\newtheorem{theorem}{Theorem}
\newtheorem{lemma}[theorem]{Lemma}
\newtheorem{proposition}[theorem]{Proposition}
\newtheorem{remark}[theorem]{Remark}
\def\beq{\begin{equation}}
\def\eeq{\end{equation}}
\def\beqs{\begin{equation*}}
\def\eeqs{\end{equation*}}
\def\bal#1\eal{\begin{align}#1\end{align}}
\def\bals#1\eals{\begin{align*}#1\end{align*}}
\def\bsp#1\esp{\begin{split}#1\end{split}}
\def\d{{\mathrm{d}}}
\let\e=\varepsilon
\numberwithin{equation}{section}
\numberwithin{theorem}{section}
\begin{document}
\date{}
\title[Diffusive Limit of  the Non-cutoff Vlasov--Maxwell--Boltzmann system]{
\large{Diffusive Limit of the  Vlasov--Maxwell--Boltzmann System without Angular Cutoff
}}

\author{Yuan Xu, Fujun Zhou, Weihua Gong and Weijun Wu}

\address[Yuan Xu]{School of Mathematics, South China University of Technology, Guangzhou 510640, China}
\email{yuanxu2019@163.com}

\address[Fujun Zhou, Corresponding author]{School of Mathematics, South China University of Technology, Guangzhou 510640, China}
\email{fujunht@scut.edu.cn}

\address[Weihua Gong]{School of Mathematics, Guangdong University of Education, Guangzhou 510303, China}
\email{weihuagong2020@163.com}

\address[Weijun Wu]{School of Network Security, Guangdong Police College, Guangzhou 510230, China}
\email{scutweijunwu@qq.com}

\begin{abstract}
Diffusive limit of the non-cutoff Vlasov--Maxwell--Boltzmann system
in perturbation framework still remains open.
By employing a new weight function and making full use of the anisotropic dissipation property of the non-cutoff linearized Boltzmann operator,
we solve this problem with some novel treatments for non-cutoff potentials $  \gamma > \max\{-3, -\frac{3}{2}-2s\}$, including both strong angular singularity $\frac{1}{2} \leq s <1$ and weak angular singularity $0 < s < \frac{1}{2}$.
Uniform estimate with respect to the Knudsen number $\varepsilon\in (0,1]$ is established globally in time, which eventually leads to the global existence of solutions to the non-cutoff Vlasov--Maxwell--Boltzmann system as well as hydrodynamic limit to the two-fluid incompressible Navier--Stokes--Fourier--Maxwell system with Ohm's law.
The indicators $\gamma > \max\{-3, -\frac{3}{2}-2s\}$ and $0<s<1$ in this paper cover all ranges that can be achieved by the previously established global solutions
to the non-cutoff Vlasov--Maxwell--Boltzmann system in perturbation framework.
 \\[3mm]
{\em Mathematics Subject Classification (2020)}:  35Q20; 35Q83
\\[1mm]
{\em Keywords}: Vlasov--Maxwell--Boltzmann system; non-cutoff potentials; hydrodynamic limit;  global solutions; incompressible Navier--Stokes--Fourier--Maxwell system.
\end{abstract}

%
%
\maketitle

\tableofcontents

\section{Introduction}

\subsection{Description of the Problem}
\hspace*{\fill}

As one of the most fundamental model for dynamics of dilute charged particles, the Vlasov--Maxwell--Boltzmann (VMB, for short) system describes particles
interacting with themselves through collisions and with their self-consistent electromagnetic field.
Despite its importance, hydrodynamic limit to the fluid equations has not been completely solved so far.
In this paper, we study diffusive limit of the two-species VMB system without angular cutoff
\begin{equation}\label{GG1}
\left\{
	\begin{array}{ll}
	\displaystyle \varepsilon \partial_t F^\varepsilon_{+}+v\cdot\nabla_xF^\varepsilon_{+}
     + (\varepsilon E^\varepsilon + v \times B^\varepsilon )\cdot\nabla_v F^\varepsilon_{+} =\frac{1}{\varepsilon}Q(F^\varepsilon_{+},F^\varepsilon_{+})+\frac{1}{\varepsilon}Q(F^\varepsilon_{+},F^\varepsilon_{-}),    \\[2mm]
		
	\displaystyle \varepsilon \partial_tF^\varepsilon_{-}+v\cdot\nabla_xF^\varepsilon_{-}
    - (\varepsilon E^\varepsilon + v \times B^\varepsilon )\cdot\nabla_v F^\varepsilon_{-}
    =\frac{1}{\varepsilon}Q(F^\varepsilon_{-},F^\varepsilon_{-})+\frac{1}{\varepsilon}Q(F^\varepsilon_{-},F^\varepsilon_{+}),   \\[2mm]
		
    \displaystyle \partial_t E^\varepsilon - \nabla_x\times B^\varepsilon = -\frac{1}{\varepsilon^2}\int_{\mathbb{R}^3} v (F^{\varepsilon}_{+}-F^{\varepsilon}_{-})\d v, \\ [2.5mm]
		
	\displaystyle \partial_t B^\varepsilon+\nabla_x \times E^\varepsilon=0, \\[2mm]

    \displaystyle \nabla_x \cdot E^\varepsilon= \frac{1}{\varepsilon}\int_{\mathbb{R}^3} (F^\varepsilon_{+}-F^\varepsilon_{-})\d v,
    \quad \nabla_x \cdot B^\varepsilon=0.

\end{array}\right.
\end{equation}
  Here, $F_{\pm}^{\varepsilon}(t,x,v)\geq 0$ are the density distribution functions for the ions $(+)$ and electrons $(-)$, respectively, at time $t \geq 0$, position $x=(x_1, x_2, x_3)\in \mathbb{R}^{3}$ and velocity $v =(v_1, v_2, v_3) \in \mathbb{R}^{3}$.
The electric field $E^\varepsilon(t, x)$ and the magnetic field $B^\varepsilon(t, x)$, which are generated by the motion of the particles in the plasma itself, are governed by the Maxwell equations in (\ref{GG1}).
The positive parameter $\varepsilon\in (0,1]$ is the Knudsen number, which equals to the ratio of the mean free path to the macroscopic length scale.
The initial data of the VMB system (\ref{GG1}) are imposed as
\begin{align}
  F_{\pm}^\varepsilon(0,x,v)= F_{\pm,0}^\varepsilon(x,v), \quad E^\varepsilon(0,x)=E^\varepsilon_0(x), \quad B^\varepsilon(0,x)=B^\varepsilon_0(x)
\end{align}
with the compatibility conditions
\begin{align*}
\nabla_x \cdot E_0^\varepsilon= \frac{1}{\varepsilon}\int_{\mathbb{R}^3} (F^\varepsilon_{+,0}-F^\varepsilon_{-,0})\d v,
    \quad \nabla_x \cdot B_0^\varepsilon=0.
\end{align*}
The Boltzmann collision operator $Q(F, G)$ is a bilinear operator, defined as
\begin{align*}
Q(F, G)(v) :=
\iint_{\mathbb{R}^{3} \times \mathbb{S}^2} B(v-v_*,\sigma)[F(v^{\prime})G(v_{*}^{\prime})-F(v)G(v_*)]\d v_* \d \sigma.
\end{align*}
Here, $v$, $v_*$ and $v^{\prime}$, $v_*^{\prime}$ are the velocities of a pair of particles before and after collision, respectively. They are connected through the formulas
$$
v^{\prime}=\frac{v+v_*}{2}+\frac{|v-v_*|}{2} \sigma, \quad v_*^{\prime}=\frac{v+v_*}{2}-\frac{|v-v_*|}{2} \sigma, \quad \sigma \in \mathbb{S}^2.
$$

The Boltzmann collision kernel $B(v-v_*, \sigma)$ depends only on the relative velocity $|v-v_*|$ and on the deviation angle $\theta$ given by $\cos \theta=\langle\sigma,(v-v_*) /|v-v_*|\rangle$, where $\langle\cdot, \cdot\rangle$ is the usual dot product in $\mathbb{R}^3$. As in \cite{GS2011}, without loss of generality, we suppose that $B(v-v_*, \sigma)$ is supported on $\cos \theta \geq 0$.
Throughout this paper, the collision kernel $B(v-v_*, \sigma) $ is supposed to satisfy the following assumptions:
\begin{itemize}
\item
$B(v-v_*, \sigma)$ takes the product form in its argument, that is,
$$
B(v-v_*, \sigma)=\Phi(|v-v_*|) b(\cos \theta)
$$
with non-negative functions $\Phi$ and $b$.

\item
The angular function $\sigma \rightarrow b(\langle\sigma,(v-v_*) /|v-v_*|\rangle)$ is not integrable on $\mathbb{S}^2$, i.e.
$$
\int_{\mathbb{S}^2} b(\cos \theta) \d \sigma=2 \pi \int_0^{\pi / 2} \sin \theta b(\cos \theta) \d \theta=\infty,
$$
 and there are two positive constants $c_b>0$ and $0<s<1$ such that
$$
\frac{c_b}{\theta^{1+2 s}} \leq \sin \theta b(\cos \theta) \leq \frac{1}{c_b \theta^{1+2 s}}.
$$

  \item
  The kinetic function $z \rightarrow \Phi(|z|)$ satisfies
$$
\Phi(|z|)=C_{\Phi}|z|^\gamma
$$
for some positive constant $C_{\Phi}>0$, where the exponent $\gamma>-3$ is determined by the intermolecular interactive mechanism.
\end{itemize}

It is convenient to call ``soft potentials'' when $-3 < \gamma <-2s$ and ``hard potentials'' when $\gamma \geq -2s$. The current work is restricted to the case of
\begin{align*}
  \gamma > \max\left\{-3, -\frac{3}{2}-2s\right\} \; \text{ and }\; 0<s<1.
\end{align*}
Moreover, $0 < s < \frac{1}{2}$ is usually called weak angular singularity, and $ \frac{1}{2}\leq  s < 1$ is called strong angular singularity \cite{DL2013}.
Recall that when the intermolecular interactive potential takes the inverse power law in the form of $U(|x|)=|x|^{-(\ell-1)}$ with $2 < \ell < \infty$, the collision kernel $B(v-v_*, \sigma)$ in three space dimensions satisfies the above assumptions with $\gamma=\frac{\ell-5}{\ell-1}$ and $s=\frac{1}{\ell-1}$. Note that $\gamma \to -3$ and $s \to 1$ as $\ell \to 2$ in the limiting case, for which the grazing collisions between particles are dominated and the Boltzmann collision term has to be replaced by the classical Landau collision term for the Coulomb potential, cf. \cite{V}.

Much significant progress has been made on the global existence and decay estimates of solutions to the VMB system. Firstly, for the cutoff VMB system, that is, the collision kernel $B(v-v_*, \sigma)$ inside the Boltzmann collision operator $Q(F,G)$ is assumed to be integrable and taken the form
$$
 B(v-v_*, \sigma)=|v-v_*|^{\gamma}b(\cos\theta)\;\;\text{ with } \; 0\leq b(\cos\theta)\leq C|\cos\theta|,
$$
there have been many results. In the framework of renormalized solutions, DiPerna--Lions developed global solutions to the Boltzmann equation \cite{Lion1989Ann}, Vlasov--Poisson--Boltzmann system \cite{Lions1994-1, Lions1994-2} and Vlasov--Maxwell system \cite{Lion1989CPAM} for general large initial data, but without global solutions for the VMB system. Recently, Ars\'{e}nio--Saint-Raymond \cite{AS2019} obtained global renormalized  solutions for general large initial data, including both cutoff and non-cutoff potentials.
In the framework of perturbation around the global Maxwellian, a significant breakthrough was achieved by \cite{Guo2003}, wherein Guo constructed classical solutions to the cutoff VMB system  in $\mathbb{T}^3$  for the hard sphere case $\gamma=1$ by employing the robust energy method.
Subsequently, Strain--Duan \cite{Strain2006CMP, DS2011} established global existence and optimal time decay rate of classical solutions in $\mathbb{R}^3$ for  the hard sphere potential $\gamma=1$. Ultimately, global existence for the full range of soft potentials $-3<\gamma<0$ was solved by \cite{DLYZ2017}. In addition, Li--Yang--Zhong \cite{LYZ2016-1} utilized spectral analysis method to study global solutions and optimal time decay rate for the hard sphere case $\gamma=1$.
Secondly, for the non-cutoff VMB system, some research results have also emerged. Following the breakthrough of the non-cutoff Boltzmann equation by Gressman--Strain \cite{GS2011} and Alexandre--Morimoto--Ukai--Xu--Yang \cite{AMUXY2011AA, AMUXY2012JFA}  independently, Duan--Liu--Yang--Zhao \cite{DLYZ2013} constructed global solutions to
the non-cutoff VMB system for the range of soft potentials $\max\{-3, -\frac{3}{2}-2s\} <\gamma< -2s$ with strong angular singularity $\frac{1}{2} \leq s < 1$.
Later on, the analysis of global classical solutions in this perturbation framework was extended to the soft potential case $\max\{-3, -\frac{3}{2}-2s\} <\gamma< -2s$ with weak angular singularity $0 < s < \frac{1}{2}$ by \cite{FLLZ2018}.

Hydrodynamic limit of the VMB system has also been the focus of extensive research efforts.
In a recent significant advancement, Ars\'{e}nio--Saint-Raymond \cite{AS2019} justified various limits (depending on the scalings) towards incompressible viscous electro--magneto--hydrodynamics in the framework of renormalized solutions. Among these limits, the most singular one is the limit to the two-fluid incompressible Navier--Stokes--Fourier--Maxwell (NSFM, for short) system with Ohm's law.  This vital work opened up a series of subsequent progress on diffusive limit of the VMB system, particularly in perturbation framework near the global Maxwellian. Jiang--Luo \cite{JL2019} justified the incompressible NSFM limit of classical solutions in perturbation framework for the cutoff hard sphere case $\gamma=1$ in $\mathbb{R}^3$, and also considered Hilbert expansion \cite{Ca} for the cutoff hard potential case $0\leq \gamma\leq 1$ \cite{JLZ2023ARMA}
in $\mathbb{T}^3$. More recently, Jiang--Lei \cite{JL2023ARXIV} investigated the incompressible NSFM limit for the cutoff VMB system in the range of soft potentials $-1\leq \gamma<0$, and for the non-cutoff VMB system in the range of soft potentials $\max\{-3, -\frac{3}{2}-2s\}<\gamma<-2s$ with strong angular singularity $\frac{1}{2}\leq s<1$.
Additionally, Yang--Zhong \cite{YZ24} demonstrated the above limit for the cutoff hard sphere case $\gamma=1$ using spectral analysis method, which also provided insights into the convergence rate.
In addition to incompressible NSFM limit of the VMB system mentioned above, Jang \cite{J2009} studied the incompressible Vlasov--Navier--Stokes--Fourier limit for the cutoff hard sphere case $\gamma=1$ in $\mathbb{T}^3$. Furthermore, recent studies have explored the compressible Euler--Maxwell limit of one-species VMB system in hyperbolic regime, as presented by \cite{DYY2023M3AS} for the cutoff hard sphere potential $\gamma=1$ and \cite{LLXZ2023ARXIV} for the non-cutoff soft potentials $\max\{-3, -\frac{3}{2}-2s\}<\gamma<-2s$ and $0<s<1$, respectively.
For more related works on hydrodynamic limit of relevant kinetic equations, we refer to \cite{BGL91, BGL93, BU91, CIP, DEL, GS, GZW-2021, guo2006, guo2010cmp, GJJ, GX2020, JXZ2018, LW2023, LYZ2020, LM, Saint} and  the references cited therein.

As shown by the research progress mentioned above, for diffusive limit of the non-cutoff VMB system in perturbation framework,
the existing result is only within the range of soft potentials $\max\{-3, -\frac{3}{2}-2s\}<\gamma<-2s$ with strong angular singularity
$\frac{1}{2}\leq s<1$, given
by \cite{JL2023ARXIV}. The more challenging case of soft potentials $\max\{-3, -\frac{3}{2}-2s\}<\gamma<-2s$ with weak angular singularity $0<s<\frac{1}{2}$ still remains open. The purpose of this paper is to fill in this gap and investigate diffusive limit of the non-cutoff VMB system \eqref{GG1} uniformly for the potential range
$\gamma>\max\{-3, -\frac{3}{2}-2s\}$ and $0<s<1$. By introducing a new weight function and some novel treatments, we establish the uniform weighted estimates with respect to the Knudsen number $\varepsilon\in (0,1]$ globally in time, which eventually justifies the convergence to the two-fluid incompressible NSFM system with Ohm's law.
Consequently, this paper completely solves diffusive limit of the non-cutoff VMB system in perturbation framework, in the sense that the indicators
$\gamma> \max\{-3, -\frac{3}{2}-2s\}$ and $0<s<1$ cover all ranges that can be achieved by the previously established global solutions
to the non-cutoff VMB system in perturbation framework.

\subsection{Reformulation of the Problem}
\hspace*{\fill}

We investigate diffusive limit of the two-species VMB system (\ref{GG1}) in the framework of perturbation around the global Maxwellian.
It is well-known that $[\mu(v),\mu(v)]$ forms the global equilibrium of the two-species VMB system (\ref{GG1}), where
$$
\mu=\mu(v) :=(2 \pi)^{-3 / 2} \mathrm{e}^{-|v|^{2} / 2}
$$
represents the global Maxwellian.
By writing
$$
F_{\pm}^{\varepsilon}(t,x,v)=\mu+\varepsilon \mu^{1/2}f_{\pm}^{\varepsilon}(t,x,v)
$$
with the fluctuation $f_{\pm}^{\varepsilon}$,
the VMB system
(\ref{GG1}) is transformed into the following equivalent perturbed VMB system
\begin{align}\label{rVMB}
\left\{\begin{array}{l}
\displaystyle \partial_t f^{\varepsilon}+\frac{1}{\varepsilon}v \cdot \nabla_x f^{\varepsilon}+ \frac{1}{\varepsilon} q_0\left(\varepsilon E^{\varepsilon}+v \times B^{\varepsilon}\right) \cdot \nabla_v f^{\varepsilon}-\frac{1}{\varepsilon}\left(E^{\varepsilon} \cdot v\right) \mu^{1/2} q_1 +\frac{1}{\varepsilon^2} L f^{\varepsilon} \\ [2mm]
\displaystyle \qquad\qquad\qquad\qquad\qquad\qquad\quad=\frac{q_0}{2}\left(E^{\varepsilon} \cdot v\right) f^{\varepsilon}+\frac{1}{\varepsilon} \Gamma\left(f^{\varepsilon}, f^{\varepsilon}\right), \\ [2mm]
\displaystyle \partial_t E^{\varepsilon}-\nabla_x \times B^{\varepsilon}=-\frac{1}{\varepsilon} \int_{\mathbb{R}^3} f^{\varepsilon} \cdot q_1 v \mu^{1/2} \d v, \\ [3mm]
\displaystyle \partial_t B^{\varepsilon}+\nabla_x \times E^{\varepsilon}=0, \\ [1mm]
\displaystyle \nabla_x \cdot E^{\varepsilon}=\int_{\mathbb{R}^3} f^{\varepsilon} \cdot q_1 \mu^{1/2} \d v, \quad \nabla_x \cdot B^{\varepsilon}=0,
\end{array}\right.
\end{align}
where $f^{\varepsilon}=[f_{+}^{\varepsilon}, f_{-}^{\varepsilon}]$ represents the vector in $\mathbb{R}^2$ with the components $f_{\pm}^{\varepsilon}$, $q_0=\mathrm{diag}(1, -1)$ represents the $2 \times 2$ diagonal matrix, $q_1=[1, -1]$, the linearized Boltzmann collision operator $Lf^{\varepsilon}$ and the nonlinear collision term $\Gamma(f^{\varepsilon}, f^{\varepsilon})$ are respectively defined by
$$Lf^{\varepsilon}:=[L_{+}f^{\varepsilon}, L_{-}f^{\varepsilon}],\qquad
\Gamma(f^{\varepsilon}, f^{\varepsilon}):=\left[\Gamma_{+}(f^{\varepsilon}, f^{\varepsilon}), \Gamma_{-}(f^{\varepsilon}, f^{\varepsilon})\right].
$$
For any given $f=\left[f_{+}, f_{-}\right]$ and $g=\left[g_{+}, g_{-}\right]$,
define
\begin{align*}
L_{\pm}f&:=-\mu^{-1/2}\left\{2Q\left(\mu^{1/2}f_{\pm}, \mu\right)+Q\left(\mu,\mu^{1/2}(f_{\pm}+f_{\mp})\right)\right\},  \\
\Gamma_{\pm}(f,g)&:=\mu^{-1/2}\left\{Q\left(\mu^{1/2}f_{\pm},\mu^{1/2}g_{\pm}\right)+Q\left(\mu^{1/2}f_{\pm},\mu^{1/2}g_{\mp}\right)\right\}.
\end{align*}

It is well-known that the operator $L$ is non-negative with null space
$$
\mathcal{N}(L):=\mathrm{span}\left\{[1,0]\mu^{1/2},[0,1]\mu^{1/2},[v_{i},v_{i}]\mu^{1/2}(1\leq i\leq 3),\big[|v|^2, |v|^2\big]\mu^{1/2}\right\},
$$
cf. \cite{Guo2003}.
For given vector valued function $f(t,x,v)$, there exists the following macro-micro decomposition
\begin{equation}\label{f decomposition}
f=\mathbf{P}f+\mathbf{P}^{\perp}f.
\end{equation}
Here,  $\mathbf{P}$ denotes the orthogonal projection from $L^2(\mathbb{R}_{v}^3) \times L^2(\mathbb{R}_{v}^3)$ to $\mathcal{N}(L)$, defined by
\begin{equation}\label{Pf define}
\mathbf{P}f:=\left\{a_{+}(t,x)[1,0]+a_{-}(t,x)[0,1]+v \cdot b(t,x)[1,1]+(|v|^2-3)c(t,x)[1,1]\right\}\mu^{1/2},
\end{equation}
or equivalently $\mathbf{P}=[\mathbf{P}_{+},\mathbf{P}_{-}]$ with
$$
\mathbf{P}_{\pm}f:=\left\{a_{\pm}(t,x)+v \cdot b(t,x)+ (|v|^2-3)c(t,x)\right\}\mu^{1/2},
$$
where
\begin{align}
a_{\pm}(t,x):=\;&\langle \mu^{1/2}, f_{\pm} \rangle =\langle \mu^{1/2}, \mathbf{P}_{\pm}f \rangle, \nonumber \\
b(t,x):=\;&\frac{1}{2}\langle v\mu^{1/2},f_{+}+f_{-}\rangle =\langle v\mu^{1/2}, \mathbf{P}_{\pm}f\rangle, \nonumber \\
c(t,x):=\;&\frac{1}{12} \big \langle (|v|^2-3)\mu^{1/2},f_{+}+f_{-} \big \rangle= \frac{1}{6} \big\langle(|v|^2-3)\mu^{1/2},\mathbf{P}_{\pm}f \big\rangle. \nonumber
\end{align}
Moreover, the symbol $\mathbf{I}$ denotes the identity operator defined by $\mathbf{I}:=[\mathbf{I}_{+}, \mathbf{I}_{-}]$ with $\mathbf{I}_{\pm}f=f_{\pm}$,  while the micro-projection operator $\mathbf{P}^{\perp}=[\mathbf{P}^{\perp}_{+}, \mathbf{P}^{\perp}_{-}]:=\mathbf{I}-\mathbf{P}=[\mathbf{I}_{+}-\mathbf{P}_{+}, \mathbf{I}_{-}-\mathbf{P}_{-}]$.

\subsection{Notations}
\hspace*{\fill}

Throughout the paper, $C$ denotes a generic positive constant independent of $\varepsilon$.
We use $X \lesssim Y$ to denote $X \leq CY$, where $C$ is a constant independent of $X$, $Y$. We also use the notation $X \approx Y$ to represent $X\lesssim Y$ and $Y\lesssim X$. The notation $X \ll 1$ means that $X$ is a positive constant small enough.

The multi-indexs $\alpha = [\alpha_1, \alpha_2, \alpha_3]\in\mathbb{N}^3$ and $\beta = [\beta_1, \beta_2, \beta_3]\in\mathbb{N}^3$ will be used to record space and velocity derivatives, respectively. We  denote the $\alpha$-th order space partial derivatives by $\partial_x^\alpha=\partial_{x_1}^{\alpha_1}\partial_{x_2}^{\alpha_2}\partial_{x_3}^{\alpha_3}$,
and the $\beta$-th order velocity partial derivatives by $\partial_v^\beta=\partial_{v_1}^{\beta_1}\partial_{v_2}^{\beta_2}\partial_{v_3}^{\beta_3}$.
In addition, $\partial^\alpha_\beta=\partial^\alpha_x\partial^\beta_v
=\partial^{\alpha_1}_{x_1}\partial^{\alpha_2}_{x_2}\partial^{\alpha_3}_{x_3}
\partial^{\beta_1}_{v_1}\partial^{\beta_2}_{v_2}\partial^{\beta_3}_{v_3}$ stands for the mixed space-velocity derivative.
The length of $\alpha$ is denoted by $|\alpha|=\alpha_1+\alpha_2+\alpha_3$.
If each component of $\theta$ is not greater than that of $\bar{\theta}$, we denote by $\theta \leq \bar{\theta}$. $\theta < \bar{\theta}$ means $\theta \leq \bar{\theta}$ and $|\theta| < |\bar{\theta}|$. 

We use $| \cdot |_{L^p}$ to denote the $L^p$ norm in $\mathbb{R}^3_v$, and use $\|\cdot\|_{L^p}$ to denote the $L^p$ norm in $\mathbb{R}^3_x \times \mathbb{R}^3_v$ or in $\mathbb{R}^3_x$. In particular, if $p=2$, then we use $\|\cdot\|$ to denote $L^2$ norm in $\mathbb{R}^3_x \times \mathbb{R}^3_v$ or in $\mathbb{R}^3_x$.
Besides, $\| \cdot \|_{L^p_x L^q_v}$ denotes $\| | \cdot |_{L^q_v} \|_{L^p_x}$.
$\langle \cdot, \cdot\rangle$ denotes the $L^2$ inner product in $\mathbb{R}_v^3$ and $( \cdot, \cdot ) $ denotes the $L^2$ inner product in $\mathbb{R}_x^3 \times \mathbb{R}_v^3$ or in $\mathbb{R}_x^3$.
The norm of a vector means the sum of the norms for all components
of this vector. Also, the norm of $\nabla_x^k f$ means the sum of the norms for all functions $\partial^\alpha f$ with $|\alpha| = k$.

We use $\Lambda^{-\varrho}f(x)$ to denote
\begin{align*}
\Lambda^{-\varrho} f(x):= {(2\pi)^{-3/2}} \int_{\mathbb{R}^3} |y|^{-\varrho} \widehat{f}(y) e^{ix \cdot y} \d y,
\end{align*}
where $\widehat{f}(y) :=  {(2\pi)^{-3/2}} \int_{\mathbb{R}^3} f(x) e^{-ix \cdot y}\d x$
represents the Fourier transform of $f(x)$.

As described in \cite{AMUXY2012JFA, DLYZ2013}, we introduce the following norms for the non-cutoff VMB system
\begin{align*}
|f|_{L^2_D}^2=|f|_{D}^2:= & \int_{\mathbb{R}^6 \times \mathbb{S}^2} B\left(v-v_*, \sigma\right) \mu(v_*)\Big(f(v^\prime)-f(v)\Big)^2 \d v_* \d v \d \sigma \\ & +\int_{\mathbb{R}^6 \times \mathbb{S}^2} B\left(v-v_*, \sigma\right) f(v_*)^2\left(\mu(v_*^{\prime})^{1/2}- \mu(v_*)^{1/2}\right)^2 \d v_* \d v \d \sigma.
\end{align*}
For $\ell \in \mathbb{R}$, $\langle v \rangle := \sqrt{1+|v|^2}$, $L^2_\ell$ denotes the weighted space with the norm
\begin{align*}
|f|_{L_{\ell}^2}^2:=\int_{\mathbb{R}^3}\langle v\rangle^{2\ell}|f(v)|^2 \d v.
\end{align*}
And the weighted fractional Sobolev norm $\left|f(v)\right|_{H^s_\ell}^2=\left|\langle v \rangle^\ell f(v)\right|_{H^s}^2$ is given by
\begin{align*}
|f|_{H_{\ell}^s}^2:=|f|_{L_{\ell}^2}^2+\iint_{\mathbb{R}^3 \times \mathbb{R}^3} \frac{\big[\langle v\rangle^{\ell}  f(v)-\langle v^{\prime}\rangle^{\ell}  f(v^{\prime})\big]^2}{|v-v^{\prime}|^{3+2 s}} \chi_{|v-v^{\prime}| \leq 1} \d v \d v^{\prime},
\end{align*}
which turns out to be equivalent with
\begin{align*}
| f|_{H_{\ell}^s}^2:=\int_{\mathbb{R}^3}\langle v\rangle^{2\ell} \big|(1-\Delta_v)^{\frac{s}{2}} f(v)\big|^2 \d v.
\end{align*}
Notice that
\begin{align}\label{norm inequality}
| f |_{L_{\gamma/2 + s}^2}^2+| f |_{H_{\gamma/2}^s}^2 \lesssim | f |_{D}^2
\lesssim | f |_{H_{\gamma/2+  s}^s}^2,
\end{align}
cf. \cite{AMUXY2012JFA}.
Furthermore, in $\mathbb{R}_x^3 \times \mathbb{R}_v^3$, we use the notations
$$
\|f\|_{L^2_\ell}=\big\| |f|_{L^2_\ell}\big\|_{L_x^2},
\quad
\|f\|_{L^p_x L^2_\ell}=\big\| |f|_{L^2_\ell}\big\|_{L_x^p},
\quad
 \| f \|_D=\big\| |f |_D \big\|_{L_x^2},
 \quad
 \|  f  \|_{L^p_x L^2_D}=\big\| | f |_D\big\|_{L_x^p}.
$$
For an integer $N \geq 0$, we define the Sobolev space
\begin{align*}
&|f|_{H^N_\ell}:=\sum_{|\beta| \leq N} |\partial_\beta f |_{L^2_\ell}, \;\;\;\;
\|f\|_{H^N_{x}}:=\sum_{|\alpha| \leq N} \|\partial^\alpha f \|_{L^2_x}, \;\;\;\;
\|f\|_{L^2_{x} H^N_v}:=\sum_{|\beta| \leq N} \|\partial_\beta f \|_{L^2_{x,v}}, \\
&\|f\|_{H^N_{x} L^2_v}:=\sum_{|\alpha| \leq N} \|\partial^\alpha f \|_{L^2_{x,v}}, \;\;\;\;
\|f\|_{H^N_{x} L^2_D}:=\sum_{|\alpha| \leq N} \|\partial^\alpha f \|_{D}.
\end{align*}

Finally, we define $B_C \subset \mathbb{R}^3$ to be the ball with center origin and radius $C$, and use $L^2 (B_C )$ to denote the space $L^2$ over $B_C$ and likewise for other spaces.

\subsection{Main Results}
\hspace*{\fill}

To state the main results of this paper, we introduce the following fundamental instant energy functional and the corresponding dissipation rate functional
\begin{align}
\label{without weight energy functional}
{\mathcal{E}}_{N} (t) \sim\;
&\sum_{|\alpha|\leq N} \left\|\partial^{\alpha}f^{\varepsilon}\right\|^2
+\sum_{|\alpha|\leq N} \left\|\partial^{\alpha}E^{\varepsilon}\right\|^2
+\sum_{|\alpha|\leq N} \left\|\partial^{\alpha}B^{\varepsilon}\right\|^2,  \\
\label{without weight dissipation functional}
{\mathcal{D}}_{N} (t)\sim \;
&\sum_{1 \leq |\alpha| \leq N} \left \| \partial^{\alpha} \mathbf{P}f^\varepsilon \right \|^2
+ \frac{1}{\varepsilon^2} \sum_{|\alpha| \leq N}\left\|\partial^{\alpha}
\mathbf{P}^{\perp}f^\varepsilon\right\|^2_D
+ \left\| a^\varepsilon_{+}-a^\varepsilon_{-}\right\|^2 \nonumber\\
\;&+\sum_{|\alpha| \leq N-1} \left\|\partial^{\alpha}E^{\varepsilon}\right\|^2
+\sum_{1 \leq |\alpha| \leq N-1} \left\|\partial^{\alpha}B^{\varepsilon}\right\|^2,
\end{align}
respectively, where the integer $N\in\mathbb{Z}$ will be determined later.

Due to the weaker dissipation of the linearized Boltzmann operator $L$ for the non-hard sphere case rather than the hard sphere case, in order to deal with the external force term brought by the self-consistent electromagnetic field, we need to introduce the following time-velocity weight function
\begin{align}\label{weight function}
w_{l}(\alpha,\beta):= \overline{w}_{l}(\alpha,\beta)
e^\frac{q \langle v \rangle}{(1+t)^\vartheta},
\end{align}
where $0 < q \ll 1$, the constants $l$ and $\vartheta$ will be determined later. Additionally, the velocity weight function $\overline{w}_{l}(\alpha,\beta)$  is defined as follows
\begin{align}\label{weight function2}
\overline{w}_{l}(\alpha,\beta):=
\begin{cases}
\langle v \rangle^{l-\max\left\{\frac{\gamma+2s}{2},\frac{1}{s}\right\}|\alpha|-\max\left\{\frac{\gamma+2s}{2},\frac{1}{s}\right\}|\beta|},
&  \text{ if } \gamma +2s \geq 0, \\[1mm]
\langle v \rangle^{l-(\frac{-2\gamma+1}{s}+\gamma)|\alpha|-(\frac{-2\gamma+1}{s})|\beta|},
&  \text{ if } \max\{-3,-\frac{3}{2}-2s\} < \gamma < -2s.
\end{cases}
\end{align}
In order to overcome the regularization loss of the electromagnetic field, we use the following notation
\begin{align}\label{weight function1}
\widetilde{w}_{l}(\alpha,\beta):=
\begin{cases}
w_{l}(\alpha,\beta), &\quad \text{ if } |\alpha|+|\beta| \leq N-1, \\[1mm]
(1+t)^{-\frac{1+\varepsilon_0}{4}}w_{l}(\alpha,\beta), &\quad\text{ if } |\alpha|+|\beta| =N,
\end{cases}
\end{align}
where $\varepsilon_0> 0$ is small enough.
It is worth noting that this notation $\widetilde{w}_{l}(\alpha,\beta)$ is only for writing convenience. Actually, we can first make the energy estimate with respect to the weight function $w_{l}(\alpha, \beta)$ $(|\alpha|+|\beta|=N)$ and then apply the time factor $(1+t)^{-\frac{1+\varepsilon_0}{2}}$.
Moreover, sometimes we write $\widetilde{w}_{l}(|\alpha|,|\beta|)$ to denote $\widetilde{w}_{l}(\alpha, \beta)$ for simplicity.
Then the weighted instant energy functional $\widetilde{\mathcal{E}}_{N,l} (t)$ and the corresponding dissipation rate functional $\widetilde{\mathcal{D}} _{N,l} (t)$ are defined as
\begin{align}\label{energy functional}
\widetilde{\mathcal{E}}_{N,l} (t) \sim\;
& {\mathcal{E}}_{N} (t)  + \sum_{\substack{|\alpha|+|\beta| \leq N \\ |\alpha| \leq N-1}}\left\|\widetilde{w}_l (\alpha, \beta) \partial_{\beta}^{\alpha} \mathbf{P}^{\perp}f^{\varepsilon}\right\|^2
+\varepsilon\sum_{|\alpha|=N}\left\|\widetilde{w}_l (\alpha, 0) \partial^{\alpha} f^{\varepsilon}\right\|^2 , \\
\label{dissipation functional}
\widetilde{\mathcal{D}}_{N,l} (t)\sim \;& {\mathcal{D}}_{N} (t)
+\frac{1}{\varepsilon^{2}} \sum_{\substack{|\alpha|+|\beta| \leq N \\
|\alpha| \leq N-1}}\left\|\widetilde{w}_l (\alpha, \beta) \partial_{\beta}^{\alpha}
\mathbf{P}^{\perp} f^{\varepsilon}\right\|^2_D
+\frac{1}{\varepsilon}\sum_{|\alpha|=N} \left\|\widetilde{w}_l (\alpha, 0) \partial^{\alpha}
\mathbf{P}^{\perp} f^{\varepsilon}\right\|^2_D \nonumber\\
&+ \frac{1}{(1+t)^{1+\vartheta}} \left\{ \sum_{\substack{|\alpha|+|\beta| \leq N \\
|\alpha| \leq N-1}}\left\|\langle v \rangle^\frac{1}{2} \widetilde{w}_l (\alpha, \beta) \partial_{\beta}^{\alpha}
\mathbf{P}^{\perp} f^{\varepsilon}\right\|^2 +  \varepsilon \sum_{|\alpha|= N} \left\|\langle v \rangle^\frac{1}{2} \widetilde{w}_l (\alpha, 0) \partial^{\alpha} f^{\varepsilon}\right\|^2 \right\} \nonumber \\
&+ \frac{1}{1+t} \left\{ \sum_{\substack{|\alpha|+|\beta| = N \\
|\alpha| \leq N-1}}\left\| \widetilde{w}_l (\alpha, \beta) \partial_{\beta}^{\alpha}
\mathbf{P}^{\perp} f^{\varepsilon}\right\|^2 +  \varepsilon \sum_{|\alpha|= N} \left\| \widetilde{w}_l (\alpha, 0) \partial^{\alpha} f^{\varepsilon}\right\|^2 \right\},
\end{align}
respectively.

Besides, to seek for the desired time decay rate to close the energy estimate, we introduce the following energy functional with the lowest $k$-order space derivative $\mathcal{E}^k_{N_0}(t)$ and the corresponding dissipation rate functional with the lowest $k$-order space derivative ${\mathcal{D}}^k_{N_0} (t)$
\begin{align}
\label{low k energy}
\mathcal{E}^k_{N_0}(t)\sim\;
& \sum_{k \leq |\alpha| \leq N_0} \left\|\partial^{\alpha}f^{\varepsilon}\right\|^2
+\sum_{k \leq |\alpha| \leq N_0} \left\|\partial^{\alpha}E^{\varepsilon}\right\|^2
+\sum_{k \leq |\alpha| \leq N_0} \left\|\partial^{\alpha}B^{\varepsilon}\right\|^2,  \\
\label{low k dissipation}
{\mathcal{D}}^k_{N_0} (t)\sim \;
& \sum_{k+1 \leq |\alpha| \leq N_0} \left \| \partial^{\alpha} \mathbf{P}f^\varepsilon \right \|^2
+ \frac{1}{\varepsilon^2} \sum_{k \leq |\alpha| \leq N_0} \left\|\partial^{\alpha}
\mathbf{P}^{\perp}f^\varepsilon\right\|^2_D
+\sum_{ |\alpha| = k } \left \| \partial^{\alpha} (a_{+}^\varepsilon-a_{-}^\varepsilon) \right \|^2 \nonumber\\
\;& +\sum_{k \leq |\alpha| \leq N_0-1} \left\|\partial^{\alpha} E^{\varepsilon}\right\|^2
+\sum_{k+1 \leq |\alpha| \leq N_0-1} \left\|\partial^{\alpha} B^{\varepsilon}\right\|^2,
\end{align}
respectively, where the integer $N_0\in\mathbb{Z}$ will be determined later.

Moreover, due to the singularity brought by the Knudsen number $\varepsilon$, the time decay estimate of the nonlinear problem can not be obtained by the Duhamel principle and semigroup estimate of the linear problem, so that we have to resort the Sobolev space with negative index. For this, we define the following instant energy functional $\overline{\mathcal{E}}_{N,l}(t)$ and the corresponding dissipation rate functional $\overline{\mathcal{D}}_{N,l} (t)$
\begin{align}
\label{negative sobolev energy}
\overline{\mathcal{E}}_{N,l} (t) \sim\;
& \widetilde{\mathcal{E}}_{N,l} (t) +\left\| \Lambda^{-\varrho}f^\varepsilon\right\|^2
+\left\| \Lambda^{-\varrho}E^\varepsilon\right\|^2
+\left\| \Lambda^{-\varrho}B^\varepsilon\right\|^2,\\
\label{negative sobolev dissipation}
\overline{\mathcal{D}}_{N,l} (t) \sim\;&\widetilde{\mathcal{D}}_{N,l}(t)
+\left\| \Lambda^{1-\varrho} \mathbf{P}f^\varepsilon\right\|^2
+\frac{1}{\varepsilon^2}\left\| \Lambda^{-\varrho} \mathbf{P}^{\perp}f^\varepsilon\right\|^2_D \nonumber\\
\;& + \left\| \Lambda^{-\varrho} (a^\varepsilon_{+}-a^\varepsilon_{-})\right\|^2
+ \left\| \Lambda^{-\varrho}E^\varepsilon\right\|_{H^1}^2
+ \left\| \Lambda^{1-\varrho}B^\varepsilon\right\|^2,
\end{align}
respectively, where the positive constant $\varrho>0$ will be determined later.
\medskip

Our first main result gives the uniform estimate with respect to $\varepsilon\in (0,1]$ of global solutions to the VMB system (\ref{rVMB}) for the potential case  $ \gamma > \max\{-3, -\frac{3}{2}-2s\} $ and $0<s<1$.

\begin{theorem}\label{mainth1}
Let $\gamma > \max\left\{-3, -\frac{3}{2}-2s\right\}$, $0 < s <1$ and $0<\varepsilon \leq 1$. Introduce the following constants in sequence
\begin{align}
\begin{split}\label{mainth1 assumption}
&N_0 \geq 3\; (N_0 \in \mathbb{Z}), \quad  N=N_0+3, \quad 1 < \varrho < \frac{3}{2}, \quad 0 < \vartheta \leq \frac{\varrho-1}{2}, \quad 0 < q \ll 1, \\
&\text{~and~}\; l = \left\{\begin{array}{l}
l_1 \geq \max\left\{\frac{\gamma+2s}{2},\frac{1}{s}\right\}N+1, \qquad\;\text{~if~}\; \gamma+2s \geq 0, \\[2mm]
l_2 \geq \frac{-2\gamma+1}{s} N+ 1, \qquad\qquad\quad\;\;\;\; \text{~~\!if~}\; \max\left\{-3,-\frac{3}{2}-2s\right\} < \gamma < -2s.
\end{array}\right.
\end{split}
\end{align}
If there exists a small constant $\delta_1>0$ independent of $\varepsilon$ such that the initial data
\begin{equation}\nonumber
\overline{\mathcal{E}}_{N, l}(0) \leq \delta_1^2,
\end{equation}
  then the VMB system \eqref{rVMB} admits a unique global solution
$(f^{\varepsilon}, E^\varepsilon, B^\varepsilon)$  satisfying
\begin{align}
&\overline{\mathcal{E}}_{N,l}(t) + \lambda  \int_{0}^{t} \overline{\mathcal{D}}_{N,l}(\tau) \d \tau \leq \overline{\mathcal{E}}_{N,l}(0),
\label{thm1 estimate 1} \\
&\mathcal{E}^k_{N_0}(t) \leq C\overline{\mathcal{E}}_{N,l}(0) (1+t)^{-(k+\varrho)}, \;\;\;\; k=0,1,
\label{thm1 estimate 2}
\end{align}
for any $t \geq 0$ and some positive constant $C>0$ independent of $\varepsilon$.
\end{theorem}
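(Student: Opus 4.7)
The plan is to prove the theorem through a standard continuation argument: local existence uniform in $\varepsilon$ can be obtained by a contraction-mapping construction adapted to the non-cutoff VMB setting, so the theorem reduces to closing uniform-in-$\varepsilon$ a priori estimates. I would assume the a priori smallness condition $\sup_{0\leq t\leq T}\overline{\mathcal{E}}_{N,l}(t)\leq \delta_0^2$ for a small $\delta_0$ independent of $\varepsilon$, and aim to improve this to $\overline{\mathcal{E}}_{N,l}(t)+\lambda\int_0^t\overline{\mathcal{D}}_{N,l}\,d\tau\leq C\overline{\mathcal{E}}_{N,l}(0)$ with $C$ independent of $\varepsilon$, so that the continuation principle yields a global solution. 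The decay bound \eqref{thm1 estimate 2} is then obtained by combining the uniform energy/dissipation inequality with an interpolation against the negative Sobolev component $\|\Lambda^{-\varrho}\,\cdot\,\|$ of $\overline{\mathcal{E}}_{N,l}$.

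The uniform estimate itself I would build up in three layers. First, \textbf{pure (un-weighted) energy estimates}: testing the perturbed system \eqref{rVMB} with $\partial^\alpha f^\varepsilon$ (for $|\alpha|\le N$) together with $\partial^\alpha E^\varepsilon$, $\partial^\alpha B^\varepsilon$ gives control of $\mathcal{E}_N(t)$ plus the microscopic dissipation $\varepsilon^{-2}\|\partial^\alpha \mathbf{P}^\perp f^\varepsilon\|_D^2$, using the coercivity of $L$ modulo its null space and the fact that the singular $\varepsilon^{-1}$ streaming/Maxwell coupling is antisymmetric and cancels. The macroscopic dissipation of $\mathbf{P}f^\varepsilon$ and of $(a_+^\varepsilon-a_-^\varepsilon)$, $E^\varepsilon$, $B^\varepsilon$ is then recovered by writing down the fluid-type moment equations for $(a_\pm,b,c)$ coupled to Maxwell, constructing an interaction functional (in the spirit of the Guo--Duan $\varepsilon$-uniform framework for VMB) whose time derivative produces the missing terms in $\mathcal{D}_N$ while being controlled by $\mathcal{E}_N$ itself. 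Second, \textbf{weighted energy estimates with $w_l(\alpha,\beta)$}: applying $\partial^\alpha_\beta$ and testing against $w_l^2\partial^\alpha_\beta \mathbf{P}^\perp f^\varepsilon$ (and $w_l^2\partial^\alpha f^\varepsilon$ when $|\alpha|=N,|\beta|=0$) converts the exponential factor $e^{q\langle v\rangle/(1+t)^\vartheta}$ into an extra dissipative term of the form $(1+t)^{-1-\vartheta}\|\langle v\rangle^{1/2}w_l\cdots\|^2$ (via differentiating the weight in $t$), which is exactly what is needed to absorb the velocity-growth of the Lorentz force $(v\times B^\varepsilon)\cdot\nabla_v f^\varepsilon$ and of the terms involving $E^\varepsilon\cdot v$. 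The polynomial part $\overline{w}_l$ is designed so that each $x$- or $v$-derivative costs precisely $\max\{\tfrac{\gamma+2s}{2},\tfrac{1}{s}\}$ (resp.\ $\tfrac{-2\gamma+1}{s}$) powers of $\langle v\rangle$, matching the loss produced by commuting $\partial_v$ past $v\cdot\nabla_x$ and by the non-local/non-cutoff structure of $\Gamma$ and $L$; the time-factor $(1+t)^{-(1+\varepsilon_0)/4}$ in $\widetilde w_l$ when $|\alpha|=N$ absorbs the top-order regularity loss of $(E^\varepsilon,B^\varepsilon)$, which has dissipation only up to $H^{N-1}$.

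Third, \textbf{the negative Sobolev layer}: applying $\Lambda^{-\varrho}$ to the system and using the commutator-type estimates for $\Lambda^{-\varrho}\Gamma$ in the Riesz-potential setting (together with interpolation $\|\Lambda^{-\varrho}g\|\lesssim\|g\|_{L^1}^\theta\|g\|^{1-\theta}$ to handle nonlinear contributions), I get $\tfrac{d}{dt}\|\Lambda^{-\varrho}(f^\varepsilon,E^\varepsilon,B^\varepsilon)\|^2 \lesssim \sqrt{\overline{\mathcal{E}}_{N,l}}\,\overline{\mathcal{D}}_{N,l}$, which is the driver of time-decay. Combining all three layers produces an inequality of the schematic form
\begin{equation*}
\frac{d}{dt}\overline{\mathcal{E}}_{N,l}(t)+\lambda\,\overline{\mathcal{D}}_{N,l}(t) \leq C\bigl(\sqrt{\overline{\mathcal{E}}_{N,l}(t)}+\overline{\mathcal{E}}_{N,l}(t)\bigr)\overline{\mathcal{D}}_{N,l}(t),
\end{equation*}
which under the a priori smallness closes to \eqref{thm1 estimate 1}. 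For \eqref{thm1 estimate 2}, I restrict the energy functional to spatial derivatives of order $\geq k$, derive $\tfrac{d}{dt}\mathcal{E}^k_{N_0}+\lambda\mathcal{D}^k_{N_0}\leq (\text{small})\mathcal{D}^k_{N_0}$, then use the interpolation $\mathcal{E}^k_{N_0}\lesssim (\mathcal{D}^k_{N_0})^{\theta}\|\Lambda^{-\varrho}(\cdots)\|^{2(1-\theta)}$ with the appropriate $\theta$, plugging back into the ODI to extract the polynomial rate $(1+t)^{-(k+\varrho)}$ by the Duan--Strain-type time-weighted argument.

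The main obstacle I anticipate is the weighted estimate at top order in the weak-singularity regime $0<s<\tfrac12$ for soft potentials $\max\{-3,-\tfrac32-2s\}<\gamma<-2s$: here the dissipation norm $|\cdot|_D$ controls only $|\langle v\rangle^{\gamma/2+s}\cdot|_{L^2}$ and a fractional Sobolev piece of order $s<1/2$, which is insufficient either to absorb the $v$-multiplicative loss from the Lorentz term or to close the nonlinear term $\Gamma(f^\varepsilon,f^\varepsilon)$ via standard Sobolev embedding. This is precisely what motivates the new weight \eqref{weight function}, whose time-dependent exponential factor $e^{q\langle v\rangle/(1+t)^\vartheta}$ generates the extra $\langle v\rangle$ in dissipation needed for the Lorentz-force absorption, while the $\overline{w}_l$ exponents compensate the derivative loss coming from commutators. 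Making these estimates work simultaneously with the singular $\varepsilon^{-1}$ and $\varepsilon^{-2}$ factors, and in particular checking that all nonlinear terms from $\Gamma$ and $(E^\varepsilon+v\times B^\varepsilon)\cdot\nabla_v f^\varepsilon$ can be bounded by $\sqrt{\overline{\mathcal{E}}_{N,l}}\cdot\overline{\mathcal{D}}_{N,l}$ with no bad $\varepsilon$-powers, is the technical heart of the argument and the step where the novel treatments promised in the abstract must appear.
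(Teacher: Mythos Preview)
Your overall architecture matches the paper's, but there is one genuine gap and one structural oversight.

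\textbf{The gap: handling $\tfrac{1}{\varepsilon}(v\times B^\varepsilon)\cdot\nabla_v f^\varepsilon$.} You claim that the extra dissipation $\tfrac{1}{(1+t)^{1+\vartheta}}\|\langle v\rangle^{1/2}\widetilde w_l\,\partial^\alpha_\beta \mathbf{P}^\perp f^\varepsilon\|^2$ produced by differentiating $e^{q\langle v\rangle/(1+t)^\vartheta}$ in $t$ absorbs the velocity growth of the Lorentz term. This works for $E^\varepsilon\cdot v\,f^\varepsilon$ (no $\varepsilon^{-1}$ in front), but fails for the magnetic term: the extra dissipation carries no $\varepsilon^{-1}$ factor, while the Lorentz term does. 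The paper explicitly flags this obstruction in Section~1.5 and resolves it by a different mechanism: for $|\alpha_1|\ge1$ the mismatched weight in
\[
\Big|\langle v\rangle^{1-\gamma/2}\,w_l(\alpha,0)\,\partial^{\alpha-\alpha_1}_{e_i}\mathbf{P}^\perp f^\varepsilon\Big|_{L^2}
\]
is controlled via the interpolation inequality (Lemma~2.9, borrowed from Cao--Deng--Li)
\[
|f|_{H^1}\lesssim |\langle v\rangle^{\ell}f|_{H^s}+|\langle v\rangle^{-\ell s/(1-s)}f|_{H^{1+s}},
\]
with $\ell$ tuned so that both terms on the right match weights of the form $\langle v\rangle^{\gamma/2}w_l(|\alpha|-1,|\beta|)$ and $\langle v\rangle^{\gamma/2}w_l(|\alpha|-1,|\beta|+1)$ and hence are bounded by the $|\cdot|_D$ dissipation. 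This is precisely why the exponents in $\overline{w}_l$ are $\tfrac{-2\gamma+1}{s}$ (soft) and $\max\{\tfrac{\gamma+2s}{2},\tfrac{1}{s}\}$ (hard): they are chosen so that $\langle v\rangle^{1-\gamma/2}\overline{w}_l(\alpha,\beta)\le\langle v\rangle^{\gamma/2}\{\overline{w}_l(|\alpha|-1,|\beta|)\}^s\{\overline{w}_l(|\alpha|-1,|\beta|+1)\}^{1-s}$ holds, not merely to ``match the loss from commuting $\partial_v$ past $v\cdot\nabla_x$''. Your proposal contains no replacement for this step, and without it the weighted estimate at weak angular singularity does not close.

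\textbf{The structural oversight: the bootstrap quantity.} The paper's a priori assumption is not $\sup_t\overline{\mathcal{E}}_{N,l}\le\delta_0^2$ alone but the stronger
\[
X(t)=\sup_{\tau\le t}\overline{\mathcal{E}}_{N,l}(\tau)+\sup_{\tau\le t}(1+\tau)^{\varrho+1}\!\!\sum_{1\le|\alpha|\le N_0}\|\partial^\alpha(E^\varepsilon,B^\varepsilon)\|^2\le\delta_0^2.
\]
The decay piece is actively used in the nonlinear estimates (e.g.\ in the $|\alpha_1|=1$ subcase of the magnetic term without weight, and in the $|\alpha_1|+|\beta_1|=0$ subcase of the electric term with weight) to provide the required $(1+t)^{-(1+\vartheta)}$ factor. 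So the bootstrap loop is: weighted energy inequality $\Rightarrow$ \eqref{thm1 estimate 1} $\Rightarrow$ negative-Sobolev interpolation gives decay \eqref{thm1 estimate 2} $\Rightarrow$ recovers the decay half of $X(t)$. Your outline has the right ingredients for this loop but does not set up the bootstrap correctly, so as written several nonlinear bounds (those requiring the assumed decay of $E^\varepsilon,B^\varepsilon$) would be left unjustified.
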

\medskip

The second main result is on hydrodynamic limit from the VMB system \eqref{rVMB}
to the two-fluid incompressible NSFM system with Ohm's law.
\begin{theorem}\label{mainth2}
Let $(f^\varepsilon, E^\varepsilon, B^\varepsilon)$  be the
global solution to the VMB system \eqref{rVMB} constructed in Theorem
\ref{mainth1} with initial data $f_0^\varepsilon={f}_0^\varepsilon(x,v)$,  $E_0^\varepsilon=E_0^\varepsilon(x,v)$ and $B_0^\varepsilon=B_0^\varepsilon(x,v)$. Suppose that there exist scalar functions $(\rho_0, n_0, \theta_0, \omega_0)=(\rho_0(x), n_0(x), \theta_0(x), \omega_0(x))$ and vector-valued functions $(u_0, E_0, B_0, j_0)=(u_0(x), E_0(x), B_0(x), j_0(x))$ such that
\begin{align*}
&{f}_0^\varepsilon \to {f}_0\; \text{ strongly in } H^{N}_x L^2_v,\\
&
E_0^\varepsilon \to E_0,\;\;  B_0^\varepsilon \to B_0\;  \text{ strongly in } H^{N}_x,\\
&\frac{1}{\varepsilon}\big\langle f_0^{\varepsilon}, q_1v\mu^{1/2} \big\rangle \to j_0, \;\;
\frac{1}{\varepsilon}\big\langle f_0^{\varepsilon}, q_1\big(\frac{|v|^2}{3}-1\big)\mu^{1/2} \big\rangle \to \omega_0\; \text{ strongly in } H^{N}_x
\end{align*}
as $\varepsilon \to 0$ and ${f}_0={f}_0(x, v)$ is of the form
\begin{align*}
f_0=\big(\rho_0+\frac{1}{2}n_0\big)\frac{q_1+q_2}{2}\mu^{1/2} + \big(\rho_0-\frac{1}{2}n_0\big)\frac{q_2-q_1}{2}\mu^{1/2}+ u_0 \cdot v q_2 \mu^{1/2} + \theta_0 \big(\frac{|v|^2}{2}-\frac{3}{2}\big)q_2 \mu^{1/2},
\end{align*}
where $q_{1}=[1,-1], q_{2}=[1,1]$.

Then there hold
\begin{equation*}
\begin{split}
&{f}^\varepsilon \to {f}
\; \text{ weakly}\!-\!* \text{ in }  L^\infty(\mathbb{R}^+; H^N_x L^2_v)
 \text{ and strongly in } C(\mathbb{R}^+; H^{N-1}_x L^2_v), \\
&E^\varepsilon \to E \;
\text{ weakly}\!-\!*  \text{ in }   L^\infty(\mathbb{R}^+;H^{N}_{x})
\text{ and strongly in } C(\mathbb{R}^+; H^{N-1}_x),\\
&B^\varepsilon \to B \;
\text{ weakly}\!-\!*  \text{ in }   L^\infty(\mathbb{R}^+;H^{N}_{x})
\text{ and strongly in } C(\mathbb{R}^+; H^{N-1}_x),\\
&\frac{1}{\varepsilon}\big\langle f^{\varepsilon}, q_1v\mu^{1/2} \big\rangle \to j\;
\text{ weakly}\!-\!* \text{ in } L^\infty(\mathbb{R}^+; H^N_{x}) \text{ and strongly in } C(\mathbb{R}^+; H^{N-1}_x), \\
&\frac{1}{\varepsilon}\big\langle f^{\varepsilon}, q_1\big(\frac{|v|^2}{3}-1\big)\mu^{1/2} \big\rangle \to \omega \;
\text{ weakly}\!-\!*  \text{ in } L^\infty(\mathbb{R}^+; H^N_{x}) \text{ and strongly in } C(\mathbb{R}^+; H^{N-1}_x)
\end{split}
\end{equation*}
as $\varepsilon \to 0$, where $f={f}(t, x, v)$ has the form
\begin{align*}
\begin{split}
f=\;&\big(\rho+\frac{1}{2}n\big)\frac{q_1+q_2}{2}\mu^{1/2} + \big(\rho-\frac{1}{2}n\big)\frac{q_2-q_1}{2}\mu^{1/2} + u \cdot v q_2 \mu^{1/2} + \theta\big(\frac{|v|^2}{2}-\frac{3}{2}\big)q_2 \mu^{1/2}.
\end{split}
\end{align*}
Moreover, the above mass density $\rho=\rho(t,x)$, the bulk velocity $u=u(t,x)$, the temperature $\theta=\theta(t,x)$, the electric charge $n=n(t,x)$, the internal electric energy $\omega=\omega(t,x)$, the electric field $E=E(t,x)$, the magnetic field $B=B(t,x)$ and the electric current $j=j(t,x)$ satisfy
 \begin{align*}
\begin{split}
&\left(\rho, u, \theta, n, \omega, E, B\right) \in  C(\mathbb{R}^+; H^{N-1}_{x}) \cap L^\infty(\mathbb{R}^+; H^{N}_{x}),\; j \in C(\mathbb{R}^+; H^{N-2}_{x}) \cap L^\infty(\mathbb{R}^+; H^{N-1}_{x})
\end{split}
\end{align*}
 and the following two-fluid incompressible NSFM system with Ohm's law
\begin{equation}\label{INSFM limit}
\left\{\begin{array}{lr}
\displaystyle \partial_t u+u \cdot \nabla_x u-\nu \Delta_x u+\nabla_x p=\frac{1}{2}(n E+j \times B), & \nabla_x \cdot u=0, \\[1mm]
\displaystyle  \partial_t \theta+u \cdot \nabla_x \theta-\kappa \Delta_x \theta=0, & \rho+\theta=0, \\[1mm]
\displaystyle \partial_t E-\nabla_x \times B=-j, & \nabla_x \cdot E=n, \\[1mm]
\displaystyle \partial_t B+\nabla_x \times E=0, & \nabla_x \cdot B=0, \\
\displaystyle j-n u=\sigma\big(-\frac{1}{2} \nabla_x n+E+u \times B\big), & \omega= n \theta,
\end{array}\right.
\end{equation}
with initial data
$$
u(0, x)=\mathcal{P} u_0(x),\; \theta(0, x)=\frac{3}{5} \theta_0(x)-\frac{2}{5} \rho_0(x),\; E(0, x)=E_0(x),\; B(0, x)=B_0(x).
$$
Here, $\mathcal{P}$ is the Leray projection, while the viscosity coefficient $\nu$, the heat conductivity coefficient $\kappa$ and the electrical conductivity coefficient $\sigma$ are defined in \cite{AS2019}.
\end{theorem}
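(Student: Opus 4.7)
The plan is to use the uniform-in-$\varepsilon$ estimates from Theorem \ref{mainth1} to extract limits and then identify the limit equations. First, from the uniform energy bound $\overline{\mathcal{E}}_{N,l}(t) \le \overline{\mathcal{E}}_{N,l}(0)$, I extract subsequences (not relabeled) such that $f^{\varepsilon}$, $E^{\varepsilon}$, $B^{\varepsilon}$ converge weakly-$*$ in $L^\infty(\mathbb{R}^+;H^N_x L^2_v)$ and $L^\infty(\mathbb{R}^+;H^N_x)$, respectively. The dissipation bound $\int_0^\infty \overline{\mathcal{D}}_{N,l}\,\d\tau \le \overline{\mathcal{E}}_{N,l}(0)$ contains the term $\varepsilon^{-2}\|\mathbf{P}^\perp f^{\varepsilon}\|^2_D$, which forces $\mathbf{P}^\perp f^{\varepsilon}\to 0$ strongly in $L^2_{t,x,v}$ at rate $\varepsilon$. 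Hence any weak limit $f$ satisfies $Lf=0$, i.e., $f\in\mathcal{N}(L)$, and so $f$ has the claimed hydrodynamic ansatz with macroscopic parameters $(\rho,n,u,\theta)$ identified as weak limits of $\frac{1}{2}(a_{+}^{\varepsilon}+a_{-}^{\varepsilon})$, $a_{+}^{\varepsilon}-a_{-}^{\varepsilon}$, $b^{\varepsilon}$, $2c^{\varepsilon}$, respectively.

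To upgrade weak to strong convergence in $C(\mathbb{R}^+;H^{N-1}_x L^2_v)$, I would estimate $\partial_t$ of the macroscopic moments using the local conservation laws obtained by testing \eqref{rVMB} against $\mu^{1/2}$, $v\mu^{1/2}$, $|v|^2\mu^{1/2}$; these provide closed equations for $(a_\pm^{\varepsilon}, b^{\varepsilon}, c^{\varepsilon})$ whose time derivatives are uniformly bounded in a suitable space, while the microscopic part $\mathbf{P}^\perp f^{\varepsilon}$ is already strongly convergent to zero. Combined with the uniform spatial $H^N_x$ regularity, a standard Aubin--Lions-type argument produces the required strong convergence of $f^{\varepsilon}$, $E^{\varepsilon}$, $B^{\varepsilon}$ (with time derivatives of the fields controlled directly from the Maxwell equations).

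Next, I pass $\varepsilon\to 0$ in the moment equations to derive the constraints and the limit equations. Extracting the $O(\varepsilon^{-1})$ terms in the mass equation yields incompressibility $\nabla_x\cdot u=0$; the corresponding balance in the momentum equation gives $\nabla_x(\rho+\theta)=0$, which combined with the decay at spatial infinity inherent in the $H^N_x$ framework produces the Boussinesq relation $\rho+\theta=0$. For the viscous and heat-conduction parts, I invoke the Chapman--Enskog-type relation extracted from the $\mathbf{P}^\perp$-projected equation: to leading order $\mathbf{P}^\perp f^{\varepsilon}\sim -\varepsilon L^{-1}\bigl(v\cdot\nabla_x \mathbf{P}f^{\varepsilon} - (E^{\varepsilon}\!\cdot\! v)\mu^{1/2}q_1+\cdots\bigr)$, so that the quadratic moments $\varepsilon^{-1}\langle v\otimes v\,\mu^{1/2},\mathbf{P}^\perp f^{\varepsilon}\rangle$ and $\varepsilon^{-1}\langle v(|v|^2-5)\mu^{1/2},\mathbf{P}^\perp f^{\varepsilon}\rangle$ converge to $-\nu(\nabla_x u+(\nabla_x u)^\top)$ and $-\kappa\nabla_x\theta$, respectively, with $\nu,\kappa$ given by the classical Burnett integrals against $L^{-1}$. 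Feeding these back into the weak form of the momentum and energy equations produces the Navier--Stokes--Fourier block of \eqref{INSFM limit}; the nonlinear terms $u\cdot\nabla_x u$, $nE+j\times B$, $u\cdot\nabla_x\theta$ pass to the limit thanks to strong compactness of the macroscopic unknowns.

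Finally, Ohm's law is extracted from the species-asymmetric current $j^{\varepsilon}=\frac{1}{\varepsilon}\langle f^{\varepsilon},q_1 v\mu^{1/2}\rangle$. The contribution from $\mathbf{P}f^{\varepsilon}$ yields $nu$ at leading order, while the microscopic contribution, after inverting the $\mathbf{P}^\perp$ equation against $L$ and identifying the source terms $\nabla_x n$, $E^{\varepsilon}$, $u^{\varepsilon}\times B^{\varepsilon}$, produces $\sigma(-\tfrac{1}{2}\nabla_x n+E+u\times B)$ with $\sigma$ the electrical conductivity of \cite{AS2019}. The Maxwell block passes to the limit directly once strong compactness of $(E^{\varepsilon},B^{\varepsilon})$ is in hand. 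The main obstacle will be the rigorous passage to the limit in the singular terms $\frac{1}{\varepsilon}q_0(\varepsilon E^{\varepsilon}+v\times B^{\varepsilon})\cdot\nabla_v f^{\varepsilon}$, $\frac{1}{\varepsilon}(E^{\varepsilon}\cdot v)\mu^{1/2}q_1$ and $\frac{1}{\varepsilon}\Gamma(f^{\varepsilon},f^{\varepsilon})$: these are handled by integrating by parts in $v$, using $\mathbf{P}^\perp f^{\varepsilon}=O(\varepsilon)$ to cancel apparent singularities, and exploiting the weighted velocity-moment estimates $\widetilde{w}_l(\alpha,\beta)\partial^\alpha_\beta f^{\varepsilon}\in L^\infty_t L^2$ which control the large-$v$ tails uniformly in $\varepsilon$.
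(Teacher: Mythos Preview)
Your proposal follows essentially the same route as the paper: extract weak-$*$ limits from the uniform bound \eqref{thm1 estimate 1}, use the dissipation $\varepsilon^{-2}\|\mathbf{P}^\perp f^\varepsilon\|_D^2$ to force $\mathbf{P}^\perp f^\varepsilon\to 0$ and hence $f\in\mathcal{N}(L)$, write down the local conservation laws for the fluid moments, invoke Aubin--Lions--Simon for strong compactness, and pass to the limit via a Chapman--Enskog expansion (the paper itself defers the details of this last step entirely to \cite{JL2019}). One small correction to your outline: the $nu$ term in Ohm's law does \emph{not} come from $\mathbf{P}f^\varepsilon$---in fact $\langle \mathbf{P}f^\varepsilon\cdot q_1,\, v\mu^{1/2}\rangle\equiv 0$ since the bulk velocity is carried by $q_2$---so $j^\varepsilon$ lives entirely in the microscopic part, and the $nu$ contribution instead emerges from the nonlinear collision term $\Gamma(\mathbf{P}f^\varepsilon,\mathbf{P}f^\varepsilon)$ in the expansion of $\frac{1}{\varepsilon}\mathbf{P}^\perp f^\varepsilon$.
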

\medskip

\begin{remark}\label{remark-0}
The results given in Theorems \ref{mainth1} and \ref{mainth2} indicate that
\begin{equation*}
  \begin{split}
    &\sup_{0\leq t\leq \infty}  \left\|  \left[ F_{+}^{\varepsilon},  F_{-}^{\varepsilon} \right](t,x,v)- \left[ \mu(v),  \mu(v) \right]-\varepsilon \mu^{1/2}(v)f(t,x,v) \right\|_{H^{N-1}_xL^2_v} =o(\varepsilon),  \\
    &\sup_{0\leq t\leq \infty}  \left\{\left\| E^\varepsilon(t,x)- E(t,x) \right\|_{H^{N-1}_x}
    + \left\| B^\varepsilon(t,x)- B(t,x) \right\|_{H^{N-1}_x} \right\} =o(1),
  \end{split}
\end{equation*}
that is, the two-fluid incompressible NSFM system with Ohm's law (\ref{INSFM limit}) is the first-order approximation of
the original VMB system (\ref{GG1}).
\end{remark}
\medskip

\begin{remark}\label{remark-1.4}
This paper solves the open problem of diffusive limit of the non-cutoff VMB system in the framework of perturbation around the global Maxwellian for collision potentials $\gamma> \max\{-3, -\frac{3}{2}-2s\}$ and $0<s<1$, by introducing a new weight function and making
full use of the anisotropic dissipation property of the non-cutoff linearized Boltzmann operator.

This paper extends the previous diffusive limit results of the non-cutoff VMB system for soft potentials $\max\{-3,-\frac{3}{2}-2s\} < \gamma <-2s$ with strong angular singularity $\frac{1}{2} \leq s < 1$ \cite{JL2023ARXIV} to the potential case $\gamma > \max\{-3,-\frac{3}{2}-2s\}$ and $0 < s < 1$.

Moreover, the uniform estimate with respect to $\varepsilon\in (0,1]$ and the subsequent global existence and diffusive limit result constructed in Theorem
\ref{mainth1} cover all potential ranges that can be achieved by the previously global existence result of the non-cutoff VMB system in perturbation framework. Besides, weaker initial conditions are required in this work. In fact, for the case $\gamma > \max\{-3, -\frac{3}{2}-2s\}$, only $N \geq 6$ is required in Theorem \ref{mainth1}, rather than the requirement $N \geq 14$ in \cite{DLYZ2013, H2016}.

\end{remark}
\medskip

\begin{remark}\label{remark-1.5}
In fact, due to the singularity $\frac{1}{\varepsilon}$ present in nonlinear terms, obtaining the time decay estimate independent of $\varepsilon$ proves challenging when using the semigroup estimate of the linearized VMB system in conjunction with the Duhamel principle. Consequently, we resort to negative Sobolev space \cite{GW2012CPDE} to derive the time decay estimate, which is crucial for closing the weighted energy estimate. It is noteworthy that the time decay rate of $\mathcal{E}^k_{N_0}(t)$ can only be obtained for $k=0,1$. Specifically, to address the singularity $\frac{1}{\varepsilon}$ in the term $ \left(\frac{1}{\varepsilon} q_0 (v \times \nabla_x^k B^\varepsilon) \cdot \nabla_v \mathbf{P}^{\perp}f^\varepsilon, \nabla_x^k \mathbf{P}f^\varepsilon \right)$ for $k \leq N_0$, we only treat $\mathbf{P}^{\perp}f^\varepsilon$ as a dissipative term and cannot use the interpolation inequality to estimate it.
Therefore, to calculate it, we resort to integration by parts and $L^2$--$L^\infty$--$L^2$ H\"{o}lder inequality. Consequently, to close the energy estimate, first-order derivative dissipation of the microscopic part $\frac{1}{\varepsilon^2}\left\| \nabla_x \mathbf{P}^{\perp}f^\varepsilon \right\|^2_{D}$, is essential, as discussed in Section \ref{Global Existence}. That's why we avoid using the method mentioned in \cite{DLYZ2017}.
\end{remark}
\medskip

\subsection{Difficulties  and Innovations}
\hspace*{\fill}

In this subsection, we outline the difficulties and innovations proposed in this paper.

Our analysis is based on the uniform weighted energy estimate with respect to $\varepsilon\in (0,1]$ for the VMB system \eqref{rVMB} globally in time, which eventually leads to the incompressible NSFM limit.
To deal with the potential range $ \gamma > \max\{-3,-\frac{3}{2}-2s\} $ and $0<s<1$,  we are faced with considerable difficulties induced by the weak dissipation of the
non-hard sphere linearized Boltzmann operator and the singularity $\frac{1}{\varepsilon}$.
In what follows, we point out the critical technical points in our treatment.

\subsubsection {Control of the velocity growth in the transport $\frac{1}{\varepsilon} v \cdot \nabla_x f^\varepsilon$}
\hspace*{\fill}

 As is well known, the weighted energy estimate is necessary for the Boltzmann equation with soft potentials due to the weak dispassion of the linearized Boltzmann operator.
 To handle the velocity growth in the transport term $\frac{1}{\varepsilon} v \cdot \nabla_x f^\varepsilon$ as well as the singular factor $\frac{1}{\varepsilon}$, we design a time-velocity weight function for the soft potential case $ \max\{-3,-\frac{3}{2}-2s\} < \gamma < -2s$
\begin{align}\label{original weight1}
w_l(\alpha,\beta)= \langle v \rangle^{l-(m+\gamma)|\alpha|-m|\beta|} e^\frac{q \langle v \rangle}{(1+t)^\vartheta} ,
\end{align}
where the constants $l$ and $m$ will be determined later.
With the help of this weight function $w_l(\alpha,\beta)$, we estimate the transport term
$\frac{1}{\varepsilon} \partial^\alpha_{\beta}( v \cdot \nabla_x \mathbf{P}^{\perp} f^\varepsilon)$
for $|\alpha|+|\beta| \leq N$ and $|\beta|\geq 1$ as
\begin{align*}
&\frac{1}{\varepsilon} \left(\partial^{\alpha+e_i}_{\beta-e_i} \mathbf{P}^{\perp}f^\varepsilon, w_l^2(\alpha,\beta)\partial^\alpha_{\beta} \mathbf{P}^{\perp}f^\varepsilon\right)\\
\lesssim\;&\frac{\eta}{\varepsilon^2} \left\| w_l(\alpha,\beta)   \partial^{\alpha}_{\beta} \mathbf{P}^{\perp} f^\varepsilon \right\|_D^2
+\left\| w_l(\alpha+e_i,\beta-e_i)  \partial^{\alpha+e_i}_{\beta-e_i} \mathbf{P}^{\perp} f^\varepsilon \right\|_D^2.
\end{align*}
where we have used the notation $e_i$ to denote the multi-index with the $i$-th element unit and the rest zero, as well as the fact
\begin{align*}
w_l(\alpha,\beta)=w_l(\alpha+e_i,\beta-e_i)\langle v \rangle^\gamma.
\end{align*}
Therefore, by proper linear combination of the weighted energy estimates for each order, the transport term can be treated with the help of this weight.
Note that this design of weight function $w_l(\alpha,\beta)$ can eliminate the limitation of strong angular singularity, i.e. $\frac{1}{2} \leq s < 1$ required in \cite{DLYZ2013}.

\subsubsection{Control of the velocity growth in the nonlinear terms}
\hspace*{\fill}

The first term with the velocity growth is referred to the electric field term $E^\varepsilon \cdot v f^\varepsilon$. To estimate this term, inspired by \cite{DYZ2002},
we use the time-velocity weight function \eqref{original weight1} that will generate an extra dissipation term. This requires a certain time decay rate of $E^\varepsilon$. For example, we have
\begin{align*}
 \left( E^\varepsilon \cdot v \partial^\alpha f^\varepsilon, w_l^2(\alpha, 0) \partial^\alpha f^\varepsilon \right)
 \lesssim \left\|E^\varepsilon\right\|_{L^\infty} \left\| \langle v \rangle ^{\frac{1}{2}}w_l(\alpha, 0) \partial^\alpha f^\varepsilon \right\|^2.
\end{align*}
This can be controlled by the extra dissipation if the electric field term has a certain time decay rate $\frac{1}{(1+t)^{1+\vartheta}}$.
However, this method fails to estimate the magnetic field term $\frac{1}{\varepsilon}  q_0 (v \times B^\varepsilon) \cdot \nabla_v f^\varepsilon$ because of  the extra singularity $\frac{1}{\varepsilon}$.

The second term with the velocity growth is the collision term $\frac{1}{\varepsilon}\Gamma(f^\varepsilon, f^\varepsilon)$ for the hard potential case $\gamma+2s \geq 0$, which has an additional velocity growth $\langle v \rangle ^{\frac{\gamma+2s}{2}}$, cf. Lemma \ref{Gamma1}.  To overcome this difficulty, we design the following time-velocity weight function  for the hard potential case $\gamma+2s \geq 0$,
$$
 w_l(\alpha,\beta)= \langle v \rangle^{l-m|\alpha|-m|\beta|} e^\frac{q \langle v \rangle}{(1+t)^\vartheta}  \;\text{ with }\; m\geq \frac{\gamma+2s}{2}.
$$
To briefly illustrate this, we use the following term as an example,
$$
\frac{1}{\varepsilon}\sum_{\substack{{\alpha_1+\alpha_2=\alpha}\\{\beta_1+\beta_2  \leq \beta}}} \int_{\mathbb{R}^3}
 \left| w_l(\alpha,\beta) \partial_{\beta_1}^{\alpha_1} \mathbf{P}^{\perp}f^\varepsilon\right|_{L_{\gamma/2+s}^2}\left|\partial_{\beta_2}^{\alpha_2} \mathbf{P}^{\perp}f^\varepsilon\right|_D
\left|w_l(\alpha,\beta) \partial_\beta^\alpha \mathbf{P}^{\perp}f^\varepsilon\right|_D \d x
$$
for $|\alpha|+|\beta|= N-1$.
For the case $|\alpha_2|+|\beta_2| \leq 1$, we take $L^2$--$L^\infty$--$L^2$ H\"{o}lder inequality, together with \eqref{norm inequality} and the fact $|\cdot|_{D} \lesssim |\cdot|_{H^1_{\gamma/2+s}}$, to estimate the above term. Moreover, the inequality that $w_{l}(\alpha, \beta) \leq w_{l}(\bar{\alpha}, \bar{\beta})$ for any $|\bar{\alpha}|+|\bar{\beta}| \leq |\alpha|+|\beta|$ and $| \bar{\beta} | \leq |\beta|$ will be frequently used. For the case $2 \leq |\alpha_2|+|\beta_2| \leq  N-2$, we use $L^6$--$L^3$--$L^2$ H\"{o}lder inequality to calculate it.
In this case, noting that $|\alpha_1+\alpha^\prime|+|\beta_1| \leq N-2$ for $|\alpha^\prime|=1$, we have $\langle v \rangle^{\frac{\gamma+2s}{2}}w_l(\alpha,\beta)\leq w_l(\alpha_1+\alpha^\prime,\beta_1)$. Thus, we bound the first term by the energy functional.  While for the case $|\alpha_2|+|\beta_2| = N-1$, we use $L^\infty$--$L^2$--$L^2$ H\"{o}lder inequality. Through this design of weight function, we can successfully solve the above-mentioned problem on velocity growth. For more details, please refer to the proof of Lemma \ref{hard Gamma}.

\subsubsection{Control of the extra velocity derivative in the nonlinear terms}
\hspace*{\fill}

In fact, the extra dissipation generated by the time-velocity weight function
like (\ref{weight function}) cannot directly control the magnetic field term $\frac{1}{\varepsilon}  q_0 (v \times B^\varepsilon) \cdot \nabla_v f^\varepsilon$, due to the fact that higher order derivatives are associated with weaker velocity weights. To overcome this difficulty, we make use of the dissipation property of the non-cutoff linearized Boltzmann operator and employ an interpolation inequality to balance the mismatch between derivatives and weight functions.

To illustrate our novel processing trick, let us mainly demonstrate the microscopic part $\frac{1}{\varepsilon}  q_0 (v \times B^\varepsilon) \cdot \nabla_v \mathbf{P}^{\perp}f^\varepsilon$ in the soft potential case $\max\{-3, -\frac{3}{2}-2s\} < \gamma <-2s$.  The term
$E^\varepsilon \cdot \nabla_v f^\varepsilon$ can be treated in a similar way.
On one hand, if all derivatives act on $\mathbf{P}^{\perp} f^\varepsilon$, then we use integration by parts with respect to velocity to make it disappear, namely,
\begin{align*}
\left( \frac{1}{\varepsilon} q_0 (v \times B^\varepsilon) \cdot \nabla_v \partial^\alpha \mathbf{P}^{\perp}f^\varepsilon,
 w^2_l(\alpha,0)\partial^\alpha \mathbf{P}^{\perp}f^\varepsilon\right)
=0.
\end{align*}
On the other hand, if not all derivatives act on $\mathbf{P}^{\perp} f^\varepsilon$, then we have that for $1\leq |\alpha_1| \leq |\alpha|$,
\begin{align*}
&\left( \frac{1}{\varepsilon} q_0 (v \times \partial^{\alpha_1}B^\varepsilon) \cdot \nabla_v \partial^{\alpha-\alpha_1} \mathbf{P}^{\perp}f^\varepsilon,
 w^2_l(\alpha,0)\partial^\alpha \mathbf{P}^{\perp}f^\varepsilon\right)\\
\lesssim
&\;\frac{1}{\varepsilon}\int_{\mathbb{R}^3}|\partial^{\alpha_1} B^\varepsilon | \underbrace{\left|\langle v\rangle^{1-\frac{\gamma}{2}} w_l(\alpha,0)\partial^{\alpha-\alpha_1}_{e_i} \mathbf{P}^{\perp}f^{\varepsilon}\right|_{L^2} } \left|\langle v\rangle^{\frac{\gamma}{2}} w_l(\alpha,0)\partial^{\alpha} \mathbf{P}^{\perp}f^{\varepsilon}\right|_{L^2} \d x,
\end{align*}
Here, the weight function $\langle v\rangle^{1-\frac{\gamma}{2}} w_l(\alpha,0)$ before the term $\partial^{\alpha-\alpha_1}_{e_i} \mathbf{P}^{\perp} f^{\varepsilon}$
mismatches with the desired weight function $w_l(\alpha-\alpha_1,e_i)$, since the inequality $\langle v \rangle ^{1-\frac{\gamma}{2}}w_l(\alpha,0) \leq \langle v \rangle ^{\frac{\gamma}{2}}w_l(\alpha-\alpha_1,e_i)$ for $|\alpha_1| =1$ does not hold.
Thereby, to control the underbraced term, inspired by \cite{CDL2024SIAM}, we employ the following interpolation inequality
\begin{align*}
\left| f \right|_{H^1} \lesssim \left| \langle v \rangle ^\ell f\right|_{H^s} + \left| \langle v \rangle ^{-\frac{\ell s}{1-s}} f\right|_{H^{1+s}}.
\end{align*}
By defining a velocity weight function for soft potentials $ \max\{-3,-\frac{3}{2}-2s\} < \gamma < -2s$,
\begin{align}\label{original weight2}
\overline{w}_l(\alpha,\beta)= \langle v \rangle^{l-(m+\gamma)|\alpha|-m|\beta|},
\end{align}
and setting
\begin{align*}
\langle v \rangle ^\ell =\{\overline{w}_l(|\alpha|-1,0)\}^{1-s}\{\overline{w}_l(|\alpha|-1,1)\}^{-(1-s)},
\end{align*}
we find that
\begin{align*}
\langle v \rangle ^{-\frac{\ell s}{1-s}} =\{\overline{w}_l(|\alpha|-1,0)\}^{-s}\{\overline{w}_l(|\alpha|-1,1)\}^s.
\end{align*}
Substituting these two terms into the above inequality, the underbraced term
can be controlled as
\begin{align*}
&\left|\langle v\rangle^{1-\frac{\gamma}{2}} w_l(\alpha, 0) \partial^{\alpha-\alpha_1}_{e_i}\mathbf{P}^{\perp} f^{\varepsilon}\right|_{L^2} \\
\lesssim\;&\left|\langle v\rangle^{1-\frac{\gamma}{2}} w_l(\alpha, 0) \partial^{\alpha-\alpha_1}\mathbf{P}^{\perp} f^{\varepsilon}\right|_{H^1}\\
\lesssim\;&\left|\langle v \rangle ^\ell\left(\langle v\rangle^{1-\frac{\gamma}{2}} w_l(\alpha, 0) \partial^{\alpha-\alpha_1}\mathbf{P}^{\perp} f^{\varepsilon}\right)\right|_{H^{s}}
+\left|\langle v \rangle ^{-\frac{\ell s}{1-s}} \left(\langle v\rangle^{1-\frac{\gamma}{2}} w_l(\alpha, 0) \partial^{\alpha-\alpha_1}\mathbf{P}^{\perp} f^{\varepsilon}\right)\right|_{H^{1+s}}
 \\
\lesssim \;& \left| \langle v \rangle ^{\frac{\gamma}{2}} w_l(|\alpha|-1,0) \partial^{\alpha-\alpha_1}\mathbf{P}^{\perp} f^\varepsilon \right|_{H^s}
+\left| \langle v \rangle ^{\frac{\gamma}{2}} w_l(|\alpha|-1,1) \partial^{\alpha-\alpha_1}\mathbf{P}^{\perp} f^{\varepsilon} \right|_{H^{1+s}}\\
\lesssim \;& \left| w_l(|\alpha|-1,0) \partial^{\alpha-\alpha_1}\mathbf{P}^{\perp} f^\varepsilon \right|_{H^s_{\gamma/2}}
+\left| w_l(|\alpha|-1,1) \partial^{\alpha-\alpha_1}_{e_i}\mathbf{P}^{\perp} f^{\varepsilon} \right|_{H^{s}_{\gamma/2}},
\end{align*}
where we used the facts
$$\langle v \rangle ^{1-\frac{\gamma}{2}}\overline{w}_l(\alpha, 0)\leq \langle v \rangle^{\frac{\gamma}{2}} \{\overline{w}_l(|\alpha|-1, 0)\}^s\{\overline{w}_l(|\alpha|-1, 1)\}^{1-s}
$$
and
$$\left|\nabla_v \left(\langle v \rangle^{\frac{\gamma}{2}}w_{l}(|\alpha|-1, 1) \right)\right|
\lesssim \langle v \rangle^{\frac{\gamma}{2}}w_{l}(|\alpha|-1, 1)
\lesssim \langle v \rangle^{\frac{\gamma}{2}}w_{l}(|\alpha|-1, 0)
$$
 by further setting $m \geq \frac{-2\gamma+1}{s}$ in the definition of $\overline{w}_l(\alpha, \beta)$ \eqref{original weight2}. Then derivatives and weight functions can be matched well with above treatment, which eventually leads to the control of the nonlinear term $\frac{1}{\varepsilon} q_0 (v \times B^\varepsilon) \cdot \nabla_v \mathbf{P}^{\perp}f^\varepsilon$, cf. the proof of Lemma \ref{soft B 1} for more details.

For the hard potential case $\gamma+2s \geq 0$, by setting the weight function
\begin{align*}
  w_l(\alpha,\beta)= \langle v \rangle^{l-m|\alpha|-m|\beta|} e^\frac{q \langle v \rangle}{(1+t)^\vartheta}  \;\text{ with }\; m\geq \frac{1}{s},
\end{align*}
we can guarantee that  the inequality
$$\langle v \rangle \overline{w}_{l}(\alpha,\beta) \leq \{\overline{w}_{l}(|\alpha|-1,|\beta|)\}^{s}\{\overline{w}_{l}(|\alpha|-1,|\beta|+1)\}^{1-s}$$ holds. Then using the similar way as the soft potential case $\max\{-3, -\frac{3}{2}-2s\} < \gamma < -2s$ mentioned above, we can control the extra velocity derivative in the hard potential case $\gamma+2s \geq 0$.

Combining the above statement, we finally design the weight functions (\ref{weight function}) with restriction (\ref{weight function2}),
 respectively for the hard and soft potential cases.

\subsubsection{Treatment of the singularities induced by $\frac{1}{\varepsilon}$}
\hspace*{\fill}

The usage of the weight function $w_l(\alpha, \beta)$ also generates a severe singularity when we estimate the $N$-th order space derivative of the linearized Boltzmann operator $L$, that is
\begin{align*}
\frac{1}{\varepsilon^2} \left(L \partial^\alpha f^\varepsilon,
w^2_l(\alpha,0)\partial^\alpha f^\varepsilon\right)
\gtrsim \frac{\lambda}{\varepsilon^2} \left\| w_l(\alpha,0) \partial^\alpha f^\varepsilon\right\|^2_{D}
-\frac{1}{\varepsilon^2}\left\|\partial^\alpha f^\varepsilon\right\|^2_{L^2(B_C)} \;\; \text{ for } |\alpha|=N,
\end{align*}
where the last term includes the singular macroscopic quantity $\frac{1}{\varepsilon^2}\left\|\partial^\alpha \mathbf{P} f^{\varepsilon}\right\|^2$ and is out of control.
We overcome this difficulty by multiplying an extra $\varepsilon$ in the weighted estimate, namely,
\begin{align*}
& \frac{1}{\varepsilon^2}\left( L \partial^\alpha f^{\varepsilon}, \varepsilon w_l^2(\alpha,0) \partial^\alpha f^{\varepsilon}\right)\\
=\;&\frac{1}{\varepsilon}\left( L \partial^\alpha \mathbf{P}^{\perp}f^{\varepsilon}, w_l^2(\alpha,0) \partial^\alpha \mathbf{P}^{\perp}f^{\varepsilon}\right)
+\frac{1}{\varepsilon}\left( L \partial^\alpha \mathbf{P}^{\perp}f^{\varepsilon}, w_l^2(\alpha,0) \partial^\alpha \mathbf{P}f^{\varepsilon}\right)\\
\gtrsim\;& \frac{\lambda}{\varepsilon} \left\| w_l(\alpha,0) \partial^\alpha \mathbf{P}^{\perp}f^\varepsilon\right\|^2_{D}
-\frac{1}{\varepsilon}\left\|\partial^\alpha \mathbf{P}^{\perp}f^\varepsilon\right\|^2_{L^2(B_C)}
-\frac{1}{\varepsilon^2} \left\| \partial^\alpha \mathbf{P}^{\perp}f^\varepsilon\right\|^2_{D}
-\left\|\partial^\alpha \mathbf{P} f^{\varepsilon}\right\|^2,
\end{align*}
so that the last three terms on the right-hand side can be controlled by the dissipation.

Meanwhile, due to the loss of $\left\| \partial^\alpha E^\varepsilon\right\|^2 (|\alpha|=N)$ in the dissipation, observing that
$$
\frac{1}{\varepsilon}\left( \partial^\alpha E^\varepsilon \cdot v\mu^{1/2}q_1, \varepsilon w^2_l(\alpha, 0) \partial^\alpha f^\varepsilon \right)
\lesssim \left\| \partial^\alpha E^\varepsilon \right\|^2 + \mathcal{D}_N(t),
$$
we use a time factor $(1+t)^{-\frac{1+\varepsilon_0}{2}}$ to calculate it. In fact, for the convenience of writing, we use the notation $\widetilde{w}_{l}(\alpha,\beta)$ defined in \eqref{weight function1} and then have
$$
\frac{1}{\varepsilon}\left( \partial^\alpha E^\varepsilon \cdot v\mu^{1/2}q_1, \varepsilon \widetilde{w}^2_l(\alpha, 0) \partial^\alpha f^\varepsilon \right)
\lesssim \frac{1}{(1+t)^{1+\varepsilon_0}}\left\| \partial^\alpha E^\varepsilon \right\|^2 + \mathcal{D}_N(t) \;\; \text{ for }  |\alpha|=N.
$$
Thus, the first term can be absorbed by the energy functional after certain calculations, cf. the proof of Proposition \ref{energy estimate 1} for more details.

 Finally, in order to close the a priori estimates, we need sufficient time decay of $E^\varepsilon$ and $B^\varepsilon$, as the cost of the dissipative term brought by the weight function.
 Due to the singularity $\frac{1}{\varepsilon}$ in front of the nonlinear terms, the method of semigroup estimate for linear problem combined with the Duhamel principle would result in $O(\frac{1}{\varepsilon})$ singularity in the final time decay rate of the nonlinear problem. To overcome this difficulty, inspired by \cite{GW2012CPDE}, we employ the interpolation inequality and energy estimate in negative Sobolev space $\|\Lambda^{-\varrho} \big(f^\varepsilon, E^\varepsilon, B^\varepsilon\big) \|$ to obtain the time decay estimate
\begin{align*}
\mathcal{E}^k_{N_0}(t) \lesssim   (1+t)^{-(k+\varrho)} \sup_{0 \leq \tau \leq t} \overline{\mathcal{E}}_{N,l}(\tau) \;\; \text{ for } k=0,1.
\end{align*}
  By combining the above strategies, we eventually close the global a priori estimates successfully.
\medskip

The rest of this paper is organized as follows. In Section \ref{Nonlinear Estimates}, we list basic lemmas concerning the properties of $L$ and $\Gamma$ in the framework of \cite{AMUXY2012JFA, DLYZ2013} and present the $\widetilde{w}_l(\alpha,\beta)$-weighted estimates for all the nonlinear terms. In Section \ref{The a Priori Estimate}, we establish a series of a priori estimates by the weighted energy method. In Section \ref{Global Existence}, we first obtain the time decay rate and close the a priori estimates, and then we give the proof of Theorem \ref{mainth1}. In section \ref{Limit section}, based on the uniform energy estimate with respect to $\e \in (0,1]$ globally in time, we justify
the limit of the VMB system \eqref{rVMB} to the two-fluid incompressible NSFM system with Ohm's law \eqref{INSFM limit}, that is, give the proof of Theorem \ref{mainth2}.
\medskip

\section{Nonlinear Estimates}\label{Nonlinear Estimates}
\hspace*{\fill}

In this section, we list some basic lemmas concerning the properties of the linearized Boltzmann operator $L$ and the nonlinear collision operator $\Gamma$ in the functional framework of \cite{AMUXY2012JFA, DLYZ2013}, and also present the $\widetilde{w}_l(\alpha,\beta)$-weighted estimates for the nonlinear terms  in \eqref{rVMB}.

\subsection{Preliminary Lemmas}
\hspace*{\fill}

In this subsection, we list some basic results to be used in the weighted energy estimates of the nonlinear terms in \eqref{rVMB}. Similar to \eqref{weight function}, we introduce the following time-velocity weight function
\begin{align*}
  w_\ell=w_\ell(t,v):=\langle v \rangle^\ell e^{\frac{q\langle v \rangle}{(1+t)^\vartheta}}.
\end{align*}

The first two lemmas concern the estimates on the linearized operator $L$ and nonlinear collision operator $\Gamma$.
\begin{lemma}
Let $\gamma > \max\left\{-3,-\frac{3}{2}-2s\right\}$, $0 < s < 1$ and $\ell \geq 0$.
\begin{itemize}
\setlength{\leftskip}{-5mm}
\item[(1)]  There holds
\begin{align}\label{L coercive1}
\langle Lf, f \rangle\gtrsim \left| \mathbf{P}^{\perp}f\right|^2_D.
\end{align}
\item[(2)] There holds
\begin{equation}\label{L coercive2}
\langle w^{2}_{\ell}Lf,f \rangle \gtrsim \left| w_{\ell}f\right|^2_D-C\left| f \right|^2_{L^2(B_C)}.
\end{equation}
\item[(3)]  For $|\beta| \geq 1$, there holds
\begin{equation}\label{L coercive3}
\langle w^{2}_{\ell}\partial_{\beta}Lf, \partial_{\beta}f \rangle \gtrsim \left| w_{\ell}\partial_{\beta}f \right|^2_D
-C\sum_{|\beta^{\prime}| < |\beta|} \left| w_{\ell}\partial_{\beta^{\prime}}f\right|^2_D-C \left| f \right|^2_{L^2(B_C)}.
\end{equation}
\end{itemize}
\end{lemma}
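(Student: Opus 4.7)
The three estimates constitute the standard weighted coercivity for the non-cutoff linearized Boltzmann operator in the two-species setting. The overall plan is to reduce the two-species operator to the single-species non-cutoff theory of \cite{GS2011, AMUXY2012JFA} by a symmetric/antisymmetric decomposition, and then to transfer the weight $w_\ell$ onto $f$ by a commutator-plus-compact splitting.

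For part (1), I would diagonalize. Setting $f_s := (f_+ + f_-)/\sqrt{2}$ and $f_d := (f_+ - f_-)/\sqrt{2}$, a direct calculation from the definition of $L_\pm$ shows that $\langle L f, f \rangle$ splits into a sum of two single-species-type quadratic forms, each non-negative with a kernel that recombines, under the inverse change of basis, to exactly the space $\mathcal{N}(L)$ displayed in the introduction. Invoking the single-species non-cutoff coercivity from \cite{GS2011, AMUXY2012JFA} on each summand yields the claimed lower bound $| \mathbf{P}^\perp f |_D^2$.

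For part (2), I write $\langle w_\ell^2 L f, f \rangle = \langle L(w_\ell f),\, w_\ell f \rangle + R_\ell(f)$. The principal term is handled by a weighted analogue of (1), while the remainder $R_\ell(f)$ collects all commutators generated when the weight $w_\ell$ is pushed through the bilinear collision integrals defining $L$; the goal is a compact bound $| R_\ell(f) | \leq \eta | w_\ell f |_D^2 + C_\eta | f |_{L^2(B_C)}^2$. This is the most delicate step and constitutes the main obstacle: because the non-cutoff kernel is genuinely singular at $\theta = 0$, the commutator $[w_\ell, L]$ is not \emph{a priori} of lower order, and one must isolate a truly compact piece (producing the $| f |_{L^2(B_C)}^2$ term, for a sufficiently large ball $B_C$) from a ``fractional-order'' piece that has to be absorbed into $\eta | w_\ell f |_D^2$. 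I would carry this out by switching to the Carleman representation of $Q$, symmetrizing in $(v, v_*, \sigma)$, and applying the cancellation lemma of \cite{AMUXY2012JFA}, exploiting that $w_\ell$ has bounded logarithmic velocity derivatives relative to the singularity order $s$.

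For part (3), I induct on $|\beta|$. Applying the Leibniz rule to the bilinear collision terms inside $L_\pm$, the derivatives $\partial_\beta$ either fall entirely on $f$, giving the main term $L(\partial_\beta f)$, or partially on the Gaussian factors $\mu$ and $\mu^{1/2}$, giving a finite family of $L$-type operators acting on $\partial_{\beta'} f$ with $|\beta'| < |\beta|$ but with kernels that decay even faster in $v$. Applying (2) to the main contribution produces the positive term $| w_\ell \partial_\beta f |_D^2$ up to the harmless remainder $| f |_{L^2(B_C)}^2$. Each lower-order term is controlled by a weighted Cauchy--Schwarz inequality, yielding a bound $\eta | w_\ell \partial_\beta f |_D^2 + C_\eta | w_\ell \partial_{\beta'} f |_D^2$ for the same kind of Carleman argument as in (2). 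Summing over $\beta' < \beta$, choosing $\eta$ small, and absorbing into the left-hand side closes the induction and yields (3).
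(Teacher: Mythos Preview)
Your outline is sound and aligns with the literature the paper invokes: the paper's own proof is purely a citation (part~(1) to \cite{AMUXY2012JFA}, parts~(2)--(3) to \cite{DLYZ2013} for soft potentials, with the remark that the hard-potential case follows by the same strategy), so there is nothing to compare at the level of argument. Your symmetric/antisymmetric diagonalization for~(1), commutator-plus-compact splitting for~(2), and Leibniz induction for~(3) are precisely the mechanisms underlying those references, so your proposal is essentially a faithful unpacking of what the paper cites.

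One small refinement: in \cite{DLYZ2013} the weighted estimate~(2) is not obtained by literally writing $\langle w_\ell^2 Lf,f\rangle=\langle L(w_\ell f),w_\ell f\rangle+R_\ell$ and then applying~(1) to the first term; rather, the weight is carried through the trilinear decomposition of $\Gamma$ from the outset, and the coercive piece is extracted directly from the anisotropic norm identity (this avoids commuting the weight through the singular integral in one stroke). Both routes are correct, but the direct route is what is actually in the cited reference, and it sidesteps the delicate absorption step you flag as the ``main obstacle.'' Also note that $w_\ell$ here carries the exponential factor $e^{q\langle v\rangle/(1+t)^\vartheta}$; your remark that it has bounded logarithmic velocity derivatives is exactly the property needed, so no modification is required.
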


\begin{proof}
The coercive estimate \eqref{L coercive1} has been shown in \cite{AMUXY2012JFA}. The relevant coercive estimates \eqref{L coercive2} and \eqref{L coercive3} with the case of $\max\left\{-3,-\frac{3}{2}-2s\right\} < \gamma <-2s$ can be found in \cite{DLYZ2013}. For the case $\gamma+2s \geq 0$, we can employ the proof strategy outlined in \cite{DLYZ2013} to prove its validity. For simplicity, we omit the detailed proof.
\end{proof}

\begin{lemma}\label{Gamma1}
Let $0 < s < 1$ and $\ell \geq 0$.
\begin{itemize}
\setlength{\leftskip}{-6mm}
\item[(1)]
For $\gamma > \max\left\{-3, -\frac{3}{2}-2s\right\}$, there holds
\begin{align}
\begin{split}\label{gamma1}
\left|\left\langle \Gamma_{\pm}(f, g),  h_{\pm}\right\rangle\right|
\lesssim\;& \left\{\left| f \right|_{L_{\gamma/2+s}^2}\left| g\right|_D+\left| g\right|_{L_{\gamma/2+s}^2}\left| f\right|_D \right\}\left| h\right|_{D} \\
& + \min \left\{\left| f\right|_{L^2}\left| g\right|_{L_{\gamma/2+s}^2},\left| g\right|_{L^2}\left| f\right|_{L_{\gamma/2+s}^2}\right\}\left| h\right|_{D}.
\end{split}
\end{align}
Moreover, one has
\begin{align}
\begin{split}\label{gamma2}
& \left|\left\langle \partial_\beta^\alpha \Gamma_{\pm}(f, g), w^{2}_{\ell} \partial_\beta^\alpha h_{\pm}\right\rangle\right| \\
\lesssim\; & \sum\left\{\left| w_\ell \partial_{\beta_1}^{\alpha_1} f\right|_{L_{\gamma/2+s}^2}\left|\partial_{\beta_2}^{\alpha_2} g\right|_D+\left|\partial_{\beta_2}^{\alpha_2} g\right|_{L_{\gamma/2+s}^2}\left|w_\ell \partial_{\beta_1}^{\alpha_1} f\right|_D \right\}\left|w_\ell \partial_\beta^\alpha h\right|_D \\
& +\sum \min \left\{\left|w_\ell \partial_{\beta_1}^{\alpha_1} f\right|_{L^2}\left|\partial_{\beta_2}^{\alpha_2} g\right|_{L_{\gamma/2+s}^2},\left|\partial_{\beta_2}^{\alpha_2} g\right|_{L^2}\left|w_\ell \partial_{\beta_1}^{\alpha_1} f\right|_{L_{\gamma/2+s}^2}\right\}\left|w_\ell \partial_\beta^\alpha h\right|_{D} \\
& +\sum\left|e^{\frac{q\langle v\rangle}{(1+t)^{\vartheta}}} \partial_{\beta_2}^{\alpha_2} g\right|_{L^2}\left|w_\ell \partial_{\beta_1}^{\alpha_1} f\right|_{L_{\gamma/2+s}^2}\left|w_\ell \partial_\beta^\alpha h\right|_{D},
\end{split}
\end{align}
where the summation $\sum$ is taken over $\alpha_1+\alpha_2=\alpha$ and $\beta_1+\beta_2 \leq \beta$.
\item[(2)]
For $\gamma+2s \geq 0$, there holds
\begin{equation}\label{hard gamma L2}
\left| \langle v \rangle ^{-\ell}  \Gamma (f, g) \right|_{L^2} \lesssim \left| \langle v \rangle ^{-\ell} f\right|_{L^2_{\gamma/2+s}}
\left| g\right|_{H^i_{\gamma+2s}}.
\end{equation}
For $-3 < \gamma < -2s$, there holds
\begin{equation}\label{soft gamma L2}
\left| \langle v \rangle ^{\ell}  \Gamma (f, g) \right|_{L^2} \lesssim \min\left\{ \left| \langle v \rangle ^{\ell}f \right|_{H^2_{\gamma/2+s}}
\left| \langle v \rangle ^{\ell} g \right|_{H^i_{\gamma/2+s}}, \left| \langle v \rangle ^{\ell}f \right|_{L^2_{\gamma/2+s}}
\left| \langle v \rangle ^{\ell} g \right|_{H^{i+2}_{\gamma/2+s}} \right\}.
\end{equation}
Here, $i=1$ if $s \in (0, 1/2)$, and $i=2$ if $s \in [1/2, 1)$.
\item[(3)] Let $\zeta(v)$ be a smooth function that decays in $v$ exponentially. For any $m \geq 0$, there holds
\begin{equation}\label{Gamma zeta}
\left| \langle   \Gamma (f, g), \zeta \rangle  \right|
\lesssim |f|_{L^2_{-m}}|g|_{L^2_{-m}}.
\end{equation}
\end{itemize}
\end{lemma}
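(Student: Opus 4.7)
\textbf{Plan of proof for Lemma \ref{Gamma1}.} The basic strategy is to reduce each item to known non-cutoff estimates from \cite{AMUXY2012JFA, GS2011, DLYZ2013} and then transplant the time-velocity weight $w_\ell=\langle v\rangle^\ell e^{q\langle v\rangle/(1+t)^\vartheta}$ through carefully designed commutator manipulations. Since $e^{q\langle v\rangle/(1+t)^\vartheta}\leq e^q\leq C$ uniformly in $(t,v)$, the exponential factor behaves like a benign multiplier for pointwise pre-post velocity inequalities, but it must be split off from $\langle v\rangle^\ell$ whenever we commute weights through the Boltzmann kernel, which is the source of the third term on the right-hand side of \eqref{gamma2}.

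For part (1), the unweighted bound \eqref{gamma1} is essentially \cite[Thm.~1.2]{AMUXY2012JFA} written in the present two-species notation; one only needs to verify that the proof there carries over to the full range $\gamma>\max\{-3,-3/2-2s\}$, which follows because the singular change of variables argument is dimension-counting and only requires $\gamma>-3$. For the weighted estimate \eqref{gamma2}, I would apply Leibniz's rule to get
\[
\partial_\beta^\alpha \Gamma_\pm(f,g)=\sum_{\alpha_1+\alpha_2=\alpha,\ \beta_0+\beta_1+\beta_2=\beta} C^{\alpha,\beta}_{\ast}\,\Gamma_\pm^{\beta_0}\!\!\left(\partial^{\alpha_1}_{\beta_1}f,\partial^{\alpha_2}_{\beta_2}g\right),
\]
where $\Gamma^{\beta_0}$ denotes $\Gamma$ with the Maxwellian differentiated by $\partial^{\beta_0}$. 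After multiplying by $w_\ell^2\partial^{\alpha}_\beta h_\pm$, the key step is to move $w_\ell(v)$ into the bilinear form. Using the pre-post inequality $\langle v'\rangle\leq\langle v\rangle+\langle v_*\rangle$ and a standard splitting $w_\ell(v)=w_\ell(v')\cdot\tau(v,v_*,\sigma)+\text{error}$, one obtains two contributions: the diagonal term, to which \eqref{gamma1} can be applied with weighted arguments, and a commutator term absorbing the $\langle v_*\rangle$-growth into $f$. The $\min$-type bound comes from choosing which of $f,g$ absorbs the loss. The extra third sum involving the pure exponential factor arises precisely when the commutator forces us to keep only $e^{q\langle v\rangle/(1+t)^\vartheta}$ on $\partial^{\alpha_2}_{\beta_2}g$ while shifting the full $w_\ell$ onto $\partial^{\alpha_1}_{\beta_1}f$, which is possible because exponentials tensorize: $e^{q\langle v'\rangle/(1+t)^\vartheta}\leq e^{q\langle v\rangle/(1+t)^\vartheta}e^{q\langle v_*\rangle/(1+t)^\vartheta}$.

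For part (2), I would use the explicit pointwise bound for $\Gamma(f,g)$ as a bilinear operator on Sobolev-weighted spaces. In the hard potential case $\gamma+2s\geq 0$, \eqref{hard gamma L2} follows from writing $\Gamma(f,g)=\mu^{-1/2}Q(\mu^{1/2}f,\mu^{1/2}g)$, expanding $\mu^{1/2}$ through the collision, and applying the $L^2$-boundedness results of \cite{GS2011, AMUXY2011AA} with a distributional Sobolev factor of order $i\in\{1,2\}$ depending on whether $s<1/2$ or $s\geq 1/2$ (the borderline $s=1/2$ needs an extra $H^2$ factor because the kernel loses one derivative in the angular variable). In the soft potential case $-3<\gamma<-2s$, \eqref{soft gamma L2} follows by the same mechanism but with an additional two spatial (velocity) derivatives to compensate for the pointwise $|v-v_*|^\gamma$ singularity via a Hardy-type inequality; the $\min$ form comes from symmetrizing the bilinear estimate.

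For part (3), \eqref{Gamma zeta} exploits the fact that $\zeta$ is smooth and exponentially decaying, so any weighted norm of $\zeta$ is finite. I would use the dual formulation $\langle\Gamma(f,g),\zeta\rangle=\iint K(v,v_*)f(v)g(v_*)\,dv\,dv_*$ where the kernel $K$ is obtained by pulling $\zeta(v)$ and $\zeta(v')$ through the collision integral. Because $\zeta$ decays faster than any polynomial, $K(v,v_*)$ satisfies $|K(v,v_*)|\lesssim \langle v\rangle^{-m}\langle v_*\rangle^{-m}$ for any $m\geq 0$, after which Cauchy--Schwarz gives the claimed product of $|\cdot|_{L^2_{-m}}$ norms. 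The main obstacle in the whole lemma is the careful bookkeeping in \eqref{gamma2}: verifying that the four-index sum produced by Leibniz's rule really collapses into the three-sum structure stated, and in particular that the commutator between $w_\ell$ and the pre-post map only generates terms that can be absorbed either into the main bilinear term or into the pure-exponential remainder.
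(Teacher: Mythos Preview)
Your plan is broadly sound, but you should be aware that the paper does not actually prove this lemma: its ``proof'' consists entirely of citations. Specifically, \eqref{gamma1} is attributed to \cite{AMUXY2012JFA}; \eqref{gamma2} is attributed to \cite{H2016} for $\gamma+2s\geq 0$ and to \cite{FLLZ2018, DLYZ2013} for the soft potential range; \eqref{hard gamma L2} and \eqref{soft gamma L2} are attributed to \cite{Strain2012}; and \eqref{Gamma zeta} is attributed to \cite{GS2011}. No argument is given beyond pointers to these references.

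Your sketch therefore goes further than the paper does, and the mechanisms you outline---Leibniz expansion of $\partial^\alpha_\beta\Gamma_\pm$, commuting the weight $w_\ell$ through the collision via the pre-post inequality $\langle v'\rangle\leq \langle v\rangle+\langle v_*\rangle$ and the tensorization $e^{q\langle v'\rangle/(1+t)^\vartheta}\leq e^{q\langle v\rangle/(1+t)^\vartheta}e^{q\langle v_*\rangle/(1+t)^\vartheta}$, identifying the commutator remainder as the third sum in \eqref{gamma2}---are indeed the core ideas underlying the proofs in \cite{DLYZ2013, FLLZ2018, H2016}. Two small corrections: for part (2) the relevant source is \cite{Strain2012} (optimal decay for non-cutoff Boltzmann), not \cite{GS2011, AMUXY2011AA}; and your description of why $i$ jumps from $1$ to $2$ at $s=1/2$ is heuristic---the actual reason in \cite{Strain2012} is an anisotropic Littlewood--Paley counting argument rather than a single derivative loss in the angular variable. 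For part (3), your dual-kernel argument with Cauchy--Schwarz is a valid route and matches the spirit of the estimate in \cite{GS2011}, though the precise argument there relies on the cancellation lemma and Carleman representation to control the gain and loss terms separately against the exponentially decaying test function.
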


\begin{proof}
The estimate \eqref{gamma1} is referenced in \cite{AMUXY2012JFA}. The estimate \eqref{gamma2} with the hard potential case $\gamma+2s \geq 0$ has been illustrated in \cite{H2016}. For the case of weak angular singularity $0 < s < \frac{1}{2}$, the details of the estimate \eqref{gamma2} with soft potentials $\max\left\{-3,-\frac{3}{2}-2s\right\} < \gamma <-2s$ are provided in \cite{FLLZ2018}. For the case of strong angular singularity $\frac{1}{2} < s < 1$, we can follow  \cite{FLLZ2018} and \cite{DLYZ2013} with minor modifications to prove its validity. Actually, the estimate \eqref{gamma2} for the case of $\max\left\{-3,-\frac{3}{2}-2s\right\} < \gamma <-2s$ and $0 < s <1$ also can be established using methodology outlined in \cite{H2016}. The details of proof are omitted for brevity. The estimates \eqref{hard gamma L2} and \eqref{soft gamma L2} have been proven in \cite{Strain2012}. The estimate \eqref{Gamma zeta} can be found in \cite{GS2011}.
\end{proof}

In what follows, we collect some basic inequalities to be used throughout this paper. Firstly, the following Sobolev interpolation inequalities have been shown in \cite{GW2012CPDE}.

\begin{lemma}\label{sobolev interpolation}
 Let $2 \leq p<\infty$ and $k, \ell, m \in \mathbb{R}$. Then for any $f \in C_0^\infty(\mathbb{R}^3)$, we have
\begin{align}\label{sobolev interpolation1}
\left\|\nabla^k f\right\|_{L^p} \lesssim\left\|\nabla^{\ell} f\right\|^\theta\left\|\nabla^m f\right\|^{1-\theta},
\end{align}
where $0 \leq \theta \leq 1$ and $\ell$ satisfies
\begin{align}\nonumber
\frac{1}{p}-\frac{k}{3}=\left(\frac{1}{2}-\frac{\ell}{3}\right) \theta+\left(\frac{1}{2}-\frac{m}{3}\right)(1-\theta).
\end{align}
For the case $p=+\infty$, we have
\begin{align}\label{sobolev interpolation2}
\left\|\nabla^k f\right\|_{L^{\infty}} \lesssim\left\|\nabla^{\ell} f\right\|^\theta\left\|\nabla^m f\right\|^{1-\theta},
\end{align}
where $\ell \leq k+1$, $m \geq k+2$, $0 \leq \theta \leq 1$ and $\ell$ satisfies
\begin{align}\nonumber
-\frac{k}{3}=\left(\frac{1}{2}-\frac{\ell}{3}\right) \theta+\left(\frac{1}{2}-\frac{m}{3}\right)(1-\theta).
\end{align}
\end{lemma}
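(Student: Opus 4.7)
The final statement is the Gagliardo–Nirenberg type interpolation lemma attributed to \cite{GW2012CPDE}. My plan is to prove both inequalities by reducing them to the base case $p=2$, which is a straightforward consequence of Plancherel and Hölder on the Fourier side, and then using a Sobolev embedding to upgrade to $p>2$ and $p=\infty$. The scaling condition on $\theta$ in each statement is exactly what makes the homogeneity add up, so the real work is in locating the correct intermediate $L^2$-based Sobolev norm.

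For the case $p=2$ (i.e.\ when one sets $k = \ell\theta + m(1-\theta)$ and the constraint reduces to pure scaling), I would write
\beqs
\|\nabla^k f\|^2 = \int |y|^{2k} |\widehat f(y)|^2\,\d y = \int \bigl(|y|^{2\ell}|\widehat f|^2\bigr)^{\theta} \bigl(|y|^{2m}|\widehat f|^2\bigr)^{1-\theta}\,\d y,
\eeqs
and apply Hölder with conjugate exponents $1/\theta$ and $1/(1-\theta)$ to obtain
\beqs
\|\nabla^k f\|^2 \leq \bigl(\|\nabla^\ell f\|^2\bigr)^{\theta}\bigl(\|\nabla^m f\|^2\bigr)^{1-\theta},
\eeqs
which is \eqref{sobolev interpolation1} for $p=2$. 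The identity $k=\ell\theta+m(1-\theta)$ is exactly the Fourier-side homogeneity corresponding to $\frac{1}{2}-\frac{k}{3}=(\frac{1}{2}-\frac{\ell}{3})\theta+(\frac{1}{2}-\frac{m}{3})(1-\theta)$.

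To pass to $2<p<\infty$, I would combine this with the homogeneous Sobolev embedding $\dot H^s(\mathbb{R}^3)\hookrightarrow L^p(\mathbb{R}^3)$ for $s=3\bigl(\tfrac{1}{2}-\tfrac{1}{p}\bigr)\ge 0$, giving $\|\nabla^k f\|_{L^p}\lesssim \|\nabla^{k+s} f\|$. Applying the $p=2$ interpolation to the right-hand side reduces the problem to verifying the algebraic identity $k+s = \ell\theta + m(1-\theta)$, and substituting $s=3(\tfrac{1}{2}-\tfrac{1}{p})$ yields precisely the scaling constraint stated in \eqref{sobolev interpolation1}. So no extra argument is needed beyond collecting exponents.

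The case $p=\infty$ in \eqref{sobolev interpolation2} is the point where one must be slightly more careful, because the scaling $-k/3=(\tfrac{1}{2}-\tfrac{\ell}{3})\theta+(\tfrac{1}{2}-\tfrac{m}{3})(1-\theta)$ forces the intermediate regularity index $\ell\theta+m(1-\theta)=k+\tfrac{3}{2}$ to sit exactly at the critical Sobolev threshold in $\mathbb{R}^3$, and the endpoint $\dot H^{3/2}\hookrightarrow L^\infty$ fails. The hypothesis $\ell\le k+1$ and $m\ge k+2$ is designed to straddle this threshold, and the standard workaround, which I would follow, is to split $\widehat{\nabla^k f}$ via low/high frequency cut-offs at radius $R$: bound the low-frequency part in $L^1_y$ using Cauchy–Schwarz against $\||y|^\ell \widehat f\|$ (needing $\ell\le k+1$ to keep the weight integrable near the origin for the requisite power), and the high-frequency part analogously against $\||y|^m \widehat f\|$ (needing $m\ge k+2$ for integrability at infinity). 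Optimizing the cut-off $R$ produces the stated geometric mean with the prescribed $\theta$. This frequency-splitting step is the only genuinely nontrivial point; the rest is bookkeeping of exponents.
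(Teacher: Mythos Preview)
The paper does not actually prove this lemma; it merely states that these Sobolev interpolation inequalities ``have been shown in \cite{GW2012CPDE}'' and moves on. Your proposal, by contrast, supplies a complete and correct argument: the Plancherel--H\"older step for $p=2$, the homogeneous Sobolev embedding $\dot H^{3(1/2-1/p)}\hookrightarrow L^p$ to handle $2<p<\infty$, and the low/high frequency splitting with optimization in the cut-off radius for $p=\infty$ (where the hypotheses $\ell\le k+1$, $m\ge k+2$ ensure the requisite integrability on each piece). This is precisely the standard proof and coincides with what is done in the cited reference, so your proposal is both correct and strictly more informative than the paper's treatment.
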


After considering the above lemma, we can frequently use the following lemma in later analysis.
\begin{lemma}\label{sobolev embedding 1}
For any $f \in C_0^\infty(\mathbb{R}^3)$, we have
\begin{align}\label{Sobolev-ineq}
\left\|f\right\|_{L^3}\lesssim \left\|f\right\|_{H^1},
\quad \left\|f\right\|_{L^6}\lesssim \left\| \nabla f \right\|_{L^2},
\quad \left\|f\right\|_{L^\infty}\lesssim \left\| \nabla f\right\|_{H^1}.
\end{align}
\end{lemma}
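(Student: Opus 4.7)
The statement is three classical Sobolev embeddings on $\mathbb{R}^3$, and my plan is to derive them directly from the interpolation inequalities stated in Lemma \ref{sobolev interpolation}, which are applied just above. No new analytic input is needed; the entire task is to pick the correct indices in \eqref{sobolev interpolation1} and \eqref{sobolev interpolation2}.

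\textbf{Proof of $\|f\|_{L^6} \lesssim \|\nabla f\|_{L^2}$.} This is the Gagliardo--Nirenberg--Sobolev embedding in dimension three. I plan to apply \eqref{sobolev interpolation1} with $p=6$, $k=0$, $\ell=1$, $\theta=1$ and any admissible $m$. The scaling relation $\tfrac{1}{6}-\tfrac{0}{3} = (\tfrac{1}{2}-\tfrac{1}{3})\cdot 1 + (\tfrac{1}{2}-\tfrac{m}{3})\cdot 0$ is automatically satisfied, so the lemma gives $\|f\|_{L^6}\lesssim\|\nabla f\|_{L^2}^{1}\|\nabla^m f\|_{L^2}^{0}=\|\nabla f\|_{L^2}$.

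\textbf{Proof of $\|f\|_{L^3}\lesssim\|f\|_{H^1}$.} Here I would use \eqref{sobolev interpolation1} with $p=3$, $k=0$, $\ell=0$, $m=1$. The admissibility condition $\tfrac{1}{3}=\tfrac{1}{2}\theta+(\tfrac{1}{2}-\tfrac{1}{3})(1-\theta)$ forces $\theta=\tfrac{1}{2}$, yielding
\[
\|f\|_{L^3}\lesssim \|f\|_{L^2}^{1/2}\|\nabla f\|_{L^2}^{1/2}\leq \tfrac{1}{2}\bigl(\|f\|_{L^2}+\|\nabla f\|_{L^2}\bigr)\lesssim \|f\|_{H^1}.
\]

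\textbf{Proof of $\|f\|_{L^\infty}\lesssim\|\nabla f\|_{H^1}$.} I would apply \eqref{sobolev interpolation2} with $k=0$, $\ell=1$, $m=2$ (these satisfy the constraints $\ell\leq k+1=1$ and $m\geq k+2=2$). Solving $0=(\tfrac{1}{2}-\tfrac{1}{3})\theta+(\tfrac{1}{2}-\tfrac{2}{3})(1-\theta)=\tfrac{1}{6}(2\theta-1)$ gives $\theta=\tfrac{1}{2}$, hence
\[
\|f\|_{L^\infty}\lesssim \|\nabla f\|_{L^2}^{1/2}\|\nabla^2 f\|_{L^2}^{1/2}\lesssim \|\nabla f\|_{H^1}.
\]

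Since each of the three bounds is merely a one-line specialisation of the previous lemma, there is no serious obstacle; the only point requiring mild care is verifying that the scaling identities and the constraints $\ell\le k+1$, $m\ge k+2$ are met for the $L^\infty$ case, which is handled by the choice $(\ell,m)=(1,2)$ above.
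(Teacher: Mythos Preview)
Your proposal is correct and matches the paper's approach: the paper gives no explicit proof for this lemma but introduces it with the phrase ``After considering the above lemma,'' indicating it is an immediate corollary of Lemma~\ref{sobolev interpolation}. Your index choices and verification of the scaling constraints are exactly the kind of one-line specialisations the paper has in mind.
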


If $\varrho \in (0,3)$, then $\Lambda^{-\varrho}$ is the Riesz potential operator. The
Hardy--Littlewood--Sobolev theorem implies the following inequality for the Riesz potential operator $\Lambda^{-\varrho}$.

\begin{lemma}\label{negative embedding theorem}
Let $0<\varrho<3$, $1< p < q < \infty$, $1/q + \varrho/3=1/p$, there holds
\begin{align}\label{negative embed 1}
\left\|\Lambda^{-\varrho} f\right\|_{L^q} \lesssim\|f\|_{L^p}.
\end{align}
\end{lemma}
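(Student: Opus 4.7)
The plan is to reduce the bound to the classical Hardy--Littlewood--Sobolev inequality. First I would identify the Fourier multiplier $\Lambda^{-\varrho}$ with a Riesz potential. Since $\widehat{\Lambda^{-\varrho}f}(\xi)=|\xi|^{-\varrho}\widehat{f}(\xi)$ and the tempered distribution $|\xi|^{-\varrho}$ is the Fourier transform (up to an explicit dimensional constant $c_{\varrho,3}>0$) of $|x|^{\varrho-3}$ for $0<\varrho<3$, a convolution-theorem computation gives the representation
\begin{equation*}
\Lambda^{-\varrho} f(x) \;=\; c_{\varrho,3}\int_{\mathbb{R}^3}\frac{f(y)}{|x-y|^{3-\varrho}}\,\dd y
\end{equation*}
in the sense of tempered distributions, valid in particular for $f\in\mathscr{S}(\mathbb{R}^3)$. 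Thus the estimate to prove becomes the classical mapping property of the Riesz potential $I_\varrho\colon L^p\to L^q$ under the scaling relation $1/q+\varrho/3=1/p$.

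Next I would establish this mapping property via Hedberg's pointwise inequality, which I find to be the cleanest route. For any $R>0$, split the integral into $|x-y|<R$ and $|x-y|\ge R$. On the near region, bound $|f|$ by its Hardy--Littlewood maximal function: a dyadic decomposition on the balls $\{|x-y|<2^{-k}R\}$ yields
\begin{equation*}
\int_{|x-y|<R}\frac{|f(y)|}{|x-y|^{3-\varrho}}\,\dd y \;\lesssim\; R^{\varrho}\,Mf(x).
\end{equation*}
On the far region, H\"{o}lder's inequality with exponents $p,p'$ gives
\begin{equation*}
\int_{|x-y|\ge R}\frac{|f(y)|}{|x-y|^{3-\varrho}}\,\dd y \;\lesssim\; \|f\|_{L^p}\Bigl(\int_{|x-y|\ge R}|x-y|^{-(3-\varrho)p'}\,\dd y\Bigr)^{1/p'}\;\lesssim\; R^{\varrho-3/p}\|f\|_{L^p},
\end{equation*}
where the integral converges exactly because $(3-\varrho)p'>3$, i.e.\ $\varrho<3/p$, which is guaranteed by the scaling condition $1/p-1/q=\varrho/3$ combined with $q<\infty$.

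Then I would optimize in $R$ by setting $R^{3/p}=\|f\|_{L^p}/Mf(x)$, yielding the pointwise Hedberg-type inequality
\begin{equation*}
|\Lambda^{-\varrho} f(x)| \;\lesssim\; (Mf(x))^{p/q}\,\|f\|_{L^p}^{1-p/q}.
\end{equation*}
Taking the $L^q$ norm in $x$ and invoking the $L^p$-boundedness of the maximal operator $M$ (which requires $1<p<\infty$, exactly the hypothesis) gives
\begin{equation*}
\|\Lambda^{-\varrho} f\|_{L^q} \;\lesssim\; \|Mf\|_{L^p}^{p/q}\|f\|_{L^p}^{1-p/q} \;\lesssim\; \|f\|_{L^p},
\end{equation*}
which is the claimed bound. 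The main subtlety, and the step I would double-check, is the constant $c_{\varrho,3}$ and the sense in which the Fourier identification holds at the endpoint regime, since $|x|^{\varrho-3}$ is only locally integrable for $\varrho>0$ and fails to be tempered-integrable at infinity; this is handled by interpreting the equality as a principal-value tempered distributional identity and extending to $L^p$ by density once the $L^p\to L^q$ bound is established for Schwartz functions. Everything else is standard real-variable machinery.
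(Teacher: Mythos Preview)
Your proof is correct. The paper does not actually prove this lemma at all: it simply states that $\Lambda^{-\varrho}$ is the Riesz potential operator for $0<\varrho<3$ and invokes the Hardy--Littlewood--Sobolev theorem as a black box. You have gone further and supplied a self-contained proof of HLS via Hedberg's pointwise inequality, which is a clean and standard route. The identification of the Fourier multiplier with the Riesz potential, the dyadic near/far splitting, the convergence check $(3-\varrho)p'>3$ (equivalently $\varrho<3/p$, which indeed follows from $1/p-1/q=\varrho/3$ and $q<\infty$), the optimization in $R$, and the final appeal to the $L^p$-boundedness of the maximal function for $p>1$ are all handled correctly. Your caution about the tempered-distribution sense of the Fourier identity for $|x|^{\varrho-3}$ is also well placed, and your density argument is the right way to close that loop.
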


Based on Lemma \ref{sobolev interpolation}, we have the following lemma to be used frequently in later analysis.

\begin{lemma}\label{negative embedding theorem2}
Let $0<\varrho<3/2$. Then we have
\begin{align}\label{negative embed 2}
\|f\|_{L^{\frac{12}{3+2 \varrho}}} \lesssim\left\|\Lambda^{\frac{3}{4}-\frac{\varrho}{2}} f\right\|,
\quad\|f\|_{L^{\frac{3}{\varrho}}} \lesssim \left\|\Lambda^{\frac{3}{2}-\varrho} f\right\|.
\end{align}
\end{lemma}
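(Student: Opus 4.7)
The plan is to derive both inequalities directly from the Hardy--Littlewood--Sobolev inequality (Lemma \ref{negative embedding theorem}) by a judicious choice of Lebesgue exponents, with no interpolation needed. The strategy is simply to rewrite $f = \Lambda^{-\sigma}(\Lambda^{\sigma}f)$ for the appropriate $\sigma>0$, verify the scaling identity $\tfrac{1}{q}+\tfrac{\sigma}{3}=\tfrac{1}{p}$ with $p=2$, and then apply \eqref{negative embed 1}.

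For the first inequality, I would set $\sigma = \tfrac{3}{4}-\tfrac{\varrho}{2}$ and $q = \tfrac{12}{3+2\varrho}$. Since $0<\varrho<\tfrac{3}{2}$, one has $\sigma \in (0,\tfrac{3}{4})\subset(0,3)$ and $q>2=:p$. Writing $g := \Lambda^{\sigma}f$, compute
\begin{equation*}
\frac{1}{q}+\frac{\sigma}{3} = \frac{3+2\varrho}{12} + \frac{1}{4}-\frac{\varrho}{6} = \frac{1}{4}+\frac{\varrho}{6}+\frac{1}{4}-\frac{\varrho}{6} = \frac{1}{2} = \frac{1}{p},
\end{equation*}
so Lemma \ref{negative embedding theorem} gives $\|f\|_{L^q} = \|\Lambda^{-\sigma}g\|_{L^q} \lesssim \|g\|_{L^2} = \|\Lambda^{3/4-\varrho/2}f\|$, as claimed.

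For the second inequality, I would proceed identically with $\sigma = \tfrac{3}{2}-\varrho$ and $q = \tfrac{3}{\varrho}$. Again $\sigma\in(0,\tfrac{3}{2})\subset(0,3)$ and $q>2$ under the assumption $0<\varrho<\tfrac{3}{2}$; the scaling identity reduces to $\tfrac{\varrho}{3}+\tfrac{1}{2}-\tfrac{\varrho}{3}=\tfrac{1}{2}$, which is immediate, and the Hardy--Littlewood--Sobolev inequality then yields $\|f\|_{L^{3/\varrho}}\lesssim \|\Lambda^{3/2-\varrho}f\|$.

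There is no real obstacle in this argument---it is purely a matter of verifying the admissibility of the exponents. The only point requiring attention is the strict inequality $\varrho<\tfrac{3}{2}$, which is precisely what guarantees both $\sigma>0$ (so that $\Lambda^{-\sigma}$ is a genuine Riesz potential) and $q>p=2$ (required by Lemma \ref{negative embedding theorem}); at the endpoint $\varrho=\tfrac{3}{2}$ the first inequality would collapse to the $L^2$ norm on both sides and the second would formally give an $L^2\hookrightarrow L^2$ bound, so the restriction in the hypothesis is sharp for this method.
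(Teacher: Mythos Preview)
Your proof is correct. The paper does not give a detailed argument but attributes the result to Lemma~\ref{sobolev interpolation} (the Sobolev interpolation inequality with $\theta=1$), whereas you invoke Lemma~\ref{negative embedding theorem} (Hardy--Littlewood--Sobolev); these are two equivalent formulations of the same homogeneous Sobolev embedding $\dot H^{\sigma}(\mathbb{R}^3)\hookrightarrow L^{q}(\mathbb{R}^3)$, so the approaches coincide in substance.
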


In many places, we will use the following Minkowski inequality to interchange the order of a multiple integral, cf. \cite{GW2012CPDE}.
\begin{lemma}\label{minkowski theorem}
For $1 \leq p \leq q \leq \infty$, there holds
\begin{align}\label{minkowski}
\|f\|_{L_x^q L_v^p} \leq\|f\|_{L_v^p L_x^q}.
\end{align}
\end{lemma}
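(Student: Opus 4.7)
The plan is to prove the Minkowski integral inequality via the standard duality--H\"older argument. First I would dispose of the boundary cases: when $p=q$, both sides coincide by Fubini; when $q=\infty$, the inequality $\sup_{x}\|f(x,\cdot)\|_{L^p_v} \leq \|\,\|f(\cdot,v)\|_{L^\infty_x}\,\|_{L^p_v}$ follows from the pointwise domination $|f(x,v)| \leq \|f(\cdot,v)\|_{L^\infty_x}$; and when $p=1$, Tonelli's theorem again yields equality. Hence one may assume $1 < p < q < \infty$, and the content of the lemma lies entirely in this regime.

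Next, I would set $r := q/p > 1$ with H\"older conjugate $r'$, and introduce the auxiliary function $g(x) := \int_{\mathbb{R}^3}|f(x,v)|^p\,\d v$, so that $\|f\|_{L^q_x L^p_v}^{p} = \|g\|_{L^r_x}$. By the $L^r$--$L^{r'}$ duality,
\[
\|g\|_{L^r_x} = \sup_{\substack{h \geq 0 \\ \|h\|_{L^{r'}_x}\leq 1}} \int_{\mathbb{R}^3} g(x)\, h(x)\, \d x.
\]
For any admissible $h$, Tonelli lets me swap the order of integration, and H\"older's inequality in $x$ with exponents $(r, r')=(q/p,\, q/(q-p))$ bounds the inner integral:
\[
\int_{\mathbb{R}^3_x} |f(x,v)|^p\, h(x)\,\d x \;\leq\; \Bigl(\int_{\mathbb{R}^3_x} |f(x,v)|^{q}\,\d x\Bigr)^{p/q}\, \|h\|_{L^{r'}_x} \;\leq\; \|f(\cdot,v)\|_{L^q_x}^{p}.
\]
Integrating in $v$ then gives $\int g h\,\d x \leq \|f\|_{L^p_v L^q_x}^{p}$; taking the supremum over $h$ and the $p$-th root delivers the claim.

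The argument is essentially routine, and I do not anticipate any genuine obstacle: the only point requiring care is matching the H\"older exponents so that the $p$-th power appears consistently on both sides, and verifying that the duality step is legitimate (which is why the preliminary reduction to $1<p<q<\infty$ is useful, since the endpoint $r=1$ would make $r'=\infty$ and require a separate trivial treatment). The lemma is of textbook type and is recorded here only to justify interchanges of norms between $L^p_x L^q_v$ and $L^q_v L^p_x$ in the later nonlinear estimates.
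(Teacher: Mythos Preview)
Your argument is correct and standard; the paper does not actually supply a proof of this lemma but simply records it with a citation to \cite{GW2012CPDE}, so there is no ``paper's proof'' to compare against. Your duality--H\"older route is exactly the textbook one.

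One small slip: in the boundary-case discussion you assert that when $p=1$ ``Tonelli's theorem again yields equality.'' This is not right --- for $p=1$ the inequality $\bigl\|\int_v |f|\,\d v\bigr\|_{L^q_x} \leq \int_v \|f(\cdot,v)\|_{L^q_x}\,\d v$ is precisely the classical Minkowski integral inequality and is a genuine inequality in general, not an identity delivered by Fubini--Tonelli. Fortunately this slip is harmless: your main duality argument already covers $p=1$ (since then $r=q>1$ and $r'<\infty$, so the duality representation of $\|g\|_{L^r_x}$ is valid). Simply drop the separate $p=1$ clause, or replace it by the observation that $p=1$ falls under the general case $1\leq p<q<\infty$.
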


The following lemma concerns the equivalence of velocity weight function and differential operator up to commutation, whose proof can be found in \cite{AMSY2023} and \cite{HMUY2008}.
\begin{lemma}\label{equivalent norm}
Let $1 \leq p \leq \infty$ and $\ell, \theta \in \mathbb{R}$. Then there exists a generic constant $C$ independent of $f$ such that
\begin{align}\label{equivalent norm1}
\frac{1}{C}\left|\langle v\rangle^{\ell}\left\langle D_v\right\rangle^\theta f\right|_{L^p} \leq\left|\left\langle D_v\right\rangle^\theta\langle v\rangle^{\ell} f\right|_{L^p} \leq C\left|\langle v\rangle^{\ell}\left\langle D_v\right\rangle^\theta f\right|_{L^p},
\end{align}
that is, these two norms are equivalent. Here, $\left\langle D_v\right\rangle$ denotes the Fourier multiplier with symbol $\langle\xi\rangle$.
\end{lemma}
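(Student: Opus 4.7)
The task is to establish the equivalence \eqref{equivalent norm1} between the two orderings of the Bessel potential $\langle D_v\rangle^\theta$ and the polynomial weight $\langle v\rangle^\ell$. The strategy is to control the commutator $[\langle D_v\rangle^\theta,\langle v\rangle^\ell]$ via pseudo-differential calculus.

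The starting point is the algebraic identity
$$\langle D_v\rangle^\theta \langle v\rangle^\ell f \;=\; \langle v\rangle^\ell \langle D_v\rangle^\theta f + [\langle D_v\rangle^\theta,\langle v\rangle^\ell]\,f \;=\; (I+R)\,\langle v\rangle^\ell \langle D_v\rangle^\theta f,$$
where $R := [\langle D_v\rangle^\theta,\langle v\rangle^\ell]\circ \langle v\rangle^{-\ell}\langle D_v\rangle^{-\theta}$. The analogous identity with the two orderings swapped reads $\langle v\rangle^\ell \langle D_v\rangle^\theta f = (I+\widetilde R)\,\langle D_v\rangle^\theta \langle v\rangle^\ell f$ for a similarly defined operator $\widetilde R$. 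It therefore suffices to show that $R$ and $\widetilde R$ are both bounded on $L^p$ uniformly in $1\leq p\leq\infty$.

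The main work is a standard symbolic computation. Regard $\langle D_v\rangle^\theta$ as a Fourier multiplier with symbol $\langle\xi\rangle^\theta$ and multiplication by $\langle v\rangle^\ell$ as a pseudo-differential operator with symbol $\langle v\rangle^\ell$. The Kohn--Nirenberg asymptotic expansion of their composition reads
$$\sigma_{\langle D_v\rangle^\theta \langle v\rangle^\ell}(v,\xi)\;\sim\;\sum_{|\alpha|\geq 0}\frac{(-i)^{|\alpha|}}{\alpha!}\,\partial_\xi^\alpha \langle\xi\rangle^\theta\,\partial_v^\alpha \langle v\rangle^\ell,$$
whose leading ($\alpha=0$) term is exactly $\langle v\rangle^\ell\langle\xi\rangle^\theta$, the symbol of the reverse composition. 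Every higher-order term gains a factor of $\langle\xi\rangle^{-|\alpha|}\langle v\rangle^{-|\alpha|}$ because of $|\partial_\xi^\alpha \langle\xi\rangle^\theta|\lesssim\langle\xi\rangle^{\theta-|\alpha|}$ and $|\partial_v^\alpha \langle v\rangle^\ell|\lesssim\langle v\rangle^{\ell-|\alpha|}$. Consequently, the full symbol of $R$ (and of $\widetilde R$) belongs to the order-zero class $S^0_{1,0}$ with the additional gain $\langle v\rangle^{-1}\langle\xi\rangle^{-1}$. The Calder\'on--Vaillancourt theorem then delivers $L^p$-boundedness of $R,\widetilde R$ in the range $1<p<\infty$, while the extra $\langle v\rangle^{-1}\langle\xi\rangle^{-1}$ decay ensures that the Schwartz kernel of $R$ is uniformly integrable in either of its variables, so that Schur's test covers the endpoints $p=1,\infty$.

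The principal technical obstacle is controlling the symbolic remainder in the asymptotic expansion uniformly in the real parameters $\ell,\theta$, and treating the endpoint exponents $p\in\{1,\infty\}$ that lie outside the Calder\'on--Vaillancourt theorem. Both points are carried out in detail in the references \cite{AMSY2023,HMUY2008}, and the argument sketched above is a concise summary of what is proved there.
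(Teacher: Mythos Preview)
Your proposal is correct and in fact more substantive than the paper's treatment: the paper does not prove this lemma at all but simply records that its proof ``can be found in \cite{AMSY2023} and \cite{HMUY2008}.'' Your sketch of the pseudo-differential commutator argument (Kohn--Nirenberg expansion, $S^0_{1,0}$ symbol class, Calder\'on--Vaillancourt plus Schur's test at the endpoints) is precisely the method used in those references, so your write-up is a faithful expansion of what the paper merely cites.
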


Lastly, we present an interpolation formula as described in \cite{CDL2024SIAM}.
\begin{lemma}\label{interpolation1}
Let $0 < s < 1$. For any $\ell \in \mathbb{R}$, there holds
\begin{equation}\label{interpolation}
\left| f \right|_{H^1_v} \lesssim \left| \langle v \rangle ^\ell f\right|_{H^s_v} + \left| \langle v \rangle ^{-\frac{\ell s}{1-s}} f \right|_{H^{1+s}_v}.
\end{equation}
\end{lemma}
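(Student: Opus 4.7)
I would treat the inequality as a weighted Gagliardo--Nirenberg interpolation and reduce it to its multiplicative form. The key observation is the exponent balance at $\theta = 1-s$: the derivative orders on the right-hand side interpolate to $(1-\theta)s + \theta(1+s) = s + \theta = 1$, matching the $H^1$ order on the left, and the weight exponents interpolate to $(1-\theta)\ell + \theta\cdot(-\ell s/(1-s)) = s\ell - s\ell = 0$, matching the absence of a weight on the left. The pointwise identity $(\langle v\rangle^\ell)^s\cdot(\langle v\rangle^{-\ell s/(1-s)})^{1-s} = 1$ is the algebraic source of this cancellation and is what forces the particular form of the exponent $-\ell s/(1-s)$.

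Concretely, Step 1 would be to establish the multiplicative form
\[
|f|_{H^1_v}\;\lesssim\;|\langle v\rangle^\ell f|_{H^s_v}^{s}\;|\langle v\rangle^{-\ell s/(1-s)} f|_{H^{1+s}_v}^{1-s}.
\]
After using Lemma~\ref{equivalent norm} to commute the velocity weight $\langle v\rangle^{a}$ with the Bessel multiplier $\langle D_v\rangle^{\sigma}$ up to equivalent norms, this reduces to a uniform $L^2\to L^2$ bound for the analytic family
\[
T_z h := \langle v\rangle^{(1-z)\ell - z\ell s/(1-s)}\,\langle D_v\rangle^{s+z} h,
\]
whose boundary values on $\mathrm{Re}\,z = 0$ and $\mathrm{Re}\,z = 1$ correspond respectively to the two right-hand side norms, and whose value at $z = 1-s$ recovers the unweighted $\langle D_v\rangle$ on $f$. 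Stein's complex interpolation theorem then yields the multiplicative bound. Step 2 applies Young's inequality $a^s b^{1-s} \leq s\,a + (1-s)\,b$ with conjugate exponents $1/s$ and $1/(1-s)$ to convert the product into the sum stated in the lemma.

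The main technical obstacle is the uniform-in-$\mathrm{Im}\,z$ control of the commutator $[\langle D_v\rangle^{s+z},\,\langle v\rangle^{(1-z)\ell - z\ell s/(1-s)}]$ along the two boundary lines of the strip, which is required for Stein's theorem. Lemma~\ref{equivalent norm} packages exactly the commutation estimate needed between polynomial-type weights and Bessel potentials, so the analytical heart of the argument is already available; the remaining work amounts to bookkeeping of weight and derivative exponents that, by design, cancel at $\theta = 1-s$ to yield the unweighted $H^1$ norm on the left.
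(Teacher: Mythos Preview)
The paper does not prove this lemma at all; it merely cites it from \cite{CDL2024SIAM} (``Lastly, we present an interpolation formula as described in [CDL2024SIAM]''), so there is no in-paper proof to compare against.

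Your plan is sound. The exponent bookkeeping is correct: with interpolation parameter $\theta=1-s$, the derivative orders $(1-\theta)s+\theta(1+s)=1$ and the weight exponents $(1-\theta)\ell+\theta\bigl(-\tfrac{\ell s}{1-s}\bigr)=0$ both land where they should, and the multiplicative form followed by Young's inequality $a^{s}b^{1-s}\le sa+(1-s)b$ gives the additive statement. One technical caveat: Stein's complex interpolation requires control of $T_z$ for $z$ on the two vertical boundary lines, and Lemma~\ref{equivalent norm} as stated covers only real $\ell,\theta$, so you would need either a complex-parameter version (with admissible growth in $\mathrm{Im}\,z$) or to arrange the commutation at the real endpoints before interpolating. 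A more elementary route, and closer to what is done in \cite{CDL2024SIAM}, is to avoid Stein altogether: take a dyadic partition $f=\sum_j\chi_j f$ with $\chi_j$ localizing to $\langle v\rangle\sim 2^j$, apply the unweighted interpolation $|g|_{H^1}\lesssim|g|_{H^s}^{s}|g|_{H^{1+s}}^{1-s}$ (which is just H\"older on the Fourier side) to each piece, use that $\langle v\rangle\sim 2^j$ on $\mathrm{supp}\,\chi_j$ to insert and remove the weights, and sum in $j$; the weight exponents are tuned precisely so that the dyadic factors cancel. Both approaches work; the dyadic one sidesteps the commutator-along-complex-lines issue you flagged.
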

\medskip

\subsection{Estimates on the Nonlinear Terms}
\hspace*{\fill}

The goal of this subsection is to make the weighted energy estimates for all the nonlinear terms in \eqref{rVMB}.

In this subsection, we always assume \eqref{mainth1 assumption} holds.
Additionally, we need to provide an assumption about the time decay rate of $E^\varepsilon$ and $B^\varepsilon$:
\begin{align}\label{assumption1}
\sup\limits_{0 \leq t \leq T}\Big\{ (1+t)^{\varrho+1} \sum_{1 \leq |\alpha| \leq N_0}\left\| \partial^\alpha (E^\varepsilon, B^\varepsilon)(t)\right\|^2\Big\} \leq \delta^2_0
\end{align}
for a sufficiently small positive constant $\delta_0$.

Recall the definitions of $w_l(\alpha,\beta)$, $\overline{w}_{l}(\alpha,\beta)$, $\widetilde{w}_l(\alpha,\beta)$, $\widetilde{\mathcal{E}}_{N,l}(t)$ and $\widetilde{\mathcal{D}}_{N,l}(t)$ given in \eqref{weight function}, \eqref{weight function2}, \eqref{weight function1}, \eqref{energy functional} and \eqref{dissipation functional}, respectively. Besides, the decomposition
\begin{equation}\label{gamma decomposition}
\Gamma(f^\varepsilon, f^\varepsilon)=  \Gamma(\mathbf{P} f^\varepsilon, \mathbf{P} f^\varepsilon)+\Gamma(\mathbf{P} f^\varepsilon,\mathbf{P}^{\perp} f^\varepsilon)+\Gamma(\mathbf{P}^{\perp} f^\varepsilon, \mathbf{P} f^\varepsilon)
+\Gamma(\mathbf{P}^{\perp} f^\varepsilon,\mathbf{P}^{\perp} f^\varepsilon)
\end{equation}
will be frequently used in later context.

The following two lemmas are about the estimates on the nonlinear collision term $\Gamma(f^\varepsilon,f^\varepsilon)$.

\begin{lemma}\label{soft Gamma}
Let $\max\left\{-3,-\frac{3}{2}-2s\right\} < \gamma <-2s $, $0 < s < 1$ and $0 < \varepsilon \leq 1$.
\begin{itemize}
\setlength{\leftskip}{-6mm}
\item[(1)] For $| \alpha | \leq N$, there holds
\begin{align}\label{soft gamma3}
\bigg|\frac{1}{\varepsilon}\left(\partial^\alpha \Gamma( f^{\varepsilon}, f^{\varepsilon} ), \partial^\alpha f^{\varepsilon}\right)\bigg|
\lesssim \sqrt{\mathcal{E}_{N}(t)} \mathcal{D}_{N}(t).
\end{align}
\item[(2)] For $| \alpha |+ |\beta| \leq N-1$, there holds
\begin{align}\label{soft gamma4}
\bigg|\frac{1}{\varepsilon} \left( \partial^\alpha_\beta \Gamma( f^{\varepsilon}, f^{\varepsilon} ),
\widetilde{w}^2_{l}(\alpha, \beta) \partial^\alpha_\beta \mathbf{P}^{\perp} f^{\varepsilon}\right)\bigg|
\lesssim \sqrt{\widetilde{\mathcal{E}}_{N,l}(t)} \widetilde{\mathcal{D}}_{N,l}(t).
\end{align}
\item[(3)] For $|\alpha|=N$, there holds
\begin{align}\label{soft gamma5}
\Big|\left( \partial^\alpha \Gamma( f^{\varepsilon}, f^{\varepsilon} ),
\widetilde{w}^2_{l}(\alpha, 0) \partial^\alpha  f^{\varepsilon}\right)\Big|
\lesssim \sqrt{\widetilde{\mathcal{E}}_{N,l}(t)} \widetilde{\mathcal{D}}_{N,l}(t).
\end{align}
\item[(4)] For $| \alpha |+ | \beta | = N$, $| \beta | \geq 1$ and $| \alpha | \leq N-1$, there holds
\begin{align}\label{soft gamma6}
\bigg| \frac{1}{\varepsilon}\left( \partial^\alpha_\beta \Gamma( f^{\varepsilon}, f^{\varepsilon} ),
\widetilde{w}^2_{l}(\alpha, \beta) \partial^\alpha_\beta \mathbf{P}^{\perp} f^{\varepsilon}\right) \bigg|
\lesssim \sqrt{\widetilde{\mathcal{E}}_{N,l}(t)} \widetilde{\mathcal{D}}_{N,l}(t).
\end{align}
\end{itemize}
\end{lemma}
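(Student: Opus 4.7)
The plan is to reduce every estimate in (1)--(4) to the weighted bilinear inequality \eqref{gamma2} of Lemma \ref{Gamma1}, combined with the macro-micro decomposition \eqref{gamma decomposition}, H\"older's inequality in $x$, and the Sobolev embeddings of Lemma \ref{sobolev embedding 1}. First I would decompose $\Gamma(f^\varepsilon,f^\varepsilon)$ via \eqref{gamma decomposition} into four pieces. Since $\mathbf{P} f^\varepsilon$ carries the $\mu^{1/2}$ factor with Gaussian decay in $v$, any weight $\widetilde{w}_l$ attached to a $\mathbf{P}$-factor is harmless: the three pieces containing at least one $\mathbf{P} f^\varepsilon$ are controlled by placing the low-derivative-order factor in $L^\infty_x$ (or $L^3_x$), using Sobolev to bound it by $\sqrt{\widetilde{\mathcal{E}}_{N,l}}$, and absorbing the high-derivative factor into $\sqrt{\widetilde{\mathcal{D}}_{N,l}}$, where the norms $\|\partial^\alpha \mathbf{P} f^\varepsilon\|$ for $1\le |\alpha|\le N$ are available in $\mathcal{D}_N\subset\widetilde{\mathcal{D}}_{N,l}$.

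For the critical piece $\Gamma(\mathbf{P}^\perp f^\varepsilon,\mathbf{P}^\perp f^\varepsilon)$ I would apply \eqref{gamma2} pointwise in $x$, which yields, for each Leibniz split $\alpha_1+\alpha_2=\alpha$ and $\beta_1+\beta_2\le\beta$, products of the form $\lvert w_l\partial^{\alpha_1}_{\beta_1}\mathbf{P}^\perp f^\varepsilon\rvert_{L^2_{\gamma/2+s}}\,\lvert\partial^{\alpha_2}_{\beta_2}\mathbf{P}^\perp f^\varepsilon\rvert_{D}\,\lvert w_l\partial^\alpha_\beta\mathbf{P}^\perp f^\varepsilon\rvert_{D}$ plus symmetric and commutator-type analogues. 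Integrating in $x$ I would use $L^\infty_x$--$L^2_x$--$L^2_x$ whenever one factor has $|\alpha_2|+|\beta_2|\le N/2$ (so Lemma \ref{sobolev embedding 1} controls it by $\sqrt{\widetilde{\mathcal{E}}_{N,l}}$), and $L^6_x$--$L^3_x$--$L^2_x$ in the balanced cases. The prefactor $\frac{1}{\varepsilon}$ in parts (1), (2), (4) is then absorbed by the low-order $\frac{1}{\varepsilon^2}\lvert\cdot\rvert_D^2$ microscopic dissipation inside $\widetilde{\mathcal{D}}_{N,l}$. The key technical ingredient is weight matching: from \eqref{weight function2} the monotonicity $w_l(\alpha,\beta)\le w_l(\alpha_1,\beta_1)$ holds whenever $|\alpha_1|+|\beta_1|\le|\alpha|+|\beta|$ and $|\beta_1|\le|\beta|$, and one also has $w_l(\alpha,\beta)=w_l(\alpha+e_i,\beta-e_i)\langle v\rangle^{\gamma}$; together with \eqref{norm inequality} these identities let me replace every pointwise factor on the right of \eqref{gamma2} by an exact term in $\widetilde{\mathcal{D}}_{N,l}$, while the extra line $\lvert e^{q\langle v\rangle/(1+t)^\vartheta}\partial^{\alpha_2}_{\beta_2}g\rvert_{L^2}$ of \eqref{gamma2} is absorbed by the exponential factor already built into $w_l$.

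The main obstacle lies in part (3), where $|\alpha|=N$ and no prefactor $\frac{1}{\varepsilon}$ is available, so I cannot spend an $\varepsilon$ to absorb the top-order microscopic dissipation, which here scales only as $\frac{1}{\varepsilon}\lVert\widetilde{w}_l(\alpha,0)\partial^\alpha\mathbf{P}^\perp f^\varepsilon\rVert_D^2$ in $\widetilde{\mathcal{D}}_{N,l}$. This is precisely where the time factor $(1+t)^{-(1+\varepsilon_0)/4}$ built into $\widetilde{w}_l$ via \eqref{weight function1} and the matching $\sqrt{\varepsilon}$-factor in front of $\lVert\widetilde{w}_l(\alpha,0)\partial^\alpha f^\varepsilon\rVert^2$ inside $\widetilde{\mathcal{E}}_{N,l}$ come in: when the $N$th derivative falls entirely on one argument, that argument is bounded by $\sqrt{\varepsilon}\sqrt{\widetilde{\mathcal{D}}_{N,l}}$ while the remaining low-order factor, after Sobolev, is bounded by $\sqrt{\widetilde{\mathcal{E}}_{N,l}/\varepsilon}$, and the two balance to give the desired $\sqrt{\widetilde{\mathcal{E}}_{N,l}}\,\widetilde{\mathcal{D}}_{N,l}$. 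For part (4) with $|\beta|\ge 1$ and $|\alpha|\le N-1$ the bookkeeping is the same, except that one additionally exploits $w_l(\alpha+e_i,\beta-e_i)\le w_l(\alpha,\beta)\langle v\rangle^{-\gamma}$ to trade velocity derivatives for space derivatives and thus access the stronger $\frac{1}{\varepsilon^2}$ dissipation, which absorbs the $\frac{1}{\varepsilon}$ prefactor with room to spare.
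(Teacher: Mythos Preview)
Your approach is essentially the paper's: split $\Gamma(f^\varepsilon,f^\varepsilon)$ via \eqref{gamma decomposition}, apply \eqref{gamma2} pointwise in $x$, then use $L^\infty$--$L^2$--$L^2$ or $L^6$--$L^3$--$L^2$ H\"older according to where the high derivative lands, together with the weight monotonicity \eqref{weight inequality2} and the time-factor bounds \eqref{weight inequality1}--\eqref{weight inequality11}. The paper carries out only part \eqref{soft gamma6} in detail, exactly along these lines, and declares \eqref{soft gamma3}--\eqref{soft gamma5} analogous.

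Two small corrections to your bookkeeping. In part (3) your $\varepsilon$-balancing is mislabeled: the factor $\sqrt{\widetilde{\mathcal{E}}_{N,l}/\varepsilon}$ comes from the \emph{top-order} weighted energy term $\varepsilon\lVert\widetilde w_l(\alpha,0)\partial^\alpha f^\varepsilon\rVert^2$ in \eqref{energy functional}, not from a low-order factor after Sobolev; it pairs with the low-order $D$-factor (which is $\varepsilon\sqrt{\mathcal{D}_N}$ after Sobolev) and the test-function factor $\sqrt{\varepsilon}\sqrt{\widetilde{\mathcal{D}}_{N,l}}$, and the product closes with a spare $\varepsilon$. In part (4) the identity $w_l(\alpha,\beta)=w_l(\alpha+e_i,\beta-e_i)\langle v\rangle^{\gamma}$ is not needed for the $\Gamma$ estimate: the $\frac{1}{\varepsilon^2}$ weighted microscopic dissipation in \eqref{dissipation functional} is already available for every mixed derivative with $|\alpha|\le N-1$, so plain weight monotonicity suffices; that identity is used elsewhere (for the transport term, Section~1.5.1) but plays no role here.
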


\begin{proof}
For the sake of brevity, we present only the proof of \eqref{soft gamma6}. Similarly, one can obtain the proof of \eqref{soft gamma3}--\eqref{soft gamma5}.

In order to verify \eqref{soft gamma6}, we write
\begin{align}\label{I1define}
I_1\equiv \frac{1}{\varepsilon}\left( \partial^\alpha_\beta \Gamma( f^{\varepsilon}, f^{\varepsilon} ),
\widetilde{w}^2_{l}(\alpha, \beta) \partial^\alpha_\beta \mathbf{P}^{\perp} f^{\varepsilon}\right) :=I_{1,1}+ I_{1,2}+ I_{1,3}+ I_{1,4},
\end{align}
where $I_{1,1}$, $I_{1,2}$, $I_{1,3}$ and $I_{1,4}$ are the terms corresponding to the decomposition \eqref{gamma decomposition}, respectively.
To proceed, we will calculate each of the four terms individually.

To begin with, for the term $I_{1,1}$, we recall \eqref{Pf define} and use \eqref{gamma2} to obtain
\begin{align*}
I_{1,1} \lesssim\;& \frac{1}{\varepsilon}\sum_{\alpha_1 \leq \alpha}
\int_{\mathbb{R}^3} \left| \partial^{\alpha_1} (a^{\varepsilon}_{\pm},b^{\varepsilon},c^{\varepsilon})\right| \left| \partial^{\alpha-\alpha_1} (a^{\varepsilon}_{\pm},b^{\varepsilon},c^{\varepsilon})\right|
\left| \widetilde{w}_{l}(\alpha, \beta) \partial^\alpha_\beta \mathbf{P}^{\perp} f^{\varepsilon}\right|_D \d x,
\end{align*}
where the fact $\widetilde{w}_{l}(\alpha,\beta)\mu^\delta \lesssim \mu^{\frac{\delta}{2}}$ is utilized.
Thanks to  $|\alpha_1| \leq |\alpha| \leq N-1$, it is clear that $1 \leq |\alpha_1+\alpha^\prime| \leq N$ for $|\alpha^\prime|=1$. Here and hereafter, the spatial derivative $\alpha^\prime$ is generated by \eqref{Sobolev-ineq}.  Thus, combining $L^6$--$L^3$--$L^2$ H\"{o}lder inequality with \eqref{Sobolev-ineq}, one gets
\begin{align*}
I_{1,1} \lesssim  \frac{1}{\varepsilon} \left\| \partial^{\alpha_1} (a^{\varepsilon}_{\pm},b^{\varepsilon},c^{\varepsilon})\right\|_{L^6} \left\| \partial^{\alpha-\alpha_1} (a^{\varepsilon}_{\pm},b^{\varepsilon},c^{\varepsilon})\right\|_{L^3}
\left\| \widetilde{w}_{l}(\alpha, \beta) \partial^\alpha_\beta \mathbf{P}^{\perp} f^{\varepsilon}\right\|_D
\lesssim  \sqrt{\widetilde{\mathcal{E}}_{N,l}(t)} \widetilde{\mathcal{D}}_{N,l}(t).
\end{align*}

To bound the term $I_{1,2}$, we use \eqref{gamma2} and select the first one inside the minimum function in \eqref{gamma2}, which gives that
\begin{align}
I_{1,2}\lesssim\;&\frac{1}{\varepsilon} \sum_{\substack{{\alpha_1\leq \alpha}\\{\beta_2  \leq \beta}}}
\int_{\mathbb{R}^3}
\left| \partial^{\alpha_1} (a^{\varepsilon}_{\pm},b^{\varepsilon},c^{\varepsilon})\right|
\left| (1+t)^{-\frac{1+\varepsilon_0}{4}} \partial^{\alpha-\alpha_1}_{\beta_2} \mathbf{P}^{\perp} f^{\varepsilon}\right|_D
\left| \widetilde{w}_{l}(\alpha,\beta) \partial^{\alpha}_{\beta} \mathbf{P}^{\perp} f^{\varepsilon}\right|_D \d x \nonumber \\
&+\frac{1}{\varepsilon} \sum_{\substack{{\alpha_1\leq \alpha}\\{\beta_2  \leq \beta}}}
\int_{\mathbb{R}^3}
\left| \partial^{\alpha_1} (a^{\varepsilon}_{\pm},b^{\varepsilon},c^{\varepsilon})\right|
\left| (1+t)^{-\frac{1+\varepsilon_0}{4}} e^{\frac{q\langle v \rangle}{(1+t)^\vartheta}}\partial^{\alpha-\alpha_1}_{\beta_2} \mathbf{P}^{\perp} f^{\varepsilon}\right|_{L^2}
\left| \widetilde{w}_{l}(\alpha,\beta) \partial^{\alpha}_{\beta} \mathbf{P}^{\perp} f^{\varepsilon}\right|_D \d x \nonumber \\
:=\;&I_{1,2}^{\prime}+I_{1,2}^{\prime\prime}. \nonumber
\end{align}
For the term $I_{1,2}^{\prime}$, by using $L^\infty$--$L^2$--$L^2$ H\"{o}lder inequality, one gets that for the case $\alpha_1 =0$ and $\beta_2 \leq \beta$,
\begin{align*}
I_{1,2}^{\prime} \lesssim \frac{1}{\varepsilon} \left\| (a^{\varepsilon}_{\pm},b^{\varepsilon},c^{\varepsilon})\right\|_{L^\infty}
\left\| (1+t)^{-\frac{1+\varepsilon_0}{4}} \partial^{\alpha}_{\beta_2}  \mathbf{P}^{\perp} f^{\varepsilon}\right\|_D
\left\|\widetilde{w}_{l}(\alpha,\beta) \partial^{\alpha}_\beta \mathbf{P}^{\perp} f^{\varepsilon}\right\|_D
\lesssim \sqrt{\widetilde{\mathcal{E}}_{N,l}(t)} \widetilde{\mathcal{D}}_{N,l}(t),
\end{align*}
where we have used \eqref{Sobolev-ineq} and the fact that for any $|\alpha|+|\beta| \leq N$, there holds
\begin{align}\label{weight inequality1}
(1+t)^{-\frac{1+\varepsilon_0}{4}}  \leq \widetilde{w}_{l}(\alpha,\beta).
\end{align}
For the case of $1 \leq |\alpha_1| \leq |\alpha| \leq N-1 $ and $\beta_2 \leq \beta$, it follows from $L^6$--$L^3$--$L^2$ H\"{o}lder inequality that
\begin{align*}
I_{1,2}^{\prime} \lesssim  \frac{1}{\varepsilon}
\left\| \partial^{\alpha_1} (a^{\varepsilon}_{\pm},b^{\varepsilon},c^{\varepsilon})\right\|_{L^6}
\left\| (1+t)^{-\frac{1+\varepsilon_0}{4}} \partial^{\alpha-\alpha_1}_{\beta_2} \mathbf{P}^{\perp} f^{\varepsilon}\right\|_{L^3_x L^2_D}
\left\| \widetilde{w}_{l}(\alpha,\beta) \partial^{\alpha}_\beta \mathbf{P}^{\perp} f^{\varepsilon}\right\|_D,
\end{align*}
which is further bounded by $\sqrt{\widetilde{\mathcal{E}}_{N,l}(t)} \widetilde{\mathcal{D}}_{N,l}(t)$ by utilizing \eqref{Sobolev-ineq}, \eqref{minkowski} and \eqref{weight inequality1}.
For the term $I_{1,2}^{\prime \prime}$, noticing that for any $|\alpha|+|\beta| \leq N$, there holds
\begin{align}\label{weight inequality11}
(1+t)^{-\frac{1+\varepsilon_0}{4}} e^{\frac{q\langle v \rangle}{(1+t)^\vartheta}} \leq \widetilde{w}_{l}(\alpha,\beta),
\end{align}
we use the similar argument as the term $I_{1,2}^\prime$ to get the same upper bound. In fact, we only need to replace \eqref{weight inequality1} with \eqref{weight inequality11}.

Next, we compute the term $I_{1,3}$. Selecting the second one inside the minimum function in \eqref{gamma2} and noting the norm $|\cdot|_{D}$ is stronger than $|\cdot |_{L^2_{\gamma/2+s}}$ in \eqref{norm inequality}, we can deduce that
\begin{align*}
I_{1,3} \lesssim \frac{1}{\varepsilon} \sum_{\substack{{\alpha_2\leq \alpha}\\{\beta_1  \leq \beta}}}
\int_{\mathbb{R}^3}
\left| \widetilde{w}_{l}(\alpha,\beta) \partial^{\alpha-\alpha_2}_{\beta_1}\mathbf{P}^{\perp} f^{\varepsilon}\right|_D
\left| \partial^{\alpha_2} (a^{\varepsilon}_{\pm},b^{\varepsilon},c^{\varepsilon})\right|
\left| \widetilde{w}_{l}(\alpha,\beta) \partial^{\alpha}_{\beta} \mathbf{P}^{\perp} f^{\varepsilon}\right|_D \d x.
\end{align*}
For the case of $\alpha_2=0$ and $\beta_1 \leq \beta$, we have from $L^2$--$L^\infty$--$L^2$ H\"{o}lder inequality and \eqref{Sobolev-ineq} that
\begin{align*}
I_{1,3} \lesssim \frac{1}{\varepsilon}
\left\|  \widetilde{w}_{l}(\alpha,\beta) \partial^{\alpha}_{\beta_1}\mathbf{P}^{\perp} f^{\varepsilon}\right\|_D
\left\| (a^{\varepsilon}_{\pm},b^{\varepsilon},c^{\varepsilon})\right\|_{L^\infty}
\left\| \widetilde{w}_{l}(\alpha,\beta) \partial^{\alpha}_{\beta} \mathbf{P}^{\perp} f^{\varepsilon}\right\|_D
\lesssim  \sqrt{\widetilde{\mathcal{E}}_{N,l}(t)} \widetilde{\mathcal{D}}_{N,l}(t),
\end{align*}
where we have applied the fact that for any $| \bar{\alpha} | + |\bar{\beta}| \leq | \alpha |+ |\beta|$ and $| \bar{\beta} | \leq |\beta|$, there holds
\begin{align}\label{weight inequality2}
  \widetilde{w}_l(\alpha, \beta) \leq \widetilde{w}_l( \bar{\alpha}, \bar{\beta}).
\end{align}
For the case $1 \leq |\alpha_2| \leq |\alpha| \leq N-1$ and $\beta_1 \leq \beta$, we get from $L^6$--$L^3$--$L^2$ H\"{o}lder inequality that
\begin{align*}
I_{1,3} \lesssim \frac{1}{\varepsilon}
\left\| \widetilde{w}_{l}(\alpha,\beta) \partial^{\alpha-\alpha_2}_{\beta_1}\mathbf{P}^{\perp} f^{\varepsilon}\right\|_{L^6_x L^2_D}
\left\| \partial^{\alpha_2}(a^{\varepsilon}_{\pm},b^{\varepsilon},c^{\varepsilon})\right\|_{L^3}
\left\| \widetilde{w}_{l}(\alpha,\beta) \partial^{\alpha}_{\beta} \mathbf{P}^{\perp} f^{\varepsilon}\right\|_D.
\end{align*}
Thanks to \eqref{Sobolev-ineq}, \eqref{minkowski} and \eqref{weight inequality2}, the estimate above is controlled by $\sqrt{\widetilde{\mathcal{E}}_{N,l}(t)} \widetilde{\mathcal{D}}_{N,l}(t)$.

At last, applying \eqref{gamma2}, \eqref{norm inequality} and the fact $|\cdot|_{L^2_{\gamma/2+s}}\lesssim |\cdot|_{L^2}$, $I_{1,4}$ is decomposed as
\begin{align*}
&\frac{1}{\varepsilon}\sum \int_{\mathbb{R}^3} (1+t)^{-\frac{1+\varepsilon_0}{4}}
\left| w_{l}(\alpha,\beta) \partial^{\alpha_1}_{\beta_1} \mathbf{P}^{\perp}f^{\varepsilon} \right|_{L^2}
\left| \partial^{\alpha_2}_{\beta_2} \mathbf{P}^{\perp} f^{\varepsilon}\right|_{D}
\left| \widetilde{w}_{l}(\alpha,\beta) \partial^{\alpha}_{\beta} \mathbf{P}^{\perp} f^{\varepsilon}\right|_{D} \d x  \\
&+\frac{1}{\varepsilon}\sum \int_{\mathbb{R}^3} (1+t)^{-\frac{1+\varepsilon_0}{4}}
\left| w_{l}(\alpha,\beta) \partial^{\alpha_1}_{\beta_1} \mathbf{P}^{\perp}f^{\varepsilon} \right|_{D}
\left| e^{\frac{q \langle v \rangle}{(1+t)^\vartheta}} \partial^{\alpha_2}_{\beta_2} \mathbf{P}^{\perp} f^{\varepsilon}\right|_{L^2}
\left| \widetilde{w}_{l}(\alpha,\beta) \partial^{\alpha}_{\beta} \mathbf{P}^{\perp} f^{\varepsilon}\right|_{D} \d x \\
& :=I_{1,4}^\prime + I_{1,4}^{\prime\prime},
\end{align*}
where the summation $\sum$ is taken over $\alpha_1+\alpha_2=\alpha$ and $\beta_1+\beta_2 \leq \beta$.
For the term $I_{1,4}^\prime$, we divide it into two cases. For the case $|\alpha_2|+|\beta_2| \leq N/2$, it is obvious that $|\alpha_2+\alpha^\prime|+|\beta_2| \leq N-1$ for $1 \leq |\alpha^\prime| \leq 2$.  Hence, we use $L^2$--$L^{\infty}$--$L^2$ H\"{o}lder inequality, \eqref{Sobolev-ineq}, \eqref{minkowski} and \eqref{weight inequality2} to obtain
\begin{align*}
I_{1,4}^\prime \lesssim \frac{1}{\varepsilon}
\left\| \widetilde{w}_{l}(\alpha,\beta) \partial^{\alpha_1}_{\beta_1} \mathbf{P}^{\perp}f^{\varepsilon} \right\|
\left\| \partial^{\alpha_2}_{\beta_2} \mathbf{P}^{\perp} f^{\varepsilon}\right\|_{L^\infty_x L^2_D}
\left\| \widetilde{w}_{l}(\alpha,\beta) \partial^{\alpha}_{\beta} \mathbf{P}^{\perp} f^{\varepsilon}\right\|_{D}
\lesssim \sqrt{\widetilde{\mathcal{E}}_{N,l}(t)} \widetilde{\mathcal{D}}_{N,l}(t).
\end{align*}
Here, we assign the time factor $(1+t)^{-\frac{1+\varepsilon_0}{4}}$ to the first term. In fact, after applying \eqref{Sobolev-ineq}, we always assign $(1+t)^{-\frac{1+\varepsilon_0}{4}}$ to the term containing total derivatives $N$ within the first two terms. Otherwise, we utilize the inequality $(1+t)^{-\frac{1+\varepsilon_0}{4}} \leq 1$.
For the case $N/2 < |\alpha_2|+|\beta_2| \leq N$, observing that $|\alpha_1+ \alpha^\prime|+|\beta_1| \leq N-1$ for $1 \leq |\alpha^\prime| \leq 2$, by $L^{\infty}$--$L^2$--$L^2$ H\"{o}lder inequality, one has
\begin{align*}
I_{1,4}^\prime \lesssim \frac{1}{\varepsilon}
\left\| w_{l}(\alpha,\beta) \partial^{\alpha_1}_{\beta_1} \mathbf{P}^{\perp}f^{\varepsilon} \right\|_{L^\infty_x L^2_v}
\left\| (1+t)^{-\frac{1+\varepsilon_0}{4}} \partial^{\alpha_2}_{\beta_2} \mathbf{P}^{\perp} f^{\varepsilon}\right\|_{D}
\left\| \widetilde{w}_{l}(\alpha,\beta) \partial^{\alpha}_{\beta} \mathbf{P}^{\perp} f^{\varepsilon}\right\|_{D},
\end{align*}
which is further controlled by $\sqrt{\widetilde{\mathcal{E}}_{N,l}(t)} \widetilde{\mathcal{D}}_{N,l}(t)$ by making use of \eqref{Sobolev-ineq}, \eqref{minkowski}, \eqref{weight inequality1} and the fact that $w_l(\alpha, \beta) \leq w_l( \bar{\alpha}, \bar{\beta})$ for any $| \bar{\alpha} | + |\bar{\beta}| \leq | \alpha |+ |\beta|$ and $| \bar{\beta} | \leq |\beta|$. For the term $I_{1,4}^{\prime\prime}$, the same bound can be obtained in a way similar to the term $I_{1,4}^\prime$.

Therefore, we can obtain the desired estimate \eqref{soft gamma6} by substituting all the above estimates into \eqref{I1define}. This completes the proof of Lemma \ref{soft Gamma}.
\end{proof}

\begin{lemma}\label{hard Gamma}
Let $ \gamma+2s \geq 0$, $0 < s < 1$ and $0 < \varepsilon \leq 1$.
\begin{itemize}
\setlength{\leftskip}{-6mm}
\item[(1)] For $| \alpha | \leq N$, there holds
\begin{align}\label{hard gamma3}
\bigg|\frac{1}{\varepsilon}\left( \partial^\alpha \Gamma( f^{\varepsilon}, f^{\varepsilon} ),
 \partial^\alpha f^{\varepsilon}\right)\bigg|
\lesssim \sqrt{\widetilde{\mathcal{E}}_{N,l}(t)} \mathcal{D}_{N}(t).
\end{align}
\item[(2)] For $| \alpha |+ |\beta| \leq N-1$, there holds
\begin{align}\label{hard gamma4}
\bigg|\frac{1}{\varepsilon} \left( \partial^\alpha_\beta \Gamma( f^{\varepsilon}, f^{\varepsilon} ),
\widetilde{w}^2_{l}(\alpha, \beta) \partial^\alpha_\beta \mathbf{P}^{\perp} f^{\varepsilon}\right)\bigg|
\lesssim \sqrt{\widetilde{\mathcal{E}}_{N,l}(t)} \widetilde{\mathcal{D}}_{N,l}(t).
\end{align}
\item[(3)] For $|\alpha|=N$, there holds
\begin{align}\label{hard gamma5}
\Big|\left( \partial^\alpha \Gamma( f^{\varepsilon}, f^{\varepsilon} ),
\widetilde{w}^2_{l}(\alpha, 0) \partial^\alpha  f^{\varepsilon}\right) \Big|
\lesssim \sqrt{\widetilde{\mathcal{E}}_{N,l}(t)} \widetilde{\mathcal{D}}_{N,l}(t).
\end{align}
\item[(4)] For $| \alpha |+ | \beta | = N$, $| \beta | \geq 1$ and $| \alpha | \leq N-1$, there holds
\begin{align}\label{hard gamma6}
\bigg| \frac{1}{\varepsilon}\left( \partial^\alpha_\beta \Gamma( f^{\varepsilon}, f^{\varepsilon} ),
 \widetilde{w}^2_{l}(\alpha, \beta) \partial^\alpha_\beta \mathbf{P}^{\perp} f^{\varepsilon}\right) \bigg|
\lesssim \sqrt{\widetilde{\mathcal{E}}_{N,l}(t)} \widetilde{\mathcal{D}}_{N,l}(t).
\end{align}
\end{itemize}
\end{lemma}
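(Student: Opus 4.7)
The plan is to mirror the four-step decomposition used in Lemma \ref{soft Gamma}, but with the crucial modification that the weight function for the hard potential range $\gamma+2s\geq 0$ is designed precisely so that the extra velocity growth $\langle v\rangle^{(\gamma+2s)/2}$ appearing in the right-hand side of \eqref{gamma2} can be absorbed. I would first decompose
\begin{equation*}
\Gamma(f^\varepsilon,f^\varepsilon)=\Gamma(\mathbf{P}f^\varepsilon,\mathbf{P}f^\varepsilon)+\Gamma(\mathbf{P}f^\varepsilon,\mathbf{P}^\perp f^\varepsilon)+\Gamma(\mathbf{P}^\perp f^\varepsilon,\mathbf{P}f^\varepsilon)+\Gamma(\mathbf{P}^\perp f^\varepsilon,\mathbf{P}^\perp f^\varepsilon),
\end{equation*}
apply \eqref{gamma2} to each piece with $w_\ell=\widetilde{w}_l(\alpha,\beta)$, and then distribute $x$-derivatives through H\"older in $x$.

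The main new ingredient, compared with Lemma \ref{soft Gamma}, is the following weight inequality: since $m=\max\{(\gamma+2s)/2,1/s\}\geq (\gamma+2s)/2$ in the hard potential definition of $\overline w_l(\alpha,\beta)$ in \eqref{weight function2}, one has
\begin{equation*}
\langle v\rangle^{(\gamma+2s)/2}\,\widetilde{w}_l(\alpha,\beta)\leq \widetilde{w}_l(\bar\alpha,\bar\beta)\qquad \text{for all } |\bar\alpha|+|\bar\beta|\leq |\alpha|+|\beta|-1,\ |\bar\beta|\leq|\beta|,
\end{equation*}
together with the monotonicity $\widetilde{w}_l(\alpha,\beta)\leq \widetilde{w}_l(\bar\alpha,\bar\beta)$ from \eqref{weight inequality2}. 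This is exactly what converts the $|\cdot|_{L^2_{\gamma/2+s}}$-norm appearing in \eqref{gamma2} (which includes the problematic $\langle v\rangle^{(\gamma+2s)/2}$ factor) into a factor that can be bounded by $\sqrt{\widetilde{\mathcal{E}}_{N,l}(t)}$, once at least one spatial derivative is shifted onto the corresponding factor via the Sobolev embeddings in Lemma \ref{sobolev embedding 1}.

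For the case splitting within each of the four pieces $I_{1,i}$, I would follow the dichotomy sketched in the introduction. For \eqref{hard gamma3} and the $\Gamma(\mathbf{P},\mathbf{P})$ piece of \eqref{hard gamma4}--\eqref{hard gamma6}, the macroscopic factors $(a^\varepsilon_\pm,b^\varepsilon,c^\varepsilon)$ carry Gaussian decay in $v$ which swallows all the weight, leaving the standard $L^6$--$L^3$--$L^2$ argument, or $L^2$--$L^\infty$--$L^2$ when $\alpha_1=0$. For the mixed and purely microscopic pieces, I split according to $|\alpha_2|+|\beta_2|$: when $|\alpha_2|+|\beta_2|\leq 1$ apply $L^2$--$L^\infty$--$L^2$; when $2\leq|\alpha_2|+|\beta_2|\leq N-2$ apply $L^6$--$L^3$--$L^2$ and use an auxiliary $|\alpha'|=1$ derivative so that the weight inequality above lets $\langle v\rangle^{(\gamma+2s)/2}\widetilde{w}_l(\alpha,\beta)\leq \widetilde{w}_l(\alpha_1+\alpha',\beta_1)$; when $|\alpha_2|+|\beta_2|\geq N-1$ apply $L^\infty$--$L^2$--$L^2$. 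In each case the middle $|\cdot|_D$ or $|\cdot|_{L^2_{\gamma/2+s}}$ factor without weight is absorbed in $\mathcal{D}_N(t)$ (and is responsible for the factor $1/\varepsilon$ being consumed), while the remaining two weighted factors are bounded by $\sqrt{\widetilde{\mathcal{E}}_{N,l}(t)}$ and $\sqrt{\widetilde{\mathcal{D}}_{N,l}(t)}$ respectively.

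Two technical points will demand care, and constitute the main obstacle. First, the time factor $(1+t)^{-(1+\varepsilon_0)/4}$ implicit in $\widetilde{w}_l$ at the top order $|\alpha|+|\beta|=N$ must be assigned to the factor that carries all $N$ derivatives, so that lower-order factors retain $w_l$ and higher-order ones retain the extra decay; this is handled exactly as in the proof of Lemma \ref{soft Gamma} via \eqref{weight inequality1}--\eqref{weight inequality11}. Second, for \eqref{hard gamma5}, since $\partial^\alpha f^\varepsilon$ contains both $\mathbf{P}$ and $\mathbf{P}^\perp$ parts and the test function is not $\mathbf{P}^\perp$-projected, one has to separately absorb the $\mathbf{P}$ contribution using $\|\partial^\alpha\mathbf{P}f^\varepsilon\|\lesssim\|\partial^\alpha(a^\varepsilon_\pm,b^\varepsilon,c^\varepsilon)\|$ together with the Gaussian-decay trick; the weight on the macroscopic part is harmless because of $\widetilde{w}_l\mu^\delta\lesssim\mu^{\delta/2}$. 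Once these allocations are done, all three estimates \eqref{hard gamma3}--\eqref{hard gamma6} follow by the same linear combination argument as in Lemma \ref{soft Gamma}.
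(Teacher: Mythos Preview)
Your proposal is correct and follows essentially the same approach as the paper: the paper likewise focuses on the purely microscopic piece $\Gamma(\mathbf{P}^\perp f^\varepsilon,\mathbf{P}^\perp f^\varepsilon)$ in \eqref{hard gamma6}, applies \eqref{gamma2}, and performs the same three-way case split on $|\alpha_2|+|\beta_2|$ with the same H\"older pairings and the key weight inequality $\langle v\rangle^{(\gamma+2s)/2}\widetilde w_l(\alpha,\beta)\le \widetilde w_l(\bar\alpha,\bar\beta)$. One small correction: in the case $|\alpha_2|+|\beta_2|\le 1$ the paper sends the \emph{unweighted} $|\cdot|_D$ factor to the energy (via $|\cdot|_D\lesssim|\cdot|_{H^1_{\gamma/2+s}}$ and then $\langle v\rangle^{(\gamma+2s)/2}\le \widetilde w_l(\alpha_2+\alpha',\beta_2+\beta')$) and the two \emph{weighted} factors to the dissipation, which is the reverse of your stated allocation---your allocation would fail when $|\alpha_2|+|\beta_2|=0$ since $\langle v\rangle^{(\gamma+2s)/2}\widetilde w_l(\alpha,\beta)$ cannot be absorbed into $\widetilde{\mathcal{E}}_{N,l}$ at top order.
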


\begin{proof}
For brevity, we only prove that for $| \alpha |+ | \beta | = N$, $| \beta | \geq 1$ and $| \alpha | \leq N-1$, there holds
\begin{align}\label{hard gamma7}
 \frac{1}{\varepsilon}\left( \partial^\alpha_\beta \Gamma( \mathbf{P}^{\perp}f^{\varepsilon}, \mathbf{P}^{\perp}f^{\varepsilon} ),
 \widetilde{w}^2_{l}(\alpha, \beta) \partial^\alpha_\beta \mathbf{P}^{\perp} f^{\varepsilon}\right)
\lesssim \sqrt{\widetilde{\mathcal{E}}_{N,l}(t)} \widetilde{\mathcal{D}}_{N,l}(t).
\end{align}
This is a part of the estimate \eqref{hard gamma6}. Other parts concerning $\mathbf{P}f^\varepsilon$ can be proved using the similar argument as that of $I_{1,1}$--$I_{1,3}$ in Lemma \ref{soft Gamma}.
The proof of \eqref{hard gamma3}--\eqref{hard gamma5} can be derived similarly.

To prove \eqref{hard gamma7}, for simplicity, we denote
$$
I_2 \equiv \text { the left-hand side of } \eqref{hard gamma7}.
$$
Applying \eqref{gamma2} and using the fact $|\cdot|_{L^2} \lesssim |\cdot|_{L^2_{\gamma/2+s}} \lesssim |\cdot|_{D}$ for $\gamma+2s \geq 0$, $I_2$ is decomposed as
\begin{align*}
&\frac{1}{\varepsilon} \sum \int_{\mathbb{R}^3} (1+t)^{-\frac{1+\varepsilon_0}{4}}
\left| w_{l}(\alpha,\beta) \partial^{\alpha_1}_{\beta_1} \mathbf{P}^{\perp}f^{\varepsilon} \right|_{L^2_{\gamma/2+s}}
\left| \partial^{\alpha_2}_{\beta_2} \mathbf{P}^{\perp} f^{\varepsilon}\right|_{D}
\left| \widetilde{w}_{l}(\alpha,\beta) \partial^{\alpha}_{\beta} \mathbf{P}^{\perp} f^{\varepsilon}\right|_{D} \d x  \\
&+\frac{1}{\varepsilon}  \sum \int_{\mathbb{R}^3} (1+t)^{-\frac{1+\varepsilon_0}{4}}
\left| w_{l}(\alpha,\beta) \partial^{\alpha_1}_{\beta_1} \mathbf{P}^{\perp}f^{\varepsilon} \right|_{D}
\left| e^{\frac{q \langle v \rangle}{(1+t)^\vartheta}} \partial^{\alpha_2}_{\beta_2} \mathbf{P}^{\perp} f^{\varepsilon}\right|_{L^2_{\gamma/2+s}}
\left| \widetilde{w}_{l}(\alpha,\beta) \partial^{\alpha}_{\beta} \mathbf{P}^{\perp} f^{\varepsilon}\right|_{D} \d x \\
&:= I_{2}^\prime + I_{2}^{\prime\prime},
\end{align*}
where the summation $\sum$ is taken over $\alpha_1+\alpha_2=\alpha$ and $\beta_1+\beta_2 \leq \beta$.

For the term $I_2^{\prime}$, we consider the following three cases $|\alpha_2|+|\beta_2| \leq 1$, $2 \leq |\alpha_2|+|\beta_2| \leq N-2$ and $N-1 \leq |\alpha_2|+|\beta_2| \leq N$.
For the first case $|\alpha_2|+|\beta_2| \leq 1$, by $L^2$--$L^\infty$--$L^2$ H\"{o}lder inequality, we have
\begin{align*}
I_{2}^\prime \lesssim
\frac{1}{\varepsilon}
\left\| \widetilde{w}_{l}(\alpha,\beta) \partial^{\alpha_1}_{\beta_1} \mathbf{P}^{\perp}f^{\varepsilon} \right\|_{L^2_{\gamma/2+s}}
\left\| \partial^{\alpha_2}_{\beta_2} \mathbf{P}^{\perp} f^{\varepsilon}\right\|_{L^\infty_x L^2_D}
\left\| \widetilde{w}_{l}(\alpha,\beta) \partial^{\alpha}_{\beta} \mathbf{P}^{\perp} f^{\varepsilon}\right\|_{D},
\end{align*}
where we assign $(1+t)^{-\frac{1+\varepsilon_0}{4}}$ to the first term thanks to the fact $|\alpha_2+\alpha^\prime|+|\beta_2+\beta^\prime| \leq 4 \leq N-2$ for $1 \leq |\alpha^\prime| \leq 2$ and $|\beta^\prime| \leq 1$. Here and hereafter, the velocity derivative $\beta^\prime$ is generated by the norm $|\cdot|_D$. Recalling $\widetilde{w}_{l}(\alpha, \beta)$ for the case $\gamma+2s \geq 0$ in \eqref{weight function}, the above fact shows that $\langle v \rangle ^{\frac{\gamma+2s}{2}} \leq \widetilde{w}_l(\alpha_2+\alpha^\prime, \beta_2+\beta^\prime)$, which, combined with the fact $|\cdot|_{D} \lesssim |\cdot|_{H^1_{\gamma/2+s}}$, implies that
\begin{align*}
\left\| \partial^{\alpha_2}_{\beta_2} \mathbf{P}^{\perp} f^{\varepsilon}\right\|_{L^\infty_x L^2_D}
\lesssim \sum_{\substack{{1 \leq |\alpha^\prime| \leq 2}\\{|\beta^\prime| \leq 1}}}
\left\| \partial^{\alpha_2+\alpha^\prime}_{\beta_2+\beta^\prime} \mathbf{P}^{\perp} f^{\varepsilon}\right\|_{L^2_{\gamma/2+s}}
\lesssim \sqrt{\widetilde{\mathcal{E}}_{N,l}(t)}.
\end{align*}
Therefore, the combination of this, \eqref{norm inequality} and \eqref{weight inequality2} results in $I_2^{\prime} \lesssim \sqrt{\widetilde{\mathcal{E}}_{N,l}(t)} \widetilde{\mathcal{D}}_{N,l}(t)$.

For the second case $2 \leq |\alpha_2|+|\beta_2| \leq N-2$, by $L^6$--$L^3$--$L^2$ H\"{o}lder inequality, we have
\begin{align*}
I_{2}^\prime \lesssim
\frac{1}{\varepsilon}
\left\| \widetilde{w}_{l}(\alpha,\beta) \partial^{\alpha_1}_{\beta_1} \mathbf{P}^{\perp}f^{\varepsilon} \right\|_{L^6_x L^2_{\gamma/2+s}}
\left\| \partial^{\alpha_2}_{\beta_2} \mathbf{P}^{\perp} f^{\varepsilon}\right\|_{L^3_x L^2_D}
\left\| \widetilde{w}_{l}(\alpha,\beta) \partial^{\alpha}_{\beta} \mathbf{P}^{\perp} f^{\varepsilon}\right\|_{D}.
\end{align*}
Noticing that $|\alpha_1+\alpha^\prime|+|\beta_1| \leq N-1$ for $|\alpha^\prime|=1$, we have  $\langle v \rangle ^{\frac{\gamma+2s}{2}} \widetilde{w}_l(\alpha,\beta)\leq \widetilde{w}_l(\alpha_1+\alpha^\prime,\beta_1)$, which yields that
\begin{align*}
\left\| \widetilde{w}_{l}(\alpha,\beta) \partial^{\alpha_1}_{\beta_1} \mathbf{P}^{\perp}f^{\varepsilon} \right\|_{L^6_x L^2_{\gamma/2+s}}
\lesssim \sum_{ |\alpha^\prime| =1}
\left\| \widetilde{w}_{l}(\alpha,\beta) \partial^{\alpha_1+\alpha^\prime}_{\beta_1} \mathbf{P}^{\perp} f^{\varepsilon}\right\|_{L^2_{\gamma/2+s}}
\lesssim \sqrt{\widetilde{\mathcal{E}}_{N,l}(t)}.
\end{align*}
From the above two estimates and \eqref{Sobolev-ineq}, we get $I_2^{\prime} \lesssim \sqrt{\widetilde{\mathcal{E}}_{N,l}(t)} \widetilde{\mathcal{D}}_{N,l}(t)$.

For the last case $ N-1 \leq |\alpha_2|+|\beta_2| \leq  N$, it can be easily proved that $|\alpha_1+ \alpha^\prime|+|\beta_1| \leq 3$ for $1 \leq |\alpha^\prime| \leq 2$. Making use of $L^\infty$--$L^2$--$L^2$ H\"{o}lder inequality, one gets
\begin{align*}
I_2^{\prime} \lesssim
\frac{1}{\varepsilon}
\left\| w_{l}(\alpha,\beta) \partial^{\alpha_1}_{\beta_1} \mathbf{P}^{\perp}f^{\varepsilon} \right\|_{L^\infty_x L^2_{\gamma/2+s}}
\left\| (1+t)^{-\frac{1+\varepsilon_0}{4}} \partial^{\alpha_2}_{\beta_2} \mathbf{P}^{\perp} f^{\varepsilon}\right\|_{D}
\left\| \widetilde{w}_{l}(\alpha,\beta) \partial^{\alpha}_{\beta} \mathbf{P}^{\perp} f^{\varepsilon}\right\|_{D},
\end{align*}
which, together with \eqref{weight inequality1}, leads to
$I_2^{\prime} \lesssim \sqrt{\widetilde{\mathcal{E}}_{N,l}(t)} \widetilde{\mathcal{D}}_{N,l}(t)$.
Here, we have employed the following estimate
\begin{align*}
\left\| w_{l}(\alpha,\beta) \partial^{\alpha_1}_{\beta_1} \mathbf{P}^{\perp}f^{\varepsilon} \right\|_{L^\infty_x L^2_{\gamma/2+s}}
\lesssim \sum_{ 1\leq |\alpha^\prime| \leq 2}
\left\| w_{l}(\alpha,\beta) \partial^{\alpha_1+\alpha^\prime}_{\beta_1} \mathbf{P}^{\perp} f^{\varepsilon}\right\|_{L^2_{\gamma/2+s}}
\lesssim \sqrt{\widetilde{\mathcal{E}}_{N,l}(t)}.
\end{align*}

For the term $I_2^{\prime\prime}$, we simplify our calculations by dividing them into three cases.
For the first case $|\alpha_2|+|\beta_2| =0$, we take $L^2$--$L^\infty$--$L^2$ H\"{o}lder inequality to obtain
\begin{align*}
I_{2}^{\prime \prime} \lesssim
\frac{1}{\varepsilon}
\left\| \widetilde{w}_{l}(\alpha,\beta) \partial^{\alpha}_{\beta} \mathbf{P}^{\perp}f^{\varepsilon} \right\|_{D}
\left\| e^{\frac{q \langle v \rangle}{(1+t)^\vartheta}}  \mathbf{P}^{\perp} f^{\varepsilon}\right\|_{L^\infty_x L^2_{\gamma/2+s}}
\left\| \widetilde{w}_{l}(\alpha,\beta) \partial^{\alpha}_{\beta} \mathbf{P}^{\perp} f^{\varepsilon}\right\|_{D}.
\end{align*}
By virtue of the fact $e^{\frac{q \langle v \rangle}{(1+t)^\vartheta}} \langle v \rangle^{\frac{\gamma+2s}{2}} \leq \widetilde{w}_l(\alpha^\prime,0)$ for $1 \leq |\alpha^\prime| \leq 2$, one has
\begin{align*}
\left\| e^{\frac{q \langle v \rangle}{(1+t)^\vartheta}} \mathbf{P}^{\perp} f^{\varepsilon}\right\|_{L^\infty_x L^2_{\gamma/2+s}}
\lesssim \sum_{1 \leq |\alpha^\prime| \leq 2}\left\| e^{\frac{q \langle v \rangle}{(1+t)^\vartheta}} \partial^{\alpha^\prime} \mathbf{P}^{\perp} f^{\varepsilon}\right\|_{ L^2_{\gamma/2+s}}
\lesssim \sqrt{\widetilde{\mathcal{E}}_{N,l}(t)}.
\end{align*}
Combining the two estimates above, we have $I_2^{\prime \prime} \lesssim \sqrt{\widetilde{\mathcal{E}}_{N,l}(t)} \widetilde{\mathcal{D}}_{N,l}(t)$.

For the case of $1 \leq |\alpha_2|+|\beta_2| \leq N-2$, it is easy to see that $|\alpha_2 +\alpha^\prime|+|\beta_2| \leq N-1$ for $|\alpha^\prime|=1$. Thereby, we have the inequality
$e^{\frac{q \langle v \rangle}{(1+t)^\vartheta}} \langle v \rangle^{\frac{\gamma+2s}{2}} \leq \widetilde{w}_l(\alpha_2+\alpha^\prime,\beta_2)$, which gives that
\begin{align*}
\left\| e^{\frac{q \langle v \rangle}{(1+t)^\vartheta}} \partial^{\alpha_2}_{\beta_2}\mathbf{P}^{\perp} f^{\varepsilon}\right\|_{L^6_x L^2_{\gamma/2+s}}
\lesssim \sum_{|\alpha^\prime| =1}\left\| e^{\frac{q \langle v \rangle}{(1+t)^\vartheta}} \partial^{\alpha_2+\alpha^\prime}_{\beta_2} \mathbf{P}^{\perp} f^{\varepsilon}\right\|_{ L^2_{\gamma/2+s}}
\lesssim \sqrt{\widetilde{\mathcal{E}}_{N,l}(t)}.
\end{align*}
As a consequence, we have $I_2^{\prime \prime} \lesssim \sqrt{\widetilde{\mathcal{E}}_{N,l}(t)} \widetilde{\mathcal{D}}_{N,l}(t)$ by using  $L^3$--$L^6$--$L^2$ H\"{o}lder inequality and \eqref{weight inequality2}.

As for the last case $N-1 \leq |\alpha_2|+|\beta_2| \leq N$, by $L^\infty$--$L^2$--$L^2$ H\"{o}lder inequality, $I_2^{\prime \prime}$ can be bounded by
\begin{align*}
\frac{1}{\varepsilon}
\left\| w_{l}(\alpha,\beta) \partial^{\alpha_1}_{\beta_1} \mathbf{P}^{\perp}f^{\varepsilon} \right\|_{L^\infty_x L^2_D}
\!\left\| (1+t)^{-\frac{1+\varepsilon_0}{4}} e^{\frac{q \langle v \rangle}{(1+t)^\vartheta}} \partial^{\alpha_2}_{\beta_2} \mathbf{P}^{\perp} f^{\varepsilon}\right\|_{L^2_{\gamma/2+s}}
\!\!\!\left\| \widetilde{w}_{l}(\alpha,\beta) \partial^{\alpha}_{\beta} \mathbf{P}^{\perp} f^{\varepsilon}\right\|_{D}.
\end{align*}
According to \eqref{norm inequality} and the fact
$(1+t)^{-\frac{1+\varepsilon_0}{4}} e^{\frac{q \langle v \rangle}{(1+t)^\vartheta}}  \leq \widetilde{w}_{l}(\alpha_2,\beta_2)$, we obtain
\begin{align*}
\left\| (1+t)^{-\frac{1+\varepsilon_0}{4}}e^{\frac{q \langle v \rangle}{(1+t)^\vartheta}} \partial^{\alpha_2}_{\beta_2}\mathbf{P}^{\perp} f^{\varepsilon}\right\|_{L^2_{\gamma/2+s}} \lesssim \sqrt{\widetilde{\mathcal{D}}_{N,l}(t)}.
\end{align*}
Meanwhile, we have $w_{l}(\alpha,\beta) \langle v \rangle ^{\frac{\gamma+2s}{2} } \leq \widetilde{w}_{l}(\alpha_1+\alpha^\prime,\beta_1+\beta^\prime)$ for $|\alpha_1|+|\beta_1| \leq 1$, $1 \leq |\alpha^\prime| \leq 2$ and $|\beta^\prime| \leq 1$, which, together with $|\cdot|_{D} \lesssim |\cdot|_{H^1_{\gamma/2+s}}$, implies that
\begin{align*}
\left\| w_{l}(\alpha,\beta) \partial^{\alpha_1}_{\beta_1} \mathbf{P}^{\perp}f^{\varepsilon} \right\|_{L^\infty_x L^2_D}
\lesssim \sum_{\substack{{1 \leq |\alpha^\prime| \leq 2 }\\{|\beta^\prime| \leq 1}}}
\left\| w_{l}(\alpha,\beta) \partial^{\alpha_1+\alpha^\prime}_{\beta_1+\beta^\prime} \mathbf{P}^{\perp}f^{\varepsilon} \right\|_{L^2_{\gamma/2+s}}
\lesssim \sqrt{\widetilde{\mathcal{E}}_{N,l}(t)}.
\end{align*}
The combination of the above three estimates gives that $I_2^{\prime \prime} \lesssim \sqrt{\widetilde{\mathcal{E}}_{N,l}(t)} \widetilde{\mathcal{D}}_{N,l}(t)$.

Consequently, collecting all the above estimates, \eqref{hard gamma7} follows. This completes the proof of Lemma \ref{hard Gamma}.
\end{proof}

\begin{remark}
In fact, within the framework of \cite{AMUXY2012JFA, DLYZ2013}, the estimates on the collision term $\Gamma(f^\varepsilon,f^\varepsilon)$ for the hard potential case $\gamma+2s \geq 0$ are more complex than for the soft potential case $\max\{-3, -\frac{3}{2}-2s\} < \gamma <-2s$, primarily due to the extra weight function $\langle v \rangle^\frac{\gamma+2s}{2}$.
This requires us to design the weight function $\overline{w}_l(\alpha,\beta)=\langle v \rangle ^{l-m|\alpha|-m|\beta|}$ with $m \geq \frac{\gamma+2s}{2}$.
\end{remark}

The following two lemmas concern the estimates on the nonlinear term $\frac{1}{\varepsilon} q_0 (v \times B^\varepsilon) \cdot \nabla_v f^{\varepsilon}$.
\begin{lemma}\label{soft B 1}
Let $\max\left\{-3,-\frac{3}{2}-2s\right\} < \gamma <-2s $, $0 < s < 1$ and $0 < \varepsilon \leq 1$.
\begin{itemize}
\setlength{\leftskip}{-6mm}
\item[(1)] For $1 \leq | \alpha | \leq N$, there holds
\begin{align}\label{soft B 1-1}
\frac{1}{\varepsilon} \left(\partial^{\alpha}(q_0(v \times B^\varepsilon) \cdot \nabla_v f^{\varepsilon}), \partial^\alpha f^{\varepsilon} \right)
\lesssim \left\{ \delta_0 + \sqrt{\mathcal{E}_{N}(t)} \right\} \widetilde{\mathcal{D}}_{N,l}(t).
\end{align}
\item[(2)] For $ | \alpha |+ |\beta| \leq N-1$,  there holds
\begin{align}\label{soft B 1-2}
\frac{1}{\varepsilon}  \left(\partial^{\alpha}_{\beta}(q_0(v \times B^\varepsilon) \cdot \nabla_v \mathbf{P}^{\perp}f^{\varepsilon}), \widetilde{w}^2_l(\alpha,\beta)\partial^\alpha_\beta \mathbf{P}^{\perp} f^{\varepsilon} \right)
\lesssim \sqrt{\mathcal{E}_{N}(t)} \widetilde{\mathcal{D}}_{N,l}(t).
\end{align}
\item[(3)] For $ | \alpha | = N$,  there holds
\begin{align}\label{soft B 1-3}
\left(\partial^{\alpha}(q_0(v \times B^\varepsilon) \cdot \nabla_v f^{\varepsilon}),  \widetilde{w}^2_l(\alpha,0) \partial^\alpha f^{\varepsilon} \right)
\lesssim \sqrt{\mathcal{E}_{N}(t)} \widetilde{\mathcal{D}}_{N,l}(t).
\end{align}
\item[(4)] For $| \alpha | + | \beta | = N$, $| \beta | \geq 1$ and
$| \alpha | \leq N-1$,  there holds
\begin{align}\label{soft B 1-4}
\frac{1}{\varepsilon}  \left(\partial^{\alpha}_{\beta}(q_0(v \times B^\varepsilon) \cdot \nabla_v \mathbf{P}^{\perp}f^{\varepsilon}), \widetilde{w}^2_l(\alpha,\beta)\partial^\alpha_\beta \mathbf{P}^{\perp} f^{\varepsilon} \right)
\lesssim \sqrt{\mathcal{E}_{N}(t)} \widetilde{\mathcal{D}}_{N,l}(t).
\end{align}
\end{itemize}
\end{lemma}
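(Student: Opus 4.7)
The plan is to expand $\partial^\alpha_\beta\bigl(q_0(v\times B^\varepsilon)\cdot\nabla_v f^\varepsilon\bigr)$ by Leibniz. Since $v\times B^\varepsilon$ is linear in $v$ and affine in $B^\varepsilon$, only $\beta_1=0$ or $|\beta_1|=1$ survive in the velocity Leibniz sum. The $|\beta_1|=1$ branch produces $(e_i\times\partial^{\alpha_1}B^\varepsilon)\cdot\nabla_v\partial^{\alpha-\alpha_1}_{\beta-e_i}f^\varepsilon$, which carries no velocity growth and can be handled by the H\"older--Sobolev machinery of Lemmas~\ref{soft Gamma}--\ref{hard Gamma} together with the decay assumption \eqref{assumption1}. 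In the $\beta_1=0$ branch, the pure diagonal term $\alpha_1=\beta_1=0$ vanishes identically: $\nabla_v\cdot(v\times B^\varepsilon)=0$ permits integration by parts in $v$, and the remainder $(v\times B^\varepsilon)\cdot\nabla_v\widetilde{w}_l^2$ is zero because $\widetilde{w}_l$ is radial in $v$, so $\nabla_v\widetilde{w}_l\parallel v$ while $(v\times B^\varepsilon)\cdot v=0$. This cancellation is crucial for parts~(1) and~(3), where it removes the contribution that would otherwise not fit into any controlled norm.

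The remaining off-diagonal terms (i.e.\ $|\alpha_1|\geq 1$ in the $\beta_1=0$ branch) are treated by H\"older in $x$, following the template of the proof of Lemma~\ref{soft Gamma}: an $L^\infty_x$--$L^2_x$--$L^2_x$ split when $\partial^{\alpha_1}B^\varepsilon$ is low-order (invoking \eqref{assumption1} and \eqref{Sobolev-ineq}), $L^3_x$--$L^6_x$--$L^2_x$ or $L^6_x$--$L^3_x$--$L^2_x$ in the intermediate range, and an $L^\infty_x L^2_v$-norm on the first factor when $\partial^{\alpha_1}B^\varepsilon$ carries the top-order derivative. The time factor $(1+t)^{-(1+\varepsilon_0)/4}$ inside $\widetilde{w}_l$ is always assigned to whichever factor contains $N$ total derivatives, so that the other factor fits into $\mathcal{E}_N(t)$ or $\widetilde{\mathcal{E}}_{N,l}(t)$; this produces the $\delta_0\widetilde{\mathcal{D}}_{N,l}(t)$ contribution in part~(1) from the low-order $B^\varepsilon$ cases and the $\sqrt{\mathcal{E}_N(t)}\widetilde{\mathcal{D}}_{N,l}(t)$ contribution from the remainder. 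For part~(3) the prefactor $\frac1\varepsilon$ is absent because the extra $\varepsilon$ in $\varepsilon\sum_{|\alpha|=N}\|\widetilde{w}_l(\alpha,0)\partial^\alpha f^\varepsilon\|^2\subset\widetilde{\mathcal{E}}_{N,l}$ has already absorbed it.

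The main obstacle is the weight mismatch created by $\nabla_v f^\varepsilon$ in the $\beta_1=0$ branch: after H\"older one needs to control a factor of the form
\[
\bigl|\langle v\rangle^{1-\gamma/2}\,w_l(\alpha,\beta)\,\partial^{\alpha-\alpha_1}_{\beta+e_i}\mathbf{P}^\perp f^\varepsilon\bigr|_{L^2},
\]
whose weight exceeds the dissipative target $\langle v\rangle^{\gamma/2}\overline{w}_l(|\alpha|-1,1)$ by a full factor of $\langle v\rangle$, and no pointwise inequality between weights resolves this. Following the strategy announced in the introduction, I would invoke Lemma~\ref{interpolation1} with
\[
\langle v\rangle^\ell:=\bigl\{\overline{w}_l(|\alpha|-1,0)\bigr\}^{1-s}\bigl\{\overline{w}_l(|\alpha|-1,1)\bigr\}^{-(1-s)},
\]
so that $\langle v\rangle^{-\ell s/(1-s)}=\{\overline{w}_l(|\alpha|-1,0)\}^{-s}\{\overline{w}_l(|\alpha|-1,1)\}^{s}$. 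The key monotonicity
\[
\langle v\rangle^{1-\gamma/2}\overline{w}_l(\alpha,0)\leq \langle v\rangle^{\gamma/2}\bigl\{\overline{w}_l(|\alpha|-1,0)\bigr\}^{s}\bigl\{\overline{w}_l(|\alpha|-1,1)\bigr\}^{1-s}
\]
holds precisely because of the exponent $\frac{-2\gamma+1}{s}$ built into $\overline{w}_l$ in \eqref{weight function2} for soft potentials, and it splits the $H^1_v$ interpolate into an $H^s_{\gamma/2}$-piece weighted by $\overline{w}_l(|\alpha|-1,0)$ and an $H^{1+s}_{\gamma/2}$-piece weighted by $\overline{w}_l(|\alpha|-1,1)$, both of which lie inside $\widetilde{\mathcal{D}}_{N,l}(t)$ by \eqref{norm inequality}. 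Lemma~\ref{equivalent norm} commutes $\langle D_v\rangle^s$ past the velocity weights, completing the estimate; the same interpolation trick closes parts~(1)--(4).
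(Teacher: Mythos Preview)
Your outline matches the paper's proof in all essential points---the Leibniz split, the vanishing of the diagonal $\alpha_1=\beta_1=0$ piece via $(v\times B^\varepsilon)\cdot v=0$, the case-by-case H\"older splits on $|\alpha_1|$, and the use of Lemma~\ref{interpolation1} with that specific choice of $\ell$ to repair the weight mismatch. There is, however, one genuine gap in your treatment of part~(1). After Leibniz you must split \emph{both} occurrences of $f^\varepsilon$ as $\mathbf{P}f^\varepsilon+\mathbf{P}^\perp f^\varepsilon$. The mixed and micro--micro pieces each carry at least one $\mathbf{P}^\perp f^\varepsilon$, whose $\tfrac{1}{\varepsilon}\|\cdot\|_D$-norm in $\widetilde{\mathcal D}_{N,l}$ absorbs the singular prefactor. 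But the pure macro--macro piece
\[
\frac{1}{\varepsilon}\left(q_0(v\times\partial^{\alpha_1}B^\varepsilon)\cdot\nabla_v\partial^{\alpha-\alpha_1}\mathbf{P}f^\varepsilon,\;\partial^\alpha\mathbf{P}f^\varepsilon\right),\qquad 1\le|\alpha_1|\le|\alpha|,
\]
contains no microscopic factor, and H\"older alone leaves $\tfrac{1}{\varepsilon}\|\partial^{\alpha_1}B^\varepsilon\|\,\|\partial^{\alpha-\alpha_1}(a_\pm,b,c)\|\,\|\partial^\alpha(a_\pm,b,c)\|$, which is not bounded by $\widetilde{\mathcal{D}}_{N,l}$. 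The paper resolves this by a structural cancellation in the kernel of $\mathbf{P}$: since $(v\times B)\cdot v=0$, one has $(v\times B)\cdot\nabla_v\mathbf{P}_\pm f=(v\times B)\cdot b\,\mu^{1/2}$, so after applying $q_0$ and pairing with $\mathbf{P}f$ only the difference $\mathbf{P}_+f-\mathbf{P}_-f=(a_+-a_-)\mu^{1/2}$ survives, and the remaining velocity integral $\int (v\times B)\cdot b\,\mu\,dv$ vanishes by oddness. You must invoke this explicitly; otherwise part~(1) does not close.

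A minor remark: for part~(1) the inner product carries no $w_l$-weight, and the paper does not actually use Lemma~\ref{interpolation1} there. For the micro--micro piece it relies instead on the direct pointwise bound $(1+t)^{-(1+\varrho)/2}\langle v\rangle^{1-(\gamma+2s)/2}\le\langle v\rangle^{(\gamma+2s)/2}\widetilde{w}_l(\alpha-\alpha_1,e_i)$ (valid because $l$ is large) together with \eqref{assumption1} when $|\alpha_1|=1$. Your interpolation argument would also close that case, but it is heavier than needed; the interpolation is genuinely required only in the weighted estimates (2)--(4), where the paper's computation is exactly the one you describe (with $\overline{w}_l(|\alpha|-1,|\beta|)$ and $\overline{w}_l(|\alpha|-1,|\beta|+1)$ in place of your $|\beta|=0$ version).
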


\begin{proof}
For brevity, we only give the proof of \eqref{soft B 1-1} and \eqref{soft B 1-4}. The estimates \eqref{soft B 1-2} and \eqref{soft B 1-3} can be proved by using the similar argument.

We first estimate \eqref{soft B 1-1}. In terms of the macro-micro decomposition \eqref{f decomposition}, we set
\begin{align*}
I_3 \equiv \;& \frac{1}{\varepsilon} \left(\partial^{\alpha}(q_0(v \times B^\varepsilon) \cdot \nabla_v f^{\varepsilon}), \partial^\alpha f^{\varepsilon} \right)\\
=\;&\frac{1}{\varepsilon} \sum_{0 \neq \alpha_1 \leq \alpha } C_\alpha^{\alpha_1} \left( q_0(v \times \partial^{\alpha_1} B^\varepsilon) \cdot \nabla_v \partial^{\alpha-\alpha_1}\mathbf{P} f^{\varepsilon}, \partial^\alpha \mathbf{P} f^{\varepsilon} \right)\\
&+\frac{1}{\varepsilon} \sum_{0 \neq \alpha_1 \leq \alpha } C_\alpha^{\alpha_1} \left( q_0(v \times \partial^{\alpha_1} B^\varepsilon) \cdot \nabla_v \partial^{\alpha-\alpha_1}\mathbf{P} f^{\varepsilon}, \partial^\alpha \mathbf{P}^{\perp} f^{\varepsilon} \right)\\
&+\frac{1}{\varepsilon} \sum_{0 \neq \alpha_1 \leq \alpha } C_\alpha^{\alpha_1} \left( q_0(v \times \partial^{\alpha_1} B^\varepsilon) \cdot \nabla_v \partial^{\alpha-\alpha_1} \mathbf{P}^{\perp} f^{\varepsilon}, \partial^\alpha \mathbf{P} f^{\varepsilon} \right)\\
&+\frac{1}{\varepsilon} \sum_{0 \neq \alpha_1 \leq \alpha } C_\alpha^{\alpha_1} \left( q_0(v \times \partial^{\alpha_1} B^\varepsilon) \cdot \nabla_v \partial^{\alpha-\alpha_1} \mathbf{P}^{\perp} f^{\varepsilon}, \partial^\alpha \mathbf{P}^{\perp}f^{\varepsilon} \right)\\
:=\;&I_{3,1}+I_{3,2}+I_{3,3}+I_{3,4}.
\end{align*}
For the term $I_{3,1}$, it follows from the kernel structure of $\mathbf{P}$ that $I_{3,1}=0$, cf. \cite[pp. 28]{JL2019}. For the term $I_{3,2}$, we obtain
\begin{align*}
 I_{3,2} \lesssim\;& \frac{1}{\varepsilon} \sum_{0 \neq \alpha_1 \leq \alpha } \int_{\mathbb{R}^3} \left|\partial^{\alpha_1} B^\varepsilon \right|
 \left|\partial^{\alpha-\alpha_1}(a^\varepsilon_{\pm}, b^\varepsilon, c^\varepsilon)\right| \left|\partial^\alpha \mathbf{P}^{\perp} f^\varepsilon \right|_{D} \d x.
\end{align*}
If $|\alpha_1|=1$, we get from $L^3$--$L^6$--$L^2$ H\"{o}lder inequality and \eqref{Sobolev-ineq} that
\begin{align*}
  I_{3,2} \lesssim \frac{1}{\varepsilon}   \left\|\partial^{\alpha_1} B^\varepsilon \right\|_{L^3}
 \left\|\partial^{\alpha-\alpha_1}(a^\varepsilon_{\pm}, b^\varepsilon, c^\varepsilon)\right\|_{L^6} \left\|\partial^\alpha \mathbf{P}^{\perp} f^\varepsilon \right\|_{D}
  \lesssim \sqrt{\mathcal{E}_{N}(t)} \mathcal{D}_{N}(t).
\end{align*}
If $2 \leq |\alpha_1| \leq |\alpha| \leq N$, by $L^2$--$L^\infty$--$L^2$ H\"{o}lder inequality and \eqref{Sobolev-ineq}, one obtains
\begin{align*}
   I_{3,2} \lesssim \frac{1}{\varepsilon}  \left\|\partial^{\alpha_1} B^\varepsilon \right\|
 \left\|\partial^{\alpha-\alpha_1}(a^\varepsilon_{\pm}, b^\varepsilon, c^\varepsilon)\right\|_{L^\infty}
 \left\|\partial^\alpha \mathbf{P}^{\perp} f^\varepsilon \right\|_{D}
 \lesssim  \sqrt{\mathcal{E}_{N}(t)} \mathcal{D}_{N}(t). \nonumber
\end{align*}
For the term $I_{3,3}$, similar to the term $I_{3,2}$, we can deduce from integration by parts with respect to velocity that $I_{3,3} \lesssim \sqrt{\mathcal{E}_{N}(t)} \mathcal{D}_{N}(t)$.
Finally, for the term $I_{3,4}$, we have
\begin{align*}
I_{3,4} \lesssim \;&\frac{1}{\varepsilon} \sum_{0 \neq \alpha_1 \leq \alpha } \int_{\mathbb{R}^3} \left|\partial^{\alpha_1} B^\varepsilon \right|
 \left|\langle v \rangle ^{1-\frac{\gamma+2s}{2}} \partial^{\alpha-\alpha_1}_{e_i} \mathbf{P}^{\perp} f^\varepsilon \right|_{L^2} \left|\partial^\alpha \mathbf{P}^{\perp} f^\varepsilon \right|_{L^2_{\gamma/2+s}} \d x,
\end{align*}
where we have used the notation $e_i$ to denote the multi-index with the $i$-th element unit and the rest zero. Now, we divide our computations into three cases.
For the case $|\alpha_1|=1$, by applying $L^\infty$--$L^2$--$L^2$ H\"{o}lder inequality, the fact
$(1+t)^{-\frac{\varrho+1}{2}}\langle v \rangle ^{1-\frac{\gamma+2s}{2}}
\leq \langle v \rangle^{\frac{\gamma+2s}{2}} \widetilde{w}_{l}(\alpha-\alpha_1, e_i)$ for any $1 \leq |\alpha| \leq N$, \eqref{Sobolev-ineq} and \eqref{assumption1},
we can derive that
\begin{align*}
I_{3,4} \lesssim \frac{1}{\varepsilon} \left\|\partial^{\alpha_1} B^\varepsilon \right\|_{L^\infty}
\left\|\langle v \rangle ^{1-\frac{\gamma+2s}{2}} \partial^{\alpha-\alpha_1}_{e_i} \mathbf{P}^{\perp} f^\varepsilon \right\|
\left\|\partial^\alpha \mathbf{P}^{\perp} f^\varepsilon \right\|_D
\lesssim \delta_0 \widetilde{\mathcal{D}}_{N,l}(t).
\end{align*}
For the case of $2 \leq |\alpha_1| \leq N-2$, noting the fact that
$\langle v \rangle ^{1-\frac{\gamma+2s}{2}} \leq \langle v \rangle^{\frac{\gamma+2s}{2}} \widetilde{w}_{l}(\alpha-\alpha_1, e_i)$
 for $|\alpha-\alpha_1|+1 \leq N-1$, by $L^\infty$--$L^2$--$L^2$ H\"{o}lder inequality and \eqref{Sobolev-ineq}, one has
$I_{3,4} \lesssim \sqrt{\mathcal{E}_{N}(t)} \widetilde{\mathcal{D}}_{N,l}(t)$. It should be noted that this case does not require the assumption \eqref{assumption1}, which is also the difference from the case $|\alpha_1|=1$.
For the last case $N-1 \leq |\alpha_1| \leq N$, employing $L^2$--$L^\infty$--$L^2$ H\"{o}lder inequality and \eqref{Sobolev-ineq}, it is easy to see
$I_{3,4} \lesssim \sqrt{\mathcal{E}_{N}(t)}  \widetilde{\mathcal{D}}_{N,l}(t)$,
where we used the fact
$\langle v \rangle ^{1-\frac{\gamma+2s}{2}} \leq \langle v \rangle^{\frac{\gamma+2s}{2}} \widetilde{w}_{l}(\alpha-\alpha_1+\alpha^\prime, e_i)$
 for $1\leq |\alpha^\prime| \leq 2$.
Collecting all the estimates of $I_{3,1}$--$I_{3,4}$, $\eqref{soft B 1-1}$ is thus proved.

Then we turn to estimate \eqref{soft B 1-4}. For brevity, we denote
$$
I_4 \equiv \frac{1}{\varepsilon}  \sum_{\substack{{\alpha_1\leq \alpha}\\{|\beta_1|  \leq 1}}} \left( q_0(\partial_{\beta_1}v \times \partial^{\alpha_1}B^\varepsilon) \cdot \nabla_v \partial^{\alpha-\alpha_1}_{\beta-\beta_1} \mathbf{P}^{\perp}f^{\varepsilon}, \widetilde{w}^2_l(\alpha,\beta)\partial^\alpha_\beta \mathbf{P}^{\perp} f^{\varepsilon} \right).
$$
If  $|\alpha_1|+|\beta_1|=0$, we can deduce from integration by parts with respect to velocity and the fact $(v\times B^\varepsilon) \cdot v=0$ that $I_4=0$.
If $|\alpha_1|+|\beta_1| \geq 1$ with $|\beta_1|=0$ and $|\alpha_1|\geq 1$, we need to make use of the interpolation inequality \eqref{interpolation}. More precisely, recalling $\overline{w}_l(\alpha,\beta)$ for soft potentials $\max\{-3, -\frac{3}{2}-2s\} < \gamma < -2s$ in \eqref{weight function2}, we first choose
$$
\langle v \rangle^\ell =\{\overline{w}_l(|\alpha|-1,|\beta|)\}^{1-s}\{\overline{w}_l(|\alpha|-1,|\beta|+1)\}^{-(1-s)},
$$
which implies that
$$
\langle v \rangle^{-\frac{\ell s}{1-s}} =\{\overline{w}_l(|\alpha|-1,|\beta|)\}^{-s}\{\overline{w}_l(|\alpha|-1,|\beta|+1)\}^{s}.
$$
Substituting them into \eqref{interpolation}, we obtain from \eqref{equivalent norm1} and \eqref{norm inequality} that
\begin{align}\label{inter H1}
&\left|\langle v\rangle^{1-\frac{\gamma}{2}} w_l(\alpha, \beta) \partial^{\alpha-\alpha_1}_{\beta}\mathbf{P}^{\perp} f^{\varepsilon}\right|_{H^1} \nonumber\\
\lesssim\;&\left|\langle v \rangle ^\ell\left(\langle v\rangle^{1-\frac{\gamma}{2}} w_l(\alpha, \beta) \partial^{\alpha-\alpha_1}_{\beta}\mathbf{P}^{\perp} f^{\varepsilon}\right)\right|_{H^{s}}
\!\!+\left|\langle v \rangle ^{-\frac{\ell s}{1-s}} \left(\langle v\rangle^{1-\frac{\gamma}{2}} w_l(\alpha, \beta) \partial^{\alpha-\alpha_1}_{\beta}\mathbf{P}^{\perp} f^{\varepsilon}\right)\right|_{H^{1+s}} \nonumber\\
 \lesssim \;& \left| \langle v \rangle ^\frac{\gamma}{2} w_l(|\alpha|-1,|\beta|) \partial^{\alpha-\alpha_1}_{\beta}\mathbf{P}^{\perp} f^\varepsilon\right|_{H^s}
+\left| \langle v \rangle ^\frac{\gamma}{2} w_l(|\alpha|-1,|\beta|+1) \partial^{\alpha-\alpha_1}_{\beta}\mathbf{P}^{\perp} f^{\varepsilon} \right|_{H^{1+s}} \nonumber\\
\lesssim \;& \left| w_l(|\alpha|-1, |\beta|) \partial^{\alpha-\alpha_1}_{\beta}\mathbf{P}^{\perp} f^\varepsilon\right|_{H^s_{\gamma/2}}
+\left| w_l(|\alpha|-1,|\beta|+1) \partial^{\alpha-\alpha_1}_{\beta+e_i}\mathbf{P}^{\perp} f^{\varepsilon} \right|_{H^{s}_{\gamma/2}} \nonumber\\
\lesssim \;&\left| w_l(|\alpha|-1, |\beta|) \partial^{\alpha-\alpha_1}_{\beta}\mathbf{P}^{\perp} f^\varepsilon\right|_{D}
+\left| w_l(|\alpha|-1,|\beta|+1) \partial^{\alpha-\alpha_1}_{\beta+e_i}\mathbf{P}^{\perp} f^{\varepsilon} \right|_{D},
\end{align}
where we have applied the facts
$$\langle v \rangle ^{1-\frac{\gamma}{2}}\overline{w}_l(\alpha, \beta)\leq \langle v \rangle^{\frac{\gamma}{2}} \{\overline{w}_l(|\alpha|-1, |\beta|)\}^s\{\overline{w}_l(|\alpha|-1,|\beta|+1)\}^{1-s}
$$
and
$$\left|\nabla_v \left(\langle v \rangle^{\frac{\gamma}{2}}w_{l}(|\alpha|-1, |\beta|+1) \right)\right|
\lesssim \langle v \rangle^{\frac{\gamma}{2}}w_{l}(|\alpha|-1, |\beta|+1)
\lesssim \langle v \rangle^{\frac{\gamma}{2}}w_{l}(|\alpha|-1, |\beta|).
$$
Combining with \eqref{inter H1}, one has
\begin{align*}
 I_4 \lesssim\;& \frac{1}{\varepsilon} \sum_{1 \leq |\alpha_1| \leq N-1}\int_{\mathbb{R}^3} \left|\partial^{\alpha_1} B^\varepsilon \right|
  \left|\langle v\rangle^{1-\frac{\gamma}{2}} \widetilde{w}_l(\alpha, \beta) \partial^{\alpha-\alpha_1}_{\beta+e_i}\mathbf{P}^{\perp} f^{\varepsilon}\right|_{L^2}
  \left|\widetilde{w}_{l}(\alpha,\beta)\partial^\alpha_\beta \mathbf{P}^{\perp}f^\varepsilon \right|_{L^2_{\gamma/2}} \d x \\
  \lesssim\;& \frac{1}{\varepsilon} \sum_{1 \leq |\alpha_1| \leq N-1}\int_{\mathbb{R}^3} \left|\partial^{\alpha_1} B^\varepsilon \right|
  \left|\langle v\rangle^{1-\frac{\gamma}{2}} \widetilde{w}_l(\alpha, \beta) \partial^{\alpha-\alpha_1}_{\beta}\mathbf{P}^{\perp} f^{\varepsilon}\right|_{H^1}
  \left|\widetilde{w}_{l}(\alpha,\beta)\partial^\alpha_\beta \mathbf{P}^{\perp}f^\varepsilon \right|_{L^2_{\gamma/2}} \d x \\
  \lesssim\;& \frac{1}{\varepsilon} \sum_{1 \leq |\alpha_1| \leq N-1}\int_{\mathbb{R}^3} \left|\partial^{\alpha_1} B^\varepsilon \right|
  \left| \widetilde{w}_l(|\alpha|-1, |\beta|) \partial^{\alpha-\alpha_1}_{\beta}\mathbf{P}^{\perp} f^\varepsilon\right|_{D}
  \left|\widetilde{w}_{l}(\alpha,\beta)\partial^\alpha_\beta \mathbf{P}^{\perp}f^\varepsilon \right|_{D} \d x \\
  &\!\!\!+\frac{1}{\varepsilon} \sum_{1 \leq |\alpha_1| \leq N-1}\int_{\mathbb{R}^3} \left|\partial^{\alpha_1} B^\varepsilon \right|
  \left| \widetilde{w}_l(|\alpha|-1, |\beta|+1) \partial^{\alpha-\alpha_1}_{\beta+e_i}\mathbf{P}^{\perp} f^\varepsilon\right|_{D}
  \left|\widetilde{w}_{l}(\alpha,\beta)\partial^\alpha_\beta \mathbf{P}^{\perp}f^\varepsilon \right|_{D} \d x \\
  :=\;&I_{4,1}+I_{4,2}.
\end{align*}
For the term $I_{4,1}$, one has from the H\"{o}lder inequality, \eqref{Sobolev-ineq}, \eqref{minkowski} and \eqref{weight inequality2} that
\begin{align*}
 I_{4,1} \lesssim\;& \frac{1}{\varepsilon} \sum_{|\alpha_1|=1} \left\|\partial^{\alpha_1} B^\varepsilon \right\|_{L^\infty}
 \left\| \widetilde{w}_l(|\alpha|-1, |\beta|) \partial^{\alpha-\alpha_1}_{\beta}\mathbf{P}^{\perp} f^\varepsilon\right\|_{D}
 \left\|\widetilde{w}_{l}(\alpha,\beta)\partial^\alpha_\beta \mathbf{P}^{\perp}f^\varepsilon \right\|_{D} \\
&+ \frac{1}{\varepsilon} \sum_{2 \leq |\alpha_1|\leq N-1} \!\! \left\|\partial^{\alpha_1} B^\varepsilon \right\|_{L^6}
 \left\| \widetilde{w}_l(|\alpha|-1, |\beta|) \partial^{\alpha-\alpha_1}_{\beta}\mathbf{P}^{\perp} f^\varepsilon\right\|_{L^3_x L^2_D}
 \left\|\widetilde{w}_{l}(\alpha,\beta)\partial^\alpha_\beta \mathbf{P}^{\perp}f^\varepsilon \right\|_{D} \\
 \lesssim\;& \sqrt{\mathcal{E}_{N}(t)}\widetilde{\mathcal{D}}_{N,l}(t).
\end{align*}
The term $I_{4,2}$ can be treated in the similar way as the term $I_{4,1}$ to get the same upper bound.

It remains to consider the case $|\alpha_1| + |\beta_1| \geq  1$ with $|\beta_1| = 1$ and $|\alpha_1| \geq 0$. By using the similar way as the previous case, we can get $I_{4} \lesssim \sqrt{\mathcal{E}_{N}(t)}\widetilde{\mathcal{D}}_{N,l}(t)$.

Collecting all the estimates of $I_4$, one can see that \eqref{soft B 1-4} holds true. Therefore, Lemma \ref{soft B 1} is proved.
\end{proof}

\begin{lemma}\label{hard B 1}
Let $\gamma+2s\geq 0 $, $0 < s < 1$ and $0 < \varepsilon \leq 1$.
\begin{itemize}
\setlength{\leftskip}{-6mm}
\item[(1)] For $1 \leq | \alpha | \leq N$, there holds
\begin{align}\label{hard B 1-1}
 \frac{1}{\varepsilon} \left(\partial^{\alpha}(q_0(v \times B^\varepsilon) \cdot \nabla_v f^{\varepsilon}), \partial^\alpha f^{\varepsilon} \right)
\lesssim \left\{ \delta_0 + \sqrt{\mathcal{E}_{N}(t)} \right\} \widetilde{\mathcal{D}}_{N,l}(t).
\end{align}
\item[(2)] For $ | \alpha |+ |\beta| \leq N-1$,  there holds
\begin{align}\label{hard B 1-2}
 \frac{1}{\varepsilon}  \left(\partial^{\alpha}_{\beta}(q_0(v \times B^\varepsilon) \cdot \nabla_v \mathbf{P}^{\perp}f^{\varepsilon}), \widetilde{w}^2_l(\alpha,\beta)\partial^\alpha_\beta \mathbf{P}^{\perp} f^{\varepsilon} \right)
\lesssim \sqrt{\mathcal{E}_{N}(t)} \widetilde{\mathcal{D}}_{N,l}(t).
\end{align}
\item[(3)] For $ | \alpha | = N$,  there holds
\begin{align}\label{hard B 1-3}
\left(\partial^{\alpha}(q_0(v \times B^\varepsilon) \cdot \nabla_v f^{\varepsilon}),  \widetilde{w}^2_l(\alpha,0) \partial^\alpha f^{\varepsilon} \right)
\lesssim \sqrt{\mathcal{E}_{N}(t)} \widetilde{\mathcal{D}}_{N,l}(t).
\end{align}
\item[(4)] For $| \alpha | + | \beta | = N$, $| \beta | \geq 1$ and
$| \alpha | \leq N-1$,  there holds
\begin{align}\label{hard B 1-4}
 \frac{1}{\varepsilon}  \left(\partial^{\alpha}_{\beta}(q_0(v \times B^\varepsilon) \cdot \nabla_v \mathbf{P}^{\perp}f^{\varepsilon}), \widetilde{w}^2_l(\alpha,\beta)\partial^\alpha_\beta \mathbf{P}^{\perp} f^{\varepsilon} \right)
\lesssim \sqrt{\mathcal{E}_{N}(t)} \widetilde{\mathcal{D}}_{N,l}(t).
\end{align}
\end{itemize}
\end{lemma}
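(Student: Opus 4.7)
The argument closely parallels that of Lemma \ref{soft B 1}, so we indicate only the modifications required by the hard-potential weight
$$
\overline{w}_l(\alpha,\beta)=\langle v\rangle^{l-m|\alpha|-m|\beta|},
\qquad m=\max\left\{\tfrac{\gamma+2s}{2},\tfrac{1}{s}\right\},
$$
and focus on the estimate \eqref{hard B 1-4}, which is the only genuinely nontrivial case. The estimates \eqref{hard B 1-1}--\eqref{hard B 1-3} will be established in the same way as \eqref{soft B 1-1}--\eqref{soft B 1-3}, by carrying out the macro--micro decomposition, using the identity $(v\times B^\varepsilon)\cdot v=0$ to kill the purely macroscopic contribution, and applying H\"older, \eqref{Sobolev-ineq}, \eqref{minkowski}, the decay hypothesis \eqref{assumption1}, and the weight monotonicity \eqref{weight inequality2}. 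In fact, because $|\cdot|_{L^2_{\gamma/2+s}}\lesssim|\cdot|_D$ when $\gamma+2s\ge 0$, several of the soft-potential estimates (notably the ones that used $\langle v\rangle^{-\gamma/2}$ to shift weights into $|\cdot|_D$) become strictly easier here, so no new idea is needed.

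The plan for \eqref{hard B 1-4} is as follows. First, expand $\partial_\beta^\alpha(q_0(v\times B^\varepsilon)\cdot\nabla_v\mathbf{P}^\perp f^\varepsilon)$ via the Leibniz rule into a sum over $\alpha_1\le\alpha$ and $|\beta_1|\le 1$, and observe that the term with $|\alpha_1|+|\beta_1|=0$ vanishes again by $(v\times B^\varepsilon)\cdot v=0$ after integration by parts in $v$. For each remaining term it suffices to bound
$$
\frac{1}{\varepsilon}\int_{\mathbb{R}^3}|\partial^{\alpha_1}B^\varepsilon|\,
\bigl|\langle v\rangle\,\widetilde{w}_l(\alpha,\beta)\partial^{\alpha-\alpha_1}_{\beta+e_i-\beta_1}\mathbf{P}^\perp f^\varepsilon\bigr|_{L^2}
\bigl|\widetilde{w}_l(\alpha,\beta)\partial^\alpha_\beta\mathbf{P}^\perp f^\varepsilon\bigr|_D\,\d x.
$$

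The heart of the matter is to absorb the extra factor $\langle v\rangle$ into the weight so that the middle factor matches a piece of $\widetilde{\mathcal{E}}_{N,l}(t)$ or $\widetilde{\mathcal{D}}_{N,l}(t)$. Mimicking the proof of \eqref{soft B 1-4}, apply Lemma \ref{interpolation1} with the choice
$$
\langle v\rangle^\ell=\{\overline{w}_l(|\alpha|-1,|\beta|)\}^{1-s}\{\overline{w}_l(|\alpha|-1,|\beta|+1)\}^{-(1-s)},
$$
which yields
$$
\langle v\rangle^{-\frac{\ell s}{1-s}}=\{\overline{w}_l(|\alpha|-1,|\beta|)\}^{-s}\{\overline{w}_l(|\alpha|-1,|\beta|+1)\}^{s}.
$$
The critical algebraic check is
$$
\langle v\rangle\,\overline{w}_l(\alpha,\beta)\le \{\overline{w}_l(|\alpha|-1,|\beta|)\}^s\{\overline{w}_l(|\alpha|-1,|\beta|+1)\}^{1-s},
$$
which reduces to $sm\ge 1$, exactly ensured by the design $m\ge 1/s$ built into the hard-potential weight \eqref{weight function2}. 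Together with \eqref{equivalent norm1} and the fact that $|\cdot|_D$ dominates $|\cdot|_{H^s_{\gamma/2}}$, this interpolation step upgrades the underbraced factor to
$$
\bigl|\widetilde{w}_l(|\alpha|-1,|\beta|)\partial^{\alpha-\alpha_1}_\beta\mathbf{P}^\perp f^\varepsilon\bigr|_D
+\bigl|\widetilde{w}_l(|\alpha|-1,|\beta|+1)\partial^{\alpha-\alpha_1}_{\beta+e_i}\mathbf{P}^\perp f^\varepsilon\bigr|_D.
$$

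Once this is in hand, the spatial integrals are closed case by case according to $|\alpha_1|$: for $|\alpha_1|=1$ use $L^\infty$--$L^2$--$L^2$ H\"older together with \eqref{Sobolev-ineq} to put $\partial^{\alpha_1}B^\varepsilon$ in $L^\infty$, which by \eqref{Sobolev-ineq} and the third-order smoothness of $B^\varepsilon$ available in $\mathcal{E}_N(t)$ gives the $\sqrt{\mathcal{E}_N(t)}$ factor; for $2\le|\alpha_1|\le N-1$ switch to $L^6$--$L^3$--$L^2$ H\"older combined with \eqref{Sobolev-ineq} and \eqref{minkowski}; the $|\beta_1|=1$ contribution is entirely analogous and in fact simpler since no interpolation is needed. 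In every case the weight monotonicity \eqref{weight inequality2} matches the resulting weighted derivatives to terms in $\widetilde{\mathcal{E}}_{N,l}(t)$ and $\widetilde{\mathcal{D}}_{N,l}(t)$. The principal obstacle, as in the soft case, is the algebraic compatibility of the interpolation with the weight function, but the condition $m\ge 1/s$ is precisely what \eqref{weight function2} encodes for $\gamma+2s\ge 0$, so the argument closes. Putting everything together yields \eqref{hard B 1-4}, which completes the proof.
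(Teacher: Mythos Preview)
Your proposal is correct and follows essentially the same approach as the paper: both reduce to the soft-potential argument of Lemma \ref{soft B 1}, with the single substantive change being the algebraic verification $\langle v\rangle\,\overline{w}_l(\alpha,\beta)\le \{\overline{w}_l(|\alpha|-1,|\beta|)\}^{s}\{\overline{w}_l(|\alpha|-1,|\beta|+1)\}^{1-s}$, which is exactly the condition $m\ge 1/s$ built into the hard-potential weight. The paper's own proof is in fact just a two-sentence pointer to this modification, so your sketch is, if anything, more detailed than what appears there.
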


\begin{proof}
This lemma can be proved similarly as that of Lemma \ref{soft B 1}. The only difference is that we design the weight function $\overline{w}_l(\alpha,\beta)=\langle v \rangle ^{l-m|\alpha|-m|\beta|}$ with $m \geq \frac{1}{s}$ such that the inequality
$\langle v \rangle \overline{w}_{l}(\alpha,\beta) \leq \{\overline{w}_{l}(|\alpha|-1,|\beta|)\}^{s}\{\overline{w}_{l}(|\alpha|-1,|\beta|+1)\}^{1-s}$ holds.
In fact, to overcome the singularity $\frac{1}{\varepsilon}$ and the velocity growth in the above estimates, we have to use the interpolation inequality \eqref{interpolation} again. For brevity, we omit the details.
\end{proof}

The following two lemmas concern the estimates on the nonlinear term $ E^\varepsilon \cdot v f^{\varepsilon}$.
\begin{lemma}\label{soft E 1}
Let $\max\left\{-3,-\frac{3}{2}-2s\right\} < \gamma <-2s $, $0 < s < 1$ and $0 < \varepsilon \leq 1$.
\begin{itemize}
\setlength{\leftskip}{-6mm}
\item[(1)] For $ | \alpha | \leq N$, there holds
\begin{align}\label{soft E 1-1}
\left( \partial^{\alpha} (E^\varepsilon \cdot v f^\varepsilon ),  \partial^\alpha f^{\varepsilon} \right)
\lesssim \left\{\delta_0 + \sqrt{\mathcal{E}_{N}(t)}\right\} \widetilde{\mathcal{D}}_{N,l}(t).
\end{align}
\item[(2)] For $ | \alpha |+|\beta| \leq N-1$,  there holds
\begin{align}\label{soft E 1-2}
\left( \partial^{\alpha}_{\beta} (E^\varepsilon \cdot v \mathbf{P}^{\perp} f^\varepsilon ),  \widetilde{w}^2_{l}(\alpha,\beta)\partial^\alpha_\beta \mathbf{P}^{\perp} f^{\varepsilon} \right)
\lesssim \left\{\delta_0 + \sqrt{\mathcal{E}_{N}(t)}\right\} \widetilde{\mathcal{D}}_{N,l}(t).
\end{align}
\item[(3)] For $ | \alpha | = N$,  there holds
\begin{align}\label{soft E 1-3}
\varepsilon \left( \partial^{\alpha} (E^\varepsilon \cdot v  f^\varepsilon ),  \widetilde{w}^2_{l}(\alpha, 0)\partial^\alpha f^{\varepsilon} \right)
\lesssim \left\{\delta_0 + \sqrt{\mathcal{E}_{N}(t)}\right\} \widetilde{\mathcal{D}}_{N,l}(t).
\end{align}
\item[(4)] For $| \alpha | + | \beta | = N$, $| \beta | \geq 1$ and
$| \alpha | \leq N-1$,  there holds
\begin{align}\label{soft E 1-4}
\left( \partial^{\alpha}_{\beta} (E^\varepsilon \cdot v \mathbf{P}^{\perp} f^\varepsilon ),  \widetilde{w}^2_{l}(\alpha,\beta)\partial^\alpha_\beta \mathbf{P}^{\perp} f^{\varepsilon} \right)
\lesssim \left\{\delta_0 + \sqrt{\mathcal{E}_{N}(t)}\right\} \widetilde{\mathcal{D}}_{N,l}(t).
\end{align}
\end{itemize}
\end{lemma}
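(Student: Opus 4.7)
The plan is to handle all four parts in parallel by first applying Leibniz's rule. Since $v$ is linear in the velocity variable, $\partial_{\beta_1} v \equiv 0$ whenever $|\beta_1|\geq 2$, so
\begin{align*}
\partial^\alpha_\beta(E^\varepsilon \cdot v\, h) = \sum_{\alpha_1 \leq \alpha}\sum_{|\beta_1| \leq 1} C^{\alpha_1,\beta_1}_{\alpha,\beta}\, \partial^{\alpha_1} E^\varepsilon \cdot \partial_{\beta_1} v \cdot \partial^{\alpha-\alpha_1}_{\beta-\beta_1} h,
\end{align*}
where $h$ stands for $f^\varepsilon$ or $\mathbf{P}^{\perp} f^\varepsilon$ according to the part in question. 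Each of the four parts is then obtained by estimating every term in this expansion by $\{\delta_0 + \sqrt{\mathcal{E}_N(t)}\}\widetilde{\mathcal{D}}_{N,l}(t)$.

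The principal contribution comes from $\alpha_1 = 0$ and $\beta_1 = 0$, where the full derivative lands on $h$. For this, starting from
\begin{align*}
\bigl(E^\varepsilon \cdot v\, \partial^\alpha_\beta h,\, \widetilde{w}_l^2(\alpha,\beta)\, \partial^\alpha_\beta h\bigr) \lesssim \|E^\varepsilon\|_{L^\infty_x}\, \bigl\|\langle v\rangle^{1/2}\widetilde{w}_l(\alpha,\beta)\, \partial^\alpha_\beta h\bigr\|^2,
\end{align*}
one invokes Sobolev embedding \eqref{Sobolev-ineq} together with the decay assumption \eqref{assumption1} to get $\|E^\varepsilon\|_{L^\infty_x} \lesssim \|\nabla_x E^\varepsilon\|_{H^1_x} \lesssim \delta_0 (1+t)^{-(\varrho+1)/2}$. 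The restriction $\vartheta \leq (\varrho-1)/2$ in \eqref{mainth1 assumption} yields $(\varrho+1)/2 \geq 1+\vartheta$, so $\|E^\varepsilon\|_{L^\infty_x} \lesssim \delta_0 (1+t)^{-(1+\vartheta)}$, which matches exactly the dissipative quantity $(1+t)^{-(1+\vartheta)}\|\langle v\rangle^{1/2}\widetilde{w}_l(\alpha,\beta)\partial^\alpha_\beta \mathbf{P}^{\perp} f^\varepsilon\|^2$ (or its $\varepsilon$-weighted analogue for $|\alpha|=N$) contained in $\widetilde{\mathcal{D}}_{N,l}(t)$. This produces the advertised $\delta_0\, \widetilde{\mathcal{D}}_{N,l}(t)$ contribution and is precisely the role of the time-velocity factor $e^{q\langle v\rangle/(1+t)^\vartheta}$ in the weight \eqref{weight function} emphasized in the introduction.

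For the remaining cross terms with $|\alpha_1|+|\beta_1|\geq 1$, the strategy mirrors Lemmas \ref{soft Gamma} and \ref{soft B 1}: we distribute the Hölder product in $x$ as $L^\infty$--$L^2$--$L^2$ when $|\alpha_1|\leq 2$ (via \eqref{Sobolev-ineq} and \eqref{minkowski}), as $L^6$--$L^3$--$L^2$ for intermediate $|\alpha_1|$, and flip to $L^\infty$--$L^2$--$L^2$ again when $|\alpha_1|$ is near the top order so that the factor carrying few derivatives is placed in $L^\infty_x$. Whenever $|\alpha_1|\leq N_0$, assumption \eqref{assumption1} provides sufficient decay on $\partial^{\alpha_1} E^\varepsilon$; otherwise it is simply controlled by $\sqrt{\mathcal{E}_N(t)}$. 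The single factor of $\langle v\rangle$ brought in by $v$ (or $\partial_{\beta_1} v$ with $|\beta_1|=1$) is absorbed by the key weight monotonicity
\begin{align*}
\langle v\rangle\, \widetilde{w}_l(\alpha,\beta) \lesssim \widetilde{w}_l(\alpha',\beta'),\qquad |\alpha'|+|\beta'|\leq |\alpha|+|\beta|-1,
\end{align*}
which holds under \eqref{weight function2} and \eqref{mainth1 assumption} because $m+\gamma>1$ and $m>1$ in the soft potential regime. The resulting estimates are uniformly bounded by $\sqrt{\mathcal{E}_N(t)}\,\widetilde{\mathcal{D}}_{N,l}(t)$.

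The chief obstacle is Part (3), the top-order case $|\alpha|=N$ accompanied by the extra prefactor $\varepsilon$. When $|\alpha_1|=N$ hits $E^\varepsilon$, the norm $\|\partial^N E^\varepsilon\|$ lies beyond the range of \eqref{assumption1} (valid only up to $N_0=N-3$) and is absent from $\mathcal{D}_N(t)$; it can only be bounded by $\sqrt{\mathcal{E}_N(t)}$. The strategy is to pair $\partial^N E^\varepsilon$ in $L^2_x$ with $v f^\varepsilon$ estimated in $L^\infty_x L^2_v$ against the weight $\widetilde{w}_l(\alpha,0)\langle v\rangle$ --- controlled by $\sqrt{\widetilde{\mathcal{E}}_{N,l}(t)}$ via Sobolev embedding after splitting $f^\varepsilon = \mathbf{P} f^\varepsilon + \mathbf{P}^{\perp} f^\varepsilon$ and applying the weight monotonicity --- and then to exploit the built-in time factor $(1+t)^{-(1+\varepsilon_0)/4}$ baked into $\widetilde{w}_l(\alpha,0)$ together with the prefactor $\varepsilon$ to balance against the dissipative quantities $\tfrac{\varepsilon}{1+t}\|\widetilde{w}_l(\alpha,0)\partial^\alpha f^\varepsilon\|^2$ and $\tfrac{\varepsilon}{(1+t)^{1+\vartheta}}\|\langle v\rangle^{1/2}\widetilde{w}_l(\alpha,0)\partial^\alpha f^\varepsilon\|^2$ in $\widetilde{\mathcal{D}}_{N,l}(t)$. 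A suitably weighted Young's inequality, in the same spirit as the top-order analysis in Lemma \ref{soft B 1}, then closes the estimate with bound $\sqrt{\mathcal{E}_N(t)}\,\widetilde{\mathcal{D}}_{N,l}(t)$.
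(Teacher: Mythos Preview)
Your overall plan—Leibniz expansion, treating the $|\alpha_1|+|\beta_1|=0$ term via the decay assumption \eqref{assumption1} together with the extra dissipation $(1+t)^{-(1+\vartheta)}\|\langle v\rangle^{1/2}\widetilde{w}_l\partial^\alpha_\beta\mathbf{P}^\perp f^\varepsilon\|^2$, and handling the cross terms by H\"older splits—is the same as the paper's. Two points deserve correction.

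\textbf{Cross terms: the weight gain must be sharper.} For the cross terms you invoke $\langle v\rangle\,\widetilde{w}_l(\alpha,\beta)\lesssim\widetilde{w}_l(\alpha',\beta')$. This inequality is true but too weak to close: it leaves the middle factor as a plain weighted $L^2$ norm (energy), so the product becomes $\sqrt{\mathcal{E}_N}\,\sqrt{\widetilde{\mathcal{E}}_{N,l}}\,\sqrt{\widetilde{\mathcal{D}}_{N,l}}$, not $\sqrt{\mathcal{E}_N}\,\widetilde{\mathcal{D}}_{N,l}$. The paper instead distributes the $v$-growth as $\langle v\rangle^{1-\frac{\gamma+2s}{2}}\cdot\langle v\rangle^{\frac{\gamma+2s}{2}}$ and uses the stronger monotonicity
\[
\langle v\rangle^{1-\frac{\gamma+2s}{2}}\widetilde{w}_l(\alpha,\beta)\leq \langle v\rangle^{\frac{\gamma+2s}{2}}\widetilde{w}_l(\alpha-\alpha_1,\beta-\beta_1),
\]
so that \emph{both} $\mathbf{P}^\perp f^\varepsilon$ factors land in the $D$-norm dissipation. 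Since $\frac{1}{\varepsilon^2}\|\widetilde{w}_l\cdots\|_D^2\lesssim\widetilde{\mathcal{D}}_{N,l}$ for each, the product is $\sqrt{\mathcal{E}_N}\cdot\varepsilon\sqrt{\widetilde{\mathcal{D}}_{N,l}}\cdot\varepsilon\sqrt{\widetilde{\mathcal{D}}_{N,l}}\lesssim\sqrt{\mathcal{E}_N}\,\widetilde{\mathcal{D}}_{N,l}$.

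\textbf{Part (3), case $|\alpha_1|=N$: your proposed balance fails.} You place the middle factor $\langle v\rangle\widetilde{w}_l(\alpha,0)f^\varepsilon$ in $L^\infty_xL^2_v$ bounded by $\sqrt{\widetilde{\mathcal{E}}_{N,l}}$, and the last factor in the time-weighted extra dissipation $\frac{\varepsilon}{1+t}\|\widetilde{w}_l(\alpha,0)\partial^\alpha f^\varepsilon\|^2$ or $\frac{\varepsilon}{(1+t)^{1+\vartheta}}\|\langle v\rangle^{1/2}\widetilde{w}_l(\alpha,0)\partial^\alpha f^\varepsilon\|^2$. Tracking the time exponents gives
\[
\varepsilon\,\|\partial^\alpha E^\varepsilon\|\cdot(1+t)^{-\frac{1+\varepsilon_0}{4}}\sqrt{\widetilde{\mathcal{E}}_{N,l}}\cdot\sqrt{\tfrac{(1+t)^{1+\vartheta}}{\varepsilon}}\sqrt{\widetilde{\mathcal{D}}_{N,l}}
=\sqrt{\varepsilon}\,(1+t)^{\frac{1+2\vartheta-\varepsilon_0}{4}}\sqrt{\mathcal{E}_N}\,\sqrt{\widetilde{\mathcal{E}}_{N,l}}\,\sqrt{\widetilde{\mathcal{D}}_{N,l}},
\]
whose time factor \emph{grows} and which carries only one $\sqrt{\widetilde{\mathcal{D}}_{N,l}}$; no Young split repairs this. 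The fix is the same trick as above: split $\langle v\rangle=\langle v\rangle^{1-\frac{\gamma+2s}{2}}\cdot\langle v\rangle^{\frac{\gamma+2s}{2}}$, use $L^2$--$L^\infty$--$L^2$ (since $L^6$ on $\partial^\alpha E^\varepsilon$ is unavailable at $|\alpha|=N$), and bound \emph{both} $f^\varepsilon$ factors through the $D$-norm after the weight adjustment. Each macro/micro piece then contributes $\sqrt{\mathcal{D}_N}$ or $\varepsilon^{1/2}\sqrt{\widetilde{\mathcal{D}}_{N,l}}$, and the $\varepsilon$ prefactor makes every combination $\lesssim\sqrt{\mathcal{E}_N}\,\widetilde{\mathcal{D}}_{N,l}$ without any time-growth.
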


\begin{proof}
For the sake of simplicity, we only prove \eqref{soft E 1-4}. The estimates \eqref{soft E 1-1}--\eqref{soft E 1-3} exclusively concerning $\mathbf{P}^{\perp} f^\varepsilon$ can be proved similarly, while those concerning $\mathbf{P}f^\varepsilon$ can be proved using the similar method as that of $I_{3,2}$ in Lemma \ref{soft B 1}.

To prove \eqref{soft E 1-4}, for simplicity's sake, we write
$$
I_5 \equiv \left( \partial^{\alpha_1} E^\varepsilon \cdot \partial_{\beta_1}v \partial^{\alpha-\alpha_1}_{\beta-\beta_1}\mathbf{P}^{\perp} f^\varepsilon ,  \widetilde{w}^2_{l}(\alpha,\beta)\partial^\alpha_\beta \mathbf{P}^{\perp} f^{\varepsilon} \right).
$$
If $|\alpha_1|+|\beta_1|=0$, we can derive from $L^\infty$--$L^2$--$L^2$ H\"{o}lder inequality, \eqref{Sobolev-ineq}, \eqref{assumption1} and the fact $(1+t)^{-\frac{1+\varrho}{2}} \leq (1+t)^{-1-\vartheta}$ for $0< \vartheta \leq \frac{\varrho-1}{2}$ that
\begin{align*}
I_5 \lesssim\;& \left\|E^\varepsilon\right\|_{L^\infty} \left\| \langle v \rangle^{\frac{1}{2}}\widetilde{w}^2_{l}(\alpha,\beta)\partial^\alpha_\beta \mathbf{P}^{\perp} f^{\varepsilon}\right\|^2 \\
\lesssim\;& (1+t)^{-\frac{1+\varrho}{2}} \delta_0 \left\| \langle v \rangle^{\frac{1}{2}}\widetilde{w}^2_{l}(\alpha,\beta)\partial^\alpha_\beta \mathbf{P}^{\perp} f^{\varepsilon}\right\|^2 \\
\lesssim\;& \delta_0 \widetilde{\mathcal{D}}_{N,l}(t).
\end{align*}
If $|\alpha_1|+|\beta_1|=1$, by using $L^\infty$--$L^2$--$L^2$ H\"{o}lder inequality and \eqref{Sobolev-ineq}, we have

\begin{align*}
I_5 \lesssim\;& \int_{\mathbb{R}^3} \left|\partial^{\alpha_1} E^\varepsilon\right|
\left| \langle v \rangle^{1-\frac{\gamma+2s}{2}}\widetilde{w}_{l}(\alpha,\beta)\partial^{\alpha-\alpha_1}_{\beta-\beta_1} \mathbf{P}^{\perp} f^{\varepsilon}\right |_{L^2}
\left| \widetilde{w}_{l}(\alpha,\beta)\partial^\alpha_\beta \mathbf{P}^{\perp} f^{\varepsilon}\right|_{D} \d x \\
\lesssim\;&\left\|\partial^{\alpha_1} E^\varepsilon\right\|_{L^\infty}
\left\| \widetilde{w}_{l}(\alpha-\alpha_1,\beta-\beta_1)\partial^{\alpha-\alpha_1}_{\beta-\beta_1} \mathbf{P}^{\perp} f^{\varepsilon}\right \|_{D}
\left\| \widetilde{w}_{l}(\alpha,\beta)\partial^\alpha_\beta \mathbf{P}^{\perp} f^{\varepsilon}\right\|_{D} \\
\lesssim\;& \sqrt{\mathcal{E}_{N}(t)} \widetilde{\mathcal{D}}_{N,l}(t).
\end{align*}
Here, we used the fact
$\langle v \rangle ^{1-\frac{\gamma+2s}{2}}\widetilde{w}_{l}(\alpha,\beta) \leq \langle v \rangle^{\frac{\gamma+2s}{2}}\widetilde{w}_{l}(\alpha-\alpha_1,\beta-\beta_1)$.
For the remaining case $2 \leq |\alpha_1|+|\beta_1| \leq N$, in view of the estimate
$\langle v \rangle ^{1-\frac{\gamma+2s}{2}}\widetilde{w}_{l}(\alpha,\beta) \leq \langle v \rangle^{\frac{\gamma+2s}{2}}\widetilde{w}_{l}(\alpha-\alpha_1+\alpha^\prime,\beta-\beta_1)$ for $|\alpha^\prime| \leq 1$, one has
\begin{align*}
I_5 \lesssim\;& \int_{\mathbb{R}^3} \left|\partial^{\alpha_1} E^\varepsilon\right|
\left| \langle v \rangle^{1-\frac{\gamma+2s}{2}}\widetilde{w}_{l}(\alpha,\beta)\partial^{\alpha-\alpha_1}_{\beta-\beta_1} \mathbf{P}^{\perp} f^{\varepsilon}\right |_{L^2}
\left| \widetilde{w}_{l}(\alpha,\beta)\partial^\alpha_\beta \mathbf{P}^{\perp} f^{\varepsilon}\right|_{D} \d x \\
\lesssim\;&\left\|\partial^{\alpha_1} E^\varepsilon\right\|_{L^6}
\left\| \widetilde{w}_{l}(\alpha-\alpha_1+\alpha^\prime,\beta-\beta_1)\partial^{\alpha-\alpha_1}_{\beta-\beta_1} \mathbf{P}^{\perp} f^{\varepsilon}\right \|_{L^3_x L^2_D}
\left\| \widetilde{w}_{l}(\alpha,\beta)\partial^\alpha_\beta \mathbf{P}^{\perp} f^{\varepsilon}\right\|_{D} \\
\lesssim\;& \sqrt{\mathcal{E}_{N}(t)} \widetilde{\mathcal{D}}_{N,l}(t),
\end{align*}
where we have employed $L^6$--$L^3$--$L^2$ H\"{o}lder inequality, \eqref{norm inequality}, \eqref{Sobolev-ineq} and \eqref{minkowski}.

Therefore, collecting the above estimates, we prove \eqref{soft E 1-4}. This completes the proof of Lemma \ref{soft E 1}.
\end{proof}

\begin{lemma}\label{hard E 1}
Let $\gamma + 2s \geq 0 $, $0 < s < 1$ and $0 < \varepsilon \leq 1$.
\begin{itemize}
\setlength{\leftskip}{-6mm}
\item[(1)] For $ | \alpha | \leq N$, there holds
\begin{align}\label{hard E 1-1}
\left( \partial^{\alpha} (E^\varepsilon \cdot v f^\varepsilon ),  \partial^\alpha f^{\varepsilon} \right)
\lesssim \left\{\delta_0 + \sqrt{\mathcal{E}_{N}(t)}\right\} \widetilde{\mathcal{D}}_{N,l}(t).
\end{align}
\item[(2)] For $ | \alpha |+|\beta| \leq N-1$,  there holds
\begin{align}\label{hard E 1-2}
\left( \partial^{\alpha}_{\beta} (E^\varepsilon \cdot v \mathbf{P}^{\perp} f^\varepsilon ),  \widetilde{w}^2_{l}(\alpha,\beta)\partial^\alpha_\beta \mathbf{P}^{\perp} f^{\varepsilon} \right)
\lesssim \left\{\delta_0 + \sqrt{\mathcal{E}_{N}(t)}\right\} \widetilde{\mathcal{D}}_{N,l}(t).
\end{align}
\item[(3)] For $ | \alpha | = N$,  there holds
\begin{align}\label{hard E 1-3}
\varepsilon \left( \partial^{\alpha} (E^\varepsilon \cdot v  f^\varepsilon ),  \widetilde{w}^2_{l}(\alpha, 0)\partial^\alpha f^{\varepsilon} \right)
\lesssim \left\{\delta_0 + \sqrt{\mathcal{E}_{N}(t)}\right\} \widetilde{\mathcal{D}}_{N,l}(t).
\end{align}
\item[(4)] For $| \alpha | + | \beta | = N$, $| \beta | \geq 1$ and
$| \alpha | \leq N-1$,  there holds
\begin{align}\label{hard E 1-4}
\left( \partial^{\alpha}_{\beta} (E^\varepsilon \cdot v \mathbf{P}^{\perp} f^\varepsilon ),  \widetilde{w}^2_{l}(\alpha,\beta)\partial^\alpha_\beta \mathbf{P}^{\perp} f^{\varepsilon} \right)
\lesssim \left\{\delta_0 + \sqrt{\mathcal{E}_{N}(t)}\right\} \widetilde{\mathcal{D}}_{N,l}(t).
\end{align}
\end{itemize}
\end{lemma}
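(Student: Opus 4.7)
The plan is to run the same structural argument as in the proof of Lemma~\ref{soft E 1}, now adapted to the hard-potential weight given by \eqref{weight function2}, namely $\overline{w}_l(\alpha,\beta) = \langle v \rangle^{l - m|\alpha| - m|\beta|}$ with $m \geq \max\{(\gamma+2s)/2,\, 1/s\}$. The decisive arithmetic fact I will rely on is that $m \geq 1/s > 1$, which yields the weight inequality $\langle v \rangle\,\widetilde{w}_l(\alpha,\beta) \lesssim \widetilde{w}_l(\alpha-\alpha_1, \beta-\beta_1)$ for every $|\alpha_1|+|\beta_1| \geq 1$; this allows the velocity factor $v$ coming from $E^\varepsilon \cdot v\, f^\varepsilon$ to be absorbed into the weight as soon as at least one derivative falls on $E^\varepsilon$ or on $v$. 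In parallel, the hard-potential assumption $\gamma + 2s \geq 0$ gives $|\cdot|_{L^2} \lesssim |\cdot|_{L^2_{\gamma/2+s}} \lesssim |\cdot|_{D}$ via \eqref{norm inequality}, which will convert bare $L^2$-norms into dissipation norms. I will write out \eqref{hard E 1-4} in detail; the statements \eqref{hard E 1-1}--\eqref{hard E 1-3} follow by the same case analysis, and the $\mathbf{P}f^\varepsilon$-contributions are handled exactly as for $I_{3,2}$ in Lemma~\ref{soft B 1}.

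By Leibniz rule, the left-hand side of \eqref{hard E 1-4} is a sum of terms
\begin{equation*}
I_6 \equiv \bigl(\partial^{\alpha_1} E^\varepsilon \cdot \partial_{\beta_1} v\, \partial^{\alpha-\alpha_1}_{\beta-\beta_1} \mathbf{P}^{\perp} f^\varepsilon,\; \widetilde{w}_l^2(\alpha,\beta)\, \partial^\alpha_\beta \mathbf{P}^{\perp} f^\varepsilon\bigr)
\end{equation*}
with $\alpha_1 \leq \alpha$ and $|\beta_1| \leq 1$; I will split by the size of $|\alpha_1|+|\beta_1|$. The hardest subcase, and the one I expect to be the main obstacle, is $|\alpha_1|+|\beta_1|=0$: since no derivative falls on $E^\varepsilon$, the weight-absorption trick is unavailable and one must exploit the extra dissipation produced by the time-velocity weight. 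I estimate by $L^\infty_x$--$L^2_{x,v}$--$L^2_{x,v}$ H\"older together with $|v|\lesssim \langle v\rangle$ to get
\begin{equation*}
I_6 \lesssim \|E^\varepsilon\|_{L^\infty} \bigl\|\langle v \rangle^{1/2} \widetilde{w}_l(\alpha,\beta) \partial^\alpha_\beta \mathbf{P}^\perp f^\varepsilon\bigr\|^2,
\end{equation*}
and then combine \eqref{Sobolev-ineq} with the a priori decay \eqref{assumption1} to obtain $\|E^\varepsilon\|_{L^\infty}\lesssim \delta_0(1+t)^{-(1+\varrho)/2}$. Since $\vartheta \leq (\varrho-1)/2$ from \eqref{mainth1 assumption}, one has $(1+t)^{-(1+\varrho)/2}\leq (1+t)^{-(1+\vartheta)}$, which is precisely the prefactor of the $\langle v \rangle^{1/2}$-dissipative term in \eqref{dissipation functional}. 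Hence $I_6 \lesssim \delta_0\,\widetilde{\mathcal{D}}_{N,l}(t)$; this is the single step where the design of the time-velocity weight \eqref{weight function} is essential, and it is responsible for the $\delta_0$ appearing in the statement.

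For $|\alpha_1|+|\beta_1|\geq 1$ the weight inequality above removes the $v$-factor and the task reduces to a routine multilinear Sobolev estimate. When $|\alpha_1|+|\beta_1|=1$ I take $L^\infty$--$L^2$--$L^2$ H\"older: if $|\alpha_1|=1$ I bound $\|\partial^{\alpha_1}E^\varepsilon\|_{L^\infty}$ via \eqref{Sobolev-ineq}, while if $|\beta_1|=1$ then $\partial_{\beta_1}v$ is constant and $E^\varepsilon$ itself is placed in $L^\infty$; in both situations $|\cdot|_{L^2}\lesssim |\cdot|_D$ upgrades the remaining $L^2$-norm to the dissipation. For $2 \leq |\alpha_1|+|\beta_1|\leq N-1$ I pass to $L^6$--$L^3$--$L^2$ H\"older, bounding $\|\partial^{\alpha_1}E^\varepsilon\|_{L^6}$ and the $L^3$-factor by \eqref{Sobolev-ineq}, \eqref{minkowski} and the weight monotonicity \eqref{weight inequality2}. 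For $|\alpha_1|+|\beta_1|=N$ the $f$-side has no derivatives, so it is placed in $L^\infty_x L^2_v$ and controlled by $\sqrt{\widetilde{\mathcal{E}}_{N,l}(t)}$ via \eqref{Sobolev-ineq}, which is legitimate thanks to $N\geq 6$. In every subcase, the required weight arithmetic reduces to the inequality stated in the first paragraph (with at most one or two extra Sobolev derivatives, still accommodated by $\widetilde{w}_l$), and the bound $I_6\lesssim \sqrt{\mathcal{E}_N(t)}\,\widetilde{\mathcal{D}}_{N,l}(t)$ follows. Summing over $\alpha_1,\beta_1$ proves \eqref{hard E 1-4}; the estimates \eqref{hard E 1-1}--\eqref{hard E 1-3} are obtained by the same three-case split, the extra factor $\varepsilon$ in \eqref{hard E 1-3} ensuring that no singular $1/\varepsilon$ is left behind and making that version no harder than \eqref{hard E 1-4}.
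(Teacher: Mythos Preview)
Your proposal is correct and follows essentially the same route as the paper, which simply refers back to the proof of Lemma~\ref{soft E 1}; your hard-potential simplification---absorbing the full factor $\langle v\rangle$ via $m\geq 1/s>1$ and then invoking $|\cdot|_{L^2}\lesssim |\cdot|_D$---is a legitimate streamlining of the paper's split $\langle v\rangle = \langle v\rangle^{1-\frac{\gamma+2s}{2}}\cdot\langle v\rangle^{\frac{\gamma+2s}{2}}$.

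One small inconsistency: in the subcase $|\alpha_1|+|\beta_1|=N$ your $L^2$--$L^\infty$--$L^2$ split places the weighted zero-derivative factor $\widetilde{w}_l(\alpha,\beta)\mathbf{P}^\perp f^\varepsilon$ in $L^\infty_x L^2_v$, which produces $\sqrt{\widetilde{\mathcal{E}}_{N,l}(t)}$ rather than the $\sqrt{\mathcal{E}_N(t)}$ you claim in the summary line. The paper avoids this by treating all of $2\leq|\alpha_1|+|\beta_1|\leq N$ uniformly with $L^6$--$L^3$--$L^2$, so that the energy factor always comes from $\|\partial^{\alpha_1}E^\varepsilon\|_{L^6}\lesssim\sqrt{\mathcal{E}_N(t)}$; adopting that split for the top-order piece recovers the stated bound, and in any event the weaker factor is harmless for the downstream estimates in Section~\ref{The a Priori Estimate}.
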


\begin{proof}
This lemma can be proved using the same argument as that of Lemma \ref{soft E 1}, and the details are omitted for brevity.
\end{proof}

The following two lemmas concern the estimates on the nonlinear term $ E^\varepsilon \cdot \nabla_v f^{\varepsilon}$.
\begin{lemma}\label{soft E 2}
Let $ \max\left\{-3,-\frac{3}{2}-2s\right\} < \gamma <-2s$, $0 < s < 1$ and $0 < \varepsilon \leq 1$.
\begin{itemize}
\setlength{\leftskip}{-6mm}
\item[(1)] For $1 \leq | \alpha | \leq N$, there holds
\begin{align}\label{soft E 2-1}
\left( \partial^\alpha ( E^\varepsilon \cdot \nabla_v f^\varepsilon ),  \partial^\alpha f^{\varepsilon} \right)
\lesssim \left\{ \delta_0 + \sqrt{\mathcal{E}_{N}(t)}\right\} \widetilde{\mathcal{D}}_{N,l}(t).
\end{align}
\item[(2)] For $| \alpha | + |\beta| \leq N-1$, there holds
\begin{align}\label{soft E 2-2}
\left( \partial^\alpha_{\beta} ( E^\varepsilon \cdot \nabla_v \mathbf{P}^{\perp}f^\varepsilon ),  \widetilde{w}^2_{l}(\alpha,\beta)\partial^\alpha_\beta \mathbf{P}^{\perp}f^{\varepsilon} \right)
\lesssim \left\{ \delta_0 + \sqrt{\mathcal{E}_{N}(t)}\right\} \widetilde{\mathcal{D}}_{N,l}(t).
\end{align}
\item[(3)] For $ | \alpha | = N$, there holds
\begin{align}\label{soft E 2-3}
\varepsilon \left( \partial^\alpha ( E^\varepsilon \cdot \nabla_v f^\varepsilon),
\widetilde{w}^2_{l}(\alpha,0)\partial^\alpha f^{\varepsilon} \right)
\lesssim \left\{ \delta_0 + \sqrt{\mathcal{E}_{N}(t)}\right\} \widetilde{\mathcal{D}}_{N,l}(t).
\end{align}
\item[(4)] For $| \alpha | + | \beta | =N$, $| \beta | \geq 1$ and
$| \alpha | \leq N-1$,  there holds
\begin{align}\label{soft E 2-4}
\left( \partial^\alpha_{\beta} ( E^\varepsilon \cdot \nabla_v \mathbf{P}^{\perp}f^\varepsilon ),  \widetilde{w}^2_{l}(\alpha,\beta)\partial^\alpha_\beta \mathbf{P}^{\perp}f^{\varepsilon} \right)
\lesssim \left\{ \delta_0 + \sqrt{\mathcal{E}_{N}(t)}\right\} \widetilde{\mathcal{D}}_{N,l}(t).
\end{align}
\end{itemize}
\end{lemma}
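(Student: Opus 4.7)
The strategy parallels that of Lemmas \ref{soft E 1} and \ref{soft B 1}, with one simplifying feature: the kinematic coupling here is the pure velocity gradient $\nabla_v$ rather than $v\times B$, so that integration by parts in $v$ is available for the resonant piece and, moreover, no $\frac{1}{\varepsilon}$ appears in front. I would focus the proof on the critical estimate \eqref{soft E 2-4}, since \eqref{soft E 2-1}--\eqref{soft E 2-3} follow from the same decomposition after splitting $f^\varepsilon=\mathbf{P}f^\varepsilon+\mathbf{P}^\perp f^\varepsilon$ (the macroscopic pieces produce only Gaussian-times-polynomial factors and are handled by $L^6$--$L^3$--$L^2$ H\"older plus \eqref{Sobolev-ineq}, while the microscopic pieces reduce to the analysis below); the additional $\varepsilon$ in \eqref{soft E 2-3} is exactly what is needed to pair the top-order contribution with the $\varepsilon$-weighted dissipation terms in \eqref{dissipation functional}. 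Since $E^\varepsilon$ is $v$-independent, the Leibniz expansion reads
\begin{equation*}
\partial^\alpha_\beta\bigl(E^\varepsilon\cdot\nabla_v\mathbf{P}^\perp f^\varepsilon\bigr)=\sum_{i=1}^{3}\sum_{\alpha_1\le\alpha}\binom{\alpha}{\alpha_1}\partial^{\alpha_1}E^\varepsilon_i\,\partial^{\alpha-\alpha_1}_{\beta+e_i}\mathbf{P}^\perp f^\varepsilon,
\end{equation*}
and I split the sum into the resonant piece $\alpha_1=0$ and the non-resonant piece $\alpha_1\neq 0$.

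For the resonant piece I integrate by parts in $v_i$; using $\nabla_v\cdot E^\varepsilon=0$ the cross term gives
\begin{equation*}
2\bigl(E^\varepsilon\cdot\partial^\alpha_{\beta+e_i}\mathbf{P}^\perp f^\varepsilon,\,\widetilde{w}_l^{\,2}(\alpha,\beta)\partial^\alpha_\beta\mathbf{P}^\perp f^\varepsilon\bigr)=-\bigl(E^\varepsilon\cdot\nabla_v\widetilde{w}_l^{\,2}(\alpha,\beta)\,\partial^\alpha_\beta\mathbf{P}^\perp f^\varepsilon,\,\partial^\alpha_\beta\mathbf{P}^\perp f^\varepsilon\bigr).
\end{equation*}
Since $|\nabla_v\widetilde{w}_l^{\,2}(\alpha,\beta)|\lesssim\widetilde{w}_l^{\,2}(\alpha,\beta)$ (the exponential factor contributing only an extra $(1+t)^{-\vartheta}$), the decay $\|E^\varepsilon\|_{L^\infty}\lesssim\delta_0(1+t)^{-(1+\varrho)/2}$ furnished by \eqref{assumption1} together with $\frac{1+\varrho}{2}\ge 1+\vartheta$ (which follows from $\vartheta\le(\varrho-1)/2$) bounds this piece by $\delta_0\widetilde{\mathcal{D}}_{N,l}(t)$, absorbed into the time-weighted $\frac{1}{(1+t)^{1+\vartheta}}\|\widetilde{w}_l\partial^\alpha_\beta\mathbf{P}^\perp f^\varepsilon\|^2$ term sitting inside \eqref{dissipation functional}.

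For the non-resonant piece the routine subcase is $2\le|\alpha_1|\le N-1$: here the surviving derivative has order $\le N-1$ with $|\beta+e_i|\le|\beta|+1$, and \eqref{weight inequality2} together with $L^6$--$L^3$--$L^2$ H\"older and \eqref{Sobolev-ineq} yields the bound $\sqrt{\mathcal{E}_N(t)}\widetilde{\mathcal{D}}_{N,l}(t)$. The main obstacle is the borderline case $|\alpha_1|=1$, for which $|\alpha-\alpha_1|+|\beta+e_i|=N$, $|\alpha-\alpha_1|\le N-2$, and the weight one wishes to place on the derivative exceeds the natural one, namely $\widetilde{w}_l(\alpha,\beta)=\widetilde{w}_l(\alpha-\alpha_1,\beta+e_i)\langle v\rangle^{m-\gamma}$ with $m-\gamma>0$. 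I overcome this exactly as in the derivation of \eqref{inter H1}: setting
\begin{equation*}
\langle v\rangle^\ell=\{\overline{w}_l(|\alpha|-1,|\beta|)\}^{1-s}\{\overline{w}_l(|\alpha|-1,|\beta|+1)\}^{-(1-s)},
\end{equation*}
applying the interpolation inequality \eqref{interpolation} of Lemma \ref{interpolation1} to $|\langle v\rangle^{1-\gamma/2}\widetilde{w}_l(\alpha,\beta)\partial^{\alpha-\alpha_1}_\beta\mathbf{P}^\perp f^\varepsilon|_{H^1_v}$, and invoking \eqref{norm inequality}, \eqref{equivalent norm1} to convert the resulting $H^s_{\gamma/2}$ norms into the $|\cdot|_D$ norms controlled by $\widetilde{\mathcal{D}}_{N,l}(t)$. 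The factor $\|\partial E^\varepsilon\|_{L^\infty}\lesssim\delta_0(1+t)^{-(1+\varrho)/2}$ from \eqref{assumption1} and \eqref{Sobolev-ineq} then gives the desired $\{\delta_0+\sqrt{\mathcal{E}_N(t)}\}\widetilde{\mathcal{D}}_{N,l}(t)$ bound, closing \eqref{soft E 2-4}.
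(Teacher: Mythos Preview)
Your approach matches the paper's own proof, which simply says the argument is ``the same as Lemma \ref{soft B 1}'' with the resonant piece $\alpha_1=0$ handled by integration by parts in $v$. Your treatment of the resonant piece and of the borderline case $|\alpha_1|=1$ via the interpolation \eqref{interpolation} is exactly what the paper does.

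There is, however, one minor slip in your ``routine'' subcase $2\le|\alpha_1|\le N-1$: you invoke \eqref{weight inequality2}, but that inequality requires $|\bar\beta|\le|\beta|$, which fails here since the surviving velocity index is $\beta+e_i$. The desired weight comparison $\widetilde{w}_l(\alpha,\beta)\le\widetilde{w}_l(\alpha-\alpha_1,\beta+e_i)$ nevertheless \emph{does} hold for $|\alpha_1|\ge 2$ by a direct check on \eqref{weight function2}: it reduces to $(m+\gamma)|\alpha_1|\ge m$ with $m=\frac{-2\gamma+1}{s}$, i.e.\ $|\alpha_1|\ge\frac{m}{m+\gamma}=\frac{2|\gamma|+1}{(2-s)|\gamma|+1}$, and one verifies this ratio is strictly below $2$ throughout the soft range $\max\{-3,-\tfrac32-2s\}<\gamma<-2s$, $0<s<1$. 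So your conclusion stands, just with the wrong citation. Alternatively, and more in line with the paper, you can simply run the interpolation argument of \eqref{inter H1} uniformly for \emph{all} $|\alpha_1|\ge 1$ (this is what the paper does in the proof of Lemma \ref{soft B 1}), and then the subsequent $L^\infty$--$L^2$--$L^2$ versus $L^6$--$L^3$--$L^2$ split handles the H\"older distribution without any weight mismatch.
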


\begin{proof}
This lemma can be proved using the same argument as that of Lemma \ref{soft B 1}. The only difference is the estimate when all derivatives act on $\mathbf{P}^{\perp} f^\varepsilon$. In fact, by using integration by parts with respect to velocity, $L^\infty$--$L^2$--$L^2$ H\"{o}lder inequality  and \eqref{assumption1}, it is easy to obtain
\begin{align*}
\left( E^\varepsilon \cdot \nabla_v \partial^\alpha_{\beta}\mathbf{P}^{\perp}f^\varepsilon ,  \widetilde{w}^2_{l}(\alpha,\beta)\partial^\alpha_\beta \mathbf{P}^{\perp}f^{\varepsilon} \right)
\lesssim  \delta_0  \widetilde{\mathcal{D}}_{N,l}(t).
\end{align*}
For brevity, we omit the details.
\end{proof}

\begin{lemma}\label{hard E 2}
Let $\gamma+2s\geq 0$, $0 < s < 1$ and $0 < \varepsilon \leq 1$.
\begin{itemize}
\setlength{\leftskip}{-6mm}
\item[(1)] For $1 \leq | \alpha | \leq N$, there holds
\begin{align}\label{hard E 2-1}
\left( \partial^\alpha ( E^\varepsilon \cdot \nabla_v f^\varepsilon ),  \partial^\alpha f^{\varepsilon} \right)
\lesssim \left\{ \delta_0 + \sqrt{\mathcal{E}_{N}(t)}\right\} \widetilde{\mathcal{D}}_{N,l}(t).
\end{align}
\item[(2)] For $| \alpha | + |\beta| \leq N-1$, there holds
\begin{align}\label{hard E 2-2}
\left( \partial^\alpha_{\beta} ( E^\varepsilon \cdot \nabla_v \mathbf{P}^{\perp}f^\varepsilon ),
\widetilde{w}^2_{l}(\alpha,\beta)\partial^\alpha_\beta \mathbf{P}^{\perp}f^{\varepsilon} \right)
\lesssim \left\{ \delta_0 + \sqrt{\mathcal{E}_{N}(t)}\right\} \widetilde{\mathcal{D}}_{N,l}(t).
\end{align}
\item[(3)] For $ | \alpha | = N$, there holds
\begin{align}\label{hard E 2-3}
\varepsilon \left( \partial^\alpha ( E^\varepsilon \cdot \nabla_v f^\varepsilon ),
\widetilde{w}^2_{l}(\alpha,0)\partial^\alpha f^{\varepsilon} \right)
\lesssim \left\{ \delta_0 + \sqrt{\mathcal{E}_{N}(t)}\right\} \widetilde{\mathcal{D}}_{N,l}(t).
\end{align}
\item[(4)] For $ | \alpha | + | \beta | = N$, $| \beta | \geq 1$ and
$| \alpha | \leq N-1$, there holds
\begin{align}\label{hard E 2-4}
\left( \partial^\alpha_{\beta} ( E^\varepsilon \cdot \nabla_v \mathbf{P}^{\perp}f^\varepsilon ),
\widetilde{w}^2_{l}(\alpha,\beta)\partial^\alpha_\beta \mathbf{P}^{\perp}f^{\varepsilon} \right)
\lesssim \left\{ \delta_0 + \sqrt{\mathcal{E}_{N}(t)}\right\} \widetilde{\mathcal{D}}_{N,l}(t).
\end{align}
\end{itemize}
\end{lemma}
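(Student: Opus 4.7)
The plan is to prove Lemma \ref{hard E 2} by mirroring the structure of Lemmas \ref{hard B 1} and \ref{soft E 2}, adapting the magnetic-field argument to the electric-force term $E^\varepsilon\cdot\nabla_v f^\varepsilon$ in the hard potential regime. Since all four statements share the same right-hand side, I will focus on \eqref{hard E 2-4}; the cases \eqref{hard E 2-1}--\eqref{hard E 2-3} follow by the same scheme with the top-order time factor $(1+t)^{-\frac{1+\varepsilon_0}{4}}$ built into $\widetilde{w}_l$ exactly as in Lemma \ref{hard B 1}. First I would expand by the Leibniz rule, using that $E^\varepsilon$ is $v$-independent, as
\[
\partial^\alpha_\beta(E^\varepsilon\cdot\nabla_v \mathbf{P}^\perp f^\varepsilon)
=\sum_{\alpha_1\leq\alpha} C_\alpha^{\alpha_1}\, \partial^{\alpha_1} E^\varepsilon \cdot \nabla_v \partial^{\alpha-\alpha_1}_\beta \mathbf{P}^\perp f^\varepsilon,
\]
and split the resulting sum into the case $\alpha_1=0$ (all derivatives on $\mathbf{P}^\perp f^\varepsilon$) versus $|\alpha_1|\geq 1$.

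For $\alpha_1=0$, the key trick is integration by parts in $v$, which transfers the gradient onto the weight and produces
\[
-\frac{1}{2}\int_{\mathbb{R}^3\times\mathbb{R}^3} E^\varepsilon\cdot \nabla_v \widetilde{w}_l^2(\alpha,\beta)\, |\partial^\alpha_\beta \mathbf{P}^\perp f^\varepsilon|^2 \d v\d x.
\]
Since $|\nabla_v \widetilde{w}_l(\alpha,\beta)|\lesssim \widetilde{w}_l(\alpha,\beta)$, this is controlled by $\|E^\varepsilon\|_{L^\infty}\|\widetilde{w}_l(\alpha,\beta)\partial^\alpha_\beta \mathbf{P}^\perp f^\varepsilon\|^2$. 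Combining \eqref{Sobolev-ineq} with the decay assumption \eqref{assumption1} and the admissible range $0<\vartheta\leq(\varrho-1)/2$ yields $\|E^\varepsilon\|_{L^\infty}\lesssim \delta_0(1+t)^{-(1+\varrho)/2}\lesssim \delta_0(1+t)^{-1-\vartheta}$, so this contribution is absorbed by the extra dissipation component $\frac{1}{(1+t)^{1+\vartheta}}\|\widetilde{w}_l(\alpha,\beta)\partial^\alpha_\beta \mathbf{P}^\perp f^\varepsilon\|^2$ present in $\widetilde{\mathcal{D}}_{N,l}(t)$, giving the desired $\delta_0\widetilde{\mathcal{D}}_{N,l}(t)$ bound.

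For $|\alpha_1|\geq 1$, the velocity derivative in $\nabla_v\partial^{\alpha-\alpha_1}_\beta \mathbf{P}^\perp f^\varepsilon$ carries one extra velocity derivative relative to the weight $\widetilde{w}_l(\alpha,\beta)$ at hand, creating precisely the mismatch addressed by the interpolation inequality \eqref{interpolation}. Applying it with the hard-potential choice
\[
\langle v\rangle^\ell=\{\overline{w}_l(|\alpha|-1,|\beta|)\}^s\{\overline{w}_l(|\alpha|-1,|\beta|+1)\}^{-s},
\]
which is admissible because the design $m\geq 1/s$ guarantees $\langle v\rangle\,\overline{w}_l(\alpha,\beta)\leq \{\overline{w}_l(|\alpha|-1,|\beta|)\}^s\{\overline{w}_l(|\alpha|-1,|\beta|+1)\}^{1-s}$, we recover weighted $H^s$-norms on $\partial^{\alpha-\alpha_1}_\beta \mathbf{P}^\perp f^\varepsilon$ and $\partial^{\alpha-\alpha_1}_{\beta+e_i}\mathbf{P}^\perp f^\varepsilon$, both controlled by $\widetilde{\mathcal{E}}_{N,l}(t)^{1/2}$ via \eqref{norm inequality} and \eqref{weight inequality2}. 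The sum over $\alpha_1$ is then handled by case-splitting $|\alpha_1|=1$, $2\leq|\alpha_1|\leq N-1$, $|\alpha_1|=N$ with $L^\infty$--$L^2$--$L^2$, $L^6$--$L^3$--$L^2$, $L^2$--$L^\infty$--$L^2$ Hölder inequalities respectively, combined with \eqref{Sobolev-ineq} and \eqref{minkowski}, yielding a $\sqrt{\mathcal{E}_N(t)}\,\widetilde{\mathcal{D}}_{N,l}(t)$ bound.

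The main obstacle is the $\alpha_1=0$ case: unlike the magnetic-field analogue in Lemma \ref{hard B 1}, where $(v\times B^\varepsilon)\cdot v=0$ makes the pure-$\mathbf{P}^\perp$ term vanish identically, here integration by parts cannot kill the term and we genuinely need the smallness of $\|E^\varepsilon\|_{L^\infty}$ via \eqref{assumption1} to close; this explains the presence of the $\delta_0$ contribution in the right-hand side.
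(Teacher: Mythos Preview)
Your proposal is correct and follows essentially the same approach as the paper, which explicitly says the lemma is proved ``similarly as that of Lemma~\ref{hard B 1} and~\ref{soft E 2}'': the $\alpha_1=0$ part handled by integration by parts together with the decay assumption~\eqref{assumption1} (as in Lemma~\ref{soft E 2}), and the $|\alpha_1|\geq 1$ part via the interpolation inequality~\eqref{interpolation} with the hard-potential weight inequality $\langle v\rangle\,\overline{w}_l(\alpha,\beta)\leq\{\overline{w}_l(|\alpha|-1,|\beta|)\}^{s}\{\overline{w}_l(|\alpha|-1,|\beta|+1)\}^{1-s}$ (as in Lemma~\ref{hard B 1}). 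Your identification of the key structural difference from the magnetic-field case---that integration by parts no longer annihilates the $\alpha_1=0$ term, forcing reliance on~\eqref{assumption1} and hence the $\delta_0$ contribution---is exactly the point.
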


\begin{proof}
This lemma can be proved similarly as that of Lemma \ref{hard B 1} and \ref{soft E 2}, and the details are omitted for brevity.
\end{proof}
\medskip

\section{The a Priori Estimates}\label{The a Priori Estimate}

In this section, we deduce the uniform a priori estimates  of the VMB system \eqref{rVMB} with respect to $\varepsilon\in (0,1]$
globally in time.

For this purpose, we define the following time-weighted energy norm $X(t)$ by
\begin{equation}\label{X define}
X(t):= \sup_{0 \leq \tau \leq t} \overline{\mathcal{E}}_{N,l}(\tau)
+ \sup_{0 \leq \tau \leq t} (1+\tau)^{\varrho+1} \sum_{1 \leq |\alpha| \leq N_0}\left\| \partial^\alpha (E^\varepsilon, B^\varepsilon)(\tau)\right\|^2.
\end{equation}
Here, all the involved parameters are fixed to satisfy \eqref{mainth1 assumption} for the case $\gamma > \max\{-3, -\frac{3}{2}-2s\}$.

The construction of the weighted instant energy functionals $\widetilde{\mathcal{E}}_{N,l}(t)$, $\overline{\mathcal{E}}_{N,l}(t)$ and the  dissipation rate functionals $\widetilde{\mathcal{D}}_{N,l}(t)$, $\overline{\mathcal{D}}_{N,l}(t)$ will be given in the following. Suppose that the VMB system  \eqref{rVMB} admits a smooth solution $\left(f^\varepsilon, E^\varepsilon, B^\varepsilon \right)$ over $0 \leq t \leq T$ for $0 < T \leq \infty$, and  the solution
$\left(f^\varepsilon, E^\varepsilon, B^\varepsilon \right)$ also satisfies
\begin{align}\label{priori assumption}
\sup_{0 \leq t \leq T} X(t) \leq \delta^2_0,
\end{align}
where $\delta_0$ is a suitable small positive constant.

\subsection{Macroscopic Estimate}
\hspace*{\fill}

In this subsection, we consider the macroscopic estimate of the VMB system \eqref{rVMB} with respect to $\varepsilon\in (0,1]$. As in \cite{DLYZ2013}, by applying the macro-micro decomposition \eqref{f decomposition} introduced in \cite{Guo2003} and defining moment functions
$$
\Theta_{ij}(f^{\varepsilon}):=\int_{\mathbb{R}^3}(v_i v_j -1)\mu^{1/2}f^{\varepsilon} \d v, \qquad
\Lambda_{i}(f^{\varepsilon}):=\frac{1}{10}\int_{\mathbb{R}^3}(|v|^2 -5)v_i \mu^{1/2}f^{\varepsilon} \d v,
$$
one can derive from \eqref{rVMB} the following two fluid-type systems of equations
\begin{equation}\label{macro equation 1}
\left\{\begin{aligned}
& \partial_t a^{\varepsilon}_{ \pm}+ \frac{1}{\varepsilon} \nabla_x \cdot b^{\varepsilon}
+\frac{1}{\varepsilon} \nabla_x \cdot \langle v \mu^{1 / 2}, \mathbf{P}_{\pm}^{\perp} f^{\varepsilon} \rangle= \langle \mu^{1/2}, g^\varepsilon_{\pm} \rangle, \\
& \partial_t \big(b^{\varepsilon}_i+\langle v_i \mu^{1 / 2},\mathbf{P}_{\pm}^{\perp} f^{\varepsilon}\rangle \big)
+ \frac{1}{\varepsilon} \partial_i(a^{\varepsilon}_{ \pm}+2 c^{\varepsilon}) \mp \frac{1}{\varepsilon} E^\varepsilon_i \\
&\qquad\qquad
=-\frac{1}{\varepsilon} \langle v_i \mu^{1 / 2}, v \cdot \nabla_x \mathbf{P}_{\pm}^{\perp} f^\varepsilon \rangle
+\langle v_i \mu^{1 / 2}, g^\varepsilon_{ \pm}-\frac{1}{\varepsilon^2}L_{ \pm} f^\varepsilon \rangle, \\
& \partial_t \Big(c^\varepsilon+\frac{1}{6}\langle (|v|^2-3) \mu^{1 / 2}, \mathbf{P}_{\pm}^{\perp} f^\varepsilon \rangle \Big)
+\frac{1}{3\varepsilon} \nabla_x \cdot b^\varepsilon \\
&\qquad\qquad
=-\frac{1}{6\varepsilon}\langle (|v|^2-3) \mu^{1 / 2}, v \cdot \nabla_x \mathbf{P}_{\pm}^{\perp} f^\varepsilon \rangle
+\frac{1}{6}\langle (|v|^2-3) \mu^{1 / 2}, g^\varepsilon_{ \pm}- \frac{1}{\varepsilon^2} L_{ \pm} f^\varepsilon\rangle
\end{aligned}\right.
\end{equation}
and
\begin{equation}\label{macro equation 2}
\left\{\begin{aligned}
\!\!\!&\partial_t (\Theta_{i i}(\mathbf{P}_{\pm}^{\perp} f^{\varepsilon})+2 c^{\varepsilon} )+ \frac{2}{\varepsilon} \partial_i b^\varepsilon_i =\Theta_{i i}(g^\varepsilon_{ \pm}+h^\varepsilon_{ \pm}), \\
&\partial_t \Theta_{i j} ( \mathbf{P}_{\pm}^{\perp} f^{\varepsilon} )+\frac{1}{\varepsilon} \partial_i b^\varepsilon_j
+\frac{1}{\varepsilon} \partial_j b^\varepsilon_i+\frac{1}{\varepsilon} \nabla_x \cdot \langle v \mu^{1 / 2},
\mathbf{P}_{\pm}^{\perp} f^{\varepsilon} \rangle\\
&\qquad\qquad\qquad\qquad\qquad=\Theta_{i j}(g^\varepsilon_{ \pm}+h^\varepsilon_{ \pm} )+\langle \mu^{1/2}, g^\varepsilon_{\pm} \rangle, \;\;\;\;
i \neq j, \\
&\partial_t \Lambda_i (\mathbf{P}_{\pm}^{\perp} f^{\varepsilon})+\frac{1}{\varepsilon} \partial_i c^{\varepsilon}
=\Lambda_i(g^\varepsilon_{ \pm}+h^\varepsilon_{ \pm}),
\end{aligned}\right.
\end{equation}
where
\begin{align}
\begin{split}\label{g,h define}
g^\varepsilon_{\pm}:=&\pm \frac{1}{2}E^{\varepsilon} \cdot vf_{\pm}^\varepsilon
\mp \Big(E^{\varepsilon}+ \frac{1}{\varepsilon}v \times B^\varepsilon\Big) \cdot \nabla_v f_{\pm}^\varepsilon
+ \frac{1}{\varepsilon} \Gamma_{\pm}(f^\varepsilon, f^\varepsilon), \\
h^\varepsilon_{\pm}:=&-\frac{1}{\varepsilon}v \cdot \nabla_x \mathbf{P}_{\pm}^{\perp}f^{\varepsilon}-\frac{1}{\varepsilon^2}L_{\pm} f^{\varepsilon}.
\end{split}
\end{align}
Furthermore, it is easy to calculate through integration by parts that
\begin{align}
\begin{split}\label{g estimate1}
\langle \mu^{1/2}, g^\varepsilon_{\pm} \rangle=\;&0, \\
\langle v \mu^{1 / 2}, g^\varepsilon_{ \pm} \rangle=\;&\pm E^\varepsilon a^\varepsilon_{\pm} \pm \frac{1}{\varepsilon} b^\varepsilon \times B^\varepsilon
\pm \frac{1}{\varepsilon} \langle v\mu^{1/2}, \mathbf{P}_{\pm}^{\perp}f^\varepsilon \rangle \times B^\varepsilon \\
&+\frac{1}{\varepsilon} \langle v \mu^{1/2}, \Gamma_{\pm}(f^\varepsilon,f^\varepsilon) \rangle, \\
\frac{1}{6} \langle (|v|^2-3)\mu^{1/2}, g^\varepsilon_{\pm} \rangle=\;& \pm \frac{1}{3} b^\varepsilon \cdot E^\varepsilon \pm \frac{1}{3} \langle v \mu^{1/2}, \mathbf{P}_{\pm}^{\perp}f^\varepsilon \rangle \cdot E^\varepsilon \\
&+\frac{1}{6\varepsilon} \langle (|v|^2-3)\mu^{1/2}, \Gamma_{\pm}(f^\varepsilon,f^\varepsilon) \rangle.
\end{split}
\end{align}
Set
\begin{align}\label{G define}
G^\varepsilon:=\langle v\mu^{1/2}, \mathbf{P}^{\perp}f^{\varepsilon} \cdot q_1\rangle.
\end{align}
By taking the mean value of every two equations with $\pm$ sign for \eqref{macro equation 1}--\eqref{macro equation 2}, one has from \eqref{g estimate1} that
\begin{equation}\label{macro equation 3}
\!\!\!\!\left\{\begin{aligned}
& \!\partial_t \Big(\frac{a^{\varepsilon}_{+}+a^{\varepsilon}_{-}}{2}\Big) + \frac{1}{\varepsilon} \nabla_x \cdot b^{\varepsilon}=0, \\
& \!\partial_t b^{\varepsilon}_i
+ \frac{1}{\varepsilon} \partial_i \Big(\frac{a^{\varepsilon}_{+}+a^{\varepsilon}_{-}}{2}+2 c^{\varepsilon}\Big)
\!+\!\frac{1}{2\varepsilon} \sum_{j=1}^{3} \partial_j \Theta_{ij} ( \mathbf{P}^{\perp} f^\varepsilon \cdot q_2 )
= \frac{a^\varepsilon_{+}-a^\varepsilon_{-}}{2}E^\varepsilon_i +\frac{1}{2\varepsilon} (G^\varepsilon \times B^\varepsilon)_i, \\
& \!\partial_t c^\varepsilon +\frac{1}{3\varepsilon} \nabla_x \cdot b^\varepsilon
+\frac{5}{6\varepsilon} \sum_{i=1}^{3} \partial_i \Lambda_i ( \mathbf{P}^{\perp} f^\varepsilon \cdot q_2 )
=\frac{1}{6} G^\varepsilon \cdot E^\varepsilon
\end{aligned}\right.
\end{equation}
and
\begin{equation}\label{macro equation 4}
\left\{\begin{aligned}
&\partial_t \Big(\frac{1}{2}\Theta_{i j}(\mathbf{P}^{\perp} f^{\varepsilon} \cdot q_2)+2 c^{\varepsilon}\delta_{ij} \Big)+ \frac{1}{\varepsilon} \partial_i b^\varepsilon_j + \frac{1}{\varepsilon} \partial_j b^\varepsilon_i
=\frac{1}{2}\Theta_{i j}(g^\varepsilon_{+}+g^\varepsilon_{-}+h^\varepsilon_{+}+h^\varepsilon_{-}), \\
&\frac{1}{2}\partial_t \Lambda_i (\mathbf{P}^{\perp} f^{\varepsilon} \cdot q_2)+\frac{1}{\varepsilon} \partial_i c^{\varepsilon}
=\frac{1}{2}\Lambda_i(g^\varepsilon_{+}+g^\varepsilon_{-}+h^\varepsilon_{+}+h^\varepsilon_{-}),
\end{aligned}\right.
\end{equation}
where $q_2=[1,1]$, $1 \leq i, j \leq 3$, $\delta_{ij}$ denotes the Kronecker delta.
Moreover,  by taking the difference of the first two equations with $\pm$ sign for \eqref{macro equation 1}, we have from \eqref{g estimate1} that
\begin{equation}\label{macro equation 5}
\left\{\begin{aligned}
&\! \partial_t (a^{\varepsilon}_{+}-a^{\varepsilon}_{-}) + \frac{1}{\varepsilon} \nabla_x \cdot G^\varepsilon=0, \\
&\! \partial_t G^\varepsilon + \frac{1}{\varepsilon} \nabla_x(a^{\varepsilon}_{+}-a^{\varepsilon}_{-})-\frac{2}{\varepsilon}E^{\varepsilon}
+\frac{1}{\varepsilon} \nabla_x \cdot \Theta ( \mathbf{P}^{\perp} f^\varepsilon \cdot q_1 ) \\
&\qquad\qquad= E(a^\varepsilon_{+}+a_{-}^\varepsilon) + \frac{2}{\varepsilon} (b^\varepsilon \times B^\varepsilon) + \big\langle [v,-v] \mu^{1 / 2}, \frac{1}{\varepsilon}\Gamma(f^\varepsilon, f^\varepsilon)- \frac{1}{\varepsilon^2}Lf^\varepsilon \big\rangle.
\end{aligned}\right.
\end{equation}

Now we define the macro dissipation $\mathcal{D}_{mac}^{N}(t)$ by
\begin{align*}
 \mathcal{D}_{mac}^{N}(t) := \sum_{1 \leq |\alpha| \leq N} \left \| \partial^{\alpha} (a^\varepsilon_{\pm}, b^\varepsilon, c^\varepsilon) \right \|^2
+ \left\| a^\varepsilon_{+}-a^\varepsilon_{-}\right\|^2
 +\sum_{|\alpha| \leq N-1} \left\|\partial^{\alpha}E^{\varepsilon}\right\|^2
+\sum_{1 \leq |\alpha| \leq N-1} \left\|\partial^{\alpha}B^{\varepsilon}\right\|^2.
\end{align*}

\begin{lemma}\label{macroscopic estimate}
Let $\left(f^\varepsilon, E^\varepsilon, B^\varepsilon \right)$ be the solution to the VMB system \eqref{rVMB}.
Then there exists an interactive functional $\mathcal{E}^N_{int}(t)$  satisfying
\begin{align}\label{macroscopic estimate11}
\left|\mathcal{E}^N_{int}(t)\right| \lesssim\;&\sum_{|\alpha| \leq N} \left\| \partial^\alpha (f^{\varepsilon}, E^\varepsilon, B^\varepsilon) \right\|^2,
\end{align}
such that for any $t \geq 0$, there holds
\begin{align}\label{macro estimate}
 \frac{\d}{\d t}\mathcal{E}^N_{int}(t)+\mathcal{D}_{mac}^{N}(t)
\lesssim  \frac{1}{\varepsilon^2} \sum_{|\alpha| \leq N}\left\| \partial^\alpha \mathbf{P}^{\perp}f^{\varepsilon}\right\|^2_{D}
+\mathcal{E}_{N}(t) \mathcal{D}_{N}(t).
\end{align}
\end{lemma}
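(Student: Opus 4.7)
The plan is to use the standard macro-micro decomposition argument in the spirit of Guo, adapted here to the two-fluid, $\varepsilon$-scaled setting. The idea is to construct $\mathcal{E}^N_{int}(t)$ as a carefully weighted linear combination of cross-product ``interaction'' functionals, each tailored to extract one piece of the macroscopic dissipation $\mathcal{D}^N_{mac}(t)$ when differentiated in time against the fluid-type equations \eqref{macro equation 3}--\eqref{macro equation 5}. The bound \eqref{macroscopic estimate11} will then follow immediately by Cauchy--Schwarz on each cross-product.

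Concretely, for $1\leq|\alpha|\leq N$ I would proceed as follows. To recover $\|\partial^\alpha c^\varepsilon\|^2$, pair the $\Lambda_i$-equation in \eqref{macro equation 4} with $\partial^\alpha\partial_i c^\varepsilon$; the singular term $\tfrac{1}{\varepsilon}\partial_i c^\varepsilon$ produces $\tfrac{1}{\varepsilon}\|\partial^\alpha\partial_i c^\varepsilon\|^2$. Similarly, $\|\partial^\alpha b^\varepsilon\|^2$ is extracted from the $\Theta_{ij}$-equations by pairing against $\partial^\alpha(\partial_i b_j^\varepsilon+\partial_j b_i^\varepsilon)$, and $\|\partial^\alpha(a_+^\varepsilon+a_-^\varepsilon)\|^2$ is extracted from the second equation of \eqref{macro equation 3} paired with $\partial^\alpha\nabla_x(a_+^\varepsilon+a_-^\varepsilon)$, after subtracting the $\nabla_x c^\varepsilon$ contribution already controlled. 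The $L^2$-in-$x$ norm $\|a_+^\varepsilon-a_-^\varepsilon\|^2$ is supplied directly by Poisson's law $\nabla_x\cdot E^\varepsilon=a_+^\varepsilon-a_-^\varepsilon$, while the electric-field dissipation $\sum_{|\alpha|\leq N-1}\|\partial^\alpha E^\varepsilon\|^2$ comes from pairing the second equation of \eqref{macro equation 5} with $E^\varepsilon$ (the $-\tfrac{2}{\varepsilon}E^\varepsilon$ term generates $\tfrac{2}{\varepsilon}\|\partial^\alpha E^\varepsilon\|^2$), and the magnetic-field dissipation $\sum_{1\leq|\alpha|\leq N-1}\|\partial^\alpha B^\varepsilon\|^2$ is obtained by pairing $\partial_t E^\varepsilon-\nabla_x\times B^\varepsilon=-\tfrac{1}{\varepsilon}G^\varepsilon$ against $\nabla_x\times B^\varepsilon$, using $\nabla_x\cdot B^\varepsilon=0$ to convert $\|\nabla_x\times\partial^\alpha B^\varepsilon\|^2$ into $\|\nabla_x\partial^\alpha B^\varepsilon\|^2$.

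Accordingly, I would set
\begin{equation*}
\mathcal{E}^N_{int}(t)\;:=\;\sum_{|\alpha|\leq N-1}\Bigl\{\kappa_c\,\mathcal{I}_c^\alpha+\kappa_b\,\mathcal{I}_b^\alpha+\kappa_a\,\mathcal{I}_a^\alpha\Bigr\}+\kappa_E\,\mathcal{I}_E+\kappa_B\,\mathcal{I}_B,
\end{equation*}
where, schematically, $\mathcal{I}_c^\alpha:=-\sum_i(\partial^\alpha\Lambda_i(\mathbf{P}^\perp f^\varepsilon\cdot q_2),\partial^\alpha\partial_i c^\varepsilon)$, $\mathcal{I}_b^\alpha$ involves $(\partial^\alpha\Theta_{ij}(\mathbf{P}^\perp f^\varepsilon\cdot q_2),\partial^\alpha(\partial_i b_j^\varepsilon+\partial_j b_i^\varepsilon))$, $\mathcal{I}_a^\alpha$ involves $(\partial^\alpha b^\varepsilon,\partial^\alpha\nabla_x(a_+^\varepsilon+a_-^\varepsilon))$, $\mathcal{I}_E:=-(G^\varepsilon,E^\varepsilon)$, and $\mathcal{I}_B:=(\partial_t E^\varepsilon,\nabla_x\times B^\varepsilon)$-type corrections. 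Choosing constants $0<\kappa_B\ll\kappa_E\ll\kappa_a\ll\kappa_b\ll\kappa_c\ll 1$ in decreasing order, the cross-error terms produced by higher-order pieces are absorbed into the dissipation generated by lower-order ones. The linear remainders in $\tfrac{d}{dt}\mathcal{E}^N_{int}$, coming from $\tfrac{1}{\varepsilon}\langle v\mu^{1/2},v\cdot\nabla_x\mathbf{P}^\perp f^\varepsilon\rangle$ and $\tfrac{1}{\varepsilon^2}Lf^\varepsilon$, are treated by Cauchy--Schwarz with a small parameter $\eta$: the $\eta$-part is absorbed into the macro dissipation while the $\eta^{-1}$-part is exactly the microscopic term $\tfrac{1}{\varepsilon^2}\sum_{|\alpha|\leq N}\|\partial^\alpha\mathbf{P}^\perp f^\varepsilon\|^2_D$ on the right of \eqref{macro estimate}. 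The nonlinear contributions from $g_\pm^\varepsilon$ and $h_\pm^\varepsilon$ (containing $\tfrac{1}{\varepsilon}\Gamma$, $\tfrac{1}{\varepsilon}v\times B^\varepsilon\cdot\nabla_v f^\varepsilon$, and $E^\varepsilon\cdot v f^\varepsilon$) are paired against $\partial^\alpha$-derivatives of $(a_\pm^\varepsilon,b^\varepsilon,c^\varepsilon,E^\varepsilon,B^\varepsilon)$ and bounded by $\sqrt{\mathcal{E}_N(t)}\,\mathcal{D}_N(t)$ via the unweighted ($\ell=0$) forms of Lemmas \ref{soft Gamma}--\ref{hard E 2}.

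The main obstacle is the delicate bookkeeping of $\varepsilon$-powers: every fluid equation contains $\tfrac{1}{\varepsilon}$ in front of spatial derivatives, so the dissipation we generate is of order $\tfrac{1}{\varepsilon}$, while the microscopic remainders after Cauchy--Schwarz naturally carry $\tfrac{1}{\varepsilon^2}$; one must verify that these fit into the available dissipation $\tfrac{1}{\varepsilon^2}\|\mathbf{P}^\perp f^\varepsilon\|^2_D$ without generating uncontrolled $\tfrac{1}{\varepsilon}$-singularities in the leading macro norms. A secondary technical point is closing the argument at the top order $|\alpha|=N$: since pairing $\partial^\alpha$ of the macro equations with $\partial^\alpha\nabla_x$ of macro quantities would require $N{+}1$ spatial derivatives, the dissipation for the top-order derivative of $(a_\pm,b,c)$ must be obtained from the $|\alpha|=N{-}1$ interaction functional, which is precisely why $\mathcal{E}^N_{int}$ sums only over $|\alpha|\leq N-1$ and why the microscopic norm on the right of \eqref{macro estimate} must include all derivatives up to order $N$.
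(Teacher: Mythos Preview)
Your approach is correct and is precisely the standard construction that the paper defers to (it cites \cite{DS2011, JL2023ARXIV} and omits details). The one point you flag as an obstacle but do not resolve is the $\varepsilon$-scaling: each of the fluid interaction functionals $\mathcal{I}_c^\alpha,\mathcal{I}_b^\alpha,\mathcal{I}_a^\alpha,\mathcal{I}_E$ must carry an explicit factor of $\varepsilon$, so that the $\tfrac{1}{\varepsilon}$ in the macro equations yields order-one dissipation and the $\tfrac{1}{\varepsilon^2}Lf^\varepsilon$ remainder lands at $\tfrac{1}{\varepsilon^2}\|\mathbf{P}^\perp f^\varepsilon\|_D^2$ rather than $\tfrac{1}{\varepsilon^3}$; the Maxwell-based functional $\mathcal{I}_B$ needs no such factor since the curl terms there carry no $\tfrac{1}{\varepsilon}$.
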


\begin{proof}
Based on the previous research \cite{DS2011, JL2023ARXIV}, it is a standard process to arrive at \eqref{macro estimate} with \eqref{macroscopic estimate11} being satisfied. For details, see \cite[Appendix A.3]{JL2023ARXIV}. We omit further discussion for brevity.
\end{proof}

\subsection{Energy Estimate Without Weight}
\hspace*{\fill}

This subsection provides the energy estimate without weight function for the potential case $\gamma > \max\left\{-3, -\frac{3}{2}-2s\right\}$.
\begin{lemma}\label{energy estimate without weight}
There holds
\begin{align}\label{estimate without weight}
\frac{1}{2}\frac{\d}{\d t}\sum_{|\alpha| \leq N}\left\| \partial^\alpha (f^\varepsilon, E^\varepsilon, B^\varepsilon)\right\|^2
+ \frac{\lambda}{\varepsilon^2}\sum_{|\alpha| \leq N}\left\|\partial^\alpha \mathbf{P}^{\perp}f^\varepsilon\right\|^2_{D}
\lesssim  \left\{ \delta_0 + \sqrt{\widetilde{\mathcal{E}}_{N,l}(t)} \right\} \widetilde{\mathcal{D}}_{N,l}(t).
\end{align}
\end{lemma}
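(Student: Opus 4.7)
The plan is to energy-estimate the perturbed VMB system \eqref{rVMB} directly, order by order, and then sum over $|\alpha|\leq N$. First I would apply $\partial^{\alpha}$ to the equation for $f^{\varepsilon}$ in \eqref{rVMB}, take the $L^2_{x,v}$ inner product with $\partial^{\alpha}f^{\varepsilon}$, apply $\partial^{\alpha}$ to the Maxwell equations and take $L^2_{x}$ inner products with $\partial^{\alpha}E^{\varepsilon}$ and $\partial^{\alpha}B^{\varepsilon}$ respectively, and add them up. The free-transport term $\frac{1}{\varepsilon}(v\cdot\nabla_{x}\partial^{\alpha}f^{\varepsilon},\partial^{\alpha}f^{\varepsilon})$ vanishes by integration by parts, and the coupling term $-\frac{1}{\varepsilon}(\partial^{\alpha}E^{\varepsilon}\cdot v\mu^{1/2}q_{1},\partial^{\alpha}f^{\varepsilon})$ cancels exactly with the corresponding source $-\frac{1}{\varepsilon}(\partial^{\alpha}\langle f^{\varepsilon},q_{1}v\mu^{1/2}\rangle,\partial^{\alpha}E^{\varepsilon})$ coming from the Amp\`ere law, while $(\partial^{\alpha}\nabla_{x}\times B^{\varepsilon},\partial^{\alpha}E^{\varepsilon})-(\partial^{\alpha}\nabla_{x}\times E^{\varepsilon},\partial^{\alpha}B^{\varepsilon})=0$. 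For the magnetic transport term $\frac{1}{\varepsilon}q_{0}(v\times B^{\varepsilon})\cdot\nabla_{v}f^{\varepsilon}$ the leading contribution with no derivative hitting $B^{\varepsilon}$ vanishes (integration by parts in $v$, using $(v\times B^{\varepsilon})\cdot v=0$), so only commutator pieces remain.

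Next I would invoke the coercivity \eqref{L coercive1}, which after summation yields
\bals
\frac{1}{\varepsilon^{2}}\sum_{|\alpha|\leq N}\bigl(L\partial^{\alpha}f^{\varepsilon},\partial^{\alpha}f^{\varepsilon}\bigr)
\gtrsim \frac{1}{\varepsilon^{2}}\sum_{|\alpha|\leq N}\bigl\|\partial^{\alpha}\mathbf{P}^{\perp}f^{\varepsilon}\bigr\|_{D}^{2},
\eals
producing the full microscopic dissipation on the left-hand side of \eqref{estimate without weight}. This is the good term that absorbs all microscopic remainders generated below.

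It remains to estimate the nonlinear right-hand side
\bals
\frac{q_{0}}{2}(E^{\varepsilon}\cdot v)f^{\varepsilon}+\frac{1}{\varepsilon}\Gamma(f^{\varepsilon},f^{\varepsilon})
\eals
together with the commutators from $\frac{1}{\varepsilon}q_{0}(v\times B^{\varepsilon})\cdot\nabla_{v}f^{\varepsilon}$ and $E^{\varepsilon}\cdot\nabla_{v}f^{\varepsilon}$ after differentiation. For these I would directly invoke the unweighted versions of the nonlinear estimates proved in Section~\ref{Nonlinear Estimates}: part~(1) of Lemmas~\ref{soft Gamma} and \ref{hard Gamma} for the collision term, part~(1) of Lemmas~\ref{soft B 1} and \ref{hard B 1} for the magnetic term (covering the case $|\alpha|=0$ separately by the cancellation noted above), and part~(1) of Lemmas~\ref{soft E 1}, \ref{hard E 1}, \ref{soft E 2}, \ref{hard E 2} for the electric terms. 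Each of these produces a bound of the form $\{\delta_{0}+\sqrt{\widetilde{\mathcal{E}}_{N,l}(t)}\}\widetilde{\mathcal{D}}_{N,l}(t)$, which is exactly what is needed.

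The main technical obstacle I foresee is keeping the $\tfrac{1}{\varepsilon}$ factors under control: the magnetic commutator $\frac{1}{\varepsilon}\partial^{\alpha_1}B^{\varepsilon}\cdot(v\times\nabla_{v}\partial^{\alpha-\alpha_1}f^{\varepsilon})$ with $|\alpha_1|=1$ would blow up if only the a~priori smallness of $\|f^{\varepsilon}\|$ were used; this is precisely why the a~priori time-decay assumption \eqref{assumption1} on the low-order derivatives of $(E^{\varepsilon},B^{\varepsilon})$ is built into the statements of Lemmas~\ref{soft B 1}--\ref{hard E 2} and why the factor $\delta_{0}$ rather than $\sqrt{\widetilde{\mathcal{E}}_{N,l}(t)}$ appears. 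Assuming those lemmas, collecting all contributions and using $\mathcal{D}_{N}(t)\lesssim\widetilde{\mathcal{D}}_{N,l}(t)$ gives \eqref{estimate without weight}.
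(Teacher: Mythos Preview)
Your proposal is correct and follows essentially the same approach as the paper: differentiate \eqref{rVMB}, pair with $\partial^{\alpha}f^{\varepsilon}$, combine with the Maxwell equations to produce the $\frac{1}{2}\frac{\d}{\d t}\|\partial^{\alpha}(E^{\varepsilon},B^{\varepsilon})\|^{2}$ term, use the coercivity \eqref{L coercive1}, and control all nonlinear terms via part~(1) of Lemmas~\ref{soft Gamma}--\ref{hard E 2}. The paper packages the Maxwell cancellation as the single identity $-\frac{1}{\varepsilon}(\partial^{\alpha}E^{\varepsilon}\cdot v\mu^{1/2}q_{1},\partial^{\alpha}f^{\varepsilon})=\frac{1}{2}\frac{\d}{\d t}\|\partial^{\alpha}(E^{\varepsilon},B^{\varepsilon})\|^{2}$ and uses the indicator $\chi_{|\alpha|\geq 1}$ for the force terms, but this is only a presentational difference from your treatment.
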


\begin{proof}
Applying $\partial^\alpha$ with $| \alpha | \leq N $ to the first equation of \eqref{rVMB} and taking the inner product with $  \partial^\alpha f^\varepsilon$ over $\mathbb{R}_x^3$ $\times$  $\mathbb{R}_v^3$, we obtain
\begin{align}\label{without weight}
&\frac{1}{2}\frac{\d}{\d t}\left\| \partial^\alpha (f^\varepsilon, E^\varepsilon, B^\varepsilon) \right\|^2
+ \left( \frac{1}{\varepsilon^2} L \partial^\alpha f^\varepsilon, \partial^\alpha f^\varepsilon \right) \nonumber \\
=\;&-\chi_{|\alpha| \geq 1}\left( \partial^\alpha  (q_0 E^\varepsilon \cdot \nabla_v f^\varepsilon ), \partial^\alpha f^\varepsilon \right)
-\chi_{|\alpha| \geq 1}\left( \frac{1}{\varepsilon }\partial^\alpha (q_0  (v \times B^\varepsilon) \cdot \nabla_v f^\varepsilon ), \partial^\alpha f^\varepsilon \right)  \\
&+ \left( \partial^\alpha \big( \frac{q_0}{2}E^\varepsilon \cdot v  f^\varepsilon \big), \partial^\alpha f^\varepsilon \right)
+ \left( \frac{1}{\varepsilon }\partial^\alpha \Gamma(f^\varepsilon, f^\varepsilon), \partial^\alpha f^\varepsilon \right), \nonumber
\end{align}
where we have used the facts
$$
\frac{1}{\varepsilon} \left( v \cdot \nabla_x \partial^\alpha f^\varepsilon, \partial^\alpha f^\varepsilon \right)=0,\;\;
\left( q_0   E^\varepsilon \cdot \nabla_v f^\varepsilon, f^\varepsilon \right)
=\left( \frac{1}{\varepsilon} q_0  (v \times B^\varepsilon) \cdot \nabla_v f^\varepsilon,  f^\varepsilon \right)=0
$$
and
$$
-\left( \frac{1}{\varepsilon} \partial^\alpha E^\varepsilon \cdot v \mu^{1/2} q_1, \partial^\alpha f^\varepsilon \right)
=\frac{1}{2} \frac{\d }{\d t}\left\| \partial^\alpha (E^\varepsilon, B^\varepsilon) \right\|^2.
$$

The second term on the left-hand side of \eqref{without weight} follows from \eqref{L coercive1} that
\begin{align*}
\left( \frac{1}{\varepsilon^2} L \partial^\alpha f^\varepsilon, \partial^\alpha f^\varepsilon \right)
\geq \frac{\lambda}{\varepsilon^2} \left\| \partial^\alpha \mathbf{P}^{\perp}f^\varepsilon \right\|^2_{D}.
\end{align*}

Now, let's estimate the terms on the right-hand side of \eqref{without weight} individually.
By \eqref{soft B 1-1}, \eqref{hard B 1-1}, \eqref{soft E 1-1}, \eqref{hard E 1-1}, \eqref{soft E 2-1} and \eqref{hard E 2-1}, we can obtain that the first three terms on the right-hand side of \eqref{without weight} are bounded by
$\left\{ \delta_0 + \sqrt{\mathcal{E}_{N}(t)} \right\} \widetilde{\mathcal{D}}_{N,l}(t)$.
By virtue of \eqref{soft gamma3} and \eqref{hard gamma3}, we can control the last term on the right-hand side of \eqref{without weight} by
$\sqrt{\widetilde{\mathcal{E}}_{N,l}(t)} \mathcal{D}_{N}(t)$.

Collecting all the above estimates and taking summation over $|\alpha| \leq N$, we can prove that \eqref{estimate without weight} holds true. This completes the proof of Lemma \ref{energy estimate without weight}.
\end{proof}

\subsection{Weighted Energy Estimate}
\hspace*{\fill}

In this subsection, we give the energy estimates with the weight function $\widetilde{w}_l(\alpha, \beta)$ for the case of $\gamma > \max\left\{-3, -\frac{3}{2}-2s\right\}$. The following lemma is the energy estimate about the pure spatial derivatives $|\alpha| \leq N-1$.
\begin{lemma}\label{weighted 1}
There holds
\begin{align}\label{weighted estimate1}
&\frac{1}{2}\frac{\d}{\d t} \sum_{|\alpha|\leq N-1} \left\|  \widetilde{w}_l(\alpha,0) \partial^\alpha \mathbf{P}^{\perp} f^\varepsilon \right\|^2
+ \frac{\lambda}{\varepsilon^2}\sum_{|\alpha| \leq N-1} \left\| \widetilde{w}_l(\alpha,0)\partial^\alpha \mathbf{P}^{\perp} f^\varepsilon\right\|_{D}^2 \nonumber\\
&+ \frac{q\vartheta}{(1+t)^{1+\vartheta}}  \sum_{|\alpha| \leq N-1} \left\| \langle v \rangle ^{\frac{1}{2}} \widetilde{w}_l(\alpha,0)\partial^\alpha \mathbf{P}^{\perp} f^\varepsilon\right\|^2 \nonumber\\
\lesssim\;&\frac{1}{\varepsilon^2} \sum_{|\alpha| \leq N} \left\| \partial^\alpha \mathbf{P}^{\perp} f^\varepsilon\right\|_{D}^2
+ \sum_{|\alpha| \leq N-1} \left\| \partial^\alpha E^\varepsilon \right\|^2
+ \sum_{1 \leq |\alpha| \leq N} \left\| \partial^\alpha (a^\varepsilon_{\pm}, b^\varepsilon, c^\varepsilon) \right\|^2
\nonumber \\
&+ \left\{ \delta_0 + \mathcal{E}_{N}(t)+ \sqrt{\widetilde{\mathcal{E}}_{N,l}(t)} \right\} \widetilde{\mathcal{D}}_{N,l}(t).
\end{align}
\end{lemma}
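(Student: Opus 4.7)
The plan is to apply $\partial^\alpha$ with $|\alpha|\leq N-1$ to the first equation of \eqref{rVMB}, take the $L^2_{x,v}$ inner product against $\widetilde{w}_l^2(\alpha,0)\partial^\alpha \mathbf{P}^\perp f^\varepsilon$, and sum over $\alpha$. Decomposing $\partial_t \partial^\alpha f^\varepsilon = \partial_t \partial^\alpha \mathbf{P}^\perp f^\varepsilon + \partial_t \partial^\alpha \mathbf{P}f^\varepsilon$, the $\mathbf{P}^\perp$ piece will produce $\frac{1}{2}\frac{d}{dt}\|\widetilde{w}_l(\alpha,0)\partial^\alpha \mathbf{P}^\perp f^\varepsilon\|^2$, and the $t$--derivative hitting $e^{q\langle v\rangle/(1+t)^\vartheta}$ inside $\widetilde{w}_l^2$ will supply exactly the extra dissipation $\frac{q\vartheta}{(1+t)^{1+\vartheta}}\|\langle v\rangle^{1/2}\widetilde{w}_l(\alpha,0)\partial^\alpha \mathbf{P}^\perp f^\varepsilon\|^2$ on the LHS of \eqref{weighted estimate1}.

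For the linearized collision term I would use $L\partial^\alpha f^\varepsilon = L\partial^\alpha \mathbf{P}^\perp f^\varepsilon$ together with the weighted coercivity \eqref{L coercive2}, producing the principal dissipation $\frac{\lambda}{\varepsilon^2}\|\widetilde{w}_l(\alpha,0)\partial^\alpha \mathbf{P}^\perp f^\varepsilon\|_D^2$ plus a local error $\frac{C}{\varepsilon^2}\|\partial^\alpha \mathbf{P}^\perp f^\varepsilon\|^2_{L^2(B_C)}$ absorbed into $\frac{1}{\varepsilon^2}\sum_{|\alpha|\leq N}\|\partial^\alpha \mathbf{P}^\perp f^\varepsilon\|_D^2$. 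The transport self--interaction $\frac{1}{\varepsilon}(v\cdot\nabla_x \partial^\alpha \mathbf{P}^\perp f^\varepsilon,\widetilde{w}_l^2(\alpha,0)\partial^\alpha \mathbf{P}^\perp f^\varepsilon)$ vanishes by integration by parts in $x$, while the macro cross transport is bounded via Cauchy--Schwarz and Young by $\eta\|\nabla_x\partial^\alpha(a^\varepsilon_\pm,b^\varepsilon,c^\varepsilon)\|^2 + C_\eta \varepsilon^{-2}\|\widetilde{w}_l(\alpha,0)\partial^\alpha\mathbf{P}^\perp f^\varepsilon\|_D^2$, using that $\widetilde{w}_l^2\mathbf{P}f^\varepsilon$ still decays like $\mu^{1/4}$. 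The linear source $-\frac{1}{\varepsilon}\partial^\alpha((E^\varepsilon\cdot v)\mu^{1/2}q_1)$ is controlled by Young into $C\|\partial^\alpha E^\varepsilon\|^2+\eta\varepsilon^{-2}\|\widetilde{w}_l(\alpha,0)\partial^\alpha \mathbf{P}^\perp f^\varepsilon\|_D^2$. The four nonlinear contributions $\frac{1}{\varepsilon}\Gamma(f^\varepsilon,f^\varepsilon)$, $\frac{1}{\varepsilon}q_0(v\times B^\varepsilon)\cdot\nabla_v f^\varepsilon$, $\frac{q_0}{2}(E^\varepsilon\cdot v) f^\varepsilon$ and $E^\varepsilon\cdot\nabla_v f^\varepsilon$ are handled directly by part~(2) of Lemmas \ref{soft Gamma}--\ref{hard E 2}, i.e.\ by \eqref{soft gamma4}/\eqref{hard gamma4}, \eqref{soft B 1-2}/\eqref{hard B 1-2}, \eqref{soft E 1-2}/\eqref{hard E 1-2} and \eqref{soft E 2-2}/\eqref{hard E 2-2}, each giving $\lesssim \{\delta_0+\sqrt{\widetilde{\mathcal{E}}_{N,l}(t)}\}\widetilde{\mathcal{D}}_{N,l}(t)$; summing over $|\alpha|\leq N-1$ then yields \eqref{weighted estimate1}.

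The hard part will be the macro--micro crossing piece $(\partial_t \partial^\alpha \mathbf{P}f^\varepsilon,\widetilde{w}_l^2(\alpha,0)\partial^\alpha \mathbf{P}^\perp f^\varepsilon)$: the weight $\widetilde{w}_l^2$ destroys the $L^2_v$ orthogonality between $\mathcal{N}(L)$ and its complement, so this term does not vanish and must be matched exactly against the three linear ingredients on the RHS. My plan for it is to first expand $\partial^\alpha\mathbf{P}f^\varepsilon$ in its polynomial--Maxwellian form (so that $\mu^{1/2}\widetilde{w}_l^2$ remains rapidly decaying in $v$), then invoke the macroscopic system \eqref{macro equation 1} to substitute $\partial_t\partial^\alpha(a^\varepsilon_\pm,b^\varepsilon,c^\varepsilon)$ by $\varepsilon^{-1}\nabla_x\partial^\alpha(a^\varepsilon_\pm,b^\varepsilon,c^\varepsilon)$, $\varepsilon^{-1}\nabla_x\partial^\alpha\mathbf{P}^\perp f^\varepsilon$, $\partial^\alpha E^\varepsilon$ and nonlinear residuals, and finally distribute the $\varepsilon$ powers through Young's inequality so that each resulting piece falls within one of $\sum_{1\leq|\alpha|\leq N}\|\partial^\alpha(a^\varepsilon_\pm,b^\varepsilon,c^\varepsilon)\|^2$, $\sum_{|\alpha|\leq N-1}\|\partial^\alpha E^\varepsilon\|^2$ or $\frac{1}{\varepsilon^2}\sum_{|\alpha|\leq N}\|\partial^\alpha\mathbf{P}^\perp f^\varepsilon\|_D^2$ on the RHS of \eqref{weighted estimate1}.
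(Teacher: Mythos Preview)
Your overall strategy is valid, but it is genuinely different from the paper's and carries some extra baggage that you do not quite acknowledge.

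The paper does \emph{not} test the original equation \eqref{rVMB} against $\widetilde{w}_l^2(\alpha,0)\partial^\alpha\mathbf{P}^\perp f^\varepsilon$. Instead it first applies $\mathbf{P}^\perp$ to \eqref{rVMB}, producing the microscopic equation \eqref{rrVMB} for $\mathbf{P}^\perp f^\varepsilon$ alone, and only then differentiates and tests against $\widetilde{w}_l^2(\alpha,0)\partial^\alpha\mathbf{P}^\perp f^\varepsilon$. Because the time derivative in \eqref{rrVMB} is already $\partial_t\mathbf{P}^\perp f^\varepsilon$, your ``hard part'' $(\partial_t\partial^\alpha\mathbf{P}f^\varepsilon,\widetilde{w}_l^2\partial^\alpha\mathbf{P}^\perp f^\varepsilon)$ simply never arises, and no recourse to the macroscopic system \eqref{macro equation 1} is needed. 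The macro--micro interaction instead appears packaged as the three commutator terms at the end of \eqref{with weight 1} (the $\mathbf{P}(\cdots\mathbf{P}^\perp f^\varepsilon)$ and $\mathbf{P}^\perp(\cdots\mathbf{P}f^\varepsilon)$ lines), which are immediately bounded by $\tfrac{\eta}{\varepsilon^2}\|\widetilde{w}_l\partial^\alpha\mathbf{P}^\perp f^\varepsilon\|_D^2+\|\partial^\alpha\nabla_x\mathbf{P}^\perp f^\varepsilon\|_D^2+\|\partial^\alpha\nabla_x(a_\pm^\varepsilon,b^\varepsilon,c^\varepsilon)\|^2+\mathcal{E}_N\mathcal{D}_N$ since they are moments against exponentially decaying functions. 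Your plan to substitute $\partial_t(a_\pm^\varepsilon,b^\varepsilon,c^\varepsilon)$ via \eqref{macro equation 1}--\eqref{macro equation 3} would ultimately land on the same right-hand side, but through more bookkeeping.

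There is also a small gap in your citations: part~(2) of Lemmas~\ref{soft B 1}--\ref{hard E 2} (i.e.\ \eqref{soft B 1-2}, \eqref{hard B 1-2}, \eqref{soft E 1-2}, \eqref{hard E 1-2}, \eqref{soft E 2-2}, \eqref{hard E 2-2}) estimates the force terms with $\mathbf{P}^\perp f^\varepsilon$ \emph{inside} the gradient, not the full $f^\varepsilon$. In your setup the forcing carries the full $f^\varepsilon$, so after splitting $f^\varepsilon=\mathbf{P}f^\varepsilon+\mathbf{P}^\perp f^\varepsilon$ you must separately treat, e.g., $\tfrac{1}{\varepsilon}\big(q_0(v\times B^\varepsilon)\cdot\nabla_v\partial^\alpha\mathbf{P}f^\varepsilon,\widetilde{w}_l^2\partial^\alpha\mathbf{P}^\perp f^\varepsilon\big)$. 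These macro pieces are harmless (exponential decay in $v$ persists), but they are exactly the terms the paper absorbs into its commutator lines and which your write-up currently omits.
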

\begin{proof}
We apply the micro-projection operator $\mathbf{P}^\perp$ into the first equation of \eqref{rVMB} and use the macro-micro decomposition \eqref{f decomposition} to obtain
\begin{align}\label{rrVMB}
&\partial_t  \mathbf{P}^{\perp} f^\varepsilon + \frac{1}{\varepsilon} v \cdot \nabla_x \mathbf{P}^{\perp} f^\varepsilon
+\frac{1}{\varepsilon^2} L\mathbf{P}^{\perp} f^\varepsilon -\frac{1}{\varepsilon} E^\varepsilon \cdot v\mu^{1/2}q_1 \nonumber\\
=\;&-\frac{1}{\varepsilon } q_0 (\varepsilon E^\varepsilon +v \times B^\varepsilon) \cdot \nabla_v \mathbf{P}^{\perp} f^\varepsilon
+\frac{q_0}{2} E^\varepsilon \cdot v \mathbf{P}^{\perp} f^\varepsilon
+\frac{1}{\varepsilon} \Gamma(f^\varepsilon, f^\varepsilon) \nonumber \\
&+ \mathbf{P} \left( \frac{1}{\varepsilon} \Big[v \cdot \nabla_x
+ q_0 (\varepsilon E^\varepsilon + v \times B^\varepsilon) \cdot \nabla_v \Big]\mathbf{P}^{\perp} f^\varepsilon
-\frac{q_0}{2} E^\varepsilon \cdot v \mathbf{P}^{\perp} f^\varepsilon \right) \nonumber\\
&- \mathbf{P}^{\perp} \left( \frac{1}{\varepsilon} \Big[ v \cdot \nabla_x
+ q_0 (\varepsilon E^\varepsilon + v \times B^\varepsilon) \cdot \nabla_v \Big] \mathbf{P} f^\varepsilon
-\frac{q_0}{2} E^\varepsilon \cdot v \mathbf{P} f^\varepsilon \right).
\end{align}
By applying $\partial^\alpha$ with $|\alpha| \leq N-1$ to \eqref{rrVMB} and taking the inner product with $ \widetilde{w}^2_l(\alpha,0)\partial^\alpha \mathbf{P}^{\perp} f^\varepsilon$ over $\mathbb{R}_x^3 \times \mathbb{R}_v^3$, one has
\begin{align}\label{with weight 1}
&\frac{1}{2} \frac{\d}{\d t}  \left\|  \widetilde{w}_l(\alpha,0) \partial^\alpha \mathbf{P}^{\perp} f^\varepsilon \right\|^2
+ \frac{q \vartheta}{(1+t)^{1+\vartheta}} \left\| \langle v \rangle ^{\frac{1}{2}} \widetilde{w}_l(\alpha,0) \partial^\alpha \mathbf{P}^{\perp} f^\varepsilon \right\|^2 \nonumber \\
&+\left( \frac{1}{\varepsilon} v \cdot \nabla_x \partial^\alpha \mathbf{P}^{\perp}f^\varepsilon, \widetilde{w}^2_l(\alpha,0)\partial^\alpha\mathbf{P}^{\perp}f^\varepsilon\right)
+ \left( \frac{1}{\varepsilon^2} L\partial^\alpha \mathbf{P}^{\perp} f^\varepsilon,  \widetilde{w}^2_l(\alpha,0) \partial^\alpha \mathbf{P}^{\perp} f^\varepsilon \right) \nonumber\\
=\;& \left( \frac{1}{\varepsilon} \partial^\alpha E^\varepsilon \cdot v\mu^{1/2}q_1, \widetilde{w}^2_l(\alpha,0) \partial^\alpha \mathbf{P}^{\perp} f^\varepsilon \right)
+ \left( \frac{1}{\varepsilon} \partial^\alpha \Gamma(f^\varepsilon, f^\varepsilon), \widetilde{w}^2_l(\alpha,0) \partial^\alpha \mathbf{P}^{\perp} f^\varepsilon \right) \nonumber \\
&+ \left( \partial^\alpha \Big( -\frac{1}{\varepsilon }q_0 (\varepsilon E^\varepsilon +v \times B^\varepsilon) \cdot \nabla_v \mathbf{P}^{\perp} f^\varepsilon
+ \frac{q_0}{2} E^\varepsilon \cdot v \mathbf{P}^{\perp} f^\varepsilon \Big),  \widetilde{w}^2_l(\alpha,0) \partial^\alpha \mathbf{P}^{\perp} f^\varepsilon \right) \nonumber \\
&+ \left( \partial^\alpha \mathbf{P} \left( \frac{1}{\varepsilon} \Big[v \cdot \nabla_x
+ q_0 (\varepsilon E^\varepsilon + v \times B^\varepsilon) \cdot \nabla_v \Big]\mathbf{P}^{\perp} f^\varepsilon \right), \widetilde{w}^2_l(\alpha,0) \partial^\alpha \mathbf{P}^{\perp} f^\varepsilon \right) \nonumber \\
&- \left( \partial^\alpha \mathbf{P} \left( \frac{q_0}{2} E^\varepsilon \cdot v \mathbf{P}^{\perp} f^\varepsilon \right)
- \partial^\alpha \mathbf{P}^{\perp} \left( \frac{q_0}{2} E^\varepsilon \cdot v \mathbf{P} f^\varepsilon \right) ,
 \widetilde{w}^2_l(\alpha,0) \partial^\alpha \mathbf{P}^{\perp} f^\varepsilon \right) \nonumber\\
&- \left( \partial^\alpha \mathbf{P}^{\perp} \left( \frac{1}{\varepsilon} \Big[v \cdot \nabla_x
+ q_0 (\varepsilon E^\varepsilon + v \times B^\varepsilon) \cdot \nabla_v \Big]\mathbf{P} f^\varepsilon \right), \widetilde{w}^2_l(\alpha,0) \partial^\alpha \mathbf{P}^{\perp} f^\varepsilon \right).
\end{align}

The third term on the left-hand side of \eqref{with weight 1} vanishes using integration by parts. And the last term on the left-hand side of \eqref{with weight 1} follows from \eqref{L coercive2} that
\begin{align*}
\left( \frac{1}{\varepsilon^2} L\partial^\alpha \mathbf{P}^{\perp} f^\varepsilon,  \widetilde{w}^2_l(\alpha,0) \partial^\alpha \mathbf{P}^{\perp} f^\varepsilon \right)
\geq \frac{\lambda}{\varepsilon^2} \left\| \widetilde{w}_l(\alpha,0) \partial^\alpha \mathbf{P}^{\perp} f^\varepsilon \right\|_{D}^2
-\frac{C}{\varepsilon^2} \left\|  \partial^\alpha \mathbf{P}^{\perp} f^\varepsilon \right\|^2_{D}.
\end{align*}

Next, we will compute each term for the right-hand side of \eqref{with weight 1}. Firstly, applying the H\"{o}lder inequality, the first term on the
right-hand side of \eqref{with weight 1} is dominated by
$$
\left\| \partial^\alpha E^\varepsilon \right\|^2 + \frac{1}{\varepsilon^2}\left\| \partial^\alpha \mathbf{P}^{\perp} f^\varepsilon \right\|_{D}^2.
$$
Due to \eqref{soft gamma4} and \eqref{hard gamma4}, we can control the second term on the right-hand side of \eqref{with weight 1} by
$\sqrt{\widetilde{\mathcal{E}}_{N,l}(t)} \widetilde{\mathcal{D}}_{N,l}(t)$.
Moreover, it follows from \eqref{soft B 1-2}, \eqref{hard B 1-2}, \eqref{soft E 1-2}, \eqref{hard E 1-2}, \eqref{soft E 2-2} and \eqref{hard E 2-2} that the third term on the right-hand side of \eqref{with weight 1} is bounded by
$\left\{ \delta_0 + \sqrt{\mathcal{E}_{N}(t)} \right\} \widetilde{\mathcal{D}}_{N,l}(t)$.
Finally, from the definition of $\mathbf{P}f^\varepsilon$ in \eqref{Pf define}, the H\"{o}lder inequality and \eqref{Sobolev-ineq}, we can derive that the last three terms on the right-hand side of \eqref{with weight 1} can be controlled by
$$
\frac{\eta}{\varepsilon^2} \left\| \widetilde{w}_l(\alpha,0)\partial^\alpha\mathbf{P}^{\perp}f^\varepsilon \right\|^2_{D}
+ \left\| \partial^\alpha \nabla_x \mathbf{P}^{\perp}f^\varepsilon \right\|_{D}^2
+ \left\| \partial^\alpha \nabla_x (a_{\pm}^\varepsilon, b^\varepsilon, c^\varepsilon) \right\|^2
+ \mathcal{E}_{N}(t)\mathcal{D}_{N}(t).
$$

Therefore, by plugging the estimates mentioned above into \eqref{with weight 1} and taking summation over $|\alpha| \leq N-1$, \eqref{weighted estimate1} follows. This completes the proof of Lemma \ref{weighted 1}.
\end{proof}

Next, we provide the weighted energy estimate with the mixed derivatives $|\alpha|+|\beta| \leq N-1$ and $|\beta| \geq 1$.
\begin{lemma}\label{weighted 2}
There holds
\begin{align}\label{weighted estimate2}
& \sum_{m=1}^{N-1}C_m \sum_{\substack{{|\alpha|+|\beta| \leq N-1}\\{|\beta| =m}}} \bigg\{\frac{1}{2} \frac{\d}{\d t} \left\| \widetilde{w}_l(\alpha,\beta) \partial^\alpha_\beta \mathbf{P}^{\perp}f^\varepsilon \right\|^2
+ \frac{\lambda}{\varepsilon^2} \left\| \widetilde{w}_l(\alpha,\beta)\partial^\alpha_\beta \mathbf{P}^{\perp} f^\varepsilon\right\|_{D}^2 \nonumber\\
&\qquad \qquad \qquad \qquad \quad\; + \frac{q\vartheta}{(1+t)^{1+\vartheta}}  \left\| \langle v \rangle ^{\frac{1}{2}} \widetilde{w}_l(\alpha, \beta)\partial^\alpha_\beta \mathbf{P}^{\perp} f^\varepsilon\right\|^2  \bigg\} \nonumber\\
\lesssim\;&\frac{1}{\varepsilon^2} \sum_{|\alpha| \leq N-1}  \left\| \widetilde{w}_{l}(\alpha, 0) \partial^\alpha \mathbf{P}^{\perp} f^\varepsilon\right\|_{D}^2
+\frac{1}{\varepsilon^2} \sum_{|\alpha| \leq N-1} \left\| \partial^\alpha \mathbf{P}^{\perp} f^\varepsilon\right\|_{D}^2
+ \sum_{|\alpha| \leq N-2} \left\| \partial^\alpha E^\varepsilon \right\|^2 \nonumber\\
&+ \sum_{1 \leq |\alpha| \leq N-1} \left\| \partial^\alpha (a^\varepsilon_{\pm}, b^\varepsilon, c^\varepsilon) \right\|^2
+ \left\{ \delta_0 + \mathcal{E}_{N}(t)+ \sqrt{\widetilde{\mathcal{E}}_{N,l}(t)} \right\} \widetilde{\mathcal{D}}_{N,l}(t).
\end{align}
Here, $C_m$ is a fixed constant satisfying $C_m \gg C_{m+1}$.
\end{lemma}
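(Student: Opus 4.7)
The plan is to parallel the proof of Lemma \ref{weighted 1} with the essential new ingredient being an induction on $m=|\beta|$. For each fixed $(\alpha,\beta)$ with $|\alpha|+|\beta|\leq N-1$ and $|\beta|=m\geq 1$, I apply $\partial^\alpha_\beta$ to the micro-projection equation \eqref{rrVMB} and pair with $\widetilde{w}_l^2(\alpha,\beta)\partial^\alpha_\beta\mathbf{P}^\perp f^\varepsilon$ in $L^2_{x,v}$. The time derivative together with the time decay of the weight produces the two leading LHS terms, while the coercivity \eqref{L coercive3} of $L$ supplies $\frac{\lambda}{\varepsilon^2}\|\widetilde{w}_l(\alpha,\beta)\partial^\alpha_\beta\mathbf{P}^\perp f^\varepsilon\|_D^2$ modulo harmless lower-order-in-$|\beta|$ remainders and a localized non-weighted piece, both of which appear on the RHS of \eqref{weighted estimate2}. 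The final estimate will be obtained by summing in $(\alpha,\beta)$ at each level $m$ and then forming the linear combination $\sum_{m=1}^{N-1}C_m\,(\cdot)$ with $C_1\gg C_2\gg\cdots\gg C_{N-1}$, independently of $\varepsilon\in(0,1]$.

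The main obstacle is the transport term. Unlike the case $|\beta|=0$ of Lemma \ref{weighted 1}, integration by parts annihilates only the principal part $v\cdot\nabla_x\partial^\alpha_\beta\mathbf{P}^\perp f^\varepsilon$, leaving the velocity commutator
\[
\frac{1}{\varepsilon}\sum_{i:\,\beta_i\geq 1}\beta_i\,\bigl(\partial^{\alpha+e_i}_{\beta-e_i}\mathbf{P}^\perp f^\varepsilon,\; \widetilde{w}_l^2(\alpha,\beta)\partial^\alpha_\beta\mathbf{P}^\perp f^\varepsilon\bigr)
\]
with the singular prefactor $1/\varepsilon$. Here the weight design \eqref{weight function2} is essential: in the soft regime a direct computation gives the algebraic identity $\widetilde{w}_l(\alpha,\beta)=\widetilde{w}_l(\alpha+e_i,\beta-e_i)\langle v\rangle^{\gamma}$, with an analogous balance through $\langle v\rangle^{1/s}$-factors in the hard regime. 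Splitting $\widetilde{w}_l^2(\alpha,\beta)=\bigl(\widetilde{w}_l(\alpha,\beta)\langle v\rangle^{-(\gamma+2s)/2}\bigr)\cdot\bigl(\widetilde{w}_l(\alpha,\beta)\langle v\rangle^{(\gamma+2s)/2}\bigr)$, Cauchy--Schwarz, the embedding $|\cdot|_{L^2_{\gamma/2+s}}\leq|\cdot|_D$ from \eqref{norm inequality}, and Young's inequality yield
\[
\Bigl|\tfrac{1}{\varepsilon}\bigl(\partial^{\alpha+e_i}_{\beta-e_i}\mathbf{P}^\perp f^\varepsilon,\, \widetilde{w}_l^2(\alpha,\beta)\partial^\alpha_\beta\mathbf{P}^\perp f^\varepsilon\bigr)\Bigr|
\leq \frac{\eta}{\varepsilon^2}\bigl\|\widetilde{w}_l(\alpha+e_i,\beta-e_i)\partial^{\alpha+e_i}_{\beta-e_i}\mathbf{P}^\perp f^\varepsilon\bigr\|_D^2 + C_\eta\bigl\|\widetilde{w}_l(\alpha,\beta)\partial^\alpha_\beta\mathbf{P}^\perp f^\varepsilon\bigr\|_D^2.
\]
The second term will be absorbed by a small fraction of the level-$m$ coercive dissipation. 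The first, at level $m-1$ with a singular $1/\varepsilon^2$, will be absorbed after multiplication by $C_m$ in the linear combination by the $C_{m-1}\lambda/\varepsilon^2$-dissipation of the level below, provided $C_{m-1}\gg C_m$; this absorption is uniform in $\varepsilon\in(0,1]$. At the base level $m=1$, the commutator yields $\frac{1}{\varepsilon^2}\sum_{|\alpha|\leq N-1}\|\widetilde{w}_l(\alpha,0)\partial^\alpha\mathbf{P}^\perp f^\varepsilon\|_D^2$, which appears on the RHS of \eqref{weighted estimate2} and is to be controlled via Lemma \ref{weighted 1} in the next section.

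All remaining contributions from \eqref{rrVMB} will be routine. The linear source $-\frac{1}{\varepsilon}\partial^\alpha E^\varepsilon\cdot v\mu^{1/2}q_1$ is bounded by $\|\partial^\alpha E^\varepsilon\|^2 + \frac{\eta}{\varepsilon^2}\|\widetilde{w}_l(\alpha,\beta)\partial^\alpha_\beta\mathbf{P}^\perp f^\varepsilon\|_D^2$ with $|\alpha|\leq N-2$, since $|\beta|\geq 1$. The nonlinear contributions from $\Gamma(f^\varepsilon,f^\varepsilon)$, $\frac{1}{\varepsilon}q_0(v\times B^\varepsilon)\cdot\nabla_v\mathbf{P}^\perp f^\varepsilon$, $\frac{q_0}{2}E^\varepsilon\cdot v\,\mathbf{P}^\perp f^\varepsilon$ and $q_0 E^\varepsilon\cdot\nabla_v\mathbf{P}^\perp f^\varepsilon$ are dominated by $\{\delta_0 + \sqrt{\mathcal{E}_N(t)}+\sqrt{\widetilde{\mathcal{E}}_{N,l}(t)}\}\widetilde{\mathcal{D}}_{N,l}(t)$ via Lemmas \ref{soft Gamma}, \ref{hard Gamma}, \ref{soft B 1}, \ref{hard B 1}, \ref{soft E 1}, \ref{hard E 1}, \ref{soft E 2} and \ref{hard E 2}. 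The microscopic and macroscopic remainders produced by pulling $\partial^\alpha_\beta$ through the projections $\mathbf{P}$ and $\mathbf{P}^\perp$ acting on linear drifts contain the kernel $\mu^{1/2}$, rendering the weight harmless, so they are absorbed into $\|\partial^{\alpha'}(a^\varepsilon_\pm,b^\varepsilon,c^\varepsilon)\|^2$ for $1\leq|\alpha'|\leq N-1$ plus a small multiple of the dissipation. Assembling everything, choosing $\eta$ sufficiently small, and taking the linear combination with $C_m\gg C_{m+1}$ will yield \eqref{weighted estimate2}.
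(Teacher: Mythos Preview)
Your approach is essentially the paper's: apply $\partial^\alpha_\beta$ to \eqref{rrVMB}, pair with $\widetilde{w}_l^2(\alpha,\beta)\partial^\alpha_\beta\mathbf{P}^\perp f^\varepsilon$, use \eqref{L coercive3}, bound the nonlinear terms by the cited lemmas, and close via the cascade $C_m\gg C_{m+1}$.

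One point deserves care. In the transport commutator you apply Young's inequality with $\eta/\varepsilon^2$ on the level-$(m-1)$ term and $C_\eta$ on the level-$m$ term; the paper does the opposite, obtaining
\[
\frac{\eta}{\varepsilon^2}\bigl\|\widetilde{w}_l(\alpha,\beta)\partial^\alpha_\beta\mathbf{P}^\perp f^\varepsilon\bigr\|_D^2
+C_\eta\bigl\|\widetilde{w}_l(\alpha+e_i,\beta-e_i)\partial^{\alpha+e_i}_{\beta-e_i}\mathbf{P}^\perp f^\varepsilon\bigr\|_D^2.
\]
The paper's allocation lets $\eta$ be genuinely small so that the level-$m$ piece is absorbed by $\tfrac{\lambda}{\varepsilon^2}$, while the $O(1)$ level-$(m-1)$ piece is handled by $C_{m-1}\gg C_m$. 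With your allocation, absorbing $C_\eta\|\cdot\|_D^2$ into $\tfrac{\lambda}{\varepsilon^2}\|\cdot\|_D^2$ uniformly on $\varepsilon\in(0,1]$ forces $C_\eta\le \lambda/2$, i.e.\ $\eta$ must be \emph{large}; if you intend $\eta$ small in the standard way, the step fails at $\varepsilon=1$. Either swap the roles as the paper does, or state explicitly that $\eta$ is a fixed constant with $1/(4\eta)<\lambda/2$.

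A minor omission: the projection remainder terms (the last three lines of \eqref{with weight 2}) produce, besides $\|\partial^{\alpha'}(a^\varepsilon_\pm,b^\varepsilon,c^\varepsilon)\|^2$, the unweighted microscopic piece $\|\partial^\alpha\nabla_x\mathbf{P}^\perp f^\varepsilon\|_D^2$ and a quadratic $\mathcal{E}_N(t)\mathcal{D}_N(t)$ contribution; both are harmless and covered by the right-hand side of \eqref{weighted estimate2}.
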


\begin{proof}
Applying $\partial^\alpha_\beta$ with $|\alpha|+\beta| \leq N-1$ and $|\beta|\geq 1$ to \eqref{rrVMB} and taking the inner product with $ \widetilde{w}^2_l(\alpha, \beta)\partial^\alpha_\beta \mathbf{P}^{\perp} f^\varepsilon$ over $\mathbb{R}_x^3 \times \mathbb{R}_v^3$, we have
\begin{align}\label{with weight 2}
&\frac{1}{2} \frac{\d}{\d t}  \left\|  \widetilde{w}_l(\alpha, \beta) \partial^\alpha_\beta \mathbf{P}^{\perp} f^\varepsilon \right\|^2
+\frac{q \vartheta}{(1+t)^{1+\vartheta}} \left\| \langle v \rangle ^{\frac{1}{2}} \widetilde{w}_l(\alpha, \beta) \partial^\alpha_\beta \mathbf{P}^{\perp} f^\varepsilon \right\|^2 \nonumber \\
&+ \left( \frac{1}{\varepsilon^2} \partial^\alpha_\beta L \mathbf{P}^{\perp} f^\varepsilon,  \widetilde{w}^2_l(\alpha,\beta) \partial^\alpha_\beta \mathbf{P}^{\perp} f^\varepsilon \right) \nonumber\\
=\;& -\left( \frac{1}{\varepsilon} \partial^\alpha_\beta (v \cdot \nabla_x \mathbf{P}^{\perp} f^\varepsilon), \widetilde{w}^2_l(\alpha,\beta) \partial^\alpha_\beta \mathbf{P}^{\perp} f^\varepsilon \right) \nonumber \\
&+\left( \frac{1}{\varepsilon} \partial^\alpha E^\varepsilon \cdot \partial_\beta(v\mu^{1/2})q_1, \widetilde{w}^2_l(\alpha,\beta) \partial^\alpha_\beta \mathbf{P}^{\perp} f^\varepsilon \right)
+ \left( \frac{1}{\varepsilon} \partial^\alpha_\beta \Gamma(f^\varepsilon, f^\varepsilon), \widetilde{w}^2_l(\alpha,\beta) \partial^\alpha_\beta \mathbf{P}^{\perp} f^\varepsilon \right) \nonumber \\
&+ \left( \partial^\alpha_\beta \Big( -\frac{1}{\varepsilon }q_0 (\varepsilon E^\varepsilon +v \times B^\varepsilon) \cdot \nabla_v \mathbf{P}^{\perp} f^\varepsilon
+ \frac{q_0}{2} E^\varepsilon \cdot v \mathbf{P}^{\perp} f^\varepsilon \Big),  \widetilde{w}^2_l(\alpha,\beta) \partial^\alpha_\beta \mathbf{P}^{\perp} f^\varepsilon \right) \nonumber \\
&+ \left( \partial^\alpha_\beta \mathbf{P} \left( \frac{1}{\varepsilon} \Big[v \cdot \nabla_x
+ q_0 (\varepsilon E^\varepsilon + v \times B^\varepsilon) \cdot \nabla_v \Big]\mathbf{P}^{\perp} f^\varepsilon \right), \widetilde{w}^2_l(\alpha,\beta) \partial^\alpha_\beta \mathbf{P}^{\perp} f^\varepsilon \right) \nonumber \\
&- \left( \partial^\alpha_\beta \mathbf{P} \left( \frac{q_0}{2} E^\varepsilon \cdot v \mathbf{P}^{\perp} f^\varepsilon \right)
- \partial^\alpha_\beta \mathbf{P}^{\perp} \left( \frac{q_0}{2} E^\varepsilon \cdot v \mathbf{P} f^\varepsilon \right) ,
 \widetilde{w}^2_l(\alpha,\beta) \partial^\alpha_\beta \mathbf{P}^{\perp} f^\varepsilon \right) \nonumber\\
&- \left( \partial^\alpha_\beta \mathbf{P}^{\perp} \left( \frac{1}{\varepsilon} \Big[v \cdot \nabla_x
+ q_0 (\varepsilon E^\varepsilon + v \times B^\varepsilon) \cdot \nabla_v \Big]\mathbf{P} f^\varepsilon \right), \widetilde{w}^2_l(\alpha,\beta) \partial^\alpha_\beta \mathbf{P}^{\perp} f^\varepsilon \right).
\end{align}

The last term on the left-hand side of \eqref{with weight 2} follows from \eqref{L coercive3} that
\begin{align*}
&\left( \frac{1}{\varepsilon^2} \partial^\alpha_\beta  L \mathbf{P}^{\perp} f^\varepsilon,  \widetilde{w}^2_l(\alpha,\beta) \partial^\alpha_\beta \mathbf{P}^{\perp} f^\varepsilon \right) \\
\geq \;& \frac{\lambda}{\varepsilon^2} \left\| \widetilde{w}_l(\alpha,\beta) \partial^\alpha_\beta \mathbf{P}^{\perp} f^\varepsilon \right\|_{D}^2
-\frac{C}{\varepsilon^2} \sum_{|\beta^\prime| < |\beta|} \left\|  \widetilde{w}_l(\alpha,\beta^\prime) \partial^\alpha_{\beta^\prime} \mathbf{P}^{\perp} f^\varepsilon \right\|^2_{D}
- \frac{C}{\varepsilon^2} \left\| \partial^\alpha \mathbf{P}^{\perp} f^\varepsilon \right\|^2_{D}.
\end{align*}

Let's now carefully treat the terms on the right-hand side of \eqref{with weight 2} one by one. For the first term on the right-hand side of \eqref{with weight 2}, recalling the definition of $\widetilde{w}_l(\alpha,\beta)$ in \eqref{weight function1}, we have
\begin{align}
&-\left( \frac{1}{\varepsilon} \partial^\alpha_\beta (v \cdot \nabla_x \mathbf{P}^{\perp} f^\varepsilon), \widetilde{w}^2_l(\alpha,\beta) \partial^\alpha_\beta \mathbf{P}^{\perp} f^\varepsilon \right) \nonumber \\
=\;&-C_{\beta}^{e_i} \left( \frac{1}{\varepsilon} \partial^{\alpha+e_i}_{\beta-e_i} \mathbf{P}^{\perp} f^\varepsilon, \widetilde{w}^2_l(\alpha,\beta) \partial^\alpha_\beta \mathbf{P}^{\perp} f^\varepsilon \right)\nonumber \\
\lesssim\;& \frac{1}{\varepsilon } \left\| \widetilde{w}_l(\alpha+e_i,\beta-e_i) \langle v \rangle^{\frac{\gamma+2s}{2}} \partial^{\alpha+e_i}_{\beta-e_i} \mathbf{P}^{\perp} f^\varepsilon \right\|
\left\| \widetilde{w}_l(\alpha,\beta) \langle v \rangle^{\frac{\gamma+2s}{2}} \partial^{\alpha}_{\beta} \mathbf{P}^{\perp} f^\varepsilon \right\| \nonumber \\
\lesssim\;& \frac{\eta}{\varepsilon^2} \left\| \widetilde{w}_l(\alpha,\beta)   \partial^{\alpha}_{\beta} \mathbf{P}^{\perp} f^\varepsilon \right\|_D^2
+\left\| \widetilde{w}_l(\alpha+e_i,\beta-e_i)  \partial^{\alpha+e_i}_{\beta-e_i} \mathbf{P}^{\perp} f^\varepsilon \right\|_D^2, \label{transport1}
\end{align}
where we used the fact $\widetilde{w}_l(\alpha,\beta) \leq \widetilde{w}_l(\alpha+e_i,\beta-e_i) \langle v \rangle^{\gamma+2s}$ for the case $\gamma > \max\{-3, -\frac{3}{2}-2s\}$.
The second term on the right-hand side of \eqref{with weight 2} can be bounded by
$$
\frac{\eta}{\varepsilon^2}\left\| \widetilde{w}_l(\alpha,\beta) \partial^\alpha_\beta \mathbf{P}^{\perp} f^\varepsilon \right\|_{D}^2 + \left\| \partial^\alpha E^\varepsilon \right\|^2
$$
for $|\alpha| \leq N-2$. Thanks to \eqref{soft gamma4} and \eqref{hard gamma4}, the third term on the right-hand side of \eqref{with weight 2} is bounded by
$\sqrt{\widetilde{\mathcal{E}}_{N,l}(t)} \widetilde{\mathcal{D}}_{N,l}(t)$.
For the fourth term on the right-hand side of \eqref{with weight 2}, from \eqref{soft B 1-2}, \eqref{hard B 1-2}, \eqref{soft E 1-2}, \eqref{hard E 1-2}, \eqref{soft E 2-2} and \eqref{hard E 2-2}, we can bound it by
$\left\{ \delta_0 + \sqrt{\mathcal{E}_{N}(t)} \right\} \widetilde{\mathcal{D}}_{N,l}(t)$.
Finally, from the definition of $\mathbf{P}f^\varepsilon$ in \eqref{Pf define}, the H\"{o}lder inequality and \eqref{Sobolev-ineq}, we can obtain that the last three terms on the right-hand side of \eqref{with weight 2} can be governed by
$$
\frac{\eta}{\varepsilon^2} \left\| \widetilde{w}_l(\alpha,\beta)\partial^\alpha_\beta \mathbf{P}^{\perp}f^\varepsilon \right\|^2_{D}
+ \left\| \partial^\alpha \nabla_x \mathbf{P}^{\perp}f^\varepsilon \right\|_{D}^2
+ \left\| \partial^\alpha \nabla_x (a_{\pm}^\varepsilon, b^\varepsilon, c^\varepsilon) \right\|^2
+ \mathcal{E}_{N}(t)\mathcal{D}_{N}(t).
$$

As a consequence, by plugging all the above estimates into \eqref{with weight 2}, taking the summation over $\{|\beta|=m, |\alpha|+|\beta| \leq N-1\}$ for each given $1 \leq m \leq N-1$, and then taking combination of those $N-1$ estimates with properly chosen constant $C_m > 0$ $(1 \leq m \leq N-1)$ and $\eta$ small enough, we can deduce \eqref{weighted estimate2}. We thus finish the proof of Lemma \ref{weighted 2}.
\end{proof}

What's more, we provide the weighted energy estimate with the pure spatial derivative $|\alpha|=N$.
\begin{lemma}\label{weighted 3}
There holds
\begin{align}\label{weighted estimate3}
&\frac{\varepsilon}{2} \frac{\d}{\d t} \sum_{|\alpha| = N} \left\| \widetilde{w}_l(\alpha,0) \partial^\alpha f^\varepsilon \right\|^2
+ \frac{\lambda}{\varepsilon}\sum_{|\alpha| = N} \left\| \widetilde{w}_l(\alpha,0)\partial^\alpha \mathbf{P}^{\perp} f^\varepsilon\right\|_{D}^2 \nonumber\\
&+ \frac{\varepsilon q\vartheta}{(1+t)^{1+\vartheta}}  \sum_{|\alpha| = N}\left\| \langle v \rangle ^{\frac{1}{2}} \widetilde{w}_l(\alpha, 0)\partial^\alpha f^\varepsilon\right\|^2
+\frac{\varepsilon(1+\varepsilon_0)}{4(1+t)} \sum_{|\alpha| = N} \left\| \widetilde{w}_l(\alpha, 0)\partial^\alpha  f^\varepsilon\right\|^2 \nonumber \\
\lesssim\;&\frac{1}{\varepsilon^2}\sum_{|\alpha| = N}\left\|\partial^\alpha \mathbf{P}^{\perp}f^\varepsilon\right\|^2_{D}
+\frac{1}{(1+t)^{1+\varepsilon_0}}\sum_{|\alpha| = N}\left\| \partial^\alpha E^\varepsilon \right\|^2
+\sum_{|\alpha| = N}\left\|\partial^\alpha(a^\varepsilon_{\pm},b^\varepsilon,c^\varepsilon)\right\|^2 \nonumber\\
&+ \left\{ \delta_0 + \sqrt{\widetilde{\mathcal{E}}_{N,l}(t)} \right\} \widetilde{\mathcal{D}}_{N,l}(t).
\end{align}
\end{lemma}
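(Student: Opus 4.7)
The plan is to apply $\partial^\alpha$ with $|\alpha|=N$ to the first equation of \eqref{rVMB} and take the $L^2_{x,v}$ inner product with the weighted test function $\varepsilon\widetilde{w}_l^2(\alpha,0)\,\partial^\alpha f^\varepsilon$. The extra factor of $\varepsilon$ is the key device: it converts the singular $\frac{1}{\varepsilon^2}L$ into $\frac{1}{\varepsilon}L$, so that the coercivity \eqref{L coercive2} still beats the lower-order pollution $C\|\partial^\alpha f^\varepsilon\|^2_{L^2(B_C)}$, while the cross term $\frac{1}{\varepsilon}(L\partial^\alpha\mathbf{P}^\perp f^\varepsilon, \widetilde{w}_l^2\partial^\alpha\mathbf{P} f^\varepsilon)$ can be split by Young's inequality into $\frac{\eta}{\varepsilon^2}\|\widetilde{w}_l(\alpha,0)\partial^\alpha\mathbf{P}^\perp f^\varepsilon\|_D^2$ plus $\|\partial^\alpha\mathbf{P} f^\varepsilon\|^2$, using $L\mathbf{P}=0$ and the exponential velocity decay of $\mathbf{P} f^\varepsilon$. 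Unlike Lemmas \ref{weighted 1}--\ref{weighted 2}, we work directly with $f^\varepsilon$ rather than with the projected equation \eqref{rrVMB}, since at the top order $|\alpha|=N$ the terms $\partial^\alpha E^\varepsilon$, $\partial^\alpha B^\varepsilon$ produced by projecting cannot be absorbed.

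The derivative of the time-dependent weight splits as follows: from the factor $(1+t)^{-(1+\varepsilon_0)/2}$ inside $\widetilde{w}_l^2(\alpha,0)$ we pick up the term $\frac{\varepsilon(1+\varepsilon_0)}{4(1+t)}\|\widetilde{w}_l(\alpha,0)\partial^\alpha f^\varepsilon\|^2$, and from the exponential piece $e^{q\langle v\rangle/(1+t)^\vartheta}$ inside $w_l(\alpha,0)$ we extract $\frac{\varepsilon q\vartheta}{(1+t)^{1+\vartheta}}\|\langle v\rangle^{1/2}\widetilde{w}_l(\alpha,0)\partial^\alpha f^\varepsilon\|^2$. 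The transport term $\varepsilon\cdot\frac{1}{\varepsilon}(v\cdot\nabla_x\partial^\alpha f^\varepsilon,\widetilde{w}_l^2\partial^\alpha f^\varepsilon)$ vanishes after integration by parts since the weight is $x$-independent. The force and electric-field nonlinearities — namely $\varepsilon\cdot\frac{1}{\varepsilon}(\partial^\alpha(q_0(v\times B^\varepsilon)\cdot\nabla_v f^\varepsilon),\widetilde{w}_l^2\partial^\alpha f^\varepsilon)$, $\varepsilon(\partial^\alpha(E^\varepsilon\cdot\nabla_v f^\varepsilon),\widetilde{w}_l^2\partial^\alpha f^\varepsilon)$ and $\varepsilon(\partial^\alpha(\tfrac{q_0}{2}E^\varepsilon\cdot v f^\varepsilon),\widetilde{w}_l^2\partial^\alpha f^\varepsilon)$ — are exactly the objects handled by item (3) of Lemmas \ref{soft B 1}, \ref{hard B 1}, \ref{soft E 2}, \ref{hard E 2}, \ref{soft E 1}, \ref{hard E 1}, yielding $\{\delta_0+\sqrt{\mathcal{E}_N(t)}\}\widetilde{\mathcal{D}}_{N,l}(t)$. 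The nonlinear collision term $\varepsilon\cdot\frac{1}{\varepsilon}(\partial^\alpha\Gamma(f^\varepsilon,f^\varepsilon),\widetilde{w}_l^2\partial^\alpha f^\varepsilon)$ matches the form of \eqref{soft gamma5}--\eqref{hard gamma5} and is controlled by $\sqrt{\widetilde{\mathcal{E}}_{N,l}(t)}\widetilde{\mathcal{D}}_{N,l}(t)$.

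The one remaining piece, which I expect to be the main technical point, is the macroscopic source $-\frac{1}{\varepsilon}(E^\varepsilon\cdot v)\mu^{1/2}q_1$. After multiplication by $\varepsilon$ the $\varepsilon^{-1}$ is eliminated and the contribution becomes
\[
(\partial^\alpha E^\varepsilon\cdot v\mu^{1/2}q_1,\widetilde{w}_l^2(\alpha,0)\partial^\alpha f^\varepsilon).
\]
Since $|v\mu^{1/2} w_l^2(\alpha,0)|$ is uniformly bounded (with Gaussian tail, thanks to $q\ll 1$), Cauchy--Schwarz together with the built-in time factor $\widetilde{w}_l^2(\alpha,0)=(1+t)^{-(1+\varepsilon_0)/2}w_l^2(\alpha,0)$ produces
\[
\tfrac{C}{(1+t)^{1+\varepsilon_0}}\|\partial^\alpha E^\varepsilon\|^2+\eta\|\widetilde{w}_l(\alpha,0)\partial^\alpha f^\varepsilon\|^2,
\]
where the second piece is absorbed into the $\frac{\varepsilon(1+\varepsilon_0)}{4(1+t)}$ dissipation on the left after noting that the $(1+t)^{-(1+\varepsilon_0)/2}$ factor in $\widetilde{w}_l^2$ effectively tames the bound, or alternatively into $\frac{1}{\varepsilon^2}\|\partial^\alpha\mathbf{P}^\perp f^\varepsilon\|_D^2+\|\partial^\alpha\mathbf{P} f^\varepsilon\|^2$ after the macro--micro split.

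Assembling all contributions, choosing $\eta$ sufficiently small to absorb the small multiples of $\frac{1}{\varepsilon^2}\|\widetilde{w}_l(\alpha,0)\partial^\alpha\mathbf{P}^\perp f^\varepsilon\|_D^2$ into the coercive term $\frac{\lambda}{\varepsilon}\|\widetilde{w}_l(\alpha,0)\partial^\alpha\mathbf{P}^\perp f^\varepsilon\|_D^2$ on the left, and summing over $|\alpha|=N$, we obtain \eqref{weighted estimate3}.
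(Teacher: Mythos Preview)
Your overall strategy matches the paper's: apply $\partial^\alpha$ with $|\alpha|=N$ to \eqref{rVMB}, test against $\varepsilon\widetilde{w}_l^2(\alpha,0)\partial^\alpha f^\varepsilon$, pick up the two time-weight dissipations, kill transport by integration by parts, quote the item-(3) nonlinear lemmas, and use the built-in $(1+t)^{-(1+\varepsilon_0)/2}$ to control the $E^\varepsilon$ source. One step, however, does not go through as you wrote it. For the cross term $\frac{1}{\varepsilon}(L\partial^\alpha\mathbf{P}^\perp f^\varepsilon,\widetilde{w}_l^2\partial^\alpha\mathbf{P} f^\varepsilon)$ you produce $\frac{\eta}{\varepsilon^2}\|\widetilde{w}_l(\alpha,0)\partial^\alpha\mathbf{P}^\perp f^\varepsilon\|_D^2$ and then claim this is absorbed into the coercive term $\frac{\lambda}{\varepsilon}\|\widetilde{w}_l(\alpha,0)\partial^\alpha\mathbf{P}^\perp f^\varepsilon\|_D^2$. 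That absorption fails: dominating $\frac{\eta}{\varepsilon^2}$ by $\frac{\lambda}{\varepsilon}$ forces $\eta\le\tfrac{\lambda\varepsilon}{2}$, which is not uniform in $\varepsilon\in(0,1]$.

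The repair is exactly what the paper does. Since $\widetilde{w}_l^2\partial^\alpha\mathbf{P} f^\varepsilon$ decays exponentially in $v$, the cross term is bounded (via \eqref{Gamma zeta}) by $\frac{1}{\varepsilon}\|\partial^\alpha\mathbf{P}^\perp f^\varepsilon\|_{L^2_{-m}}\|\partial^\alpha(a^\varepsilon_\pm,b^\varepsilon,c^\varepsilon)\|$, hence by the \emph{unweighted} combination $\frac{1}{\varepsilon^2}\|\partial^\alpha\mathbf{P}^\perp f^\varepsilon\|_D^2+\|\partial^\alpha(a^\varepsilon_\pm,b^\varepsilon,c^\varepsilon)\|^2$. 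Both terms already sit on the right-hand side of \eqref{weighted estimate3}, so no absorption into the weighted coercive term is needed. Likewise, the pollution $\frac{C}{\varepsilon}\|\partial^\alpha\mathbf{P}^\perp f^\varepsilon\|^2_{L^2(B_C)}$ from \eqref{L coercive2} is not ``beaten'' by the coercivity; it is simply bounded by $\frac{C}{\varepsilon^2}\|\partial^\alpha\mathbf{P}^\perp f^\varepsilon\|_D^2$ and placed on the right. With these two adjustments your argument is complete.
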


\begin{proof}
Applying $\partial^\alpha $ with $|\alpha|=N$ to the first equation of \eqref{rVMB} and taking the inner product with $\varepsilon \widetilde{w}^2_l(\alpha,0)\partial^\alpha f^\varepsilon$ over $\mathbb{R}_x^3 \times \mathbb{R}_v^3$, we have
\begin{align}\label{with weight 3}
&\frac{\varepsilon}{2} \frac{\d}{\d t}  \left\|  \widetilde{w}_l(\alpha, 0) \partial^\alpha f^\varepsilon \right\|^2
+ \frac{\varepsilon q\vartheta}{(1+t)^{1+\vartheta}}  \left\| \langle v \rangle ^{\frac{1}{2}} \widetilde{w}_l(\alpha, 0)\partial^\alpha f^\varepsilon\right\|^2 \nonumber \\
&+\frac{\varepsilon(1+\varepsilon_0)}{4(1+t)} \left\| \widetilde{w}_l(\alpha, 0)\partial^\alpha  f^\varepsilon\right\|^2
+ \left( \frac{1}{\varepsilon^2} L \partial^\alpha \mathbf{P}^{\perp} f^\varepsilon,  \varepsilon \widetilde{w}^2_l(\alpha, 0) \partial^\alpha \mathbf{P}^{\perp} f^\varepsilon \right) \nonumber\\
=\;& -\left( \frac{1}{\varepsilon^2} L \partial^\alpha \mathbf{P}^{\perp} f^\varepsilon,  \varepsilon \widetilde{w}^2_l(\alpha, 0) \partial^\alpha \mathbf{P} f^\varepsilon \right)
+\left( \frac{1}{\varepsilon} \partial^\alpha E^\varepsilon \cdot v\mu^{1/2}q_1, \varepsilon \widetilde{w}^2_l(\alpha, 0) \partial^\alpha f^\varepsilon \right) \nonumber\\
&-\left( \partial^\alpha (q_0 E^\varepsilon \cdot \nabla_v f^\varepsilon ), \varepsilon \widetilde{w}^2_l(\alpha, 0) \partial^\alpha f^\varepsilon \right)
-\left( \frac{1}{\varepsilon }\partial^\alpha (q_0  (v \times B^\varepsilon) \cdot \nabla_v f^\varepsilon ), \varepsilon \widetilde{w}^2_l(\alpha, 0) \partial^\alpha f^\varepsilon \right) \nonumber \\
&+ \left( \partial^\alpha \big(\frac{q_0}{2} E^\varepsilon \cdot v  f^\varepsilon \big), \varepsilon \widetilde{w}^2_l(\alpha, 0) \partial^\alpha f^\varepsilon \right)
+ \left( \frac{1}{\varepsilon }\partial^\alpha \Gamma(f^\varepsilon, f^\varepsilon), \varepsilon \widetilde{w}^2_l(\alpha, 0) \partial^\alpha f^\varepsilon \right),
\end{align}
where we have utilized the facts
$$
\left( \frac{1}{\varepsilon} v \cdot \nabla_x \partial^\alpha f^\varepsilon, \varepsilon \widetilde{w}^2_l(\alpha, 0) \partial^\alpha f^\varepsilon \right) =0,
$$
and
$$
\partial_t \widetilde{w}^2_{l}(\alpha, 0)= -\frac{2 q\vartheta}{(1+t)^{1+\vartheta}} \langle v \rangle\widetilde{w}^2_l(\alpha, 0)-\frac{1+\varepsilon_0}{2(1+t)}\widetilde{w}^2_l(\alpha, 0)
$$
for $|\alpha|=N$ by recalling $\widetilde{w}_{l}(\alpha, \beta)$ in \eqref{weight function1}.

The last term on the left-hand side of \eqref{with weight 3} follows from \eqref{L coercive2} that
\begin{align*}
\left( \frac{1}{\varepsilon^2} L\partial^\alpha \mathbf{P}^{\perp} f^\varepsilon,  \varepsilon \widetilde{w}^2_l(\alpha,0) \partial^\alpha \mathbf{P}^{\perp} f^\varepsilon \right)
\geq  \frac{\lambda}{\varepsilon} \left\| \widetilde{w}_l(\alpha,0) \partial^\alpha \mathbf{P}^{\perp} f^\varepsilon \right\|_{D}^2
-\frac{C}{\varepsilon} \left\|  \partial^\alpha \mathbf{P}^{\perp} f^\varepsilon \right\|^2_{D},
\end{align*}
where we used the fact $(1+t)^{-\frac{1+\varepsilon_0}{2}} \leq 1$.

Now, let's estimate the terms on the right-hand side of \eqref{with weight 3} individually. For the first term on the right-hand side of \eqref{with weight 3}, we can get from \eqref{Gamma zeta} that
$$
-\left( \frac{1}{\varepsilon^2} L\partial^\alpha \mathbf{P}^{\perp} f^\varepsilon,  \varepsilon \widetilde{w}^2_l(\alpha,0) \partial^\alpha \mathbf{P} f^\varepsilon \right)
\lesssim \frac{1}{\varepsilon^2} \left\| \partial^\alpha \mathbf{P}^{\perp} f^\varepsilon \right\|^2_{D}+ \left\| \partial^\alpha (a^\varepsilon_{\pm}, b^\varepsilon, c^\varepsilon) \right\|^2.
$$
The second term on the right-hand side of \eqref{with weight 3} can be bounded by
$$
\frac{1}{(1+t)^{1+\varepsilon_0}}\left\| \partial^\alpha E^\varepsilon \right\|^2 + \left\| \partial^\alpha \mathbf{P}^{\perp} f^\varepsilon \right\|_{D}^2 + \left\| \partial^\alpha (a^\varepsilon_{\pm}, b^\varepsilon, c^\varepsilon) \right\|^2.
$$
By using \eqref{soft B 1-3}, \eqref{hard B 1-3}, \eqref{soft E 1-3}, \eqref{hard E 1-3}, \eqref{soft E 2-3}, \eqref{hard E 2-3}, \eqref{soft gamma5} and \eqref{hard gamma5}, the upper bound of the last four terms on the right-hand side of \eqref{with weight 3} is
$$
\left\{ \delta_0 + \sqrt{\widetilde{\mathcal{E}}_{N,l}(t)} \right\} \widetilde{\mathcal{D}}_{N,l}(t).
$$

Collecting all the estimates above with $|\alpha|=N$, we can obtain the desired estimate \eqref{weighted estimate3}. This completes the proof of Lemma \ref{weighted 3}.
\end{proof}

Finally, we present the weighted energy estimate with the mixed derivatives $|\alpha|+|\beta| \leq N$ and $|\beta| \geq 1$.
\begin{lemma}\label{weighted 4}
There holds
\begin{align}\label{weighted estimate4}
& \sum_{m=1}^{N} \widetilde{C}_m \sum_{\substack{{|\alpha|+|\beta| = N}\\{|\beta| =m}}} \bigg\{\frac{1}{2} \frac{\d}{\d t} \left\| \widetilde{w}_l(\alpha,\beta) \partial^\alpha_\beta \mathbf{P}^{\perp}f^\varepsilon \right\|^2
+ \frac{\lambda}{\varepsilon^2} \left\| \widetilde{w}_l(\alpha,\beta)\partial^\alpha_\beta \mathbf{P}^{\perp} f^\varepsilon\right\|_{D}^2 \nonumber\\
&\qquad \; + \frac{q\vartheta}{(1+t)^{1+\vartheta}}  \left\| \langle v \rangle ^{\frac{1}{2}} \widetilde{w}_l(\alpha, \beta)\partial^\alpha_\beta \mathbf{P}^{\perp} f^\varepsilon\right\|^2
+\frac{1+\varepsilon_0}{4(1+t)} \left\| \widetilde{w}_l(\alpha, \beta)\partial^\alpha_\beta \mathbf{P}^{\perp} f^\varepsilon\right\|^2 \bigg\} \nonumber\\
\lesssim\;&\frac{1}{\varepsilon^2}  \sum_{|\alpha|+|\beta| \leq N-1} \left\| \widetilde{w}_{l}(\alpha, \beta) \partial^\alpha_\beta \mathbf{P}^{\perp} f^\varepsilon\right\|_{D}^2
+  \sum_{|\alpha| = N} \left\|  \widetilde{w}_{l}(\alpha, 0) \partial^\alpha \mathbf{P}^{\perp} f^\varepsilon\right\|_{D}^2 \nonumber\\
&+\frac{1}{\varepsilon^2} \sum_{|\alpha| \leq N} \!\left\| \partial^\alpha \mathbf{P}^{\perp} f^\varepsilon\right\|_{D}^2
+ \sum_{|\alpha| \leq N-1} \left\| \partial^\alpha E^\varepsilon \right\|^2
+ \sum_{1 \leq |\alpha| \leq N} \left\| \partial^\alpha (a^\varepsilon_{\pm}, b^\varepsilon, c^\varepsilon) \right\|^2 \nonumber\\
&+ \left\{ \delta_0 + \mathcal{E}_{N}(t)+ \sqrt{\widetilde{\mathcal{E}}_{N,l}(t)} \right\} \widetilde{\mathcal{D}}_{N,l}(t).
\end{align}
Here, $\widetilde{C}_m$ is a fixed constant satisfying $\widetilde{C}_m \gg \widetilde{C}_{m+1}$.
\end{lemma}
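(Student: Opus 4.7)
The plan is to mirror the strategy of Lemma \ref{weighted 2}, but applied at the top total order $|\alpha|+|\beta|=N$ with $|\beta|\geq 1$ and $|\alpha|\leq N-1$, and then to close by induction on the velocity order $|\beta|$ via the cascade constants $\widetilde{C}_m\gg \widetilde{C}_{m+1}$. First I would apply $\partial^\alpha_\beta$ to the micro-projected equation \eqref{rrVMB} and take the $L^2_{x,v}$-inner product with $\widetilde{w}_l^2(\alpha,\beta)\partial^\alpha_\beta\mathbf{P}^{\perp}f^\varepsilon$. The crucial new feature compared with Lemma \ref{weighted 2} is that, because $|\alpha|+|\beta|=N$, the definition \eqref{weight function1} of $\widetilde{w}_l(\alpha,\beta)$ carries the extra time factor $(1+t)^{-(1+\varepsilon_0)/4}$, so the time derivative of the weight yields not only the familiar gain $\frac{q\vartheta}{(1+t)^{1+\vartheta}}\|\langle v\rangle^{1/2}\widetilde{w}_l(\alpha,\beta)\partial^\alpha_\beta \mathbf{P}^\perp f^\varepsilon\|^2$ but also the additional dissipation $\frac{1+\varepsilon_0}{4(1+t)}\|\widetilde{w}_l(\alpha,\beta)\partial^\alpha_\beta\mathbf{P}^\perp f^\varepsilon\|^2$ displayed on the left-hand side of \eqref{weighted estimate4}.

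The coercivity estimate \eqref{L coercive3} applied to $\frac{1}{\varepsilon^2}\partial^\alpha_\beta L\mathbf{P}^\perp f^\varepsilon$ produces the desired dissipation $\frac{\lambda}{\varepsilon^2}\|\widetilde{w}_l(\alpha,\beta)\partial^\alpha_\beta\mathbf{P}^\perp f^\varepsilon\|_D^2$ up to the lower-velocity-order terms $\frac{C}{\varepsilon^2}\sum_{|\beta'|<|\beta|}\|\widetilde{w}_l(\alpha,\beta')\partial^\alpha_{\beta'}\mathbf{P}^\perp f^\varepsilon\|_D^2$, which will be absorbed at the final linear-combination step by choosing $\widetilde{C}_m\gg\widetilde{C}_{m+1}$. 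For the transport commutator, as in \eqref{transport1} I would rewrite $\frac{1}{\varepsilon}\partial^\alpha_\beta(v\cdot\nabla_x\mathbf{P}^\perp f^\varepsilon)$ by moving one $\partial_\beta$ to $\partial_{x_i}$, bound it by Young's inequality and use $\widetilde{w}_l(\alpha,\beta)\leq \widetilde{w}_l(\alpha+e_i,\beta-e_i)\langle v\rangle^{\gamma+2s}$, producing $\frac{\eta}{\varepsilon^2}\|\widetilde{w}_l(\alpha,\beta)\partial^\alpha_\beta\mathbf{P}^\perp f^\varepsilon\|_D^2$ plus $\|\widetilde{w}_l(\alpha+e_i,\beta-e_i)\partial^{\alpha+e_i}_{\beta-e_i}\mathbf{P}^\perp f^\varepsilon\|_D^2$. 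When $|\beta|\geq 2$ the latter term falls into the induction hypothesis at velocity order $|\beta|-1$, while when $|\beta|=1$ it becomes a pure spatial top-order term $\|\widetilde{w}_l(\alpha+e_i,0)\partial^{\alpha+e_i}\mathbf{P}^\perp f^\varepsilon\|_D^2$ with $|\alpha+e_i|=N$, which is precisely the borrowable quantity $\sum_{|\alpha|=N}\|\widetilde{w}_l(\alpha,0)\partial^\alpha\mathbf{P}^\perp f^\varepsilon\|_D^2$ that appears on the right-hand side of \eqref{weighted estimate4}; note there is no $1/\varepsilon$ mismatch because Lemma \ref{weighted 3} has already provided the $\frac{\lambda}{\varepsilon}$-weighted version of that quantity in its dissipation and $\varepsilon\leq 1$.

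The source contribution $\frac{1}{\varepsilon}\partial^\alpha E^\varepsilon\cdot\partial_\beta(v\mu^{1/2})q_1$ is harmless since $|\alpha|\leq N-1$ and the polynomial $\partial_\beta(v\mu^{1/2})$ is exponentially decaying, so Cauchy--Schwarz against the $D$-dissipation absorbs $1/\varepsilon^2$ into the dissipation and leaves $\|\partial^\alpha E^\varepsilon\|^2$ with $|\alpha|\leq N-1$ on the right. All the remaining electromagnetic nonlinear contributions --- the $\frac{1}{\varepsilon}q_0(v\times B^\varepsilon)\cdot\nabla_v\mathbf{P}^\perp f^\varepsilon$, $E^\varepsilon\cdot\nabla_v\mathbf{P}^\perp f^\varepsilon$ and $E^\varepsilon\cdot v\mathbf{P}^\perp f^\varepsilon$ pieces, together with the projection-mixing pieces involving $\mathbf{P}$, $\mathbf{P}^\perp$ --- are exactly of the form covered by \eqref{soft B 1-4}, \eqref{hard B 1-4}, \eqref{soft E 1-4}, \eqref{hard E 1-4}, \eqref{soft E 2-4}, \eqref{hard E 2-4}, while the collision term $\frac{1}{\varepsilon}\partial^\alpha_\beta\Gamma(f^\varepsilon,f^\varepsilon)$ is controlled by \eqref{soft gamma6} and \eqref{hard gamma6}; each contributes at most $\{\delta_0+\sqrt{\widetilde{\mathcal{E}}_{N,l}(t)}\}\widetilde{\mathcal{D}}_{N,l}(t)$. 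The residual mixing terms coming from $\partial^\alpha_\beta\mathbf{P}(\tfrac{1}{\varepsilon}v\cdot\nabla_x\mathbf{P}^\perp f^\varepsilon)$ and from $\partial^\alpha_\beta\mathbf{P}^\perp(\tfrac{1}{\varepsilon}v\cdot\nabla_x\mathbf{P} f^\varepsilon)$ are handled as in Lemma \ref{weighted 2} using \eqref{Pf define} and \eqref{Sobolev-ineq}, producing the spatial macroscopic and microscopic dissipation pieces $\sum_{1\leq|\alpha|\leq N}\|\partial^\alpha(a^\varepsilon_\pm,b^\varepsilon,c^\varepsilon)\|^2$ and $\frac{1}{\varepsilon^2}\sum_{|\alpha|\leq N}\|\partial^\alpha\mathbf{P}^\perp f^\varepsilon\|_D^2$ on the right.

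Finally, for each fixed $1\leq m\leq N$ I would sum the resulting inequality over $\{|\beta|=m,\ |\alpha|+|\beta|=N,\ |\alpha|\leq N-1\}$ and then take a linear combination in $m$ with constants $\widetilde{C}_m$ chosen so that $\widetilde{C}_m\gg\widetilde{C}_{m+1}$; this ordering is exactly what is needed to absorb the $|\beta'|<|\beta|$ lower-order microscopic terms generated both by \eqref{L coercive3} and by the transport commutator into the leading coercive dissipation at level $m-1$. The main obstacle is precisely this bookkeeping: matching the borrowed transport term $\|\widetilde{w}_l(\alpha+e_i,\beta-e_i)\partial^{\alpha+e_i}_{\beta-e_i}\mathbf{P}^\perp f^\varepsilon\|_D^2$ at the boundary $|\beta|=1$ against the right-hand-side quantity $\sum_{|\alpha|=N}\|\widetilde{w}_l(\alpha,0)\partial^\alpha\mathbf{P}^\perp f^\varepsilon\|_D^2$ supplied by Lemma \ref{weighted 3} without an extra $1/\varepsilon$ factor, while simultaneously forcing the cascade $\widetilde{C}_m\gg\widetilde{C}_{m+1}$ and choosing $\eta$ small enough to keep the Young-inequality by-products absorbed into $\frac{\lambda}{\varepsilon^2}\|\widetilde{w}_l(\alpha,\beta)\partial^\alpha_\beta\mathbf{P}^\perp f^\varepsilon\|_D^2$. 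Once this hierarchy is set, the claimed estimate \eqref{weighted estimate4} follows.
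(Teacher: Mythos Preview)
Your proposal is correct and follows essentially the same approach as the paper: apply $\partial^\alpha_\beta$ to the micro-projected equation \eqref{rrVMB}, pair with $\widetilde{w}_l^2(\alpha,\beta)\partial^\alpha_\beta\mathbf{P}^\perp f^\varepsilon$, use \eqref{L coercive3}, treat the transport commutator as in \eqref{transport1}, invoke the nonlinear estimates \eqref{soft gamma6}/\eqref{hard gamma6} and \eqref{soft B 1-4}--\eqref{hard E 2-4}, and then combine over $m$ with $\widetilde{C}_m\gg\widetilde{C}_{m+1}$. One small bookkeeping correction: the lower-order terms $\tfrac{C}{\varepsilon^2}\sum_{|\beta'|<|\beta|}\|\widetilde{w}_l(\alpha,\beta')\partial^\alpha_{\beta'}\mathbf{P}^\perp f^\varepsilon\|_D^2$ produced by \eqref{L coercive3} have total order $|\alpha|+|\beta'|<N$ and hence go directly into the right-hand term $\tfrac{1}{\varepsilon^2}\sum_{|\alpha|+|\beta|\leq N-1}\|\widetilde{w}_l(\alpha,\beta)\partial^\alpha_\beta\mathbf{P}^\perp f^\varepsilon\|_D^2$, not into the cascade; the hierarchy $\widetilde{C}_m\gg\widetilde{C}_{m+1}$ is needed only to absorb the transport contributions $\|\widetilde{w}_l(\alpha+e_i,\beta-e_i)\partial^{\alpha+e_i}_{\beta-e_i}\mathbf{P}^\perp f^\varepsilon\|_D^2$, which stay at total order $N$.
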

\begin{proof}
Let $1 \leq m \leq N$. By applying $\partial^\alpha_\beta$ with $|\beta|=m$ and $|\alpha|+|\beta| = N$ to \eqref{rrVMB} and taking the inner product with $\widetilde{w}^2_l(\alpha,\beta)\partial^\alpha_\beta \mathbf{P}^{\perp}f^\varepsilon$ over $\mathbb{R}_x^3 \times \mathbb{R}_v^3$, one has
\begin{align}\label{with weight 4}
&\frac{1}{2} \frac{\d}{\d t}  \left\|  \widetilde{w}_l(\alpha, \beta) \partial^\alpha_\beta \mathbf{P}^{\perp} f^\varepsilon \right\|^2
+\frac{q \vartheta}{(1+t)^{1+\vartheta}} \left\| \langle v \rangle ^{\frac{1}{2}} \widetilde{w}_l(\alpha, \beta) \partial^\alpha_\beta \mathbf{P}^{\perp} f^\varepsilon \right\|^2 \nonumber \\
&+\frac{1+\varepsilon_0}{4(1+t)}  \left\| \widetilde{w}_l(\alpha, \beta)\partial^\alpha_\beta \mathbf{P}^{\perp}  f^\varepsilon\right\|^2
+ \left( \frac{1}{\varepsilon^2} \partial^\alpha_\beta L \mathbf{P}^{\perp} f^\varepsilon,  \widetilde{w}^2_l(\alpha,\beta) \partial^\alpha_\beta \mathbf{P}^{\perp} f^\varepsilon \right) \nonumber\\
=\;& -\left( \frac{1}{\varepsilon} \partial^\alpha_\beta (v \cdot \nabla_x \mathbf{P}^{\perp} f^\varepsilon), \widetilde{w}^2_l(\alpha,\beta) \partial^\alpha_\beta \mathbf{P}^{\perp} f^\varepsilon \right) \nonumber \\
&+\left( \frac{1}{\varepsilon} \partial^\alpha E^\varepsilon \cdot \partial_\beta(v\mu^{1/2})q_1, \widetilde{w}^2_l(\alpha,\beta) \partial^\alpha_\beta \mathbf{P}^{\perp} f^\varepsilon \right)
+ \left( \frac{1}{\varepsilon} \partial^\alpha_\beta \Gamma(f^\varepsilon, f^\varepsilon), \widetilde{w}^2_l(\alpha,\beta) \partial^\alpha_\beta \mathbf{P}^{\perp} f^\varepsilon \right) \nonumber \\
&+ \left( \partial^\alpha_\beta \Big( -\frac{1}{\varepsilon } q_0 (\varepsilon E^\varepsilon +v \times B^\varepsilon) \cdot \nabla_v \mathbf{P}^{\perp} f^\varepsilon
+ \frac{q_0}{2} E^\varepsilon \cdot v \mathbf{P}^{\perp} f^\varepsilon \Big),  \widetilde{w}^2_l(\alpha,\beta) \partial^\alpha_\beta \mathbf{P}^{\perp} f^\varepsilon \right) \nonumber \\
&+ \left( \partial^\alpha_\beta \mathbf{P} \left( \frac{1}{\varepsilon} \Big[v \cdot \nabla_x
+ q_0 (\varepsilon E^\varepsilon + v \times B^\varepsilon) \cdot \nabla_v \Big]\mathbf{P}^{\perp} f^\varepsilon \right), \widetilde{w}^2_l(\alpha,\beta) \partial^\alpha_\beta \mathbf{P}^{\perp} f^\varepsilon \right) \nonumber \\
&- \left( \partial^\alpha_\beta \mathbf{P} \left( \frac{q_0}{2} E^\varepsilon \cdot v \mathbf{P}^{\perp} f^\varepsilon \right)
- \partial^\alpha_\beta \mathbf{P}^{\perp} \left( \frac{q_0}{2} E^\varepsilon \cdot v \mathbf{P} f^\varepsilon \right) ,
 \widetilde{w}^2_l(\alpha,\beta) \partial^\alpha_\beta \mathbf{P}^{\perp} f^\varepsilon \right) \nonumber\\
&- \left( \partial^\alpha_\beta \mathbf{P}^{\perp} \left( \frac{1}{\varepsilon} \Big[v \cdot \nabla_x
+ q_0 (\varepsilon E^\varepsilon + v \times B^\varepsilon) \cdot \nabla_v \Big]\mathbf{P} f^\varepsilon \right), \widetilde{w}^2_l(\alpha,\beta) \partial^\alpha_\beta \mathbf{P}^{\perp} f^\varepsilon \right).
\end{align}

The last term on the left-hand side of \eqref{with weight 4} follows from \eqref{L coercive3} that
\begin{align*}
&\left( \frac{1}{\varepsilon^2} \partial^\alpha_\beta  L \mathbf{P}^{\perp} f^\varepsilon,  \widetilde{w}^2_l(\alpha,\beta) \partial^\alpha_\beta \mathbf{P}^{\perp} f^\varepsilon \right) \\
\geq \;& \frac{\lambda}{\varepsilon^2} \left\| \widetilde{w}_l(\alpha,\beta) \partial^\alpha_\beta \mathbf{P}^{\perp} f^\varepsilon \right\|_{D}^2
-\frac{C}{\varepsilon^2} \sum_{|\beta^\prime| < |\beta|} \left\|  \widetilde{w}_l(\alpha,\beta^\prime) \partial^\alpha_{\beta^\prime} \mathbf{P}^{\perp} f^\varepsilon \right\|^2_{D}
- \frac{C}{\varepsilon^2} \left\| \partial^\alpha \mathbf{P}^{\perp} f^\varepsilon \right\|^2_{D},
\end{align*}
where we have employed the fact $(1+t)^{-\frac{1+\varepsilon_0}{2}} \leq 1$.

We will estimate the terms on the R.H.S. of \eqref{with weight 4} one by one. For the first term on the right-hand side of \eqref{with weight 4}, recalling the definition of $\widetilde{w}_l(\alpha,\beta)$ in \eqref{weight function1}, similar to \eqref{transport1}, we have
\begin{align*}
&-\left( \frac{1}{\varepsilon} \partial^\alpha_\beta (v \cdot \nabla_x \mathbf{P}^{\perp} f^\varepsilon), \widetilde{w}^2_l(\alpha,\beta) \partial^\alpha_\beta \mathbf{P}^{\perp} f^\varepsilon \right) \\
\lesssim\;& \frac{\eta}{\varepsilon^2} \left\| \widetilde{w}_l(\alpha,\beta)   \partial^{\alpha}_{\beta} \mathbf{P}^{\perp} f^\varepsilon \right\|_D^2
+\left\| \widetilde{w}_l(\alpha+e_i,\beta-e_i)  \partial^{\alpha+e_i}_{\beta-e_i} \mathbf{P}^{\perp} f^\varepsilon \right\|_D^2.
\end{align*}
The second term on the right-hand side of \eqref{with weight 4} can be bounded by
$$
\frac{\eta}{\varepsilon^2}\left\| \widetilde{w}_l(\alpha,\beta) \partial^\alpha_\beta \mathbf{P}^{\perp} f^\varepsilon \right\|_{D}^2
+ \left\| \partial^\alpha E^\varepsilon \right\|^2 $$
for $|\alpha| \leq N-1$.
Thanks to \eqref{soft gamma6} and \eqref{hard gamma6}, the third term on the right-hand side of \eqref{with weight 4} is bounded by
$\sqrt{\widetilde{\mathcal{E}}_{N,l}(t)} \widetilde{\mathcal{D}}_{N,l}(t)$.
For the fourth term on the right-hand side of \eqref{with weight 4}, from \eqref{soft B 1-4}, \eqref{hard B 1-4}, \eqref{soft E 1-4}, \eqref{hard E 1-4}, \eqref{soft E 2-2} and \eqref{hard E 2-2}, we can bound it by
$\left\{ \delta_0 + \sqrt{\mathcal{E}_{N}(t)} \right\} \widetilde{\mathcal{D}}_{N,l}(t)$.
Finally, from the definition of $\mathbf{P}f^\varepsilon$ in \eqref{Pf define}, the H\"{o}lder inequality and \eqref{Sobolev-ineq}, we can obtain that the last three terms on the right-hand side of \eqref{with weight 4} can be governed by
$$
\frac{\eta}{\varepsilon^2} \left\| \widetilde{w}_l(\alpha,\beta)\partial^\alpha_\beta \mathbf{P}^{\perp}f^\varepsilon \right\|^2_{D}
+ \left\| \partial^\alpha \nabla_x \mathbf{P}^{\perp}f^\varepsilon \right\|_{D}^2
+ \left\| \partial^\alpha \nabla_x (a_{\pm}^\varepsilon, b^\varepsilon, c^\varepsilon) \right\|^2
+ \mathcal{E}_{N}(t)\mathcal{D}_{N}(t).
$$

Consequently, by plugging all the estimates above into \eqref{with weight 4}, taking the summation over $\left\{ |\beta|=m,|\alpha|+|\beta| \leq N\right\}$ for each given $1 \leq m \leq N$, and then taking combination of those $N$ estimates with properly chosen constant $\widetilde{C}_m > 0 $ $(1 \leq m \leq N)$ and $\eta$ small enough,
we obtain \eqref{weighted estimate4}. This completes the proof of Lemma \ref{weighted 4}.
\end{proof}

\subsection{Energy Estimate in Negative Sobolev Space}
\hspace*{\fill}

This subsection aims to provide an energy estimate in negative Sobolev space.
\begin{lemma}\label{negative sobolev lemma}
There holds
\begin{align}
\begin{split}\label{negative sobolev estimate}
&\frac{1}{2}\frac{\d}{\d t}  \left\|\Lambda^{-\varrho}(f^\varepsilon, E^\varepsilon, B^\varepsilon)\right\|^2
+\frac{\lambda}{\varepsilon^2}\left\|\Lambda^{-\varrho}\mathbf{P}^{\perp}f^\varepsilon\right\|_{D}^2
\lesssim \sqrt{ \overline{\mathcal{E}}_{N,l}(t) }\overline{\mathcal{D}}_{N,l}(t) .
\end{split}
\end{align}
\end{lemma}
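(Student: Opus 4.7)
The plan is to apply $\Lambda^{-\varrho}$ to the perturbed system \eqref{rVMB}, pair with $\Lambda^{-\varrho} f^{\varepsilon}$ in $L^2_{x,v}$, and pair the Maxwell part with $\Lambda^{-\varrho}E^{\varepsilon}$ and $\Lambda^{-\varrho}B^{\varepsilon}$ in $L^2_x$. As in the derivation of Lemma \ref{energy estimate without weight}, the transport term $\frac{1}{\varepsilon} v\cdot\nabla_x \Lambda^{-\varrho} f^\varepsilon$ vanishes by integration by parts, the inner products $\big(\frac{1}{\varepsilon^2}L\Lambda^{-\varrho}f^{\varepsilon},\Lambda^{-\varrho}f^{\varepsilon}\big)$ yield the coercive bound $\frac{\lambda}{\varepsilon^2}\|\Lambda^{-\varrho}\mathbf{P}^{\perp}f^{\varepsilon}\|_{D}^{2}$ via \eqref{L coercive1}, and the pairing of the electric source $-\frac{1}{\varepsilon}(E^\varepsilon\cdot v)\mu^{1/2}q_1$ with the Ampère and Faraday identities produces the time derivative $\frac{1}{2}\frac{d}{dt}\|\Lambda^{-\varrho}(E^\varepsilon,B^\varepsilon)\|^2$ after cancellation. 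This yields the structural identity whose R.H.S.\ consists purely of the four nonlinear contributions arising from $\Gamma(f^\varepsilon,f^\varepsilon)$, $(v\times B^\varepsilon)\cdot\nabla_v f^\varepsilon$, $E^\varepsilon\cdot\nabla_v f^\varepsilon$, and $(E^\varepsilon\cdot v) f^\varepsilon$.

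The key tool for the nonlinear side is the Hardy--Littlewood--Sobolev inequality of Lemma \ref{negative embedding theorem}: since $1<\varrho<3/2$, we have $\|\Lambda^{-\varrho}G\|_{L^2_x}\lesssim \|G\|_{L^{p}_x}$ with $\frac{1}{p}=\frac{1}{2}+\frac{\varrho}{3}$, and the critical embedding \eqref{negative embed 2} converts $L^{12/(3+2\varrho)}_x$ norms to $\|\Lambda^{3/4-\varrho/2}\cdot\|$. For the Boltzmann nonlinearity I would apply Minkowski \eqref{minkowski} to pull the $L^2_v$ (or $L^2_D$) norm inside, bound $|\Gamma_\pm(f,g)|_{L^2_v}$ by the weighted collision estimates \eqref{hard gamma L2}--\eqref{soft gamma L2}, and then split one factor in $L^{3/\varrho}_x$ (controlled by $\|\Lambda^{3/2-\varrho}\cdot\|$) and the other in $L^2_x$. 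Via the interpolation $\|\Lambda^{3/2-\varrho}h\|\lesssim \|h\|^{\theta}\|\nabla_x^{N_0}h\|^{1-\theta}$ (valid since $N_0\geq 3$ and $1<\varrho<3/2$), this is absorbed into $\sqrt{\overline{\mathcal{E}}_{N,l}(t)}\,\overline{\mathcal{D}}_{N,l}(t)$. The crucial point is that the factor $\frac{1}{\varepsilon}$ is dissipated by one power of $\frac{1}{\varepsilon}$ on the $L^2_D$ norm of $\mathbf{P}^{\perp}f^\varepsilon$ appearing either in the first factor (estimated in $\overline{\mathcal{D}}_{N,l}(t)$) or through writing the product in the $\mathbf{P}\mathbf{P}^\perp$ decomposition \eqref{gamma decomposition}.

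For the field--transport and field--acceleration terms, I would use the same template: pair with $\Lambda^{-\varrho}f^{\varepsilon}$, apply \eqref{negative embed 1} with $p=6/(3+2\varrho)$, then split by Hölder in $x$ as $L^{3/\varrho}_x \cdot L^2_x$ or $L^\infty_x\cdot L^{6/(3+2\varrho)}_x$. The $\frac{1}{\varepsilon}(v\times B^\varepsilon)\cdot\nabla_v f^\varepsilon$ contribution requires moving the $v$-derivative off of $f^\varepsilon$ when possible (noting $v\cdot(v\times B^\varepsilon)=0$ as in the proof of Lemma \ref{soft B 1}), and otherwise absorbing the velocity growth $|v|$ into the exponential weight $e^{q\langle v\rangle/(1+t)^\vartheta}$ encoded in $w_l$. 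One power of $\frac{1}{\varepsilon}$ is handled by pairing with $\frac{1}{\varepsilon}\|\Lambda^{-\varrho}\mathbf{P}^{\perp}f^\varepsilon\|_D$ from the coercive term; the remaining piece involving $\mathbf{P}f^\varepsilon$ is bounded by $\|\Lambda^{1-\varrho}\mathbf{P}f^\varepsilon\|$ which sits inside $\overline{\mathcal{D}}_{N,l}(t)$ thanks to \eqref{negative sobolev dissipation}.

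The main obstacle will be the magnetic acceleration term $\frac{1}{\varepsilon}q_0(v\times B^\varepsilon)\cdot\nabla_v f^\varepsilon$ tested against $\Lambda^{-\varrho} f^\varepsilon$, because there is no symmetry that kills it outright and the $\frac{1}{\varepsilon}$ cannot be balanced directly without a dissipative factor on $\mathbf{P}^\perp f^\varepsilon$ in $L^2_D$. The way to overcome this is to decompose $f^\varepsilon = \mathbf{P}f^\varepsilon + \mathbf{P}^\perp f^\varepsilon$ in the inner product: when $\nabla_v$ hits $\mathbf{P}f^\varepsilon$ the velocity decay of $\mu^{1/2}$ gives $L^2_{x}$ bounds controlled by $\sqrt{\mathcal{E}^1_{N_0}(t)}\,\|\Lambda^{1-\varrho}\mathbf{P}f^\varepsilon\|$ (absorbing one $x$-derivative into $\Lambda^{1-\varrho}$ by Riesz rearrangement), while when it hits $\mathbf{P}^\perp f^\varepsilon$ one uses the $\frac{1}{\varepsilon}$ to pair with $\frac{1}{\varepsilon}\|\Lambda^{-\varrho}\mathbf{P}^\perp f^\varepsilon\|_D$ and absorbs the velocity growth as before. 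After collecting all the nonlinear contributions and using Cauchy--Schwarz with small $\eta$ to absorb the coercive terms, the announced estimate \eqref{negative sobolev estimate} follows.
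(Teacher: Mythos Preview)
Your overall strategy matches the paper's: apply $\Lambda^{-\varrho}$, use the coercivity \eqref{L coercive1} and the Maxwell cancellation to obtain the structural identity, then bound the four nonlinear terms via Hardy--Littlewood--Sobolev \eqref{negative embed 1}--\eqref{negative embed 2}, Minkowski \eqref{minkowski}, and the macro--micro decomposition. That is exactly what the paper does.

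There is, however, one concrete gap in how you close the estimate. After the H\"older/HLS splitting you are left with intermediate negative-order norms such as $\|\Lambda^{3/2-\varrho}f^\varepsilon\|$, $\|\Lambda^{3/4-\varrho/2}(E^\varepsilon,B^\varepsilon)\|$, $\|\Lambda^{3/2-\varrho}B^\varepsilon\|$. You propose to interpolate these between order $0$ and order $N_0$, which places them in $\sqrt{\overline{\mathcal{E}}_{N,l}}$. But then your product has \emph{two} energy factors and only one dissipation factor, and after Cauchy--Schwarz you are left with a dangling $\overline{\mathcal{E}}_{N,l}^{3/2}$-type term that is not dominated by $\sqrt{\overline{\mathcal{E}}_{N,l}}\,\overline{\mathcal{D}}_{N,l}$. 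The paper's fix is the opposite assignment: these intermediate norms are interpolated between $\Lambda^{1-\varrho}$ and $\nabla_x$ (both of which live in $\overline{\mathcal{D}}_{N,l}$ by design, cf.\ \eqref{negative sobolev dissipation}), so that, e.g.,
\[
\|\Lambda^{3/2-\varrho}f^\varepsilon\|^2 \lesssim \|\Lambda^{1-\varrho}\mathbf{P}f^\varepsilon\|^2+\|\nabla_x\mathbf{P}f^\varepsilon\|^2+\|\langle v\rangle^{-(\gamma+2s)/2}\mathbf{P}^\perp f^\varepsilon\|_{H^1_xL^2_D}^2\lesssim \overline{\mathcal{D}}_{N,l}(t),
\]
and similarly for the $E^\varepsilon,B^\varepsilon$ terms. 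The \emph{energy} factor in each product is instead the weighted velocity norm (e.g.\ $\|\langle v\rangle^{1-(\gamma+2s)/2}\nabla_v f^\varepsilon\|$ or $\|f^\varepsilon\|_{H^4_vL^2_x}$), which sits in $\widetilde{\mathcal{E}}_{N,l}$. This is precisely why $\overline{\mathcal{D}}_{N,l}$ was built to contain $\|\Lambda^{1-\varrho}\mathbf{P}f^\varepsilon\|^2$, $\|\Lambda^{-\varrho}E^\varepsilon\|_{H^1}^2$ and $\|\Lambda^{1-\varrho}B^\varepsilon\|^2$.

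A smaller point: for the magnetic term you mention integration by parts in $v$ and the exponential weight. Neither is used here. Since $\Lambda^{-\varrho}$ does not commute with the product $B^\varepsilon\,\nabla_v f^\varepsilon$, the $(v\times B^\varepsilon)\cdot\nabla_v$ symmetry does not kill anything; the paper simply splits via macro--micro (the pure macro--macro piece vanishes by the kernel structure of $\mathbf{P}$), applies HLS directly, and absorbs the velocity growth $\langle v\rangle^{1-(\gamma+2s)/2}$ into the algebraic weight $\overline{w}_l$ already present in $\widetilde{\mathcal{E}}_{N,l}$.
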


\begin{proof}
Applying $\Lambda^{-\varrho}$ to the first equation of \eqref{rVMB} and taking the inner product with $\Lambda^{-\varrho} f^\varepsilon$ over $\mathbb{R}_x^3 \times \mathbb{R}_v^3$, one has
\begin{align}\label{negative sobolev estimate 1}
&\frac{1}{2}\frac{\d}{\d t} \left\| \Lambda^{-\varrho} f^\varepsilon \right\|^2
+\left( \frac{1}{\varepsilon} \Lambda^{-\varrho} E^\varepsilon \cdot v \mu^{1/2}q_1 ,\Lambda^{-\varrho} f^\varepsilon \right)
+\left( \frac{1}{\varepsilon^2} L\Lambda^{-\varrho} f^\varepsilon, \Lambda^{-\varrho} f^\varepsilon\right) \nonumber \\
=\;&-\left( \frac{1}{\varepsilon } \Lambda^{-\varrho} (q_0  (v \times B^\varepsilon) \cdot \nabla_v f^\varepsilon ), \Lambda^{-\varrho} f^\varepsilon \right)
- \left( \Lambda^{-\varrho} ( q_0 E^\varepsilon \cdot \nabla_v f^\varepsilon ), \Lambda^{-\varrho} f^\varepsilon \right)\\
&+ \left( \Lambda^{-\varrho} \Big(\frac{q_0}{2} E^\varepsilon \cdot v  f^\varepsilon \Big), \Lambda^{-\varrho} f^\varepsilon \right)
+ \left( \frac{1}{\varepsilon } \Lambda^{-\varrho} \Gamma(f^\varepsilon, f^\varepsilon), \Lambda^{-\varrho} f^\varepsilon \right), \nonumber
\end{align}

For the second term on the left-hand side of \eqref{negative sobolev estimate 1}, we have
\begin{align}\nonumber
\left( \frac{1}{\varepsilon} \Lambda^{-\varrho} E^\varepsilon \cdot v \mu^{1/2}q_1 ,\Lambda^{-\varrho} f^\varepsilon \right)
=\frac{1}{2}\frac{\d}{\d t} \left\| \Lambda^{-\varrho} (E^\varepsilon, B^\varepsilon) \right\|^2.
\end{align}
For the third term on the left-hand side of \eqref{negative sobolev estimate 1}, one has from \eqref{L coercive1} that
\begin{align}\nonumber
\left( \frac{1}{\varepsilon^2} L\Lambda^{-\varrho} f^\varepsilon, \Lambda^{-\varrho} f^\varepsilon \right)
\geq \frac{\lambda}{\varepsilon^2} \left\| \Lambda^{-\varrho} \mathbf{P}^{\perp}f^\varepsilon \right\|_{D}^2.
\end{align}

To estimate the first term on the right-hand side of \eqref{negative sobolev estimate 1}, in terms of the macro-micro decomposition \eqref{f decomposition}, we set
\begin{align}\label{J1 estimate}
J_1=\;&\left( \frac{1}{\varepsilon } \Lambda^{-\varrho} (q_0  (v \times B^\varepsilon) \cdot \nabla_v f^\varepsilon ), \Lambda^{-\varrho} f^\varepsilon \right)  \nonumber \\
=\;&\left( \frac{1}{\varepsilon} \Lambda^{-\varrho}  ( q_0(v \times B^\varepsilon) \cdot \nabla_v \mathbf{P} f^{\varepsilon} ), \Lambda^{-\varrho} \mathbf{P} f^{\varepsilon} \right) \nonumber\\
&+ \left( \frac{1}{\varepsilon} \Lambda^{-\varrho}  ( q_0(v \times B^\varepsilon) \cdot \nabla_v \mathbf{P}^{\perp} f^{\varepsilon} ), \Lambda^{-\varrho} \mathbf{P} f^{\varepsilon} \right) \nonumber\\
&+\left( \frac{1}{\varepsilon} \Lambda^{-\varrho}  ( q_0(v \times B^\varepsilon) \cdot \nabla_v  f^{\varepsilon} ), \Lambda^{-\varrho} \mathbf{P}^{\perp} f^{\varepsilon} \right) \nonumber\\
:=\;& J_{1,1}+ J_{1,2}+J_{1,3}.
\end{align}
For the term $J_{1,1}$, it follows from the kernel structure of $\mathbf{P}$ that $J_{1,1}=0$, cf. \cite[pp. 28]{JL2019}.
For the term $J_{1,2}$, owing to $1 < \varrho < \frac{3}{2}$, by using \eqref{negative embed 1}, \eqref{negative embed 2}, \eqref{minkowski}, the H\"{o}lder inequality and the Cauchy--Schwarz inequality, one has
\begin{align*}
J_{1,2} \lesssim\;& \frac{1}{\varepsilon}\left\| \Lambda^{-\varrho} (B^\varepsilon \mu^\delta \mathbf{P}^{\perp}f^\varepsilon) \right\|
\left\| \Lambda^{-\varrho} \mathbf{P} f^{\varepsilon} \right\| \\
\lesssim\;& \frac{1}{\varepsilon}\left\| B^\varepsilon \mu^\delta \mathbf{P}^{\perp}f^\varepsilon \right\|_{L^2_v L_x^{\frac{6}{3+2\varrho}}}
\left\| \Lambda^{-\varrho} \mathbf{P} f^{\varepsilon}  \right\| \\
\lesssim\;&\frac{1}{\varepsilon}\left\| B^\varepsilon \right\|_{L^\frac{12}{3+2\varrho}} \left\| \mu^\delta \mathbf{P}^{\perp}f^\varepsilon \right\|_{L^2_v L^\frac{12}{3+2\varrho}_x}
\left\| \Lambda^{-\varrho} \mathbf{P} f^{\varepsilon} \right\| \\
\lesssim\;&\frac{1}{\varepsilon}\left\| \Lambda^{\frac{3}{4}-\frac{\varrho}{2}}B^\varepsilon \right\|
\left\| \Lambda^{\frac{3}{4}-\frac{\varrho}{2}} \big(\mu^\delta \mathbf{P}^{\perp}f^\varepsilon \big) \right\|
\left\| \Lambda^{-\varrho} \mathbf{P} f^{\varepsilon} \right\| \\
\lesssim\;& \sqrt{ \overline{\mathcal{E}}_{N,l}(t) } \left\{ \left\| \Lambda^{\frac{3}{4}-\frac{\varrho}{2}} B^\varepsilon \right\|^2 +
\frac{1}{\varepsilon^2}\left\| \Lambda^{\frac{3}{4}-\frac{\varrho}{2}} \mathbf{P}^{\perp}f^\varepsilon \right\|^2_D \right\}.
\end{align*}
Using the similar argument as the term $J_{1,2}$, we have that
\begin{align*}
J_{1,3} \lesssim\;& \frac{1}{\varepsilon}\left\| \Lambda^{-\varrho} \big(B^\varepsilon \langle v \rangle ^{1-\frac{\gamma+2s}{2}} \nabla_v f^\varepsilon \big) \right\|
\left\| \Lambda^{-\varrho} \mathbf{P}^{\perp} f^{\varepsilon} \right\|_D \\
\lesssim\;& \frac{1}{\varepsilon}\left\| B^\varepsilon \langle v \rangle ^{1-\frac{\gamma+2s}{2}} \nabla_v f^\varepsilon \right\|_{L^2_v L_x^{\frac{6}{3+2\varrho}}}
\left\| \Lambda^{-\varrho} \mathbf{P}^{\perp} f^{\varepsilon} \right\|_D \\
\lesssim\;&\frac{1}{\varepsilon}\left\| B^\varepsilon \right\|_{L^\frac{3}{\varrho}}
\left\| \langle v \rangle ^{1-\frac{\gamma+2s}{2}} \nabla_v f^\varepsilon \right\|
\left\| \Lambda^{-\varrho} \mathbf{P}^{\perp} f^{\varepsilon} \right\|_D \\
\lesssim\;&\frac{1}{\varepsilon}\left\| \Lambda^{\frac{3}{2}-\varrho}B^\varepsilon \right\|
\left\| \langle v \rangle ^{1-\frac{\gamma+2s}{2}} \nabla_v f^\varepsilon \right\|
\left\| \Lambda^{-\varrho} \mathbf{P}^{\perp} f^{\varepsilon} \right\|_D \\
\lesssim\;& \sqrt{ \overline{\mathcal{E}}_{N,l}(t) } \left\{ \left\| \Lambda^{\frac{3}{2}-\varrho}B^\varepsilon \right\|^2 +
\frac{1}{\varepsilon^2}\left\| \Lambda^{-\varrho} \mathbf{P}^{\perp} f^{\varepsilon} \right\|^2_D \right\}.
\end{align*}
Therefore, by substituting the estimates of $J_{1,1}$--$J_{1,3}$ into \eqref{J1 estimate}, one has
\begin{align*}
J_1 \lesssim \sqrt{ \overline{\mathcal{E}}_{N,l}(t) } \left\{ \left\| \Lambda^{\frac{3}{4}-\frac{\varrho}{2}} B^\varepsilon \right\|^2 +
\frac{1}{\varepsilon^2}\left\| \Lambda^{\frac{3}{4}-\frac{\varrho}{2}} \mathbf{P}^{\perp}f^\varepsilon \right\|^2_D
+\left\| \Lambda^{\frac{3}{2}-\varrho}B^\varepsilon \right\|^2 +
\frac{1}{\varepsilon^2}\left\| \Lambda^{-\varrho} \mathbf{P}^{\perp} f^{\varepsilon} \right\|^2_D\right\}.
\end{align*}
Similarly to $J_{1,2}$ and $J_{1,3}$, the second and third terms on the right-hand side of \eqref{negative sobolev estimate 1} have the following upper bound
\begin{align*}
\sqrt{ \overline{\mathcal{E}}_{N,l}(t) } \left\{ \left\| \Lambda^{\frac{3}{4}-\frac{\varrho}{2}} E^\varepsilon \right\|^2 +
\left\| \Lambda^{\frac{3}{4}-\frac{\varrho}{2}} f^\varepsilon \right\|^2_D
+ \left\| \Lambda^{\frac{3}{2}-\varrho}E^\varepsilon \right\|^2 +
\left\| \Lambda^{-\varrho} \mathbf{P}^{\perp} f^{\varepsilon} \right\|^2_D \right\}.
\end{align*}
For the last term on the right-hand side of \eqref{negative sobolev estimate 1}, by the collision invariant property, we have
\begin{align} \label{negative sobolev Gamma1}
\left( \frac{1}{\varepsilon} \Lambda^{-\varrho}\Gamma(f^\varepsilon, f^\varepsilon), \Lambda^{-\varrho} f^\varepsilon \right)
=\;&\left( \frac{1}{\varepsilon} \Lambda^{-\varrho}\Gamma(f^\varepsilon, f^\varepsilon),
\Lambda^{-\varrho} \mathbf{P}^{\perp}f^\varepsilon \right) \nonumber\\
\lesssim \;& \frac{1}{\varepsilon} \left\|\Lambda^{-\varrho}\left( \langle v \rangle ^{-\frac{\gamma+2s}{2}}\Gamma(f^\varepsilon, f^\varepsilon)\right) \right\|
\left\| \Lambda^{-\varrho} \mathbf{P}^{\perp}f^\varepsilon \right\|_{D} \nonumber\\
\lesssim\;& \frac{1}{\varepsilon} \left\| \langle v \rangle ^{-\frac{\gamma+2s}{2}}\Gamma(f^\varepsilon, f^\varepsilon) \right\|_{L^2_{v}L_x^{\frac{6}{3+2\varrho}}}
\left\| \Lambda^{-\varrho} \mathbf{P}^{\perp}f^\varepsilon \right\|_{D} \nonumber\\
\lesssim\;&\frac{1}{\varepsilon} \left\| \langle v \rangle ^{-\frac{\gamma+2s}{2}}\Gamma(f^\varepsilon, f^\varepsilon) \right\|_{L_x^{\frac{6}{3+2\varrho}}L^2_{v}}
\left\| \Lambda^{-\varrho} \mathbf{P}^{\perp}f^\varepsilon \right\|_{D} \\
\lesssim\;&\frac{1}{\varepsilon} \left\| |f^\varepsilon|_{L^2} |f^\varepsilon|_{H^4} \right\|_{L_x^{\frac{6}{3+2\varrho}}}
\left\| \Lambda^{-\varrho} \mathbf{P}^{\perp}f^\varepsilon \right\|_{D} \nonumber\\
\lesssim\;&\frac{1}{\varepsilon} \left\|f^\varepsilon\right\|_{L_x^{\frac{3}{\varrho}}L^2_v} \left\|f^\varepsilon\right\|_{H^4_vL^2_x}
\left\| \Lambda^{-\varrho} \mathbf{P}^{\perp}f^\varepsilon \right\|_{D} \nonumber\\
\lesssim\;& \sqrt{ \overline{\mathcal{E}}_{N,l}(t) } \left\{ \left\| \Lambda^{\frac{3}{2}-\varrho}f^\varepsilon\right\|^2
+\frac{1}{\varepsilon^2} \left\| \Lambda^{-\varrho} \mathbf{P}^{\perp} f^\varepsilon \right\|_{D}^2 \right\} \nonumber
\end{align}
for soft potentials $\max\{-3, -\frac{3}{2}-2s\} < \gamma < -2s$, where we have used \eqref{soft gamma L2}, \eqref{negative embed 1}, \eqref{negative embed 2}, \eqref{minkowski}, the H\"{o}lder inequality and the Cauchy--Schwarz inequality. The case $\gamma+2s \geq 0$ can be treated in the similar way as above. Hence, by using \eqref{hard gamma L2}, we can obtain the same upper bound as \eqref{negative sobolev Gamma1} for the hard potential case  $\gamma+2s \geq 0$.

Recalling the definition of $\overline{\mathcal{D}}_{N,l}(t)$ in \eqref{negative sobolev dissipation}, by the interpolation inequality with respect to the spatial derivatives, it is easy to derive that
\begin{align*}
&\left\| \Lambda^{\frac{3}{4}-\frac{\varrho}{2}} (E^\varepsilon, B^\varepsilon) \right\|^2
+\left\| \Lambda^{\frac{3}{2}-\varrho} (E^\varepsilon, B^\varepsilon) \right\|^2
\lesssim \left\| \Lambda^{1-\varrho} (E^\varepsilon, B^\varepsilon) \right\|^2
+\left\| \nabla_x (E^\varepsilon, B^\varepsilon) \right\|^2
\lesssim \overline{\mathcal{D}}_{N,l}(t),   \\
&\frac{1}{\varepsilon^2}\left\| \Lambda^{\frac{3}{4}-\frac{\varrho}{2}} \mathbf{P}^{\perp}f^\varepsilon \right\|^2_D
\lesssim \frac{1}{\varepsilon^2} \left\| \Lambda^{-\varrho} \mathbf{P}^{\perp}f^\varepsilon \right\|^2_D
+ \frac{1}{\varepsilon^2} \left\| \nabla_x \mathbf{P}^{\perp}f^\varepsilon \right\|^2_D
\lesssim \overline{\mathcal{D}}_{N,l}(t), \\
&\left\| \Lambda^{\frac{3}{4}-\frac{\varrho}{2}}f^\varepsilon  \right\|^2_{D}
\lesssim  \left\| \Lambda^{1-\varrho}\mathbf{P}f^\varepsilon \right\|^2
+\left\| \nabla_x \mathbf{P}f^\varepsilon \right\|^2
+\left\| \Lambda^{-\varrho}\mathbf{P}^{\perp}f^\varepsilon \right\|^2_{D}
+\left\| \nabla_x\mathbf{P}^{\perp}f^\varepsilon \right\|^2_{D}
\lesssim \overline{\mathcal{D}}_{N,l}(t), \\
&\left\| \Lambda^{\frac{3}{2}-\varrho}f^\varepsilon \right\|^2
\lesssim  \left\| \Lambda^{1-\varrho} \mathbf{P}f^\varepsilon \right\|^2
+\left\| \nabla_x \mathbf{P}f^\varepsilon \right\|^2
+\left\| \langle v \rangle ^{-\frac{\gamma+2s}{2}} \mathbf{P}^{\perp}f^\varepsilon \right\|_{H^1_x L^2_D}^2
\lesssim \overline{\mathcal{D}}_{N,l}(t),
\end{align*}
where we employed the fact $1-\varrho < \frac{3}{4}-\frac{\varrho}{2} < \frac{3}{2}-\varrho < 1$ for $1 < \varrho < \frac{3}{2}$.

As a result, combining all the estimates above, we can get \eqref{negative sobolev estimate}. This completes the proof of Lemma \ref{negative sobolev lemma}.
\end{proof}

\subsection{Uniform Energy Estimates}
\hspace*{\fill}

In this subsection, based on Lemma \ref{macroscopic estimate}--\ref{negative sobolev lemma}, we give the uniform a priori estimates
with respect to $\varepsilon\in (0,1]$.
\begin{proposition}\label{basic energy estimate}
There is $\mathcal{E}_{N}(t)$ satisfying \eqref{without weight energy functional} such that
\begin{align}\label{a priori estimate 1}
\frac{\d}{\d t} \mathcal{E}_{N}(t) + \lambda  \mathcal{D}_{N}(t)
\lesssim \left\{ \delta_0 + \mathcal{E}_{N}(t) + \sqrt{\widetilde{\mathcal{E}}_{N,l}(t)} \right\} \widetilde{\mathcal{D}}_{N,l}(t).
\end{align}
\end{proposition}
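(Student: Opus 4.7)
The plan is to construct $\mathcal{E}_{N}(t)$ as a suitable linear combination of the pure energy functional from Lemma \ref{energy estimate without weight} and the interactive functional $\mathcal{E}^N_{int}(t)$ from Lemma \ref{macroscopic estimate}. Specifically, I would set
\begin{equation*}
\mathcal{E}_{N}(t) := \frac{1}{2}\sum_{|\alpha|\leq N}\bigl\|\partial^\alpha (f^\varepsilon, E^\varepsilon, B^\varepsilon)\bigr\|^2 + \kappa_1\,\mathcal{E}^N_{int}(t)
\end{equation*}
for a sufficiently small constant $\kappa_1>0$. The bound \eqref{macroscopic estimate11} guarantees that $|\kappa_1\,\mathcal{E}^N_{int}(t)|$ is absorbed by the leading quadratic term, so $\mathcal{E}_{N}(t)$ is equivalent to the functional displayed in \eqref{without weight energy functional}, as required.

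Next I would add $\kappa_1$ times the macroscopic inequality \eqref{macro estimate} to the energy inequality \eqref{estimate without weight}. This yields
\begin{equation*}
\frac{\d}{\d t}\mathcal{E}_{N}(t) + \frac{\lambda}{\varepsilon^{2}}\!\!\sum_{|\alpha|\leq N}\!\bigl\|\partial^\alpha\mathbf{P}^\perp f^\varepsilon\bigr\|_{D}^{2} + \kappa_1 \mathcal{D}_{mac}^{N}(t) \lesssim \bigl\{\delta_0 + \sqrt{\widetilde{\mathcal{E}}_{N,l}(t)}\bigr\}\widetilde{\mathcal{D}}_{N,l}(t) + \frac{\kappa_1}{\varepsilon^{2}}\!\!\sum_{|\alpha|\leq N}\!\bigl\|\partial^\alpha\mathbf{P}^\perp f^\varepsilon\bigr\|_{D}^{2} + \kappa_1\,\mathcal{E}_{N}(t)\mathcal{D}_{N}(t).
\end{equation*}
Taking $\kappa_1$ small enough, the first microscopic remainder on the right is absorbed by the $\frac{\lambda}{\varepsilon^{2}}$-term on the left. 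Recalling the definition of $\mathcal{D}_{N}(t)$ in \eqref{without weight dissipation functional}, what remains on the left is precisely $\lambda\,\mathcal{D}_{N}(t)$ up to a constant, since $\kappa_1\mathcal{D}_{mac}^{N}(t)$ supplies the macroscopic dissipation of $\partial^\alpha\mathbf{P}f^\varepsilon$ for $1\leq|\alpha|\leq N$, the dissipation of $a_+^\varepsilon-a_-^\varepsilon$, and the lower-order dissipation of $E^\varepsilon,B^\varepsilon$.

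Finally, the cubic term $\kappa_1\mathcal{E}_{N}(t)\mathcal{D}_{N}(t)$ inherited from \eqref{macro estimate} is controlled by $\mathcal{E}_{N}(t)\widetilde{\mathcal{D}}_{N,l}(t)$ since $\mathcal{D}_{N}(t)\leq\widetilde{\mathcal{D}}_{N,l}(t)$ by construction, which contributes the $\mathcal{E}_{N}(t)$ factor in the right-hand side of the target inequality \eqref{a priori estimate 1}. Putting everything together produces the stated bound. The main technical point — and essentially the only one that is not bookkeeping — is the choice of $\kappa_1$: it must be small enough that $\mathcal{E}_{N}(t)$ remains equivalent to the squared $H^N_xL^2_v$-norm via \eqref{macroscopic estimate11}, and simultaneously small enough that the microscopic term on the right-hand side is absorbed by the coercive $\frac{\lambda}{\varepsilon^2}$ dissipation uniformly in $\varepsilon\in(0,1]$. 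Both conditions depend only on the absolute constants in Lemmas \ref{macroscopic estimate} and \ref{energy estimate without weight}, so a single admissible $\kappa_1$ exists.
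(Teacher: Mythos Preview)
Your proposal is correct and follows exactly the same approach as the paper: define $\mathcal{E}_{N}(t)$ by adding $\kappa_1\mathcal{E}^N_{int}(t)$ to the pure energy, then take the combination $\kappa_1\times\eqref{macro estimate}+\eqref{estimate without weight}$ with $\kappa_1$ small enough to absorb the microscopic remainder and to preserve the equivalence \eqref{without weight energy functional}. The paper's proof is a single sentence recording this combination; your write-up simply spells out the absorption and equivalence details.
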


\begin{proof}
Under the a priori assumption \eqref{priori assumption}, the combination $\kappa_1 \times \eqref{macro estimate} + \eqref{estimate without weight}$ gives \eqref{a priori estimate 1} for given $0 < \kappa_1 \ll 1$. This completes the proof of Proposition \ref{basic energy estimate}.
\end{proof}

\begin{proposition}\label{energy estimate 1}
There is $\widetilde{\mathcal{E}}_{N,l}(t)$ satisfying \eqref{energy functional} such that
\begin{align}\label{a priori estimate 2}
\frac{\d}{\d t} \widetilde{\mathcal{E}}_{N,l}(t) + \lambda  \widetilde{\mathcal{D}}_{N,l}(t)
\lesssim 0.
\end{align}
\end{proposition}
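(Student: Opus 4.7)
The plan is to form a carefully weighted linear combination of Proposition \ref{basic energy estimate} and the four weighted estimates Lemmas \ref{weighted 1}--\ref{weighted 4}, organized by the order of derivatives and the weight level. Concretely, I would define
\begin{align*}
\widetilde{\mathcal{E}}_{N,l}(t) \;:=\;& K_0\,\mathcal{E}_N(t) + K_1\!\!\!\sum_{|\alpha|\leq N-1}\!\|\widetilde{w}_l(\alpha,0)\partial^\alpha\mathbf{P}^{\perp}f^\varepsilon\|^2 + K_1\!\!\!\sum_{\substack{|\alpha|+|\beta|\leq N-1\\|\beta|\geq 1}}\!\!\!C_{|\beta|}\|\widetilde{w}_l(\alpha,\beta)\partial^{\alpha}_\beta\mathbf{P}^{\perp}f^\varepsilon\|^2\\
&+K_2\varepsilon\!\sum_{|\alpha|=N}\|\widetilde{w}_l(\alpha,0)\partial^\alpha f^\varepsilon\|^2 + K_2\!\!\!\sum_{\substack{|\alpha|+|\beta|=N\\|\beta|\geq 1}}\!\!\!\widetilde{C}_{|\beta|}\|\widetilde{w}_l(\alpha,\beta)\partial^\alpha_\beta\mathbf{P}^{\perp}f^\varepsilon\|^2,
\end{align*}
with a hierarchy $K_0\gg K_1\gg K_2\gg 1$, and with $C_m,\widetilde{C}_m$ the chained constants from Lemmas \ref{weighted 2} and \ref{weighted 4}. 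Up to equivalence this matches \eqref{energy functional}, and the induced dissipation matches \eqref{dissipation functional} once we account for the $(1+t)^{-(1+\vartheta)}\|\langle v\rangle^{1/2}\widetilde{w}_l\cdot\|^2$ pieces produced by $\partial_t e^{q\langle v\rangle/(1+t)^\vartheta}$ and the $(1+t)^{-1}\|\widetilde{w}_l\cdot\|^2$ pieces produced by $\partial_t\bigl((1+t)^{-(1+\varepsilon_0)/2}\bigr)$ at the top derivative level.

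Next, I would absorb the right-hand side term-by-term. With $K_0$ sufficiently large, the $K_0\mathcal{D}_N$ dissipation absorbs the microscopic residues $\varepsilon^{-2}\sum_{|\alpha|\leq N}\|\partial^\alpha\mathbf{P}^\perp f^\varepsilon\|_D^2$, the macroscopic residues $\sum_{|\alpha|\leq N}\|\partial^\alpha(a^\varepsilon_\pm,b^\varepsilon,c^\varepsilon)\|^2$, and the electromagnetic residues $\sum_{|\alpha|\leq N-1}\|\partial^\alpha E^\varepsilon\|^2$. The weighted microscopic $D$-residues $\|\widetilde{w}_l(\alpha,\beta^\prime)\partial^\alpha_{\beta^\prime}\mathbf{P}^\perp f^\varepsilon\|_D^2$ with $|\beta^\prime|<|\beta|$ that come out of the coercivity \eqref{L coercive3} telescope into the strictly stronger dissipation at the next lower $|\beta|$-level via the constant chains $C_m\gg C_{m+1}$ and $\widetilde{C}_m\gg\widetilde{C}_{m+1}$ that Lemmas \ref{weighted 2} and \ref{weighted 4} already fix. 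The transport residues $\|\widetilde{w}_l(\alpha+e_i,\beta-e_i)\partial^{\alpha+e_i}_{\beta-e_i}\mathbf{P}^\perp f^\varepsilon\|_D^2$ from the commutator $[v\cdot\nabla_x,\partial^\alpha_\beta]$ drop one unit in $|\beta|$ and are absorbed by the same chain, a mechanism made possible precisely by the design $w_l(\alpha,\beta)=w_l(\alpha+e_i,\beta-e_i)\langle v\rangle^\gamma$ described in the introduction. Finally, the perturbative right-hand sides $\{\delta_0+\mathcal{E}_N+\sqrt{\widetilde{\mathcal{E}}_{N,l}}\}\widetilde{\mathcal{D}}_{N,l}$ are absorbed into a small fraction of $\lambda\widetilde{\mathcal{D}}_{N,l}$ by the a priori smallness \eqref{priori assumption}: since $\widetilde{\mathcal{E}}_{N,l}(t)\leq X(t)\leq\delta_0^2$, the bracket is $O(\delta_0)$.

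The main obstacle is the anomalous term $\frac{1}{(1+t)^{1+\varepsilon_0}}\sum_{|\alpha|=N}\|\partial^\alpha E^\varepsilon\|^2$ from Lemma \ref{weighted 3}, because $\mathcal{D}_N$ controls $E^\varepsilon$-derivatives only up to order $N-1$. To handle it I would exploit the Maxwell constraints directly: for any $|\alpha|=N$, writing $\alpha=\alpha^\prime+e_i$ and using the Helmholtz decomposition in $\mathbb{R}^3$ together with $\nabla_x\cdot E^\varepsilon=a^\varepsilon_+-a^\varepsilon_-$ and $\partial_t B^\varepsilon=-\nabla_x\times E^\varepsilon$ yields
\begin{equation*}
\|\partial^\alpha E^\varepsilon\|^2\lesssim \|\partial^{\alpha^\prime}(a^\varepsilon_+-a^\varepsilon_-)\|^2 + \|\partial^{\alpha^\prime}\partial_t B^\varepsilon\|^2,
\end{equation*}
where the first piece is in $\mathcal{D}_N$, while the second piece is compensated by augmenting $\widetilde{\mathcal{E}}_{N,l}$ with an auxiliary term of the form $\int_t^\infty(1+s)^{-(1+\varepsilon_0)}\,\d s\cdot\|\partial^{\alpha^\prime}B^\varepsilon\|^2$ (whose time derivative exactly produces the required $(1+t)^{-(1+\varepsilon_0)}\|\partial^{\alpha^\prime}\partial_t B^\varepsilon\|^2$ cancellation, and whose size is uniformly bounded since $\int_0^\infty(1+s)^{-(1+\varepsilon_0)}\d s<\infty$). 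After this bookkeeping the closure $\frac{\d}{\d t}\widetilde{\mathcal{E}}_{N,l}(t)+\lambda\widetilde{\mathcal{D}}_{N,l}(t)\lesssim 0$ follows from the hierarchy of constants $K_0\gg K_1\gg K_2\gg 1$ together with $\delta_0$ small.
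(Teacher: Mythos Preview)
Your overall linear-combination scheme with the hierarchy $K_0\gg K_1\gg K_2$ is correct and essentially matches the paper, and your absorption of the lower-$|\beta|$ residues via the $C_m,\widetilde C_m$ chains is fine. The genuine gap is in your treatment of the anomalous term $\frac{1}{(1+t)^{1+\varepsilon_0}}\sum_{|\alpha|=N}\|\partial^\alpha E^\varepsilon\|^2$ from Lemma~\ref{weighted 3}.

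Your proposed auxiliary term does not do what you claim. With $\phi(t):=\int_t^\infty (1+s)^{-(1+\varepsilon_0)}\,\d s$, one has
\[
\frac{\d}{\d t}\bigl[\phi(t)\|\partial^{\alpha'}B^\varepsilon\|^2\bigr]
=-(1+t)^{-(1+\varepsilon_0)}\|\partial^{\alpha'}B^\varepsilon\|^2
+2\phi(t)\,(\partial^{\alpha'}\partial_t B^\varepsilon,\partial^{\alpha'}B^\varepsilon),
\]
which contains no $\|\partial^{\alpha'}\partial_t B^\varepsilon\|^2$ term whatsoever---you appear to have confused $\frac{\d}{\d t}\|g\|^2=2(g,\partial_t g)$ with $\|\partial_t g\|^2$. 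Worse, the cross term $2\phi(t)(\partial^{\alpha'}\partial_t B^\varepsilon,\partial^{\alpha'}B^\varepsilon)$, after substituting $\partial_t B^\varepsilon=-\nabla_x\times E^\varepsilon$, reintroduces an $N$-th order $E^\varepsilon$ derivative multiplied by $\phi(t)\sim (1+t)^{-\varepsilon_0}$, which is strictly worse than what you started with. So the argument is circular.

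The paper's resolution is much simpler: multiply the \emph{unweighted} basic estimate \eqref{a priori estimate 1} by $(1+t)^{-\varepsilon_0}$ and add it to the combination. Since
\[
\frac{\d}{\d t}\bigl[(1+t)^{-\varepsilon_0}\mathcal{E}_N(t)\bigr]
+\frac{\varepsilon_0}{(1+t)^{1+\varepsilon_0}}\mathcal{E}_N(t)
=(1+t)^{-\varepsilon_0}\frac{\d}{\d t}\mathcal{E}_N(t),
\]
this produces an \emph{additional dissipation} $\frac{\varepsilon_0}{(1+t)^{1+\varepsilon_0}}\mathcal{E}_N(t)$ on the left, which manifestly dominates $\frac{1}{(1+t)^{1+\varepsilon_0}}\sum_{|\alpha|=N}\|\partial^\alpha E^\varepsilon\|^2$ once the combination constant $\overline C_3$ is chosen large. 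No Maxwell-equation manipulation or auxiliary functional is needed.
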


\begin{proof}
First of all, multiplying $(1+t)^{-\varepsilon_0}$ by \eqref{a priori estimate 1}, we have
\begin{align}
&\frac{\d}{\d t} \left\{ (1+t)^{-\varepsilon_0} \mathcal{E}_{N}(t) \right\}
+ \frac{\varepsilon_0}{(1+t)^{1+\varepsilon_0}} \mathcal{E}_{N}(t)
+ (1+t)^{-\varepsilon_0} \mathcal{D}_{N}(t) \nonumber \\
\lesssim\;& \left\{ \delta_0 +  \mathcal{E}_{N}(t) + \sqrt{\widetilde{\mathcal{E}}_{N,l}(t)} \right\} \widetilde{\mathcal{D}}_{N,l}(t).
\label{a priori estimate 3}
\end{align}
Next, for $\overline{C}_1 \gg \overline{C}_2 $ large enough, the combination $\overline{C}_1 \times \left\{ \eqref{weighted estimate1} + \eqref{weighted estimate3} \right\} + \overline{C}_2 \times \eqref{weighted estimate2} + \eqref{weighted estimate4}$ gives that
\begin{align}
&\frac{\d}{\d t} \widetilde{\mathcal{E}}^h_{N,l}(t) + \lambda  \widetilde{\mathcal{D}}^h_{N,l}(t) \nonumber\\
\lesssim\;& \mathcal{D}_{N}(t)
+ \left\{ \delta_0 + \mathcal{E}_{N}(t)+ \sqrt{\widetilde{\mathcal{E}}_{N,l}(t)} \right\} \widetilde{\mathcal{D}}_{N,l}(t)
+\frac{1}{(1+t)^{1+\varepsilon_0}}\sum_{|\alpha| = N}\left\| \partial^\alpha E^\varepsilon \right\|^2, \label{a priori estimate 4}
\end{align}
where $\widetilde{\mathcal{E}}^h_{N,l}(t)$ is given by
\begin{align*}
\widetilde{\mathcal{E}}^h_{N,l}(t)
:=\; &\overline{C}_1\left\{ \sum_{|\alpha|\leq N-1} \left\|  \widetilde{w}_l(\alpha,0) \partial^\alpha \mathbf{P}^{\perp} f^\varepsilon \right\|^2
+ \varepsilon \sum_{|\alpha| = N} \left\| \widetilde{w}_l(\alpha,0) \partial^\alpha f^\varepsilon \right\|^2 \right\}\\
&+\overline{C}_2 \sum_{m=1}^{N-1}C_m \sum_{\substack{{|\alpha|+|\beta| \leq N-1}\\{|\beta| =m}}}  \left\| \widetilde{w}_l(\alpha,\beta) \partial^\alpha_\beta \mathbf{P}^{\perp}f^\varepsilon \right\|^2\\
&+\sum_{m=1}^{N} \widetilde{C}_m \sum_{\substack{{|\alpha|+|\beta| = N}\\{|\beta| =m}}} \left\| \widetilde{w}_l(\alpha,\beta) \partial^\alpha_\beta \mathbf{P}^{\perp}f^\varepsilon \right\|^2.
\end{align*}
Finally, we take the proper linear combination of the estimates above as follows.
For $\overline{C}_3 > 0$ large enough,  the combination
$\overline{C}_3 \times \left\{ \eqref{a priori estimate 1} + \eqref{a priori estimate 3} \right\} + \eqref{a priori estimate 4}$ gives
\begin{align}\label{a priori estimate 5}
\frac{\d}{\d t} \widetilde{\mathcal{E}}_{N,l}(t) + \lambda  \widetilde{\mathcal{D}}_{N,l}(t)
\lesssim \left\{ \delta_0 + \mathcal{E}_{N}(t)+ \sqrt{\widetilde{\mathcal{E}}_{N,l}(t)} \right\} \widetilde{\mathcal{D}}_{N,l}(t).
\end{align}

Recalling the a priori assumption \eqref{priori assumption}, the desired estimate \eqref{a priori estimate 2} follows directly from \eqref{a priori estimate 5}. This completes the proof of Proposition \ref{energy estimate 1}.
\end{proof}

\begin{proposition}\label{negative sobolev energy estimates total}
There is $\overline{\mathcal{E}}_{N,l}(t)$ satisfying \eqref{negative sobolev energy} such that
\begin{align}\label{a priori estimate 6}
\frac{\d}{\d t} \overline{\mathcal{E}}_{N,l}(t) + \lambda  \overline{\mathcal{D}}_{N,l}(t) \lesssim 0.
\end{align}
\end{proposition}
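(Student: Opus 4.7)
The plan is to combine three ingredients already established in the excerpt: Proposition \ref{energy estimate 1} (which controls $\widetilde{\mathcal{E}}_{N,l}$ by $\widetilde{\mathcal{D}}_{N,l}$), Lemma \ref{negative sobolev lemma} (which produces the negative Sobolev energy $\|\Lambda^{-\varrho}(f^\varepsilon,E^\varepsilon,B^\varepsilon)\|^2$ together with the micro-dissipation $\tfrac{1}{\varepsilon^2}\|\Lambda^{-\varrho}\mathbf{P}^\perp f^\varepsilon\|_D^2$), and one additional new estimate: a macroscopic-type dissipation bound in $\dot H^{-\varrho}$. The last ingredient is required because $\overline{\mathcal{D}}_{N,l}$ contains the extra dissipative pieces $\|\Lambda^{1-\varrho}\mathbf{P}f^\varepsilon\|^2$, $\|\Lambda^{-\varrho}(a^\varepsilon_+-a^\varepsilon_-)\|^2$, $\|\Lambda^{-\varrho} E^\varepsilon\|_{H^1}^2$ and $\|\Lambda^{1-\varrho}B^\varepsilon\|^2$, none of which are delivered by Lemma \ref{negative sobolev lemma} alone.

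To obtain this extra macroscopic estimate, I would apply $\Lambda^{-\varrho}$ to the fluid-type systems \eqref{macro equation 3}, \eqref{macro equation 4} and \eqref{macro equation 5}, and then repeat the interactive functional construction used in Lemma \ref{macroscopic estimate}, but now working in $\dot H^{-\varrho}$ instead of $L^2$. Concretely, one constructs an interactive functional $\mathcal{E}^{N,-\varrho}_{int}(t)$ satisfying $|\mathcal{E}^{N,-\varrho}_{int}(t)|\lesssim \|\Lambda^{-\varrho}(f^\varepsilon,E^\varepsilon,B^\varepsilon)\|^2$ such that
\begin{align*}
\frac{\d}{\d t}\mathcal{E}^{N,-\varrho}_{int}(t)
&+\|\Lambda^{1-\varrho}\mathbf{P}f^\varepsilon\|^2
+\|\Lambda^{-\varrho}(a^\varepsilon_+-a^\varepsilon_-)\|^2
+\|\Lambda^{-\varrho}E^\varepsilon\|_{H^1}^2
+\|\Lambda^{1-\varrho}B^\varepsilon\|^2 \\
&\lesssim \frac{1}{\varepsilon^2}\|\Lambda^{-\varrho}\mathbf{P}^\perp f^\varepsilon\|_D^2
+\sqrt{\overline{\mathcal{E}}_{N,l}(t)}\,\overline{\mathcal{D}}_{N,l}(t).
\end{align*}
The linear macroscopic structure produces the desired dissipation rates exactly as in Lemma \ref{macroscopic estimate}, while every nonlinear term on the right (coming from $g^\varepsilon_\pm$, $h^\varepsilon_\pm$, the magnetic force, and $\Gamma$) is estimated by the same Hardy--Littlewood--Sobolev chain used in Lemma \ref{negative sobolev lemma}: namely Lemmas \ref{negative embedding theorem}--\ref{minkowski theorem} together with $1<\varrho<3/2$ produce bounds of the form $\|\Lambda^{-\varrho}(\cdot)\|\lesssim\|\cdot\|_{L^{6/(3+2\varrho)}_x L^2_v}$, and these land in the energy $\overline{\mathcal{E}}_{N,l}$ (after controlling the velocity weight by smooth decay or by $H^4_v$ Sobolev embedding in $v$) and the dissipation $\overline{\mathcal{D}}_{N,l}$ (via the interpolation $\|\Lambda^{3/4-\varrho/2}\cdot\|^2+\|\Lambda^{3/2-\varrho}\cdot\|^2\lesssim \|\Lambda^{1-\varrho}\cdot\|^2+\|\nabla_x\cdot\|^2$). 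The main technical point here is verifying that each nonlinear piece, including the ones with the $1/\varepsilon$ singularity in front of $\Gamma(f^\varepsilon,f^\varepsilon)$ and $v\times B^\varepsilon\cdot\nabla_v f^\varepsilon$, can be absorbed into $\sqrt{\overline{\mathcal{E}}_{N,l}}\,\overline{\mathcal{D}}_{N,l}$ with the $1/\varepsilon^2$ correctly placed inside the dissipation of $\mathbf{P}^\perp f^\varepsilon$; this is the same strategy that already worked in \eqref{negative sobolev estimate 1}--\eqref{negative sobolev Gamma1}, so no genuinely new obstacle arises.

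The conclusion is then a routine linear combination. Define
\[
\overline{\mathcal{E}}_{N,l}(t):=\widetilde{\mathcal{E}}_{N,l}(t)+K_1\|\Lambda^{-\varrho}(f^\varepsilon,E^\varepsilon,B^\varepsilon)\|^2+K_2\,\mathcal{E}^{N,-\varrho}_{int}(t)
\]
with $K_1\gg K_2>0$ chosen so that (i) the sign condition \eqref{negative sobolev energy} holds (since $|\mathcal{E}^{N,-\varrho}_{int}|\lesssim \|\Lambda^{-\varrho}(f^\varepsilon,E^\varepsilon,B^\varepsilon)\|^2$), and (ii) the micro-dissipation produced by Lemma \ref{negative sobolev lemma} dominates the $\tfrac{1}{\varepsilon^2}\|\Lambda^{-\varrho}\mathbf{P}^\perp f^\varepsilon\|_D^2$ that appears on the right of the new macroscopic estimate. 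Adding a large multiple of Proposition \ref{energy estimate 1} on top then closes every piece of $\overline{\mathcal{D}}_{N,l}$, and yields
\[
\frac{\d}{\d t}\overline{\mathcal{E}}_{N,l}(t)+\lambda\,\overline{\mathcal{D}}_{N,l}(t)
\lesssim \sqrt{\overline{\mathcal{E}}_{N,l}(t)}\,\overline{\mathcal{D}}_{N,l}(t).
\]
Finally, the a priori assumption \eqref{priori assumption} guarantees $\sqrt{\overline{\mathcal{E}}_{N,l}(t)}\leq \delta_0\ll 1$, so the right-hand side is absorbed by the left and \eqref{a priori estimate 6} follows.

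The anticipated main obstacle is the proper placement of the singular factor $1/\varepsilon$ when running the $\dot H^{-\varrho}$ macroscopic estimate: several nonlinear terms (in particular $\tfrac{1}{\varepsilon}b^\varepsilon\times B^\varepsilon$ in \eqref{macro equation 5} and $\tfrac{1}{\varepsilon}\langle v\mu^{1/2},\Gamma\rangle$ in \eqref{g estimate1}) must be distributed so that every $1/\varepsilon$ lands next to a factor controlled by $\tfrac{1}{\varepsilon^2}\|\Lambda^{-\varrho}\mathbf{P}^\perp f^\varepsilon\|_D^2$ or $\tfrac{1}{\varepsilon^2}\|\nabla_x\mathbf{P}^\perp f^\varepsilon\|_D^2$, both already contained in $\overline{\mathcal{D}}_{N,l}$; otherwise the $\varepsilon$-uniformity would be lost. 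This is handled by the same L\'evy--H\"older chain used to derive \eqref{negative sobolev Gamma1}, together with the smallness of $\overline{\mathcal{E}}_{N,l}$ from the a priori bound.
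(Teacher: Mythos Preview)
Your approach is essentially the same as the paper's: construct a negative-Sobolev interactive functional $\mathcal{E}^{-\varrho}_{int}$ by running the macroscopic argument of Lemma \ref{macroscopic estimate} at the $\Lambda^{-\varrho}$ level, and then take a linear combination with Lemma \ref{negative sobolev lemma} and Proposition \ref{energy estimate 1} under the a priori smallness \eqref{priori assumption}. Two minor technical imprecisions are worth flagging. First, the interactive functional cannot be bounded by $\|\Lambda^{-\varrho}(f^\varepsilon,E^\varepsilon,B^\varepsilon)\|^2$ alone: since it contains gradient-type pairings (e.g.\ $\langle \Lambda^{-\varrho}\nabla_x a,\Lambda^{-\varrho} b\rangle$), the paper correctly states $|\mathcal{E}^{-\varrho}_{int}(t)|\lesssim \|\Lambda^{-\varrho}(f^\varepsilon,E^\varepsilon,B^\varepsilon)\|^2+\|\Lambda^{1-\varrho}(f^\varepsilon,E^\varepsilon,B^\varepsilon)\|^2$; the extra $\Lambda^{1-\varrho}$ piece is harmless because it interpolates between $\|\Lambda^{-\varrho}\cdot\|^2$ and $\mathcal{E}_N$, both already in $\overline{\mathcal{E}}_{N,l}$. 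Second, the linear micro-term appearing on the right of the macroscopic estimate is not just $\tfrac{1}{\varepsilon^2}\|\Lambda^{-\varrho}\mathbf{P}^\perp f^\varepsilon\|_D^2$ but rather $\tfrac{1}{\varepsilon^2}\|\Lambda^{-\varrho}\mathbf{P}^\perp f^\varepsilon\|_{H^2_xL^2_D}^2$, because recovering $\|\Lambda^{-\varrho}E^\varepsilon\|_{H^1}^2$ requires up to two extra spatial derivatives of $\mathbf{P}^\perp f^\varepsilon$; you anticipated this implicitly when you mentioned $\tfrac{1}{\varepsilon^2}\|\nabla_x\mathbf{P}^\perp f^\varepsilon\|_D^2$ is available in $\overline{\mathcal{D}}_{N,l}$, and indeed the $H^2_x$ norm is absorbed by interpolation between $\|\Lambda^{-\varrho}\mathbf{P}^\perp f^\varepsilon\|_D^2$ and $\widetilde{\mathcal{D}}_{N,l}$.
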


\begin{proof}
Similar to Lemma \ref{macroscopic estimate}, there exists an interactive functional $\mathcal{E}^{-\varrho}_{int}(t)$ satisfying
\begin{align}\nonumber
\left|\mathcal{E}^{-\varrho}_{int}(t)\right| \lesssim \left\| \Lambda^{-\varrho} (f^\varepsilon, E^\varepsilon, B^\varepsilon) \right\|^2
+ \left\| \Lambda^{1-\varrho} (f^\varepsilon, E^\varepsilon, B^\varepsilon) \right\|^2,
\end{align}
such that
\begin{align}\label{macro negative sobolev 1}
& \frac{\d}{\d t} \mathcal{E}^{-\varrho}_{int}(t)
+ \left\| \Lambda^{1-\varrho} (a_{\pm}^\varepsilon, b^\varepsilon, c^\varepsilon) \right\|^2
+ \left\| \Lambda^{-\varrho} (a_{+}^ \varepsilon-a_{-}^\varepsilon) \right\|^2
+ \left\| \Lambda^{-\varrho} E^ \varepsilon \right\|_{H^1}^2
+ \left\| \Lambda^{1-\varrho} B^ \varepsilon \right\|^2 \nonumber \\
\lesssim\;& \frac{1}{\varepsilon^2}\left\| \Lambda^{-\varrho}\mathbf{P}^{\perp}f^\varepsilon \right\|^2_{H^2_x L^2_D}
+\overline{\mathcal{E}}_{N,l}(t)\overline{\mathcal{D}}_{N,l}(t).
\end{align}

Under the a priori assumption \eqref{priori assumption},  one can deduce \eqref{a priori estimate 6} by a proper linear combination of \eqref{negative sobolev estimate}, \eqref{a priori estimate 2} and \eqref{macro negative sobolev 1}. This completes the proof of Proposition \ref{negative sobolev energy estimates total}.
\end{proof}
\medskip

\section{Global Existence}\label{Global Existence}
In this section, we give the proof of Theorem \ref{mainth1} by first establishing the closed uniform estimates on $X(t)$ defined in \eqref{X define}.

\subsection{Time Decay Estimate}
\hspace*{\fill}

Our main idea to deduce the time decay estimate is based on the approach proposed by \cite{GW2012CPDE}.
First of all, we give a more refined energy estimate for the pure spatial derivatives than \eqref{estimate without weight}.

\begin{lemma}\label{0-N0 1-N0 lemma}
There hold
\begin{align}
&\frac{\d}{\d t}  \sum_{k \leq N_0} \left\| \nabla^k_x (f^\varepsilon, E^\varepsilon, B^\varepsilon) \right\|^2
+ \frac{\lambda}{\varepsilon^2} \sum_{k \leq N_0 }  \left\| \nabla^k_x \mathbf{P}^{\perp} f^\varepsilon \right\|^2_{D} \nonumber \\
\lesssim\;& \sqrt{\widetilde{\mathcal{E}}_{N,l}(t)} \bigg\{ \sum_{k \leq N_0-1} \left\| \nabla^k_x E^\varepsilon \right\|^2
+ \sum_{2 \leq k \leq N_0-1} \left\| \nabla^k_x B^\varepsilon \right\|^2
+  \sum_{1 \leq k \leq N_0} \left\| \nabla^k_x \mathbf{P}f^\varepsilon \right\|^2 \bigg\}  \label{0-N0 estimate decay}
\end{align}
and
\begin{align}
&\frac{\d}{\d t}  \sum_{1 \leq k \leq N_0} \left\| \nabla^k_x (f^\varepsilon, E^\varepsilon, B^\varepsilon) \right\|^2
+ \frac{\lambda}{\varepsilon^2} \sum_{1 \leq k \leq N_0 }  \left\| \nabla^k_x \mathbf{P}^{\perp} f^\varepsilon \right\|^2_{D} \nonumber \\
\lesssim\;& \sqrt{\widetilde{\mathcal{E}}_{N,l}(t)} \bigg\{ \sum_{1 \leq k \leq N_0-1} \left\| \nabla^k_x E^\varepsilon \right\|^2
+ \sum_{2 \leq k \leq N_0-1} \left\| \nabla^k_x B^\varepsilon \right\|^2
+  \sum_{2 \leq k \leq N_0} \left\| \nabla^k_x \mathbf{P}f^\varepsilon \right\|^2 \bigg\}. \label{1-N0 estimate decay}
\end{align}
\end{lemma}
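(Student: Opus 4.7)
The plan is to follow the derivation of Lemma \ref{energy estimate without weight} but refine the nonlinear estimates so that the right-hand side involves only the specific pieces of the lower-order dissipation \eqref{low k dissipation} (together with the smallness factor $\sqrt{\widetilde{\mathcal{E}}_{N,l}(t)}$), rather than the full weighted dissipation $\widetilde{\mathcal{D}}_{N,l}(t)$. First I apply $\nabla_x^k$ to the first equation of \eqref{rVMB} and pair it with $\nabla_x^k f^\varepsilon$, summing over the appropriate range. The transport term cancels by integration by parts in $x$; the Maxwell coupling $\frac{1}{\varepsilon}(\nabla_x^k E^\varepsilon\cdot v\mu^{1/2}q_1,\nabla_x^k f^\varepsilon)$, combined with the Maxwell equations, produces $\frac{1}{2}\frac{d}{dt}\|\nabla_x^k(E^\varepsilon,B^\varepsilon)\|^2$; and \eqref{L coercive1} yields the microscopic dissipation $\frac{\lambda}{\varepsilon^2}\|\nabla_x^k\mathbf{P}^\perp f^\varepsilon\|_D^2$ on the left. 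These supply the left-hand side of \eqref{0-N0 estimate decay}.

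For the collision term I use the invariance property to reduce the pairing to $\frac{1}{\varepsilon}(\nabla_x^k\Gamma(f^\varepsilon,f^\varepsilon),\nabla_x^k\mathbf{P}^\perp f^\varepsilon)$ and bound it exactly as in Lemmas \ref{soft Gamma}/\ref{hard Gamma} via \eqref{gamma2}, with the $\mathbf{P}^\perp$-factor absorbed into the left-hand dissipation. For the electric nonlinearities $E^\varepsilon\cdot vf^\varepsilon$ and $E^\varepsilon\cdot\nabla_v f^\varepsilon$, I expand by Leibniz and use the H\"older inequalities $L^3$--$L^6$--$L^2$ or $L^2$--$L^\infty$--$L^2$ in $x$ combined with Lemma \ref{sobolev embedding 1}. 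The factor $\sqrt{\widetilde{\mathcal{E}}_{N,l}(t)}$ is placed on whichever factor lies outside the admissible ranges $\|\nabla_x^{k'}E^\varepsilon\|$ ($k'\le N_0-1$), $\|\nabla_x^{k'}B^\varepsilon\|$ ($2\le k'\le N_0-1$), $\|\nabla_x^{k'}\mathbf{P}f^\varepsilon\|$ ($1\le k'\le N_0$); the choice $N=N_0+3$ in \eqref{mainth1 assumption} guarantees that every Sobolev-embedded factor has at most $N$ total derivatives, so it is controlled by $\sqrt{\widetilde{\mathcal{E}}_{N,l}(t)}$.

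The main obstacle is the magnetic term $\frac{1}{\varepsilon}(\nabla_x^k((v\times B^\varepsilon)\cdot\nabla_v f^\varepsilon),\nabla_x^k f^\varepsilon)$, because the $\frac{1}{\varepsilon}$ prefactor cannot be removed by interpolation as in the weighted estimates (cf.\ Remark \ref{remark-1.5}). I expand via Leibniz into contributions indexed by $\alpha_1\le\alpha$ with $|\alpha|=k$. When $\alpha_1=0$, integration by parts in $v$, together with $(v\times B^\varepsilon)\cdot v=0$ and $\nabla_v\cdot(v\times B^\varepsilon)=0$, kills the entire contribution. When $\alpha_1\neq 0$, I decompose each copy of $f^\varepsilon$ as $\mathbf{P}f^\varepsilon+\mathbf{P}^\perp f^\varepsilon$; the $(\mathbf{P},\mathbf{P})$ crossing vanishes because a direct computation gives $(v\times B^\varepsilon)\cdot\nabla_v\mathbf{P}f^\varepsilon=\mu^{1/2}(B^\varepsilon\times b)\cdot v\,[1,1]$, which is $L^2_v$-orthogonal to $\mathbf{P}f^\varepsilon$ (since $\int v_i\mu\,dv=0$ and $(B^\varepsilon\times b)\cdot b=0$). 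For the three remaining crossings I pair the $\mathbf{P}^\perp$-factors with the left-hand dissipation through $\frac{1}{\varepsilon}AB\le\eta\frac{1}{\varepsilon^2}A^2+C B^2$, which absorbs the $\frac{1}{\varepsilon}$ and leaves $\mathbf{P}^\perp$-terms in the dissipation $\frac{1}{\varepsilon^2}\|\nabla_x^{k'}\mathbf{P}^\perp f^\varepsilon\|_D^2$. The surviving $B^\varepsilon$- and $\mathbf{P}f^\varepsilon$-factors are then estimated by H\"older and Sobolev embedding; the highest-index factor (falling outside $\|\nabla_x^{k'}B^\varepsilon\|$, $2\le k'\le N_0-1$, or $\|\nabla_x^{k'}\mathbf{P}f^\varepsilon\|$, $1\le k'\le N_0$) is absorbed into $\sqrt{\widetilde{\mathcal{E}}_{N,l}(t)}$, once again using $N=N_0+3$.

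Summing over $k\le N_0$ and absorbing the small $\mathbf{P}^\perp$ terms into the left-hand dissipation produces \eqref{0-N0 estimate decay}. The proof of \eqref{1-N0 estimate decay} is identical, but starts the sum at $k=1$; the $k=0$ pieces simply drop out. One small extra point for \eqref{0-N0 estimate decay}: the $k=0$ contribution $((E^\varepsilon\cdot v)f^\varepsilon/2,f^\varepsilon)$ is handled by $L^\infty$--$L^2$--$L^2$ H\"older together with $\|E^\varepsilon\|_{L^\infty}\lesssim\|\nabla_x E^\varepsilon\|_{H^1}$ from \eqref{Sobolev-ineq}, giving $\sqrt{\widetilde{\mathcal{E}}_{N,l}(t)}\,(\|\nabla_x E^\varepsilon\|^2+\|\nabla_x^2 E^\varepsilon\|^2)$, which fits inside the allowed right-hand side for $N_0\ge 3$.
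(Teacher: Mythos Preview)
Your outline is broadly aligned with the paper for the energy identity, the collision term, and the electric nonlinearities, and your observation that the $(\mathbf{P},\mathbf{P})$ magnetic crossing vanishes is correct. However, there is a genuine gap in your treatment of the $(\mathbf{P}^\perp,\mathbf{P}^\perp)$ magnetic crossing
\[
\frac{1}{\varepsilon}\Big(q_0(v\times\nabla_x^{j}B^\varepsilon)\cdot\nabla_v\nabla_x^{k-j}\mathbf{P}^\perp f^\varepsilon,\ \nabla_x^{k}\mathbf{P}^\perp f^\varepsilon\Big).
\]
You propose to ``pair the $\mathbf{P}^\perp$-factors with the left-hand dissipation'' and absorb both into $\frac{1}{\varepsilon^2}\|\nabla_x^{k'}\mathbf{P}^\perp f^\varepsilon\|_D^2$. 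This does not work: the factor $\langle v\rangle\,\nabla_v\nabla_x^{k-j}\mathbf{P}^\perp f^\varepsilon$ carries a full velocity derivative together with an $\langle v\rangle$ weight, whereas the $D$-norm dissipation only controls $H^s_v$ regularity with weight $\langle v\rangle^{\gamma/2+s}$ (recall \eqref{norm inequality}). No amount of $L^p$ juggling in $x$ repairs this $v$-mismatch, and integration by parts in $v$ just moves the derivative to the top-order $\mathbf{P}^\perp$ factor, which is worse.

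The paper's remedy is to place the weighted velocity-derivative factor into $\sqrt{\widetilde{\mathcal{E}}_{N,l}(t)}$, using that the weighted energy controls $\|\langle v\rangle^{1-\frac{\gamma+2s}{2}}\nabla_v\nabla_x^{m}\mathbf{P}^\perp f^\varepsilon\|$. The remaining product $\frac{1}{\varepsilon}\|\nabla_x^{j'}B^\varepsilon\|\cdot\|\nabla_x^{k}\mathbf{P}^\perp f^\varepsilon\|_D$ is then split by Cauchy--Schwarz into $\|\nabla_x^{j'}B^\varepsilon\|^2$ (landing in the allowed range $2\le j'\le N_0-1$) and $\frac{1}{\varepsilon^2}\|\nabla_x^{k}\mathbf{P}^\perp f^\varepsilon\|_D^2$. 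The endpoint $j=N_0$ is delicate: here $\|\nabla_x^{N_0}B^\varepsilon\|$ and $\|\langle v\rangle\nabla_x^{N_0}\mathbf{P}^\perp f^\varepsilon\|$ both fall outside the admissible ranges, and the paper closes it by a multi-parameter interpolation (in $x$ for $B^\varepsilon$, and in the velocity weight for $\mathbf{P}^\perp f^\varepsilon$) that explicitly uses $N=N_0+3$ and the lower bound on $l$ from \eqref{mainth1 assumption}. Your proposal does not contain this step, and without it the top-order magnetic term cannot be controlled within the stated right-hand side.
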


\begin{proof}
Similar to \eqref{estimate without weight}, by applying $\nabla^k_x$ to the first equation of \eqref{rVMB} and taking the inner product with $\nabla^k_x f^\varepsilon$ over $\mathbb{R}_x^3 \times \mathbb{R}_v^3$, one has
\begin{align} \label{pure spatial estimate}
&\frac{1}{2}\frac{\d}{\d t}  \left\| \nabla^k_x (f^\varepsilon, E^\varepsilon, B^\varepsilon) \right\|^2
+\frac{\lambda}{\varepsilon^2}  \left\| \nabla^k_x \mathbf{P}^{\perp} f^\varepsilon \right\|^2_{D} \nonumber\\
=\;&- \chi_{k \geq 1} \left( \frac{1}{\varepsilon }\nabla^k_x (q_0  (v \times B^\varepsilon) \cdot \nabla_v f^\varepsilon ), \nabla^k_x f^\varepsilon \right)
- \chi_{k \geq 1} \left( \nabla^k_x (q_0 E^\varepsilon \cdot \nabla_v f^\varepsilon ), \nabla^k_x f^\varepsilon \right) \nonumber\\
&+ \left( \nabla^k_x \big(\frac{q_0}{2} E^\varepsilon \cdot v  f^\varepsilon \big), \nabla^k_x f^\varepsilon \right)
+ \left( \frac{1}{\varepsilon } \nabla^k_x \Gamma(f^\varepsilon, f^\varepsilon), \nabla^k_x f^\varepsilon \right) \nonumber\\
:=\;& J_{2}+ J_{3}+ J_{4}+J_{5}.
\end{align}

We first consider $J_{2}$--$J_{5}$ with $1 \leq k \leq N_0$ for the case $\max\{-3, -\frac{3}{2}-2s\} < \gamma <-2s$.
To estimate the term $J_{2}$, in terms of the macro-micro decomposition \eqref{f decomposition}, we set
\begin{align*}
J_2=\;&  \sum_{1 \leq j \leq k} C_k^j \left(\frac{1}{\varepsilon } q_0  (v \times \nabla^j_x B^\varepsilon) \cdot \nabla_v \nabla^{k-j}_x f^\varepsilon , \nabla^k_x f^\varepsilon \right)  \nonumber \\
=\;& \sum_{1 \leq j \leq k} C_k^j \left( \frac{1}{\varepsilon} q_0(v \times \nabla^j_x B^\varepsilon) \cdot \nabla_v \nabla^{k-j}_x \mathbf{P}^{\perp} f^{\varepsilon} , \nabla^k_x \mathbf{P} f^{\varepsilon} \right) \nonumber\\
&+ \sum_{1 \leq j \leq k} C_k^j \left( \frac{1}{\varepsilon}  q_0(v \times \nabla^j_x B^\varepsilon) \cdot \nabla_v  \nabla^{k-j}_x \mathbf{P} f^{\varepsilon}, \nabla^k_x \mathbf{P}^{\perp} f^{\varepsilon} \right) \nonumber\\
&+ \sum_{1 \leq j \leq k} C_k^j \left( \frac{1}{\varepsilon}   q_0(v \times \nabla^j_x B^\varepsilon) \cdot \nabla_v \nabla^{k-j}_x  \mathbf{P}^{\perp} f^{\varepsilon} , \nabla^k_x \mathbf{P}^{\perp} f^{\varepsilon} \right) \nonumber\\
:=\;& J_{2,1}+ J_{2,2}+J_{2,3}.
\end{align*}
Here, we have used the fact that from the kernel structure of $\mathbf{P}$, there holds
$$
\sum_{1 \leq j \leq k} C_k^j \left( \frac{1}{\varepsilon}  q_0(v \times \nabla^j_x B^\varepsilon) \cdot \nabla_v \nabla^{k-j}_x \mathbf{P} f^{\varepsilon} , \nabla^k_x \mathbf{P} f^{\varepsilon} \right) =0.
$$
For the term $J_{2,1}$, we divide our computations into the following two cases.

{\em Case 1.} $k=1$.\; In this case, $j=k=1$. From integration by parts,  \eqref{sobolev interpolation1},  \eqref{Sobolev-ineq}, \eqref{minkowski}, the H\"{o}lder inequality and the Cauchy--Schwarz inequality, one has
\begin{align*}
J_{2,1} \lesssim\;& \frac{1}{\varepsilon} \left\| \nabla_x B^\varepsilon \right\|
\left\| \mathbf{P}^{\perp} f^\varepsilon \right\|_{L^6_x L^2_D}
\left\| \nabla_x \mathbf{P} f^{\varepsilon} \right\|_{L^3_x L^2_v} \\
\lesssim\;& \frac{1}{\varepsilon} \left\|  B^\varepsilon \right\|^{\frac{1}{2}} \left\| \nabla^2_x B^\varepsilon \right\|^{\frac{1}{2}}
\left\| \nabla_x \mathbf{P}^{\perp} f^\varepsilon \right\|_{D}
\left\| \nabla_x \mathbf{P} f^{\varepsilon} \right\|^{\frac{1}{2}} \left\| \nabla^2_x \mathbf{P} f^{\varepsilon} \right\|^{\frac{1}{2}} \\
\lesssim\;& \sqrt{\mathcal{E}_{N}(t)}
\left\{ \left\| \nabla^2_x B^\varepsilon \right\|^2
+ \left\| \nabla^2_x \mathbf{P} f^{\varepsilon} \right\|^2
+ \frac{1}{\varepsilon^2}\left\| \nabla_x \mathbf{P}^{\perp} f^{\varepsilon} \right\|^2_D \right\}.
\end{align*}

{\em Case 2.} $2 \leq k \leq N_0$.\; Using the similar method as the case $k=1$, we have
\begin{align*}
J_{2,1} \lesssim \;& \frac{1}{\varepsilon} \sum_{1 \leq j \leq k} \left\| \nabla^j_x B^\varepsilon \right\|_{L^3}
\left\| \nabla^{k-j}_x \mathbf{P}^{\perp} f^\varepsilon \right\|_{L^6_x L^2_D}
\left\| \nabla^k_x \mathbf{P} f^{\varepsilon} \right\| \\
\lesssim\;& \sqrt{\mathcal{E}_{N}(t)}
\bigg\{ \sum_{2 \leq \ell \leq N_0}  \left\| \nabla^\ell_x \mathbf{P} f^{\varepsilon} \right\|^2
+ \frac{1}{\varepsilon^2} \sum_{1 \leq \ell \leq N_0} \left\| \nabla^\ell_x \mathbf{P}^{\perp} f^{\varepsilon} \right\|^2_D \bigg\}.
\end{align*}
Next, we estimate the term $J_{2,2}$. It follows from \eqref{sobolev interpolation1}, \eqref{sobolev interpolation2}, \eqref{Sobolev-ineq}, \eqref{minkowski}, the H\"{o}lder inequality and the Cauchy--Schwarz inequality that
\begin{align*}
J_{2,2} \lesssim \;& \frac{1}{\varepsilon} \sum_{1 \leq j \leq N_0-2} \left\| \nabla^j_x B^\varepsilon \right\|_{L^6}
\left\| \nabla^{k-j}_x \mathbf{P} f^\varepsilon \right\|_{L^3_x L^2_v}
\left\| \nabla^k_x \mathbf{P}^{\perp} f^{\varepsilon} \right\|_D \\
& + \frac{1}{\varepsilon} \sum_{ j = N_0-1} \left\| \nabla^j_x B^\varepsilon \right\|
\left\| \nabla^{k-j}_x \mathbf{P} f^\varepsilon \right\|_{L^\infty_x L^2_v}
\left\| \nabla^k_x \mathbf{P}^{\perp} f^{\varepsilon} \right\|_D \\
&+ \frac{1}{\varepsilon} \left\| \nabla^{N_0}_x B^\varepsilon \right\|
\left\| \mathbf{P} f^\varepsilon \right\|_{L^\infty_x L^2_v}
\left\| \nabla^{N_0}_x \mathbf{P}^{\perp} f^{\varepsilon} \right\|_D \\
\lesssim\;& \sqrt{\mathcal{E}_{N}(t)}
\bigg\{ \sum_{2 \leq \ell \leq N_0-1}  \left\| \nabla^\ell_x B^{\varepsilon} \right\|^2
+ \left\| \nabla^2_x \mathbf{P} f^\varepsilon \right\|^2
+ \frac{1}{\varepsilon^2}  \sum_{1 \leq \ell \leq N_0}  \left\| \nabla^{\ell}_x \mathbf{P}^{\perp} f^{\varepsilon} \right\|^2_D \bigg\},
\end{align*}
where we used that
\begin{align*}
&\frac{1}{\varepsilon} \left\| \nabla^{N_0}_x B^\varepsilon \right\|
\left\| \mathbf{P} f^\varepsilon \right\|_{L^\infty_x L^2_v}
\left\| \nabla^{N_0} _x\mathbf{P}^{\perp} f^{\varepsilon} \right\|_D \\
\lesssim\;&\frac{1}{\varepsilon} \left\| \nabla^{N_0-1}_x B^\varepsilon \right\|^{\frac{1}{2}}
\left\| \nabla^{N_0+1}_x B^\varepsilon \right\|^{\frac{1}{2}}
\left\| \nabla_x \mathbf{P} f^\varepsilon \right\|^{\frac{1}{2}}
\left\| \nabla^2_x \mathbf{P} f^\varepsilon \right\|^{\frac{1}{2}}
\left\| \nabla^{N_0}_x \mathbf{P}^{\perp} f^{\varepsilon} \right\|_D \\
\lesssim\;& \sqrt{\mathcal{E}_{N}(t)}
\bigg\{ \left\| \nabla^{N_0-1}_x B^\varepsilon \right\|^2+ \left\| \nabla^2_x \mathbf{P} f^\varepsilon \right\|^2
+ \frac{1}{\varepsilon^2} \left\| \nabla^{N_0}_x \mathbf{P}^{\perp} f^{\varepsilon} \right\|_D^2 \bigg\}.
\end{align*}
Similarly, for the term $J_{2,3}$, we obtain
\begin{align*}
J_{2,3} \lesssim \;& \frac{1}{\varepsilon} \sum_{1 \leq j \leq N_0-2}  \left\| \nabla_x^j B^\varepsilon \right\|_{L^6}
\left\| \langle v \rangle ^{1-\frac{\gamma+2s}{2}} \nabla_v \nabla^{k-j}_x \mathbf{P}^{\perp}  f^\varepsilon \right\|_{L^3_x L^2_v}
\left\| \nabla^k_x \mathbf{P}^{\perp} f^{\varepsilon} \right\|_D \\
&+ \frac{1}{\varepsilon} \sum_{ j = N_0-1} \left\| \nabla^j_x B^\varepsilon \right\|
\left\| \langle v \rangle ^{1-\frac{\gamma+2s}{2}}  \nabla_v \nabla_x^{k-j} \mathbf{P}^{\perp}  f^\varepsilon \right\|_{L^\infty_x L^2_v}
\left\| \nabla^k_x \mathbf{P}^{\perp} f^{\varepsilon} \right\|_D \\
&+ \frac{1}{\varepsilon}  \left\| \nabla^{N_0}_x B^\varepsilon \right\|
\left\| \nabla_v  \mathbf{P}^{\perp}  f^\varepsilon \right\|_{L^\infty_x L^2_v}
\left\| \langle v \rangle \nabla^{N_0}_x \mathbf{P}^{\perp} f^{\varepsilon} \right\| \\
\lesssim\;& \sqrt{\widetilde{\mathcal{E}}_{N,l}(t)}
\bigg\{ \sum_{2 \leq \ell \leq N_0-1}  \left\| \nabla^\ell_x B^{\varepsilon} \right\|^2
+ \frac{1}{\varepsilon^2}  \sum_{1 \leq \ell \leq N_0}\left\| \nabla^{\ell}_x \mathbf{P}^{\perp} f^{\varepsilon} \right\|^2_D \bigg\}.
\end{align*}
Here, we have made use of the estimate
\begin{align*}
&\frac{1}{\varepsilon}  \left\| \nabla^{N_0}_x B^\varepsilon \right\|
\left\| \nabla_v  \mathbf{P}^{\perp}  f^\varepsilon \right\|_{L^\infty_x L^2_v}
\left\| \langle v \rangle \nabla^{N_0}_x \mathbf{P}^{\perp} f^{\varepsilon} \right\|  \\
\lesssim\;& \frac{1}{\varepsilon}  \left\| \nabla^{N_0-1}_x B^\varepsilon \right\|^{\frac{3}{4}}
\left\| \nabla^{N_0+3}_x B^\varepsilon \right\|^{\frac{1}{4}}
\left\| \nabla_x \mathbf{P}^{\perp} f^\varepsilon \right\|_{H^1_x L^2_v}^{\frac{2}{3}}
\left\| \nabla^3_v \nabla_x \mathbf{P}^{\perp} f^\varepsilon \right\|_{H^1_x L^2_v}^{\frac{1}{3}} \\
&\times \left\| \langle v \rangle^{\frac{\gamma+2s}{2}} \nabla^{N_0}_x \mathbf{P}^{\perp} f^{\varepsilon} \right\|^{\frac{3}{4}}
\left\| \langle v \rangle^{4-\frac{3(\gamma+2s)}{2}} \nabla^{N_0}_x \mathbf{P}^{\perp} f^{\varepsilon} \right\|^{\frac{1}{4}} \\
\lesssim\;& \frac{1}{\varepsilon}  \left\| \nabla^{N_0-1}_x B^\varepsilon \right\|^{\frac{3}{4}}
\left\| \nabla^{N_0+3}_x B^\varepsilon \right\|^{\frac{1}{4}}
\left\| \langle v \rangle ^{\frac{\gamma+2s}{2}} \nabla_x \mathbf{P}^{\perp} f^\varepsilon  \right\|_{H^1_x L^2_v}^{\frac{2}{3} \times \frac{3}{4}}
\left\| \langle v \rangle ^{-\frac{3(\gamma+2s)}{2}} \nabla_x \mathbf{P}^{\perp} f^\varepsilon  \right\|_{H^1_x L^2_v}^{\frac{2}{3} \times \frac{1}{4}} \\
&\times \left\| \nabla^3_v \nabla_x \mathbf{P}^{\perp} f^\varepsilon \right\|_{H^1_x L^2_v}^{\frac{1}{3}}
\left\| \langle v \rangle^{\frac{\gamma+2s}{2}} \nabla^{N_0}_x \mathbf{P}^{\perp} f^{\varepsilon} \right\|^{\frac{3}{4}}
\left\| \langle v \rangle^{4-\frac{3(\gamma+2s)}{2}} \nabla^{N_0}_x \mathbf{P}^{\perp} f^{\varepsilon} \right\|^{\frac{1}{4}} \\
\lesssim\;& \sqrt{\widetilde{\mathcal{E}}_{N,l}(t)}
\bigg\{  \left\| \nabla^{N_0-1}_x B^{\varepsilon} \right\|^2
+ \frac{1}{\varepsilon^2}  \sum_{1 \leq \ell \leq 2}\left\| \nabla^{\ell}_x \mathbf{P}^{\perp} f^{\varepsilon} \right\|^2_D
+ \frac{1}{\varepsilon^2}  \left\| \nabla^{N_0}_x \mathbf{P}^{\perp} f^{\varepsilon} \right\|^2_D \bigg\}.
\end{align*}
It should be mentioned that in the above estimate, we need to use the definition of $l$ in \eqref{mainth1 assumption} and the fact $N=N_0+3$.
Thus, collecting all the above estimates of $J_{2,1}$--$J_{2,3}$, we can derive that for $1 \leq k \leq N_0$,
\begin{align*}
J_2 \lesssim \sqrt{\widetilde{\mathcal{E}}_{N,l}(t)}
\bigg\{ \sum_{2 \leq \ell \leq N_0-1}\left\| \nabla^{\ell}_x B^{\varepsilon} \right\|^2
+ \sum_{2 \leq \ell \leq N_0 }\left\| \nabla^{\ell}_x \mathbf{P} f^{\varepsilon} \right\|^2
+ \frac{1}{\varepsilon^2}  \sum_{1 \leq \ell \leq N_0}\left\| \nabla^{\ell}_x \mathbf{P}^{\perp} f^{\varepsilon} \right\|^2_D \bigg\}.
\end{align*}
The terms $J_3$ and $J_4$ can be handled similarly. We thus obtain that for $1 \leq k \leq N_0$,
\begin{align*}
J_3 + J_4 \lesssim \sqrt{\widetilde{\mathcal{E}}_{N,l}(t)}
\bigg\{  \sum_{1 \leq \ell \leq N_0-1}\left\| \nabla^{\ell}_x E^{\varepsilon} \right\|^2
+ \sum_{2 \leq \ell \leq N_0 }\left\| \nabla^{\ell}_x \mathbf{P} f^{\varepsilon} \right\|^2
+ \frac{1}{\varepsilon^2}  \sum_{1 \leq \ell \leq N_0}\left\| \nabla^{\ell}_x \mathbf{P}^{\perp} f^{\varepsilon} \right\|^2_D \bigg\}.
\end{align*}
Finally, we estimate the term $J_5$ for $1 \leq k \leq N_0$. We denote $J_{5,1}$, $J_{5,2}$, $J_{5,3}$, $J_{5,4}$ to be the terms corresponding to the decomposition \eqref{gamma decomposition}.  For the term $J_{5,1}$, we divide our computations into the following two cases.

{\em Case 1}. $k=1$.\;  Making use of the collision invariant property, \eqref{gamma1}, \eqref{Sobolev-ineq}, the H\"{o}lder inequality and the Cauchy--Schwarz inequality, we obtain the following result
\begin{align*}
J_{5,1}=\;& \left( \frac{1}{\varepsilon} \nabla_x \Gamma( \mathbf{P}f^\varepsilon, \mathbf{P}f^\varepsilon),
\nabla_x \mathbf{P}^{\perp}f^\varepsilon \right)\\
\lesssim\;& \frac{1}{\varepsilon} \left\| \mathbf{P}f^\varepsilon \right\|_{L^3_x L^2_v}\left\| \nabla_x \mathbf{P}f^\varepsilon \right\|_{L^6_x L^2_v}
\left\| \nabla_x \mathbf{P}^{\perp}f^\varepsilon \right\|_{D} \\
\lesssim\;& \sqrt{\mathcal{E}_{N}(t)} \bigg\{ \left\| \nabla^2_x \mathbf{P}f^\varepsilon\right\|^2
+\frac{1}{\varepsilon^2} \left\| \nabla_x \mathbf{P}^{\perp}f^\varepsilon\right\|_{D}^2 \bigg\}.
\end{align*}

{\em Case 2}. $2 \leq k \leq N_0$.\; Using the similar method as the case $k=1$, one has
\begin{align*}
J_{5,1}=\;& \left( \frac{1}{\varepsilon} \nabla^k_x \Gamma( \mathbf{P}f^\varepsilon, \mathbf{P}f^\varepsilon),
\nabla^k_x \mathbf{P}^{\perp}f^\varepsilon \right)\\
\lesssim\;& \frac{1}{\varepsilon} \left\| \mathbf{P}f^\varepsilon \right\|_{L^\infty_x L^2_v}\left\| \nabla^k_x \mathbf{P}f^\varepsilon \right\|
\left\| \nabla^k_x \mathbf{P}^{\perp}f^\varepsilon \right\|_{D} \\
&+ \frac{1}{\varepsilon} \sum_{1 \leq j \leq k-1}
\left\| \nabla^j_x \mathbf{P}f^\varepsilon \right\|_{L^6_x L^2_v}\left\| \nabla^{k-j}_x \mathbf{P}f^\varepsilon \right\|_{L^3_x L^2_v}
\left\| \nabla^k_x \mathbf{P}^{\perp}f^\varepsilon \right\|_{D}\\
\lesssim\;&\sqrt{\mathcal{E}_{N}(t)} \bigg\{ \sum_{2 \leq \ell \leq N_0} \left\| \nabla^\ell_x \mathbf{P}f^\varepsilon\right\|^2
+\frac{1}{\varepsilon^2} \sum_{2 \leq \ell \leq N_0} \left\| \nabla^{\ell}_x \mathbf{P}^{\perp}f^\varepsilon\right\|_{D}^2 \bigg\}.
\end{align*}
In a similar fashion, for the term $J_{5,2}$, selecting the first term within the minimum function in \eqref{gamma1} yields
\begin{align}\nonumber
J_{5,2} \lesssim\; \sqrt{\mathcal{E}_{N}(t)}
\frac{1}{\varepsilon^2} \sum_{1 \leq \ell \leq N_0} \left\| \nabla^{\ell}_x \mathbf{P}^{\perp}f^\varepsilon\right\|_{D}^2 .
\end{align}
Opting for the second term inside the minimum function in \eqref{gamma1}, $J_{5,3}$ has the same upper as  $J_{5,2}$.
Regarding $J_{5,4}$, from the collision invariant property, \eqref{gamma1}, \eqref{Sobolev-ineq}, the H\"{o}lder inequality and the Cauchy--Schwarz inequality, we have
\begin{align*}
J_{5,4}=\;& \left( \frac{1}{\varepsilon} \nabla_x^k \Gamma( \mathbf{P}^{\perp} f^\varepsilon, \mathbf{P}^{\perp} f^\varepsilon),
\nabla_x^k \mathbf{P}^{\perp}f^\varepsilon \right)\\
\lesssim\;&\frac{1}{\varepsilon} \sum_{ j \leq k} \int_{\mathbb{R}^3} \left| \nabla_x^j \mathbf{P}^{\perp} f^\varepsilon \right|_{L^2}
\left| \nabla_x^{k-j} \mathbf{P}^{\perp} f^\varepsilon \right|_{D}
\left| \nabla_x^k \mathbf{P}^{\perp} f^\varepsilon \right|_{D} \d x \\
&+\frac{1}{\varepsilon} \sum_{ j \leq k} \int_{\mathbb{R}^3} \left| \nabla_x^j \mathbf{P}^{\perp} f^\varepsilon \right|_{D}
\left| \nabla_x^{k-j} \mathbf{P}^{\perp} f^\varepsilon \right|_{L^2}
\left| \nabla_x^k \mathbf{P}^{\perp} f^\varepsilon \right|_{D} \d x \\
:=\;& J_{5,4}^{\prime}+J_{5,4}^{\prime \prime}.
\end{align*}
where we used \eqref{norm inequality} and the fact $|\cdot|_{L^2_{\gamma/2+s}} \lesssim |\cdot|_{L^2}$
for the case $ \max\{-3, -\frac{3}{2}-2s\} < \gamma < -2s$.
For the term $J_{5,4}^{\prime}$, by the collision invariant property, \eqref{gamma1}, \eqref{Sobolev-ineq}, the H\"{o}lder inequality and the Cauchy--Schwarz inequality, we can obtain that
\begin{align*}
 J_{5,4}^{\prime} \lesssim\;& \frac{1}{\varepsilon}  \left\| \mathbf{P}^{\perp} f^\varepsilon \right\|_{L^\infty_x L^2_{v}}
\left\| \nabla_x^{k} \mathbf{P}^{\perp} f^\varepsilon \right\|_{D}
\left\| \nabla_x^k \mathbf{P}^{\perp} f^\varepsilon \right\|_{D}  \\
&+\frac{1}{\varepsilon}  \sum_{1 \leq j \leq k-1 }\left\| \nabla_x^j \mathbf{P}^{\perp} f^\varepsilon \right\|_{L^3_x L^2_{v}}
\left\| \nabla_x^{k-j} \mathbf{P}^{\perp} f^\varepsilon \right\|_{L^6_x L^2_D}
\left\| \nabla_x^k \mathbf{P}^{\perp} f^\varepsilon \right\|_{D} \\
&+\frac{1}{\varepsilon}  \left\| \nabla_x^k \mathbf{P}^{\perp} f^\varepsilon \right\|
\left\|  \mathbf{P}^{\perp} f^\varepsilon \right\|_{L^\infty_x L^2_D}
\left\| \nabla_x^k \mathbf{P}^{\perp} f^\varepsilon \right\|_{D} \\
\lesssim\;& \sqrt{ \mathcal{E}_{N}(t)}
\frac{1}{\varepsilon^2}\sum_{1 \leq \ell \leq N_0} \left\| \nabla_x^\ell \mathbf{P}^{\perp} f^\varepsilon \right\|^2_{D}.
\end{align*}
Similarly, $J_{5,4}^{\prime \prime}$ has the same upper as $J_{5,4}^{\prime}$.
Therefore, combining all the estimates mentioned above with the a priori assumption \eqref{priori assumption}
and taking the summation over $1 \leq k \leq N_0$, we can conclude \eqref{1-N0 estimate decay} for soft potentials
$\max\{-3,-\frac{3}{2}-2s\} < \gamma < -2s$.

In order to prove \eqref{0-N0 estimate decay}, we estimate \eqref{pure spatial estimate} for $k=0$. Under the a priori assumption \eqref{priori assumption}, by combining \eqref{gamma1} along with \eqref{Sobolev-ineq}, it is easy to derive that
\begin{align}\label{k=0 estimate decay}
\frac{\d}{\d t}  \left\| (f^\varepsilon, E^\varepsilon, B^\varepsilon) \right\|^2
+\frac{\lambda}{\varepsilon^2}  \left\|  \mathbf{P}^{\perp} f^\varepsilon \right\|^2_{D}
\lesssim \sqrt{\widetilde{\mathcal{E}}_{N,l}(t)} \left\{ \left\| E^\varepsilon \right\|^2
+ \left\| \nabla_x \mathbf{P}f^\varepsilon \right\|^2 \right\}.
\end{align}
As a result, for the case $\max\{-3,-\frac{3}{2}-2s\} < \gamma < -2s$, the desired estimate \eqref{0-N0 estimate decay} follows directly by \eqref{1-N0 estimate decay} and \eqref{k=0 estimate decay}.

Then, we turn to the hard potential case $\gamma+2s \geq 0$. In fact,  in this case, the estimates of $ J_{2}$--$J_{5}$ in \eqref{pure spatial estimate} with $1 \leq k \leq N$ can be derived in a similar but simpler way than the case $\max\{-3,-\frac{3}{2}-2s\} < \gamma < -2s$.  The details are omitted for simplicity. This completes the proof of Lemma \ref{0-N0 1-N0 lemma}.
\end{proof}

\begin{lemma}\label{k-N0 decay lemma}
There exist the energy functional $\mathcal{E}^k_{N_0}(t)$ and the corresponding energy dissipation rate functional $\mathcal{D}^k_{N_0}(t)$ satisfying \eqref{negative sobolev energy} and \eqref{negative sobolev dissipation} respectively such that
\begin{align}\label{k-N0 estimate}
\frac{\d}{\d t} \mathcal{E}^k_{N_0}(t)+ \lambda \mathcal{D}^k_{N_0}(t) \leq 0
\end{align}
holds for $k=0,1$ and all $0 \leq t \leq T$.

Furthermore, we can obtain that for $k=0, 1$,
\begin{align} \label{k-N0 decay}
\mathcal{E}^k_{N_0}(t) \lesssim (1+t)^{-(k+\varrho)} \sup_{0 \leq \tau \leq t} \overline{\mathcal{E}}_{N,l}(\tau).
\end{align}
\end{lemma}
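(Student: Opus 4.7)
The plan is to establish the differential inequality \eqref{k-N0 estimate} first, and then deduce the polynomial decay \eqref{k-N0 decay} by a Guo--Wang type interpolation argument using the negative Sobolev norm built into $\overline{\mathcal{E}}_{N,l}(t)$.

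For \eqref{k-N0 estimate}, the idea is to mimic the construction of $\widetilde{\mathcal{E}}_{N,l}(t)$ used in Proposition \ref{energy estimate 1}, but restricted to spatial derivatives of order between $k$ and $N_0$. Specifically, I would first run the analogue of Lemma \ref{macroscopic estimate} at this range (pure spatial derivatives only), producing an interactive functional $\mathcal{E}^{k,N_0}_{int}(t)$ controlled by $\mathcal{E}^k_{N_0}(t)$ whose time derivative generates the macroscopic dissipation $\sum_{k+1\leq |\alpha|\leq N_0}\|\partial^\alpha (a_\pm^\varepsilon,b^\varepsilon,c^\varepsilon)\|^2$, the charge dissipation $\sum_{|\alpha|=k}\|\partial^\alpha(a_+^\varepsilon-a_-^\varepsilon)\|^2$, and the electromagnetic dissipation $\sum_{k\leq|\alpha|\leq N_0-1}\|\partial^\alpha E^\varepsilon\|^2+\sum_{k+1\leq |\alpha|\leq N_0-1}\|\partial^\alpha B^\varepsilon\|^2$. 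Adding this with a small coefficient to the refined spatial energy inequalities \eqref{0-N0 estimate decay}--\eqref{1-N0 estimate decay} from Lemma \ref{0-N0 1-N0 lemma}, the right-hand sides of \eqref{0-N0 estimate decay}--\eqref{1-N0 estimate decay} carry the small prefactor $\sqrt{\widetilde{\mathcal{E}}_{N,l}(t)}\lesssim \delta_0$, so those terms are absorbed into $\lambda \mathcal{D}^k_{N_0}(t)$ after using the a priori assumption \eqref{priori assumption}. This directly yields \eqref{k-N0 estimate} for $k=0,1$.

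For the decay \eqref{k-N0 decay}, the key ingredient is the Sobolev interpolation on the Fourier side:
\begin{equation*}
\|\nabla_x^k u\|\lesssim \|\Lambda^{-\varrho}u\|^{1/(k+1+\varrho)}\,\|\nabla_x^{k+1}u\|^{(k+\varrho)/(k+1+\varrho)}
\end{equation*}
applied componentwise to $u=f^\varepsilon,E^\varepsilon,B^\varepsilon$. Since $\overline{\mathcal{E}}_{N,l}(t)$ controls $\|\Lambda^{-\varrho}(f^\varepsilon,E^\varepsilon,B^\varepsilon)\|^2$ and $\mathcal{D}^k_{N_0}(t)$ controls $\|\nabla_x^{k+1}(f^\varepsilon,E^\varepsilon,B^\varepsilon)\|^2$ (via the macro--micro split and the fact that $\nabla_x^{k+1}\mathbf{P}f^\varepsilon$ and $\nabla_x^{k+1}\mathbf{P}^\perp f^\varepsilon$ are both in $\mathcal{D}^k_{N_0}$), a standard interpolation gives
\begin{equation*}
\mathcal{E}^k_{N_0}(t)^{1+\frac{1}{k+\varrho}}\lesssim \overline{\mathcal{E}}_{N,l}(t)^{\frac{1}{k+\varrho}}\,\mathcal{D}^k_{N_0}(t).
\end{equation*}
Combining with \eqref{k-N0 estimate} and $\sup_{0\le\tau\le t}\overline{\mathcal{E}}_{N,l}(\tau)\le C\delta_0^2$, the functional $y(t):=\mathcal{E}^k_{N_0}(t)$ satisfies $y'+C_0\,y^{1+1/(k+\varrho)}\le 0$ with $C_0$ independent of $\varepsilon$, whose Bernoulli-type integration yields the desired rate $(1+t)^{-(k+\varrho)}$.

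The main obstacle is controlling the top-order components of $\mathcal{E}^k_{N_0}(t)$, namely $\|\nabla_x^{N_0}E^\varepsilon\|$ and $\|\nabla_x^{N_0}B^\varepsilon\|$, whose dissipation is absent from $\mathcal{D}^k_{N_0}(t)$ (which only goes up to order $N_0-1$ for $E^\varepsilon$ and $B^\varepsilon$). I would handle this by separating $\mathcal{E}^k_{N_0}(t)$ as a \emph{lowest-order} part (orders $k$ through $N_0-1$), to which the interpolation above applies cleanly and drives the decay, and a top-order remainder that is already bounded uniformly by $\overline{\mathcal{E}}_{N,l}(t)\lesssim \delta_0^2$; since $N=N_0+3$, applying the same interpolation one level higher shows the top-order piece in fact decays at least as fast. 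A second technical point is ensuring that the interpolation constants are truly $\varepsilon$-independent despite the $1/\varepsilon$ weight on $\|\nabla_x^{k+1}\mathbf{P}^\perp f^\varepsilon\|_D^2$ in $\mathcal{D}^k_{N_0}(t)$; this is automatic because the interpolation bounds only $\|\nabla_x^{k+1}f^\varepsilon\|$ and the extra $1/\varepsilon^2$ on the micro-part can only help.
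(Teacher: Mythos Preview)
Your outline for \eqref{k-N0 estimate} is essentially the paper's: combine the refined spatial energy inequalities of Lemma~\ref{0-N0 1-N0 lemma} with a macroscopic interactive functional restricted to orders $k$ through $N_0$, and absorb the small right-hand side via the a~priori bound. That part is fine.

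The gap is in your interpolation argument for \eqref{k-N0 decay}, specifically your claim that ``$\mathcal{D}^k_{N_0}(t)$ controls $\|\nabla_x^{k+1}(f^\varepsilon,E^\varepsilon,B^\varepsilon)\|^2$ via the macro--micro split.'' For the microscopic part this is false in the soft potential range $\max\{-3,-\tfrac32-2s\}<\gamma<-2s$: the dissipation only contains $\tfrac{1}{\varepsilon^2}\|\nabla_x^{m}\mathbf{P}^\perp f^\varepsilon\|_D^2$, and since $|\cdot|_D$ dominates only $|\cdot|_{L^2_{\gamma/2+s}}$ with $\gamma/2+s<0$, you \emph{cannot} recover the unweighted $L^2_v$ norm $\|\nabla_x^{m}\mathbf{P}^\perp f^\varepsilon\|$ from it. Your spatial Sobolev interpolation between $\Lambda^{-\varrho}$ and $\nabla_x^{k+1}$ therefore does not close for $\mathbf{P}^\perp f^\varepsilon$.

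The paper circumvents this by interpolating in \emph{velocity weight} rather than in spatial regularity for the microscopic part: for every $k\le m\le N_0$,
\[
\|\nabla_x^{m}\mathbf{P}^\perp f^\varepsilon\|
\lesssim \Big(\tfrac{1}{\varepsilon}\|\langle v\rangle^{\frac{\gamma+2s}{2}}\nabla_x^{m}\mathbf{P}^\perp f^\varepsilon\|\Big)^{\frac{k+\varrho}{k+1+\varrho}}
\|\langle v\rangle^{-\frac{\gamma+2s}{2}(k+\varrho)}\nabla_x^{m}\mathbf{P}^\perp f^\varepsilon\|^{\frac{1}{k+1+\varrho}},
\]
where the first factor sits in $\mathcal{D}^k_{N_0}$ and the second, carrying a positive velocity weight, is controlled by the weighted energy $\widetilde{\mathcal{E}}_{N,l}\subset\overline{\mathcal{E}}_{N,l}$. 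This is precisely why the $l$ in \eqref{mainth1 assumption} is chosen large. Your proposal never invokes the weighted velocity norms in $\overline{\mathcal{E}}_{N,l}$, so it cannot handle the soft case.

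Your treatment of the top-order electromagnetic terms $\|\nabla_x^{N_0}(E^\varepsilon,B^\varepsilon)\|$ is also too vague. The paper interpolates between $\|\nabla_x^{N_0-1}(E^\varepsilon,B^\varepsilon)\|$ (in $\mathcal{D}^k_{N_0}$) and $\|\nabla_x^{N_0+k+\varrho}(E^\varepsilon,B^\varepsilon)\|$ (in $\overline{\mathcal{E}}_{N,l}$); the latter is available because $N_0+k+\varrho< N_0+3=N$, which is exactly why $N=N_0+3$ is required. Splitting into a ``lowest-order part'' and ``top-order remainder bounded by $\delta_0^2$'' as you suggest would only give boundedness, not the needed comparison $\mathcal{E}^k_{N_0}\lesssim \{\overline{\mathcal{E}}_{N,l}\}^{1/(k+1+\varrho)}\{\mathcal{D}^k_{N_0}\}^{(k+\varrho)/(k+1+\varrho)}$.
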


\begin{proof}
Similar to Lemma \ref{macroscopic estimate}, there are two interactive energy functionals $\mathcal{E}^{N_0}_{int}(t)$
and $\mathcal{E}^{1 \rightarrow N_0}_{int}(t)$ satisfying
\begin{align*}
\left|\mathcal{E}^{N_0}_{int}(t)\right|
\lesssim  \sum_{k \leq N_0} \left\| \nabla_x^k (f^{\varepsilon}, E^\varepsilon, B^\varepsilon) \right\|^2
\;\text{~and~}\; \left| \mathcal{E}^{1 \rightarrow N_0}_{int}(t) \right|
\lesssim  \sum_{1 \leq k \leq N_0} \left\| \nabla_x^k (f^{\varepsilon}, E^\varepsilon, B^\varepsilon) \right\|^2,
\end{align*}
such that
\begin{align}\label{0-N0 macro 1}
& \frac{\d}{\d t} \mathcal{E}^{N_0}_{int}(t) + \sum_{1 \leq k \leq N_0} \left\| \nabla^k_x \mathbf{P}f^\varepsilon\right\|^2
+ \left \| a_{+}^\varepsilon-a_{-}^\varepsilon \right \|^2
+ \sum_{k \leq N_0-1} \left\|\nabla^k_x E^{\varepsilon}\right\|^2
+ \sum_{1 \leq k \leq N_0-1} \left\|\nabla^k_x B^{\varepsilon}\right\|^2 \nonumber \\
&\lesssim\;\frac{1}{\varepsilon^2} \sum_{k \leq N_0} \left\| \nabla^k_x \mathbf{P}^{\perp}f^\varepsilon\right\|_{D}^2
+\mathcal{E}_{N_0}(t) \frac{1}{\varepsilon^2} \sum_{k \leq N_0}\left\| \nabla^k_x \mathbf{P}^{\perp}f^\varepsilon\right\|_{D}^2,
\end{align}
and
\begin{align} \label{1-N0 macro 1}
& \frac{\d}{\d t} \mathcal{E}^{1 \rightarrow N_0}_{int}(t) \!+ \!\!\sum_{2 \leq k \leq N_0} \left\| \nabla^k_x \mathbf{P}f^\varepsilon\right\|^2
+ \left \| \nabla_x (a_{+}^\varepsilon-a_{-}^\varepsilon) \right \|^2
\!+ \!\!\sum_{1 \leq k \leq N_0-1} \left\|\nabla^k_x E^{\varepsilon}\right\|^2
\!+ \!\!\sum_{2 \leq k \leq N_0-1} \left\|\nabla^k_x B^{\varepsilon}\right\|^2 \nonumber \\
&\lesssim\;\frac{1}{\varepsilon^2} \sum_{1 \leq k \leq N_0} \left\| \nabla^k_x \mathbf{P}^{\perp}f^\varepsilon\right\|_{D}^2
+\mathcal{E}_{N_0}(t) \frac{1}{\varepsilon^2} \sum_{1 \leq k \leq N_0} \left\| \nabla^k_x \mathbf{P}^{\perp}f^\varepsilon\right\|_{D}^2.
\end{align}
Here, we have made use of the a priori assumption \eqref{priori assumption}.
It should be pointed out that the above two estimates are more precise compared to \eqref{macro estimate}.

Therefore, recalling \eqref{low k energy} for $\mathcal{E}^k_{N_0}(t)$, \eqref{k-N0 estimate} for $k=0$ can be derived from \eqref{0-N0 estimate decay}, \eqref{0-N0 macro 1} and the a priori assumption \eqref{priori assumption}.
Similarly, \eqref{k-N0 estimate} for $k=1$ follows from \eqref{1-N0 estimate decay}, \eqref{1-N0 macro 1} and the a priori assumption \eqref{priori assumption}.

To deduce \eqref{k-N0 decay}, for $k=0,1$, one has
\begin{align*}
\left\| \nabla^k_x (\mathbf{P}f^\varepsilon, B^\varepsilon) \right\|
\lesssim \left\| \nabla^{k+1}_x (\mathbf{P}f^\varepsilon, B^\varepsilon) \right\|^{\frac{k+\varrho}{k+1+\varrho}}
\left\| \Lambda^{-\varrho} (\mathbf{P}f^\varepsilon, B^\varepsilon) \right\|^{\frac{1}{k+1+\varrho}}.
\end{align*}
The above inequality, together with the facts that for $m \leq N_0$,
\begin{align*}
\left\| \nabla^m_x \mathbf{P}^{\perp}f^\varepsilon \right\|
\lesssim\;& \left( \frac{1}{\varepsilon} \left\| \langle v \rangle ^{\frac{\gamma+2s}{2}}\nabla^m_x \mathbf{P}^{\perp}f^\varepsilon\right\|\right)^{\frac{k+\varrho}{k+1+\varrho}}
 \left\| \langle v \rangle ^{-\frac{\gamma+2s}{2}(k+\varrho)}\nabla^m_x \mathbf{P}^{\perp}f^\varepsilon\right\|^{\frac{1}{k+1+\varrho}}, \\
\left\| \nabla^{N_0}_x (E^\varepsilon, B^\varepsilon) \right\|
\lesssim\;& \left\| \nabla^{N_0-1}_x (E^\varepsilon, B^\varepsilon) \right\|^{\frac{k+\varrho}{k+1+\varrho}}
\left\| \nabla^{N_0+k+\varrho}_x (E^\varepsilon, B^\varepsilon) \right\|^{\frac{1}{k+1+\varrho}},
\end{align*}
implies that
\begin{align} \nonumber
\mathcal{E}^k_{N_0}(t) \lesssim \left\{\overline{\mathcal{E}}_{N,l}(t)\right\}^{\frac{1}{k+1+\varrho}}
\left\{ \mathcal{D}^k_{N_0}(t) \right\}^{\frac{k+\varrho}{k+1+\varrho}}
\lesssim \left\{\sup_{0 \leq \tau \leq t} \overline{\mathcal{E}}_{N,l}(\tau)\right\}^{\frac{1}{k+1+\varrho}}
\left\{ \mathcal{D}^k_{N_0}(t) \right\}^{\frac{k+\varrho}{k+1+\varrho}}.
\end{align}
Here, we have used $1 < \varrho <\frac{3}{2}$, $k=0,1$ and $N=N_0+3$.
Hence, we deduce that
\begin{align*}
\frac{\d}{\d t} \mathcal{E}^k_{N_0}(t)+ \lambda \left\{\sup_{0 \leq \tau \leq t} \overline{\mathcal{E}}_{N,l}(\tau)\right\}^{-\frac{1}{k+\varrho}}
\left\{ \mathcal{E}^k_{N_0}(t) \right\}^{1+\frac{1}{k+\varrho}} \leq 0.
\end{align*}
Therefore, \eqref{k-N0 decay} can be deduced by solving the above inequality directly. This completes the proof of Lemma \ref{k-N0 decay lemma}.
\end{proof}
\medskip

\subsection{Proof of Global Existence}
\hspace*{\fill}

In this subsection, we complete the proof of Theorem \ref{mainth1}.

\begin{proof}[{\bf Proof of Theorem \ref{mainth1}.}] \
By applying \eqref{a priori estimate 6} and taking the time integration, one has that for any $0 \leq t \leq T$,
\begin{align}\label{existence estimate 1}
\overline{\mathcal{E}}_{N,l}(t) + \lambda  \int_{0}^{t} \overline{\mathcal{D}}_{N,l}(\tau) \d \tau \leq  \overline{\mathcal{E}}_{N,l}(0).
\end{align}
This, together with \eqref{k-N0 decay}, implies that for any $0 \leq t \leq T$,
\begin{align}\label{existence estimate 2}
\sum_{1 \leq |\alpha| \leq N_0}\left\| \partial^\alpha (E^\varepsilon, B^\varepsilon)(t)\right\|^2
\lesssim (1+t)^{-(1+\varrho)} \sup_{0 \leq \tau \leq t} \overline{\mathcal{E}}_{N,l}(\tau)
\lesssim \overline{\mathcal{E}}_{N,l}(0) (1+t)^{-(1+\varrho)}.
\end{align}
Now, recalling the definition of $X(t)$ in \eqref{X define}, it follows immediately from \eqref{existence estimate 1} and \eqref{existence estimate 2} that
$$
X(t) \lesssim \overline{\mathcal{E}}_{N,l}(0)
$$
holds for any $0 \leq t \leq T$. As long as $\overline{\mathcal{E}}_{N,l}(0)$ is sufficiently small, we can close the a priori assumption \eqref{priori assumption}. The rest is to prove the local existence and uniqueness of solutions in terms of the energy norm $\overline{\mathcal{E}}_{N,l}(t)$ and the non-negativity of $F^\varepsilon=\mu+\varepsilon \mu^{1/2}f^\varepsilon$. One can use the iteration method with the iterating sequence $f_n^\varepsilon$ $(n \geq 0)$ on the system
\begin{align*}
	\left\{\begin{array}{l}
\displaystyle \partial_{t} f_{n+1}^{\varepsilon}+\frac{1}{\varepsilon}v \cdot \nabla_{x} {f}_{n+1}^{\varepsilon}-\frac{1}{\varepsilon} (E_{n+1}^{\varepsilon} \cdot v) \mu^{1/2}q_1+\frac{1}{\varepsilon^{2}} L {f}_{n+1}^{\varepsilon}
\\[3mm]
\displaystyle \qquad\qquad= \frac{q_0}{2} E_{n}^{\varepsilon} \cdot v {f}_{n+1}^{\varepsilon}
-q_0 E_{n}^{\varepsilon} \cdot \nabla_{v} {f}_{n+1}^{\varepsilon}
-\frac{1}{\varepsilon} q_0 (v \times B_{n}^\varepsilon) \cdot \nabla_v f_{n+1}^\varepsilon
+\frac{1}{\varepsilon} \Gamma\left({f}_{n}^{\varepsilon}, {f}_{n}^{\varepsilon}\right),  \\ [2mm]
\displaystyle \partial_t E_{n+1}^{\varepsilon}-\nabla_x \times B_{n+1}^{\varepsilon}=-\frac{1}{\varepsilon} \int_{\mathbb{R}^3} f_{n+1}^{\varepsilon} \cdot q_1 v \mu^{1/2} \d v, \\ [2.5mm]
\displaystyle \partial_t B_{n+1}^{\varepsilon}+\nabla_x \times E_{n+1}^{\varepsilon}=0, \\ [2mm]
\displaystyle \nabla_x \cdot E_{n+1}^{\varepsilon}=\int_{\mathbb{R}^3} f_{n+1}^{\varepsilon} \cdot q_1 \mu^{1/2} \d v, \quad \nabla_x \cdot B_{n+1}^{\varepsilon}=0.
	\end{array}\right.
\end{align*}
The details of proof are omitted for brevity; see also \cite{JL2019, ZMQ1994}. Therefore, the global existence of solutions follows with the help of the continuity argument, and the estimates \eqref{thm1 estimate 1} and \eqref{thm1 estimate 2} hold by the definition of $X(t)$ in \eqref{X define} and \eqref{k-N0 decay}. This completes the proof of Theorem \ref{mainth1}.
\end{proof}
\medskip

\section{Limit to Two-fluid Incompressible NSFM System with Ohm's Law}\label{Limit section}

In this section, our goal is to derive the two-fluid incompressible NSFM system with Ohm's law from the VMB system \eqref{rVMB} as $\varepsilon \to  0$.

\begin{proof}[{\bf Proof of Theorem \ref{mainth2}.}] \
Based on \eqref{thm1 estimate 1} in Theorem \ref{mainth1},  the VMB system (\ref{rVMB})
admits a global solution $(f^\varepsilon, E^\varepsilon, B^\varepsilon) \in L^\infty(\mathbb{R}^+; H^N_x L^2_v)$, so there exists a positive constant $C$ which is independent of $\varepsilon$, such that
\begin{align*}
&\sup_{t\geq 0}  \sum_{|\alpha| \leq N}\left\| \partial^\alpha f^{\varepsilon}(t) \right\|^2 \leq  C, \\
&\sum_{|\alpha|\leq N}\int_0^\infty \left\| \partial^\alpha \mathbf{P}^{\perp}{f}^{\varepsilon}(\tau) \right\|^2_{D}\d \tau\leq C \varepsilon^2,
\end{align*}
which implies that
\begin{align*}
f^{\varepsilon} &\to  f \qquad \quad\;\text{~~weakly}\!-\!* ~\text{in}~ L^\infty(\mathbb{R}^+; H^N_x L^2_v), \\
\mathbf{P}^{\perp}{f}^{\varepsilon} &\to  0 \quad \qquad\;\;\!\text{~~strongly} {\text{~in}~L^2(\mathbb{R}^+;H^N_x L^2_D)}
\end{align*}
as $\varepsilon \to 0$. We thereby deduce from the above convergences that
$ \mathbf{P}^{\perp}f=0$.

As in \cite{JL2019}, we introduce the following fluid variables
\begin{align*}
\begin{split}
&\rho^\varepsilon =\frac{1}{2}\Big\langle f^{\varepsilon}, q_2\mu^{1/2} \Big\rangle,\;\;\;\;
u^\varepsilon= \frac{1}{2} \Big \langle f^{\varepsilon}, q_2 v\mu^{1/2} \Big \rangle,\;\;\;\;
\theta^\varepsilon = \frac{1}{2} \Big\langle f^{\varepsilon}, q_2 \big(\frac{|v|^2}{3}-1 \big)\mu^{1/2}\Big\rangle, \\
&n^\varepsilon = \Big\langle f^{\varepsilon}, q_1\mu^{1/2} \Big\rangle, \;\;\;\;\;\;\;
j^\varepsilon = \frac{1}{\varepsilon} \Big\langle f^{\varepsilon}, q_1 v\mu^{1/2} \Big\rangle, \;\;\;
\omega^\varepsilon = \frac{1}{\varepsilon} \Big\langle f^{\varepsilon}, q_1\big(\frac{|v|^2}{3}-1\big)\mu^{1/2} \Big\rangle.
\end{split}
\end{align*}
Similar to \eqref{macro equation 3}, we can derive the following local conservation laws
\begin{equation*}
 \left\{
\begin{array}{ll}
\displaystyle
\partial_t \rho^\varepsilon+\frac{1}{\varepsilon}\nabla_x\cdot u^\varepsilon=0,~\\[2mm]
\displaystyle\partial_t u^\varepsilon+\frac{1}{\varepsilon}\nabla_x(\rho^\varepsilon+\theta^\varepsilon)+\frac{1}{\varepsilon}
\nabla_x\cdot \Big\langle \widehat{A}(v)\mu^{1/2}, \mathcal{L} \big(\frac{f^{\varepsilon} \cdot q_2}{2}\big)\Big\rangle =\frac{1}{2}(n^\varepsilon E^{\varepsilon}+j^\varepsilon \times B^\varepsilon),~\\[2mm]
\displaystyle\partial_t \theta^\varepsilon+\frac{2}{3\varepsilon}\nabla_x\cdot u^\varepsilon+\frac{2}{3\varepsilon}\nabla_x\cdot \Big\langle \widehat{B}(v)\mu^{1/2},
\mathcal{L} \big(\frac{f^{\varepsilon} \cdot q_2}{2}\big) \Big\rangle=\frac{\varepsilon}{3}j^\varepsilon \cdot E^\varepsilon, \\[3mm]
\displaystyle\partial_t n^\varepsilon + \nabla_x \cdot j^\varepsilon =0,\\[2mm]
\displaystyle\partial_t E^\varepsilon - \nabla_x \times B^\varepsilon = -j^\varepsilon,\\[2mm]
\displaystyle\partial_t B^\varepsilon + \nabla_x \times E^\varepsilon =0,\\[2mm]
\displaystyle\nabla_x \cdot E^\varepsilon = n^\varepsilon, \;\;
\nabla_x \cdot B^\varepsilon =0,
\end{array}
\right.
\end{equation*}
where
$$A (v)=v\otimes v -\frac{|v|^2}{3}\mathbb{I}_{3\times 3}, \quad B (v)=v\Big(\frac{|v|^2}{2}-\frac{5}{2}\Big),$$
$\widehat{A} (v)$ is such that $\mathcal{L}\big( \widehat{A}(v) \mu^{1/2}\big)= A(v) \mu^{1/2}$ with $\widehat{A}(v) \mu^{1/2} \in \mathcal{N}^{\bot}(\mathcal{L})$,
$\widehat{B} (v)$ is such that $\mathcal{L}\big( \widehat{B}(v) \mu^{1/2}\big)$ $= B(v) \mu^{1/2}$ with $\widehat{B}(v) \mu^{1/2} \in \mathcal{N}^{\bot}(\mathcal{L})$ and $\mathcal{L}$ is the linearized collision operator for the one-species Boltzmann equation, cf. \cite{GS2011}.

By standard convergent method, there are $f$, $E$, $B$, $\rho$, $u$, $\theta$, $n$, $j$, $\omega$ such that
\begin{align*}
f^\varepsilon & \rightarrow f  \quad \quad \qquad \quad \;\;\; \qquad\;\!\text{~~weakly}\!-\!* ~\text{in}~ L^\infty(\mathbb{R}^+; H^N_x L^2_v), \\
(E^\varepsilon,B^\varepsilon, \rho^\e, u^\e, \theta^\e, n^\e )& \rightarrow (E,B, \rho, u, \theta, n )  \quad\quad \!\text{~~weakly}\!-\!* ~\text{in}~ L^\infty(\mathbb{R}^+; H^N_x ), \\
(j^\varepsilon,\omega^\varepsilon) & \rightarrow (j,\omega)  \quad \quad\quad  \qquad\; \;\;\; \;\text{~~weakly} {\text{~in}~L^2(\mathbb{R}^+;H^N_{x})}
\end{align*}
as $\varepsilon \rightarrow 0$, where $f={f}(t, x, v)$ has the form
\begin{align*}
f=\;&\big(\rho+\frac{1}{2}n\big)\frac{q_1+q_2}{2}\mu^{1/2} + \big(\rho-\frac{1}{2}n\big)\frac{q_2-q_1}{2}\mu^{1/2} + u \cdot v q_2 \mu^{1/2} + \theta\big(\frac{|v|^2}{2}-\frac{3}{2}\big)q_2 \mu^{1/2},
\end{align*}
with
\begin{align*}
\rho =\frac{1}{2}\Big\langle f, q_2\mu^{1/2} \Big\rangle,\;
u= \frac{1}{2} \Big \langle f, q_2 v\mu^{1/2} \Big \rangle,\;
\theta = \frac{1}{2} \Big\langle f, q_2 \big(\frac{|v|^2}{3}-1 \big)\mu^{1/2}\Big\rangle, \;
n = \Big\langle f, q_1\mu^{1/2} \Big\rangle.
\end{align*}
In the sense of distributions, by using \eqref{thm1 estimate 1}, Aubin--Lions--Simon Theorem and the similar argument as in \cite{JL2019}, we can obtain that
$$
(u, \theta, n, E, B) \in C (\mathbb{R}^{+} ; H_x^{N-1} ) \cap L^{\infty} (\mathbb{R}^{+} ; H_x^N )
$$
satisfy the following two-fluid incompressible NSFM system with Ohm's law
\begin{equation*}
\left\{\begin{array}{lr}
\displaystyle \partial_t u+u \cdot \nabla_x u-\nu \Delta_x u+\nabla_x p=\frac{1}{2}(n E+j \times B), & \nabla_x \cdot u=0, \\[1mm]
\displaystyle  \partial_t \theta+u \cdot \nabla_x \theta-\kappa \Delta_x \theta=0, & \rho+\theta=0, \\[1mm]
\displaystyle \partial_t E-\nabla_x \times B=-j, & \nabla_x \cdot E=n, \\[1mm]
\displaystyle \partial_t B+\nabla_x \times E=0, & \nabla_x \cdot B=0, \\
\displaystyle j-n u=\sigma\big(-\frac{1}{2} \nabla_x n+E+u \times B\big), & \omega= n \theta,
\end{array}\right.
\end{equation*}
with initial data
$$
u(0, x)=\mathcal{P} u_0(x),\; \theta(0, x)=\frac{3}{5} \theta_0(x)-\frac{2}{5} \rho_0(x),\; E(0, x)=E_0(x),\; B(0, x)=B_0(x).
$$
 We omit the detailed proof for brevity. This completes the proof of Theorem \ref{mainth2}.
\end{proof}
\bigskip

\noindent\textbf{Declarations of competing interest}

The authors declare that they have no conflicts of interest.
\medskip

\noindent\textbf{Data availability}

No data was used for the research described in the article.
\medskip

\noindent\textbf{Acknowledgements}

The research is supported by NSFC under the grant number 12271179,
Basic and Applied Basic Research Project of Guangdong under the grant number 2022A1515012097,
Basic and Applied Basic Research Project of Guangzhou under the grant number SL2022A04J01496.
\medskip

\end{document}